\definecolor{amethyst}{rgb}{0.6, 0.4, 0.8}
\definecolor{darkspringgreen}{rgb}{0.09, 0.45, 0.27}
\newtheorem{thm}{Theorem}[section]
\newtheorem{cor}[thm]{Corollary}
\newtheorem{prop}[thm]{Proposition}
\newtheorem{lem}[thm]{Lemma}
\theoremstyle{definition}
\newtheorem{defn}[thm]{Definition}
\newtheorem{exmp}[thm]{Example}
\theoremstyle{remark}
\newtheorem{rem}[thm]{Remark}
\let\c@equation\c@thm
\numberwithin{equation}{section}
\newcommand*\bigcdot{\mathpalette\bigcdot@{.5}}
\newcommand*\bigcdot@[2]{\mathbin{\vcenter{\hbox{\scalebox{#2}{$\m@th#1\bullet$}}}}}
\def\subsection{\@startsection{subsection}{3}%
  \z@{.5\linespacing\@plus.7\linespacing}{.1\linespacing}%
  {\bfseries}}
\newcommand{\R}{\mathbb{R}}
\newcommand{\Hy}{\mathbb{H}}
\renewcommand{\phi}{\varphi}
\DeclareMathOperator{\conv}{conv}
\DeclareMathOperator{\rank}{rank}
\DeclareMathOperator{\cut} { \setminus}
\@date \else {\vskip3ex \centering\footnotesize\@date\par\vskip1ex}\fi
\else \@footnotetext{\@setdate}\fi}
\title[Compact Hyperbolic Coxeter Polytopes with Few Facets]{Near Classification of Compact Hyperbolic Coxeter $d$-Polytopes with $d+4$ Facets and Related Dimension Bounds}
\author{Amanda Burcroff}
\address{Department of Mathematics,
Harvard University}
\email{aburcroff@math.harvard.edu}
\begin{document}
\maketitle

\begin{abstract}
    We complete the classification of compact hyperbolic Coxeter $d$-polytopes with $d+4$ facets for $d=4$ and $5$. By previous work of Felikson and Tumarkin, the only remaining dimension where new polytopes may arise is $d=6$. We derive a new method for generating the combinatorial type of these polytopes via the classification of point set order types. In dimensions $4$ and $5$, there are $348$ and $51$ polytopes, respectively, yielding many new examples for further study.  
    
    We furthermore provide new upper bounds on the dimension $d$ of compact hyperbolic Coxeter polytopes with $d+k$ facets for $k \leq 10$.  It was shown by Vinberg in $1985$ that for any $k$, we have $d \leq 29$, and no better bounds have previously been published for $k \geq 5$.  As a consequence of our bounds, we prove that a compact hyperbolic Coxeter $29$-polytope has at least $40$ facets.
\end{abstract}

\section{Introduction}
Let $\Hy^d$ be the $d$-dimensional real hyperbolic space.  A hyperbolic Coxeter polytope is a domain in $\Hy^d$ bounded by a collection of geodesic hyperplanes, such that each intersecting pair of hyperplanes meets at dihedral angle $\frac{\pi}{m}$ for some integer $m \geq 2$.  Hyperbolic Coxeter polytopes are precisely the fundamental domains of discrete hyperbolic reflection groups.  These polytopes also have relevance to the construction of orbifolds and manifolds, in particular some of minimal volume \cite{Kel}.  

In this paper, we classify the compact hyperbolic Coxeter $d$-polytopes with $d+4$ facets for $d = 4$ and $5$, as well as improve some bounds on the dimension of compact Coxeter polytopes with few facets.  While Euclidean and spherical Coxeter polytopes were classified by Coxeter in 1934 \cite{Cox}, no complete classification is known in the hyperbolic case.  Henceforth, all polytopes are assumed to be hyperbolic unless otherwise specified.  

The partial classification of compact Coxeter polytopes has been obtained by restricting either the dimension, combinatorial type, or number of facets.  A dynamic summary of this progress is maintained by Anna Felikson on her webpage 
\\\url{www.maths.dur.ac.uk/users/anna.felikson/Polytopes/polytopes.html}.  

This work is based on the author's master's thesis \cite{Bur} at Durham University, supervised by Pavel Tumarkin.  The thesis is publicly available at \url{http://etheses.dur.ac.uk/14202/}.  

\subsection{\texorpdfstring{Classification of compact Coxeter $d$-polytopes with $d+4$ facets for $d \neq 6$}{Classification of compact Coxeter d-polytopes with d+4 facets for d not equal to 6}}
We first focus on restricting the number of facets with respect to the dimension.  Compact Coxeter simplices, i.e., $d$-polytopes with $d+1$ facets, were classified by Lann{\'e}r in 1950 \cite{Lan}.  These arise only in dimensions $2$, $3$, and $4$.  The compact Coxeter $d$-polytopes with $d+2$ facets are classified in \cite{Kap} and \cite{Ess2}; these arise in dimensions $3$ through $5$ and must be a simplicial prism except in dimension $4$.  Esselmann \cite{Ess} showed in 1994 that a compact Coxeter $d$-polytope with $d+3$ facets must satisfy $d \leq 8$, and that there is a unique polytope of dimension $8$ (first constructed by Bugaenko \cite{Bug}).  In 2007, Tumarkin \cite{Tum} completed the classification of compact Coxeter $d$-polytopes with $d+3$ facets, which arise in dimensions $2$ through $6$ and $8$.  

The first portion of this paper is dedicated to furthering the classification of compact Coxeter $d$-polytopes with $d+4$ facets.   In 2008, Felikson and Tumarkin \cite{FT} showed that such polytopes arise only in dimension at most $7$, and furthermore that there is a unique compact Coxeter $7$-polytope with $11$ facets (originally constructed by Bugaenko \cite{Bug}).  We complete this classification in dimensions $4$ and $5$, with $348$ polytopes in dimension $4$ and $51$ of dimension $5$.  This includes the first known Coxeter polytope in dimension $> 3$ with an angle of less than $\frac{\pi}{10}$ and the first known Coxeter polytope in dimension $> 3$ with an angle of $\frac{\pi}{7}$, along with many new essential polytopes.  A polytope is \emph{essential} if it is minimal with respect to the operations of taking the fundamental domain of a finite index reflection subgroup of the corresponding reflection group, or gluing two Coxeter polytopes along congruent facets (see \cite{FT3} for further details).  The present work combined with that of Felikson and Tumarkin yields a classification in all dimensions except $6$, where the only known polytope was constructed by Bugaenko \cite{Bug}.  We show that a compact Coxeter $6$-polytope with $10$ facets must contain a missing face of size $3$ or $4$.

In order to obtain this classification, we develop a new method for restricting the possible combinatorial types of these polytopes.  The Gale diagram of a $d$-dimensional polytope with $n$ facets is an $n - d - 1$ arrangement of points in Euclidean space that encodes the combinatorial type of the polytope.  In studying $d$-polytopes with $d+4$ facets, this means their combinatorial types can be studied in terms of a $3$-dimensional point arrangement.  Moreover, each Gale diagram can be transformed into an affine Gale diagram, an arrangement of positive and negative points, to further reduce the dimension by $1$ \cite{Vin}.  We show that the (affine) Gale diagrams of compact $d$-dimensional hyperbolic polytopes with $d+4$ facets can be generated by bipartitioning the points in all point set order types with $d+4$ points (see Theorem \ref{thm: affine generation} for further details).  The point set order types of sizes up to $10$ have been enumerated and made available through the Point Set Order Type Database \cite{AK}.  Using this database, we produce a reasonably short list of possible combinatorial types for the polytopes of interest in dimensions $4$ and $5$.  In dimension $6$, the same methods can be applied, but the number of point set order types makes the process rather computationally demanding.

Having greatly restricted the possible combinatorial types in dimensions $4$ and $5$, we then determine whether each combinatorial type can be realised as one or more polytopes.  This involves enumerating weighted graphs with restrictions on certain subgraphs and the spectral properties of their adjacency matrices.  Though the search space is infinite, combinatorial and linear algebraic techniques (see, e.g., \cite{Vin}) have previously been successful in reducing this to a computational problem.  In particular, Tumarkin handled the analogous task for polytopes with $d + 3$ facets by inspecting local determinants, face structures, and gluings of Lann{\'e}r diagrams \cite{Tum}.  These methods are only partially effective for the polytopes with $d + 4$ facets, due to the greater complexity of the polytopes and less restrictive missing face structures.  We then utilise the computer algebra system {\tt Mathematica} to check a finite number of cases in order to list all polytopes of a given combinatorial type, a technique which was recently used in classifying the compact Coxeter cubes \cite{JT}.

Ma and Zheng independently and via different methods classified the compact hyperbolic Coxeter $4$-polytopes with $8$ facets \cite{MZ} and $5$-polytopes with $9$ facts \cite{MZ2}. Their work became publicly available within a few months following the release of the author's master's thesis on this topic (available at \url{http://etheses.dur.ac.uk/14202/}). The author is very grateful to Ma and Zheng for their communication about this classification, as it helped to correct several minor errors.  Due to the sheer volume of data handling required by both our methods and those of Ma-Zheng, there were a few errors in the polytope lists initially announced by both groups.  In the case of the author's master's thesis, coding errors led to the omission of ten $4$-polytopes of type $G_2$ (all obtainable from one such polytope by gluing prisms) and one $5$-polytope of type $H_6$.  There were also three polytopes of type $G_3$ that were double-counted due to isomorphic Coxeter diagrams being listed.  Ma, Zheng, and the current author now agree on the published lists.  The existence of two independent methods for obtaining these polytopes may lend some confidence to the accuracy of this rather delicate classification.

\subsection{Bounding the dimension of polytopes with few facets}
In Sections \ref{sec: 3-free} and \ref{sec: dim bound}, we shift our focus to bounding the dimension of certain compact Coxeter polytopes.  It was shown in 1984 by Vinberg \cite{Vin2} that compact Coxeter polytopes do not arise in dimensions higher than $29$.  Vinberg proceeded by constructing certain weightings on the edges of the polytopes, and utilised a result of Nikulin \cite{Nik} on the average number of vertices along a $2$-dimensional face.  In Section \ref{sec: 3-free}, we show that a slight modification of Vinberg's argument yields a stronger bound for $3$-free polytopes, that is, polytopes having missing faces only of order $2$.  In particular, we show that compact Coxeter $3$-free polytopes do not arise in dimension higher than $13$. \footnote{Since the release of this work, our upper bound of $13$ has been improved using similar methods (which, in turn, were inspired by those of Vinberg \cite{Vin2}) to $12$ by Alexandrov \cite{Ale}.}  

In Section \ref{sec: dim bound}, we improve the bounds on the dimension of compact Coxeter $d$-polytopes with $d+k$ facets for $5 \leq k \leq 10$.  In order to obtain an initial bound, we examine certain faces which must themselves be compact Coxeter polytopes, similar to the methods used by Felikson and Tumarkin \cite{FT} to bound the dimension when $k = 4$.  These ideas combined with the results in Section \ref{sec: 3-free} yield the bounds in Theorem \ref{thm: partial dim bound}.  The rest of the section is devoted to improving these bounds in certain cases, frequently referring to the classification of polytopes with fewer facets.  One corollary of our improved bounds is that any compact Coxeter polytopes of dimension $29$, i.e., the threshold of Vinberg's bound, must have at least $29 + 11 = 40$ facets.

\section{Convex Polytopes and Their Combinatorial Types}

 \subsection{Convex polytope preliminaries}
 A convex polytope in $\Hy^d$ can be defined as $\bigcap_{i \in I} H_i^-$, where $I$ is an index set and $H_i^-$ is a half-space containing $P$. Throughout this paper we will consider only polytopes of finite volume, or equivalently, those which can be obtained as the convex hull of a finite point set.  We furthermore assume that the set of bounding hyperplanes is chosen minimally to define $P$.    The intersection of a convex polytope with a bounding hyperplane is called a \emph{facet} of the polytope.  We identify the index set $I$ with the set $\{0,1,\dots,d+k-1\}$, where $d+k$ is the number of facets of $P$.  If the convex polytope $P$ has a vertex at infinity, then $P$ is said to be \emph{non-compact}, otherwise we say $P$ is \emph{compact}. For the purposes of this section, we do not assume $P$ is compact, though we only consider compact polytopes in the remainder of this paper.  If each vertex is formed by the intersection of precisely $d$ half spaces, then $P$ is said to be \emph{simple}. An equivalent condition is that every $(d-j)$-face is contained in precisely $j$ facets. 
 
Fix an enumeration of the facets of $P$ as $f_0$, $f_1$, \dots, $f_{d+k-1}$.  Then each face $P \cap f_{i_0}\cap \cdots \cap f_{i_s}$ for $0 \leq i_0 < \dots < i_s < d+k$ is denoted by the string $i_0 \dots i_s$.  A \emph{missing face} of $P$ is a list of facets whose intersection is empty, but such that the intersection of every proper subset of these facets is non-empty.  That is, a missing face is an intersection of facets $i_0\dots i_s$ that is empty, but such that $i_0\dots i_{m-1}i_{m+1}\dots i_s$ is non-empty for each $0 \leq m \leq s$.  We refer to the set of missing faces of $P$ as the \emph{missing face list}.  Note that every missing face list is an antichain by inclusion, i.e., no missing face is a subset of another.  Two polytopes are said to be \emph{isomorphic}, or of the same \emph{combinatorial type}, provided there exists a bijective correspondence between their faces, such that two faces of the first polytope meet if and only if the corresponding faces of the second meet.  In particular, two polytopes have the same combinatorial type if and only if they have the same set of missing faces, up to relabelling of the facets.

\subsection{Gale diagrams and affine Gale diagrams}\label{subsec: gale diagrams}
An important technique in classifying convex polytopes is representing a polytope by a ``diagram'' from which one can read off the face structure.  Throughout this paper, we frequently reference \emph{Gale diagrams}, introduced in a $1956$ paper by David Gale \cite{Gal}.  Note that while Gale diagrams are often defined using the vertices of a polytope, we look at the dual construction defined on the facets.  We will consider only simple polytopes in this section, which include all compact Coxeter polytopes (see Remark \ref{rem: simple Coxeter}).

Given a simple $d$-polytope $P$ with $d + k$ facets, a Gale diagram of $P$ consists of a set of $d + k$ points on $S^{k-2} \subset \R^{k-1}$ corresponding to the facets of $P$.  These points can be obtained by applying a \emph{Gale transform} to the affine dependences of the normal vectors to the facets (see \cite{GS} for details of the construction).  These points encode the face structure of $P$ in the following way: a set of facets $\{f_i : i \in I\}$ of $P$ has a non-trivial intersection if and only if the points in the Gale diagram corresponding to $\{f_j : j \in [d+k] \cut I\}$ have a convex hull containing the origin.  Two Gale diagrams are \emph{isomorphic} if their corresponding polytopes are combinatorially equivalent.

A set of $d+k$ points on $S^{k-2}$ corresponds to a Gale diagram of a simple convex $d$-polytope with $d+k$ facets if and only if every half space bounded by a hyperplane through the origin contains at least two of the points.  One can see that the latter condition is necessary by taking the convex hull of the points corresponding to $\{f_j: j \neq i\}$ for any $i \in [d+k]$; this convex hull should contain the origin, as any facet is itself a non-empty face.

In the case $k = 4$, Gale diagrams consist of point configurations on $S^2 \subseteq \R^3$.  These seem rather difficult to classify, and thus a crucial step in our analysis is passing to the affine Gale diagrams.  These affine variants encode the same information as Gale diagrams, but using partitioned sets of points in $\R^{k-2}$. Thus, the configurations are reduced to points on the Euclidean plane when $k = 4$, which can be classified using point set order types (further details in Section \ref{sec: comb types d+4}).

An \emph{affine Gale diagram} of a $d$-polytope with $d+k$ facets consists of two (not necessarily disjoint) point sets, called ``positive'' and ``negative'', in $\R^{k-2}$ containing $d+k$ points in total (counted with multiplicity).  An affine Gale diagram is obtained from a Gale diagram $G$ by taking a hyperplane $H$ through the origin not containing any points of $G$, and projecting the points orthogonally onto $H$, with the projections of points from the open half space $H^+$ being labelled ``positive'' and those from $H^-$ labelled ``negative''.  The face structure of $P$ is determined in the following way: a set of facets $\{f_i : i \in I\}$ of $P$ has a non-trivial intersection if and only if the convex hull of the positive points in $\{f_j : j \notin I\}$ non-trivially intersects the convex hull of the negative points in $\{f_j : j \notin I\}$.

A configuration of positive and negative points then corresponds to an affine Gale diagram of a polytope if and only if, for every hyperplane through the origin $H$ in $\R^{k-2}$, the total number of positive points contained in the open half space $H^+$ and negative points contained in $H^-$ is at least $2$.  This classification follows directly from the analogous half space condition for Gale diagrams.

\section{\texorpdfstring{Combinatorial Types of Simple $d$-Polytopes with $d+4$ Facets}{Combinatorial Types of Simple d-Polytopes with d + 4 Facets}}\label{sec: comb types d+4}
\subsection{\texorpdfstring{Affine Gale diagrams of simple $d$-polytopes with $d+4$ facets}{Affine Gale diagrams of compact polytopes with d + 4 facets}}\label{sec: affine Gale d+4}
In order to determine the compact hyperbolic $d$-polytopes with $d+4$ facets in dimension $4$ and $5$, our methods involve first limiting their possible combinatorial types.  This was accomplished for compact hyperbolic $d$-polytopes with $d+3$ facets by Tumarkin \cite{Tum}, using \emph{standard Gale diagrams} described by Esselmann\cite{Ess}.  However, Tumarkin's methods do not seem immediately generalisable to Gale diagrams of higher dimension. We develop a new method for generating affine Gale diagrams using a classification of point set order types by Aichholzer, Aurenhammer, and Krasser \cite{AAK}.  These are reduced to a representative list of Gale diagrams, from which one can easily determine the set of missing faces. Later sections are devoted to determining which of these combinatorial types are realisable. 

We now describe how to obtain the potential combinatorial types of compact hyperbolic $d$-polytopes with $d + 4$ facets, listed in Appendix \ref{app: Gale diagrams}. First, we show that every combinatorial type of such a polytope has a corresponding affine Gale diagram satisfying certain properties (see Subsection \ref{subsec: gale diagrams} for details on affine Gale diagrams).  In fact, the conditions we consider hold for the more general case of simple hyperbolic $d$-polytopes with $d+4$ facets.  

\begin{rem}\label{rem: disj pos neg}
Let $P$ be a simple $d$-polytope with $d + k$ facets for $k \geq 2$.  Observe that the positive and negative points of an affine Gale diagram for $P$ are necessarily disjoint.  If not, let $v \in \R^2$ be a point which is both positive and negative.  The facets not corresponding to this point then intersect non-trivially in a face of codimension $d + k - 2 \geq d$, which is not possible.
\end{rem}

A set of points in $\R^d$ is said to be in \emph{general position} if no $m$ points lie in a subspace of dimension $m -2$ for $m = 2,\dots,d+1$.  In particular, when $d = 2$ this is equivalent to requiring that no three points are collinear.

\begin{prop}\label{prop: gen pos}
For $d \geq 4$, every simple $d$-polytope with $d + 4$ facets admits an affine Gale diagram where all points are in general position. 
\end{prop}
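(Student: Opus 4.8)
The plan is to start from an arbitrary affine Gale diagram of a simple $d$-polytope $P$ with $d+4$ facets, sitting in $\R^2$, with its points partitioned into positive and negative classes (disjoint by Remark \ref{rem: disj pos neg}), and then to perturb the points slightly so that no three of them become collinear, while arguing that the perturbation changes neither the sign labels nor the combinatorial type. The key observation is that the combinatorial type is detected by the half-space condition described in Subsection \ref{subsec: gale diagrams}: a collection of facets intersects nontrivially iff, among the complementary points, the convex hull of the positive ones meets the convex hull of the negative ones. Equivalently, back in the original (spherical) Gale diagram in $\R^3$, this is the condition that certain subsets of points positively span $\R^3$ (have the origin in their convex hull). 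So the strategy is really: show the relevant "origin in convex hull / positive spanning" relations are stable under small perturbations, and that after a generic small perturbation the point set is in general position as a planar point set.

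Here are the steps in order. First, fix a Gale diagram $G \subset S^2 \subset \R^3$ of $P$ (which exists since $P$ is simple, by the discussion in Subsection \ref{subsec: gale diagrams}), and an affine Gale diagram obtained from $G$ via a hyperplane $H$ through the origin missing all points of $G$. Second, note that for $k = 4$ the affine diagram lives in $\R^2$, so "general position" means simply: no three points collinear, and also no point equal to another — but coincidences are already excluded by Remark \ref{rem: disj pos neg} for points of opposite sign, and two points of the same sign coinciding would (by the half-space / convex-hull criterion) force two facets to behave identically, contradicting that the facets are distinct and $P$ is simple with $d \ge 4$; this last point needs a short argument analogous to the one in Remark \ref{rem: disj pos neg}. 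Third — the main step — take a small generic perturbation of the $d+4$ points in $\R^2$. For a generic perturbation of magnitude $\varepsilon$, no three points are collinear, so the perturbed configuration is in general position. It remains to check that all the convex-hull incidences used to read off the combinatorial type are preserved. The incidences come in two flavors: (a) those that hold strictly (the two convex hulls meet in their relative interiors, i.e. the corresponding face is "generic"), and (b) those that hold only because of a collinearity or a point lying exactly on the boundary of a hull. Flavor (a) relations are clearly open conditions, hence survive any sufficiently small perturbation. The delicate part is flavor (b).

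The main obstacle is precisely controlling the boundary/degenerate incidences of flavor (b): a priori, perturbing to kill a collinearity could destroy a "facet $f_i$ is nonempty" relation or an intersection relation that depended on a point sitting exactly on an edge of a convex hull. The way I would handle this is to choose the direction of the perturbation, not just its smallness — i.e. argue that the set of "bad" perturbation directions (those that flip at least one incidence relation) is a proper subvariety, or more concretely, push each point in a direction that only enlarges the incidences: intuitively, if a point lies on the boundary of the opposite-sign hull, nudging it slightly "inward" keeps the hulls intersecting, and one checks this can be done consistently for all the finitely many incidence relations at once. Alternatively — and this is probably the cleanest route given the machinery already set up — lift the problem back to $S^2$: the spherical Gale diagram need only satisfy the open condition "every open half-space through the origin contains at least two points" (stated in Subsection \ref{subsec: gale diagrams}), and the combinatorial type is determined by which subsets positively span; one perturbs the points on $S^2$ so that no three lie on a common great circle and no positive-spanning relation is lost. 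Since positive spanning of a full-dimensional set is an open condition and minimal positive-spanning (circuit) relations are what encode the face lattice, a dimension count shows a generic small perturbation on $S^2$ keeps all the needed spanning relations and makes the diagram "generic"; projecting back down to $H$ then yields the desired affine Gale diagram in general position. The hypothesis $d \ge 4$ (equivalently $d+k \ge 8$ with $k=4$, so at least $8$ points) is what guarantees there is enough room for such a generic perturbation to exist while respecting the half-space lower bound of $2$.
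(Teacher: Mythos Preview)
Your overall plan---start from an affine Gale diagram and perturb to general position while preserving the combinatorial type---is the same as the paper's. But you miss the one clean observation that makes the perturbation argument immediate, and the workarounds you propose for the ``flavor (b)'' degeneracies are not actually carried out.

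The paper's key point is that \emph{there are no flavor (b) incidences to worry about}. Because $P$ is simple, any $d+1$ facets have empty intersection; hence in the affine Gale diagram, no set of fewer than $4$ points can have the positive hull meeting the negative hull (since the complementary $\geq d+1$ facets would then intersect). In particular: no positive point coincides with a negative point (your Remark~\ref{rem: disj pos neg}), and no positive point lies on the segment between two negative points (nor vice versa). These are exactly the degenerate boundary-touching configurations in $\R^2$. Once they are excluded, $\conv(\text{positives}) \cap \conv(\text{negatives}) \neq \emptyset$ is equivalent to the corresponding interiors meeting, which is an open condition; non-intersection of the closed hulls is also open. So \emph{every} incidence relation is already stable, and a sufficiently small generic perturbation puts the points in general position without changing anything.

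Your proposed fixes---nudging points ``inward'' consistently, or lifting to $S^2$ and invoking a dimension count on circuits---are plausible-sounding but not established as stated (e.g.\ minimal positive-spanning sets need not be full-dimensional, so ``positive spanning is open'' doesn't directly apply). They are also unnecessary once you notice the simplicity argument above. Finally, your explanation of the hypothesis $d \geq 4$ as ``enough room for a generic perturbation'' is off; the role of the facet count is to force the complement of any $\leq 3$ Gale points to consist of more than $d$ facets, which is what kills the degenerate incidences.
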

\begin{proof}
Let $A = A_+ \cup A_-$ be an affine Gale diagram for $P$, where $A_+, A_-\subseteq \R^2$ denote the set of positive points and the set of negative points, respectively.  Our aim will be to slightly perturb the points of $A$ to ensure they are in general position while preserving the associated combinatorial type.  By Remark \ref{rem: disj pos neg}, $A_+$ and $A_-$ are necessarily disjoint.  

Since every vertex of $P$ is obtained as the intersection of exactly $d$ facets, then no set of fewer than $4$ positive and negative vertices can have intersecting convex hulls.  In particular, no positive vertex lies on the line segment between two negative vertices, and no negative vertex lies on the line segment between two positive vertices.  Thus, the convex hulls of a set of positive and negative vertices intersect non-trivially if and only if their interiors intersect.  

Thus, each point of $A$ can be moved within a small neighbourhood of its original position without changing whether a given set of positive and negative points have intersecting convex hulls, i.e., without changing the combinatorial type associated to $A$.  We can thus slightly perturb the points within this small neighbourhood to obtain a set of points in general position.  This process will yield an affine diagram $A' = A'_+ \cup A'_-$ of the same combinatorial type as $A$ but with all points in general position, along with the additional property that $|A'_+| = |A_+|$.
\end{proof}

It has been shown by Tumarkin and Felikson \cite{FT1,FT2} that for $k \geq 4$, every compact $d$-polytope with $d+k$ facets has at least two pairs of non-intersecting facets (see Theorem \ref{thm: FT one pair}).  Given a polytope $P$ with $d+4$ facets, let $f$ and $f'$ be a non-intersecting pair of facets.  In any Gale diagram associated to $P$, the points corresponding to $f$ and $f'$ can be separated from the remaining points by a hyperplane through the origin.  Take the affine Gale diagram obtained by orthogonal projection onto this hyperplane and choosing the half-space containing $f$ and $f'$ to be positive.  Applying the results of Proposition \ref{prop: gen pos}, in particular the last line of the proof, we obtain an affine Gale diagram associated to $P$ where all points are in general position and with exactly two positive points.  

\begin{prop}\label{prop: affine pos int}
Let $A = A_+ \cup A_-$ be an affine Gale diagram for a simple $d$-polytope with $d+4$ facets, where $A$ is in general position in $\R^2$ and $|A_+| = 2$.  Then $A_+$ is contained in the interior of the convex hull of $A_-$.
\end{prop}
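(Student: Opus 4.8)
The plan is to argue by contradiction using the half-space condition that characterizes affine Gale diagrams (stated at the end of Subsection~\ref{subsec: gale diagrams}). Write $A_+ = \{p, q\}$, so $|A_-| = d+2 \geq 6$. Since $A$ is in general position in $\R^2$, in particular no three points of $A$ are collinear. Suppose, for contradiction, that $A_+$ is \emph{not} contained in the interior of $\conv(A_-)$. Then at least one of $p, q$ lies outside or on the boundary of $\conv(A_-)$; but since general position forbids $p$ or $q$ from lying on an edge of $\conv(A_-)$ and (because $|A_+|=2$, there are no coincident points) from being a vertex of $\conv(A_-)$, we may assume some point of $A_+$, say $p$, lies strictly outside $\conv(A_-)$.

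First I would separate $p$ from $\conv(A_-)$: since $\conv(A_-)$ is a compact convex set not containing $p$, there is a line $\ell$ through the origin of the ambient construction — more precisely, after translating we can find a closed half-plane containing all of $A_-$ but not $p$. I would then rotate/translate this separating line so that it passes through $p$ itself (pushing $\ell$ until it touches $p$), giving a line $\ell'$ through $p$ with all of $A_-$ strictly on one side. Now here is where the bound on $|A_-|$ versus the half-space condition comes in: recall the affine Gale diagram condition says that for \emph{every} line $H$ through the origin, the number of positive points in $H^+$ plus the number of negative points in $H^-$ is at least $2$. The subtlety is that the half-space condition is phrased for lines through the \emph{origin} in $\R^{k-2} = \R^2$, whereas my separating line $\ell'$ need not pass through the origin — so the first real step is to recall (or re-derive) that the combinatorial/realizability condition can be rephrased intrinsically: a configuration is a valid affine Gale diagram iff it cannot be "swept" by a directed line leaving fewer than two points (counted with the sign convention) on the appropriate side. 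Concretely, for any closed half-plane $H^+$ with $|A_+ \cap H^+| + |A_- \cap (\text{complement})| < 2$ we get a contradiction; equivalently every open half-plane must contain at least one positive point or leave at most all-but-one negative point behind — I'd state this cleanly as: there is no line missing all points of $A$ with all of $A_+$ strictly on one side and at most one point of $A_-$ strictly on the other.

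The key step: take the line $\ell'$ through $p$ with $A_-$ strictly on one side, say the side $\ell'^-$, and $p$ on the boundary. Perturb $\ell'$ slightly, rotating it about a point just past $p$, to get a nearby line $\ell''$ missing all of $A$, with $p$ strictly on one side $\ell''^+$ and all of $A_- \cup \{q\}$ possibly split — but I can choose the rotation direction so that $q$ also ends up on $\ell''^+$ (since $q$ is a single point, a small enough perturbation keeping $p$ isolated can be arranged to sweep $q$ to the same side, or if not, I instead translate $\ell''$ toward $A_-$ until it has $\{p\}$ — or $\{p,q\}$ — strictly on the positive side and \emph{at most one} negative point strictly on the negative side before hitting the rest). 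This violates the half-space condition: we'd have a half-plane whose "positive" contribution is the (at most) $2$ points $p,q$ but whose complementary side contains $d+1 \geq 5$ negative points, and dualizing through the Gale correspondence this corresponds to a would-be face of codimension too large — equivalently it directly contradicts that every facet is a nonempty face (the argument at the end of Subsection~\ref{subsec: gale diagrams}, applied to $f_p$ or $f_q$: the hull of the complementary Gale points would fail to surround the origin).

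The main obstacle I anticipate is \textbf{bookkeeping the sign convention correctly} when translating between "the origin-based half-space condition on the spherical Gale diagram" and "sweeping lines in the affine picture," together with handling the case $q$ also outside $\conv(A_-)$ (which should only make the contradiction easier, since then one line can separate both $p$ and $q$ from $A_-$) and the degenerate-looking case where the separating line is forced to be nearly tangent. I would also double-check that general position is genuinely used — it rules out $p$ or $q$ sitting on an edge or vertex of $\conv(A_-)$, which is exactly what lets me pass from "not in the interior" to "strictly outside," and it is what makes the small perturbations of the sweeping line legitimate. Once the separating-line contradiction is set up, the remainder is routine: both $p$ and $q$ must lie in $\operatorname{int}\conv(A_-)$.
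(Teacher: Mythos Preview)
Your approach is the same as the paper's in spirit---separate a positive point from $\conv(A_-)$ and contradict the half-space condition---but there is a genuine gap in the execution, and you are making it much harder than necessary.

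First, the confusion about ``lines through the origin'' is a red herring. The affine half-space condition applies to \emph{every} affine line in $\R^2$ (the paper's phrasing earlier is imprecise, and indeed the paper's own proof of this proposition uses a line through two points of $A$, not through any origin). So no passage back to the spherical picture or perturbation is needed on that account.

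Second, and more seriously, your identification of the violated inequality is backwards. You write that the contradiction is ``a half-plane whose positive contribution is the (at most) $2$ points $p,q$ but whose complementary side contains $d+1 \geq 5$ negative points.'' That configuration gives $|A_+\cap H^+| + |A_-\cap H^-| \geq 2 + (d+2)$, which is no violation at all. The violation comes from the \emph{opposite} orientation: if all of $A_-$ lies in the closed half-plane $H\cup H^+$ and $p\in H$, then $|A_+\cap H^+|\le 1$ (only possibly $q$) and $|A_-\cap H^-|=0$, so the sum is at most $1<2$. Because this works regardless of where $q$ sits, your entire perturbation manoeuvre to push $q$ to the same side as $p$ is unnecessary.

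The paper's proof avoids all of this by a single clean choice of line: if $u\in A_+$ is not interior to $\conv(A_-)$, take $v\in A_-$ so that the line through $u$ and $v$ supports $\conv(A_-)$ (such a $v$ exists since $u$ is outside or on the boundary). Then $A_-$ lies entirely in $H\cup H^+$, giving $|A_-\cap H^-|=0$, and $u\in H$ gives $|A_+\cap H^+|\le 1$; done. No separation theorem, no perturbation, no case split on $q$.
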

\begin{proof}
Suppose that $u \in A_+$ is not in the interior of the convex hull of $A_-$.  Then there is some $v \in A_-$ such that no two points of $A_-$ are separated by the line through $u$ and $v$.  In other words, all points of $A_-$ lie in a closed half space $H \cup H^+$ bounded by the line $H$ through $u$ and $v$.  Observe then that the number of positive points in $H^+$ is at most one, as $|A_+| = 2$ and $u \in H$.  Moreover, there are no negative points in $H^-$.  As discussed in Subsection \ref{subsec: gale diagrams}, $|A_+ \cap H^+| + |A_- \cap H^-|$ must be at least $2$ for $A$ to be an affine Gale diagram, so we reach a contradiction.
\end{proof}

In summary, every simple $d$-polytope with $d+4$ facets admits an affine Gale diagram $A = A_+ \cup A_-$ where
\begin{enumerate}[(i)]
    \item all points of $A$ are in general position,
    \item $|A_+| = 2$, and
    \item $A_+$ is contained in the interior of the convex hull of $A_-$.
\end{enumerate}
In the sequel, we use these restrictions to show that all affine Gale diagrams of compact $d$-polytopes with $d+4$ facets can be obtained by bipartitioning the $d + k$ points of a point set order type.

\subsection{Point set order types}\label{subsec: point set}
In order to enumerate the possible Gale diagrams of compact $d$-polytopes with $d+4$ facets, we utilise a classification of the point set order types of size at most $10$ \cite{AAK}. This classification was obtained with the aid of extensive computer search in 2002 and is restricted to point sets in general position. The order type of a point set records the relative orientations of triples of points, from which one can determine many other combinatorial properties, such as whether a set of points is in convex position or whether line segments between points intersect.  

The \emph{orientation} of an ordered triple $((x_1,x_2),(y_1,y_2),(z_1,z_2)) \in (\R^2)^3$ of points in general position is determined by the sign of the determinant of the matrix 
$$\begin{bmatrix} 1 & 1 & 1\\ x_1 & y_1 & z_1\\ x_2 & y_2 & z_2\end{bmatrix}\,.$$
The points are said to be oriented \emph{counter-clockwise} if the determinant is positive and \emph{clockwise} if the determinant is negative. 
The \emph{order type} of a set of points $\{p_1,\dots,p_n\} \subset \R^2$ in general position is a mapping that assigns to each ordered triple $(i,j,k)$ of distinct elements in $\{1,\dots,n\}$ the orientation of the point triple $(p_i,p_j,p_k)$.  One can also define the order type of points not in general position, but we are not concerned with those in the present setting.  Two finite subsets $P,Q \subset \R^2$ are said to be \emph{combinatorially equivalent} if they have  the  same  order  type, i.e., if there exists a bijection $f:P\to Q$ that preserves orientations.  

The following lemma shows that the order type of a point set records the same intersection properties of convex hulls that the affine Gale diagrams use.  This is a crucial step in seeing that all affine Gale diagrams of compact $d$-polytopes with $d+4$ facets can be obtained by partitioning the points of each order type.

\begin{lem}\label{lem: order type conv int}
Fix two combinatorially equivalent point sets $P,Q \subseteq \R^2$, each in general position, with an orientation-preserving bijection $\sigma: P \to Q$ between them.  Then for any $P_1,P_2 \subseteq P$ and $Q_1 = \sigma(P_1),Q_2 = \sigma(P_2) \subseteq Q$, we have
$$\conv(P_1) \cap \conv(P_2) = \emptyset \iff \conv(Q_1) \cap \conv(Q_2) = \emptyset\,.$$
\end{lem}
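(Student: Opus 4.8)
The plan is to reduce the claim to a statement purely about which triples of points are positively or negatively oriented, and then invoke a classical combinatorial description of when two convex hulls (of subsets of a fixed point set) are disjoint. The key observation is that by Carathéodory-type reasoning in the plane, $\conv(P_1)$ and $\conv(P_2)$ are disjoint if and only if there is a line separating them, and—since $P_1, P_2$ are finite—such a separating line can be perturbed to pass through at most one point of $P_1 \cup P_2$, or taken to be the line through two points of $P_1 \cup P_2$. Thus disjointness of the two hulls is equivalent to the existence of a \emph{witnessing} pair or singleton from $P_1 \cup P_2$ such that all of $P_1$ lies (weakly) on one side and all of $P_2$ on the other. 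The point is that ``lies on one side of the line spanned by two given points'' is exactly the data recorded by the orientation function: for points $p, q, r$ in general position, $r$ is on one specified side of the line through $p,q$ precisely according to the sign assigned to the triple $(p,q,r)$ by the order type.

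First I would make precise the separation criterion. Given finite $P_1, P_2 \subseteq P$ in general position, I claim $\conv(P_1) \cap \conv(P_2) = \emptyset$ iff there exist $a, b \in P_1 \cup P_2$ (possibly $a = b$, in which case interpret the ``line through $a,b$'' as a line through $a$ that can be chosen generically) such that, writing $\ell$ for the oriented line through them, every point of $P_1$ lies in the closed left half-plane of $\ell$ and every point of $P_2$ lies in the closed right half-plane, with at least one of each strictly off $\ell$. The forward direction uses the hyperplane separation theorem followed by a translation/rotation of the separating line until it first touches $P_1 \cup P_2$; general position guarantees we can arrange it to touch in at most two points. The reverse direction is immediate. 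The crucial feature is that, once $a, b$ are fixed as elements of $P$, the condition ``$x$ is in the closed left half-plane of the line through $a, b$'' is expressible entirely in terms of the orientation of the triple $(a, b, x)$ (being $+$, or being $0$, i.e.\ $x \in \{a,b\}$)—this is exactly where general position of all of $P$ enters, ensuring no other point lies on the line through $a$ and $b$.

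Next I would transport this across $\sigma$. Since $\sigma$ is an orientation-preserving bijection $P \to Q$, for every triple $(a,b,x)$ of points of $P$ the orientation of $(a,b,x)$ equals that of $(\sigma a, \sigma b, \sigma x)$. Therefore the pair $(a,b) \in P_1 \cup P_2$ witnesses the separation of $P_1$ from $P_2$ in the sense above if and only if $(\sigma a, \sigma b) \in Q_1 \cup Q_2$ witnesses the separation of $Q_1 = \sigma(P_1)$ from $Q_2 = \sigma(P_2)$. Hence a separating witness exists for $(P_1, P_2)$ iff one exists for $(Q_1, Q_2)$, which by the criterion of the previous paragraph is equivalent to $\conv(P_1) \cap \conv(P_2) = \emptyset \iff \conv(Q_1) \cap \conv(Q_2) = \emptyset$. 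A small point to handle carefully is the degenerate case where one of $P_1, P_2$ is empty or a single point, but these are trivial: an empty convex hull is disjoint from everything, and $\sigma$ maps empty sets to empty sets and singletons to singletons.

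The main obstacle I anticipate is the bookkeeping in the separation criterion: pinning down exactly how the separating line can be normalized so that it meets $P_1 \cup P_2$ in a controlled way while its side-data is encoded by orientations of triples within $P$—in particular handling the case $a = b$ (a separating line touching only one point of $P_1 \cup P_2$) and the boundary/closed-versus-open half-plane subtleties that arise because general position of $P$ does not by itself prevent a point of $P_2$ from lying on a supporting line of $\conv(P_1)$ at one of its vertices. Once the criterion is stated in the ``combinatorial'' form above, the transport across $\sigma$ is essentially formal.
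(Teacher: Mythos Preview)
Your approach is correct but takes a genuinely different route from the paper's. The paper argues the contrapositive: assuming $P_1 \cap P_2 = \emptyset$, it shows that $\conv(P_1)\cap\conv(P_2)\neq\emptyset$ forces either (i) a pair of crossing segments $\overline{uv}$, $\overline{xy}$ with $u,v\in P_1$ and $x,y\in P_2$, or (ii) one hull contained in the other; it then checks that each of these is detectable from orientation triples (segment crossing via opposite signs of $(u,v,x)$ and $(u,v,y)$ together with opposite signs of $(x,y,u)$ and $(x,y,v)$; containment via the point-in-polygon test). This is essentially the colored Radon/Carath\'eodory direction. You instead use the separation direction: disjoint hulls admit a separating line, which you normalize to pass through two points of $P_1\cup P_2$ so that the side constraints become pure sign conditions on triples. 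Both arguments are standard and equally short once the geometry is set up; the paper's has slightly less case analysis at the boundary, while yours makes the invariance under $\sigma$ a one-line formality.

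One simplification for your write-up: the case $a=b$ that you flag is never needed. If $P_1,P_2$ are both non-empty (and $P_1\cap P_2=\emptyset$, which you should state explicitly as the trivial case you are excluding, since then $\sigma$ carries a common point to a common point), then your translate-then-rotate procedure always terminates at a line through two \emph{distinct} points of $P_1\cup P_2$: starting from a strict separator touching $\conv(P_1)$ at a vertex $a$, rotate about $a$ toward $\conv(P_2)$ until the line first meets either another vertex $a'\in P_1$ (giving an edge of $\conv(P_1)$ that still strictly separates $P_2$) or a vertex $b\in P_2$. General position then guarantees no third point of $P$ lies on the line $ab$, so the conditions on $P_1\setminus\{a,b\}$ and $P_2\setminus\{a,b\}$ are pure sign conditions and transport across $\sigma$ immediately. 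This disposes of the ``generic line through a single point'' worry, whose direction would indeed not be encoded by the order type.
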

\begin{proof}
Note that we can assume $P_1$ and $P_2$, hence $Q_1$ and $Q_2$, are disjoint, since otherwise both intersections are clearly non-empty.  Since the sets are in general position, $\conv(P_1)$ and $\conv(P_2)$ intersect non-trivially if and only if their interiors intersect.  If their interiors intersect, there are two possibilities, up to swapping $P_1$ and $P_2$:
\begin{enumerate}[(i)]
    \item there exist points $u,v \in P_1$ and $x,y \in P_2$ such that the line segment $\overline{uv}$ transversally intersects the line segment $\overline{xy}$, i.e, the segments intersect at an interior point, or
    \item $\conv(P_1)$ is contained in the interior of $\conv(P_2)$.
\end{enumerate}

We can detect the first condition by looking at the orientations of certain triples.  The line segments $\overline{uv}$ and $\overline{xy}$ intersect transversally if and only if $(u,v,x)$ and $(u,v,y)$ have opposite orientations, and additionally $(x,y,u)$ and $(x,y,v)$ have opposite orientations.  

The second condition can also be detected by the order type.  Recall that a set of points in $\R^2$ is in convex position, i.e., form the vertices of a convex polygon, if and only if every set of four points is in convex position (this follows directly from Caratheodory's theorem; see, e.g., \cite{Mat}).  The latter condition is equivalent to saying that every four points define precisely one pair of intersecting line segments, which can be detected by the order type as described in the previous paragraph.  Moreover, given a set of points $(p_1,\dots,p_n)$ forming cyclically adjacent vertices of a convex polygon, hence in convex position, a point $p$ is contained in their convex hull if and only if the orientations of $(p_i,p_{i+1},p)$ are all the same, where the subscripts are taken modulo $n$.  Now, if $\conv(P_1)$ is contained in $\conv(P_2)$, then there is a subset $P_2' \subset P_2$ in convex position such that $\conv(P_1) \subseteq \conv(P_2')$.  Using the order type, we can then detect if each vertex of $P_1$ is contained in the convex hull of $P_2'$, hence the convex hull of $P_2$.  

Therefore, whether $\conv(P_1) \cap \conv(P_2)$ is empty is determined by the orientations of triples in $P_1 \cup P_2$.  That is, whether each such intersection is non-empty is determined by the order type of $P$.
\end{proof}

We now obtain our method for generating the desired affine Gale diagrams using the point set order types. 

\begin{thm}\label{thm: affine generation}
Every compact Coxeter $d$-polytope $P$ with $d+4$ facets admits an affine Gale diagram obtained by taking an arrangement of $d+4$ points $A \subseteq \R^2$ in general position and choosing two points from the interior of $\conv(A)$ to be positive.  Moreover, the combinatorial type of $P$ is completely determined by the order type of $X$.
\end{thm}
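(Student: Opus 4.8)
The plan is to assemble Theorem \ref{thm: affine generation} from the three structural properties already established for simple $d$-polytopes with $d+4$ facets together with Lemma \ref{lem: order type conv int} and the database of point set order types. First I would invoke the summary immediately preceding Subsection \ref{subsec: point set}: since a compact Coxeter $d$-polytope is in particular simple (Remark \ref{rem: simple Coxeter}), it admits an affine Gale diagram $A = A_+ \cup A_-$ with all points in general position, $|A_+| = 2$, and $A_+ \subseteq \operatorname{int}(\conv(A_-))$. Here one uses the result of Felikson--Tumarkin that for $k \geq 4$ every compact $d$-polytope with $d+k$ facets has at least one (indeed two) pair of non-intersecting facets (Theorem \ref{thm: FT one pair}), which is what licenses choosing exactly two positive points via Proposition \ref{prop: gen pos} and Proposition \ref{prop: affine pos int}. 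Setting $A = A_+ \cup A_-$ as an unordered arrangement of $d+4$ points in $\R^2$ in general position, property (iii) says precisely that the two chosen positive points lie in $\operatorname{int}(\conv(A))$ — note $\operatorname{int}(\conv(A)) = \operatorname{int}(\conv(A_-))$ once $A_+ \subseteq \operatorname{int}(\conv(A_-))$, since adding interior points does not change the convex hull. This proves the first sentence of the theorem.

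For the second sentence I would argue that the combinatorial type of $P$ depends only on the order type of the underlying point set $A$ (the ``$X$'' of the statement), not on the finer embedding data nor even on which two interior points were designated positive up to the combinatorial equivalence. The face structure of $P$ is encoded by the affine Gale diagram via the rule of Subsection \ref{subsec: gale diagrams}: a set of facets $\{f_i : i \in I\}$ meets non-trivially iff $\conv(A_+ \setminus \{f_i : i \in I\}) \cap \conv(A_- \setminus \{f_i : i \in I\}) \neq \emptyset$. By Lemma \ref{lem: order type conv int}, for a fixed bipartition $A = A_+ \sqcup A_-$ and any two subsets, the emptiness of the intersection of the two convex hulls is determined entirely by the order type of $A$. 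Hence, once the bipartition is fixed, the order type of $A$ determines every missing face and thus the combinatorial type. Since the theorem's phrasing allows us to recover $P$'s combinatorial type from the order type of $X = A$ by first recording which two points are the positive ones (data that is part of producing the affine Gale diagram), this gives the claim; I would state it carefully as: the order type of $A$ together with the (at most $\binom{d+4}{2}$-many) choices of which pair is positive determines the combinatorial type, and in the generation procedure of Appendix \ref{app: Gale diagrams} one simply ranges over all such choices.

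The main obstacle, and the place requiring the most care in the write-up, is the precise sense in which ``the order type of $X$ completely determines the combinatorial type'' — as literally stated it is slightly too strong, since the order type of the point set alone does not record the positive/negative labelling, and different labellings can give non-isomorphic polytopes. I would resolve this either by reading the theorem as ``there exists a choice of two positive interior points such that the resulting affine Gale diagram has combinatorial type equal to that of $P$, and this combinatorial type is a function of the order type and the chosen pair'' (which is what the generation algorithm actually uses), or by noting that in the enumeration one iterates over all $\binom{d+4}{2}$ pairs of interior points of each order type and keeps all realizable combinatorial types; the finiteness of the order-type database then makes the search finite. A secondary point to nail down is that the perturbation in Proposition \ref{prop: gen pos} and the projection step before Proposition \ref{prop: affine pos int} can indeed be performed so that $A$ lands in general position (no three collinear), which is exactly the hypothesis needed to apply Lemma \ref{lem: order type conv int} and to match the database's standing assumption; this is already handled by the cited propositions, so I would simply cite them.
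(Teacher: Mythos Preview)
Your proposal is correct and follows essentially the same route as the paper: the paper's proof is a one-line citation of Proposition~\ref{prop: gen pos}, Proposition~\ref{prop: affine pos int}, and Lemma~\ref{lem: order type conv int}, and you have simply unpacked how these combine (together with Remark~\ref{rem: simple Coxeter} and Theorem~\ref{thm: FT one pair}, which the paper invokes in the paragraph preceding Proposition~\ref{prop: affine pos int}). Your observation that the second sentence is, read literally, slightly too strong --- the order type of the point set alone does not record which two points are positive --- is a fair critique of the theorem's wording rather than a flaw in the argument; the paper's intended meaning, as the generation procedure makes clear, is exactly the reading you propose.
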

\begin{proof}
This follows directly from Proposition \ref{prop: gen pos}, Proposition \ref{prop: affine pos int}, and Lemma \ref{lem: order type conv int}.
\end{proof}

An example of such an affine Gale diagram and a polytope realising its combinatorial type are depicted in Figure \ref{fig: affine G6}.

\begin{figure}[ht]
\begin{center}
\includegraphics[width=.7\linewidth]{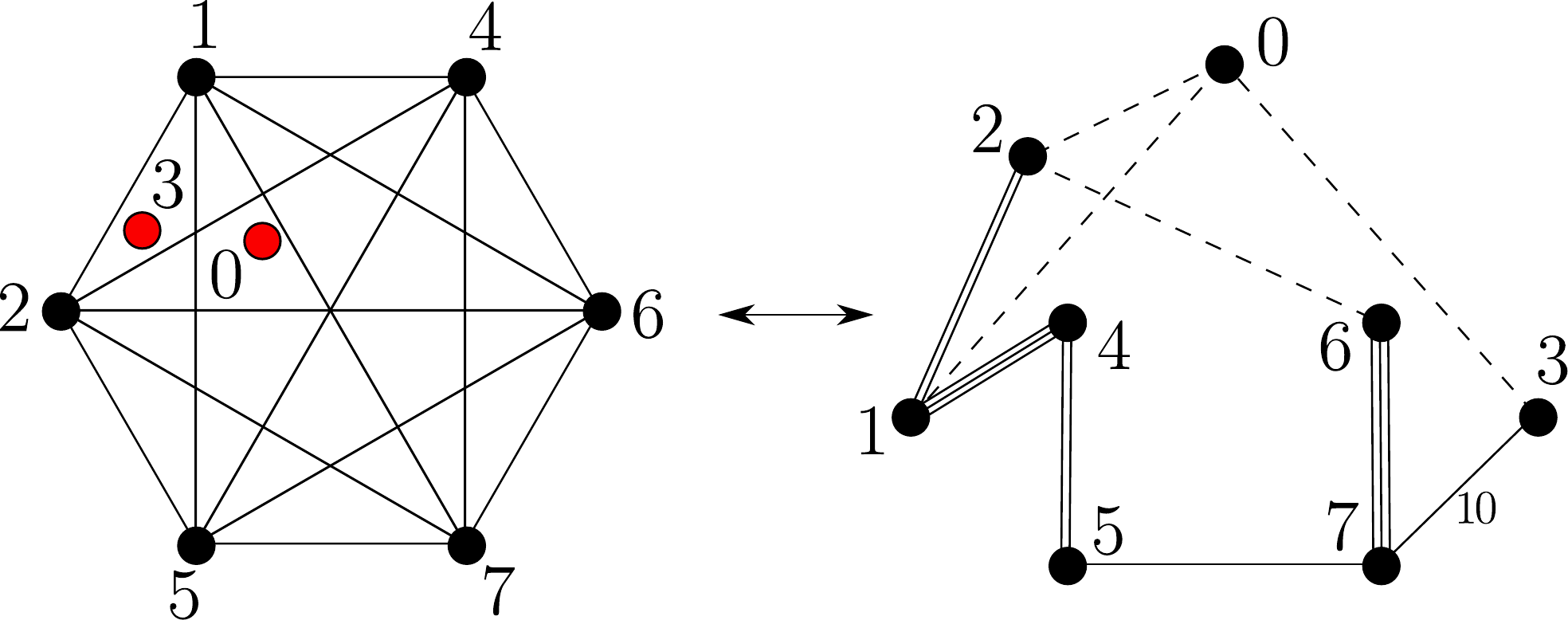}
\caption{On the left is an affine Gale diagram of combinatorial type $G_6$, with the positive points depicted in red and the negative points in black.  The line segments between negative points have been included to aid in identifying the convex hull of each subset.  On the right, we have a Coxeter diagram of a compact $4$-polytope with combinatorial type $G_6$ (see Appendix \ref{app: Gale diagrams} for a description of this type); the labelling of the vertices corresponding to the $8$ facets is preserved (see Section \ref{sec: Coxeter and Gram} for details of how to interpret Coxeter diagrams).}
\label{fig: affine G6}
\end{center}  
\end{figure}

Using the order type database for up to $10$ points \cite{AAK}, one can classify the combinatorial types of all polytopes with $d + 4$ facets in dimension at most $6$.  There are $3{,}315$ order types on $8$ points, which yield $34$ possible combinatorial types of simple $4$-polytopes with $8$ facets having at least one pair of disjoint facets, listed in Table \ref{tab: d+4 comb types}.  The classification in dimension $4$ was also completed by Gr{\" u}nbaum and Sreedharan \cite{GSr} via different methods, though they also considered polytopes with no pairs of disjoint facets.  There are $158{,}817$ order types on $9$ points, which yield $186$ possible combinatorial types of $5$-polytopes with $9$ facets; the $111$ combinatorial types with at least two pairs of disjoint facets are listed in Appendix \ref{app: Gale diagrams}.  Note that not all of these combinatorial types are realized; this is merely a short list of possible types from which we determine those that are realized as compact Coxeter polytopes.  

There are $14{,}309{,}547$ order types on $10$ facets; the significant jump in the number of order types, recalling that we must also iterate over all choices of two positive vertices for each order type, makes determining the possible combinatorial types for this class computationally challenging.  The author plans to handle this class in a later work.  However, we have been able to glean some partial information about $6$-polytopes with 10 facets; see Section \ref{sec: dim 6 polys} for more details.

\section{Coxeter Diagrams and Gram Matrices}\label{sec: Coxeter and Gram}
In this section, we define (abstract) Coxeter diagrams and Gram matrices.  We also illustrate several properties of these structures when associated to a compact hyperbolic Coxeter $d$-polytope with $d+4$ facets, which are used in our classification of these polytopes in the subsequent sections.

We begin by introducing the notion of an \emph{abstract Coxeter diagram}, a weighted graph which can, under certain conditions, describe the dihedral angles of a polytope.  

\begin{defn}
An \emph{abstract Coxeter diagram} is a finite simple graph (i.e., one-dimensional simplicial complex) with weighted edges.  The weights $w_{ij}$ must be positive and satisfy the following condition: if $w_{ij} < 1$, then $w_{ij} = \cos\left( \frac{\pi}{m_{ij}}\right)$ for some integer $m_{ij} \geq 3$. 
\end{defn}

We often denote abstract Coxeter diagrams by the letter $\Sigma$.  The order $|\Sigma|$ of the diagram $\Sigma$ is the number of vertices of $\Sigma$.  A \emph{subdiagram} $\Sigma'$ of $\Sigma$ is a vertex-induced subgraph of $\Sigma$, where the edge weights are preserved; this relation is written as $\Sigma' \subset \Sigma$.  For any two subdiagrams $\Sigma_1,\Sigma_2$ of $\Sigma$, we let $\langle \Sigma_1, \Sigma_2\rangle$ denote the subdiagram induced by the vertices contained in either $\Sigma_1$ or $\Sigma_2$.  When drawing abstract Coxeter diagrams, we adhere to the following conventions:
\begin{enumerate}[(i)]
    \item If $w_{ij} < 1$, hence $w_{ij} = \cos\left( \frac{\pi}{m_{ij}}\right)$ for some integer $m_{i,j} \geq 2$, then the corresponding edge is drawn as a straight line labelled by $m_{ij}$.  In the special cases where $m_{ij}$ is equal to $3$, $4$, or $5$, we draw the corresponding edge as an unlabelled single, double, or triple line, respectively. If $m_{ij} = 2$, we leave the edge empty.
    \item If $w_{ij} > 1$, the corresponding edge is drawn as a dashed line with label $w_{ij}$. 
    \item If $w_{ij} = 1$, the corresponding edge is drawn as an unlabelled bold line.  Since we are considering only compact polytopes, this case does not arise in our setting.
\end{enumerate}

An ordinary edge of weight $\cos\left( \frac{\pi}{m}\right)$ for some $m \geq 2$ is said to have \emph{multiplicity} $m-2$.  We refer to an ordinary edge of weight $ \cos\left( \frac{\pi}{m_{ij}}\right)$ for $2 \leq m_{ij} \leq 5$ as having \emph{low weight}, and an ordinary edge of weight $\cos\left( \frac{\pi}{m_{ij}}\right)$ for $m_{ij} \geq 6$, i.e., having multiplicity at least $4$, as being \emph{multi-multiple}.  While abstract Coxeter diagrams encode the information of the dihedral angles in a graph theoretic context, it is also often useful to treat these linear-algebraically.  We do so by considering a certain weighted adjacency matrix, known as a Gram matrix.

\begin{defn}
The \emph{Gram matrix} $M(\Sigma)$ of an abstract Coxeter diagram $\Sigma$ on $n$ vertices is an $n \times n$ matrix with entries $m_{ij}$ as prescribed:
$$m_{ij} = \begin{cases} 1 & \text{ if } i = j\,,\\
                        -w_{ij} & \text{ if vertices } i,j \in \Sigma \text{ are connected by an edge of weight $w_{ij}$}\,,\\
                        0 & \text{ if vertices } i,j \in \Sigma \text{ are not adjacent}\,.
                        \end{cases}$$
\end{defn}

Now that we have introduced the notions of Coxeter diagrams and Gram matrices abstractly, we describe their relation to Coxeter polytopes.  The \emph{Coxeter diagram} $\Sigma(P)$ of $P$ is a weighted graph with vertices corresponding to facets $\{f_i\}_{i \in I}$, where two vertices corresponding to $f_i$ and $f_j$ are connected by

\begin{itemize}
\item $\text{an ordinary edge of weight $\cos(\frac{\pi}{k})$ if $f_i$ and $f_j$ meet at angle $\frac{\pi}{k}$;}$
\item $\text{a dashed edge of weight $\cosh(\rho)$ if $f_i$ and $f_j$ diverge and are at distance $\rho$ apart;}$
\item $\text{a bold edge of weight $1$ if $f_i$ and $f_j$ are parallel.}$
\end{itemize}

The \emph{Gram matrix} $M(P)$ of a Coxeter polytope $P$ is the Gram matrix of the corresponding Coxeter diagram, i.e., $M(\Sigma(P))$.  Note that $M(P)$ coincides with the Gram matrix of unit normal vectors to the facets of $P$.

Recall that the \emph{signature} of a real symmetric matrix is a triple $(n_+, n_-,n_0)$ of non-negative integers where $n_+$ is the number of positive eigenvalues (the \emph{positive inertia index}), $n_-$ is the number of negative eigenvalues (the \emph{negative inertia index}), and $n_0$ is the multiplicity of $0$ as an eigenvalue, i.e., the nullity.  
Thus, a real symmetric matrix is \emph{positive definite} if it has signature $(n,0,0)$.  A matrix is called \emph{indecomposable} if it cannot be transformed into a block-diagonal matrix via simultaneous permutations of columns and rows. 

An abstract Coxeter diagram $\Sigma$ is called 
\begin{itemize}
    \item \emph{elliptic} if $M(\Sigma)$ is positive definite (the connected elliptic diagrams are listed in Figure \ref{fig: ell diagrams});
    \item \emph{parabolic} if any indecomposable component of $M(\Sigma)$ is singular, and every proper subdiagram of an indecomposable component is elliptic;
    \item \emph{Lann{\'e}r} if $\Sigma$ is connected and is neither elliptic nor parabolic, and any proper subdiagram of $\Sigma$ is elliptic (all Lann{\'e}r diagrams are listed in Figure \ref{fig: Lanner diagrams});
    \item \emph{hyperbolic} if $M(\Sigma)$ has negative inertia index $n_- = 1$;
    \item \emph{superhyperbolic} if $M(\Sigma)$ has negative inertia index $n_- > 1$;
    \item \emph{admissible}, following the terminology from \cite{Tum}, if $M(\Sigma)$ is not superhyperbolic and contains no parabolic subdiagrams.
\end{itemize}

\begin{figure}[ht]
\begin{center}
\includegraphics[width=.95\linewidth]{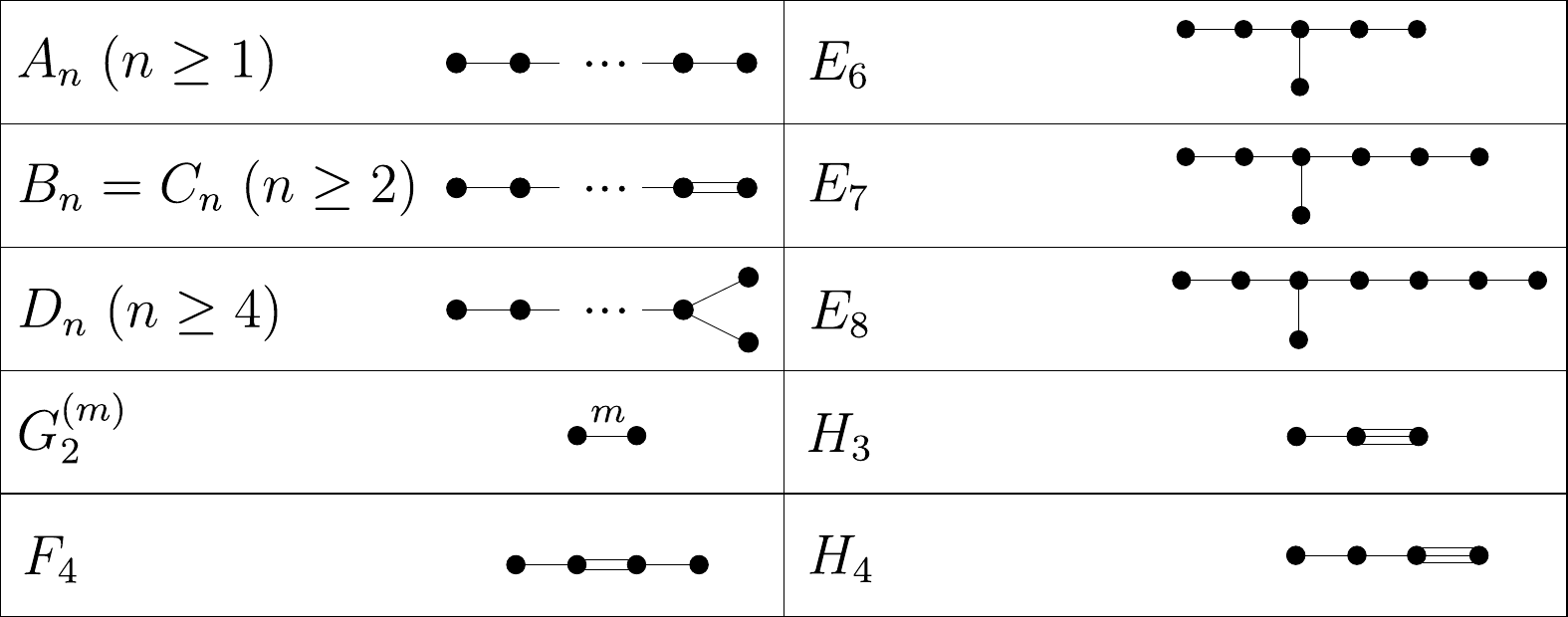}
\end{center}  
\label{fig: ell diagrams}
\caption{The list of all connected elliptic diagrams along with their type, where the subscript corresponds to the number of vertices.}
\end{figure}

\begin{figure}[ht]
\begin{center}
\includegraphics[width=.6\linewidth]{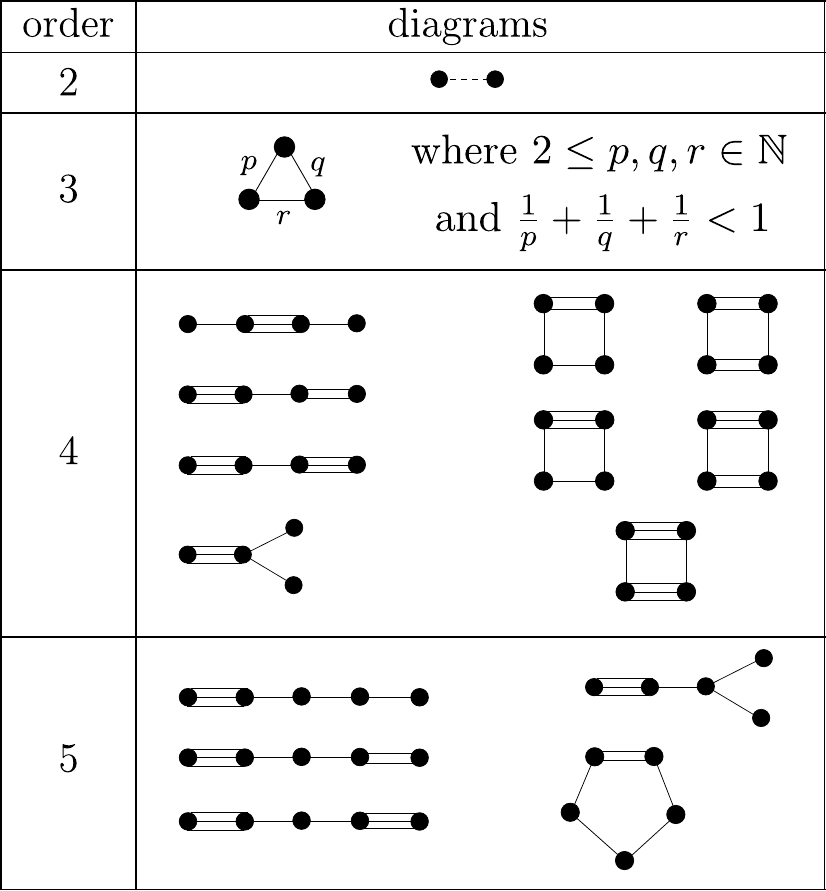}
\end{center}  
\caption{The list of all Lann{\'e}r diagrams, classified by Lann{\'e}r in \cite[Table 3]{Lan}.}
\label{fig: Lanner diagrams}
\end{figure}

\begin{rem}\label{rem: Vinberg conditions}
In \cite{Vin}, Vinberg shows that if $\Sigma = \Sigma(P)$ is the Coxeter diagram of a compact hyperbolic $d$-polytope $P$, then $\Sigma$ is an admissible connected hyperbolic diagram with positive inertia index $d$.  Note that this implies that $\Sigma$ does not contain any parabolic subdiagrams nor bold edges.
\end{rem}    

We can see from the above remark that  $M(\Sigma)$ has rank $d+1$, so any principal minor of order $d+2$ or larger is singular.  Though we often omit the weights of dashed edges when drawing Coxeter diagrams, these can be determined from the remaining weights by applying this rank condition.  The face structure of $\Sigma(P)$ can be determined by the following proposition.

\begin{prop}[\protect{\hspace{1sp}\cite[Proposition 3.1]{Vin}}]\label{prop: Vin 3.1}
Let $P = \bigcap_{i\in I}f^-_i \subset \Hy^d$ be an acute-angled polytope and $M$ its Gram matrix.  Let $J \subset I$ be a subset such that the induced submatrix $M_J$ is positive semidefinite.  Then $P^J = P \cap \left(\bigcap_{i \in J}f_i\right)$ is a face of $P$ if and only if $M_J$ is positive definite.  If so, $P^J$ is a face of codimension $|J|$.
\end{prop}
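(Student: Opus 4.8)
The plan is to work in the hyperboloid model of $\Hy^d$ inside Minkowski space $\R^{d,1}$: represent the facet $f_i$ by its outward spacelike unit normal $e_i$ (so $\langle e_i,e_i\rangle=1$ and $f_i^-=\{\,x:\langle x,e_i\rangle\le 0\,\}$), recall that $M=(\langle e_i,e_j\rangle)_{i,j\in I}$, and note that the acute-angled hypothesis is exactly $\langle e_i,e_j\rangle\le 0$ for $i\ne j$. For $J\subseteq I$ put $E_J=\mathrm{span}\{e_i:i\in J\}$, so that $M_J$ is the Gram matrix of the form restricted to $E_J$; fix also a point $y$ in the interior of $P$, characterised by $\langle y,e_i\rangle<0$ for all $i$. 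Since $P^J=\Hy^d\cap\{\,x:\langle x,e_i\rangle=0\ (i\in J),\ \langle x,e_j\rangle\le 0\ (j\notin J)\,\}$, the task is to decide when this set is non-empty and, if so, to compute its dimension.

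First I would treat the case where $M_J$ is positive definite. Then $E_J$ is a Euclidean subspace of dimension $|J|$, its orthogonal complement has signature $(d-|J|,1)$, and so $L_J:=\Hy^d\cap E_J^\perp=\bigcap_{i\in J}f_i$ is a totally geodesic copy of $\Hy^{d-|J|}$, with $P^J=L_J\cap\bigcap_{j\notin J}f_j^-$. To show $P^J\ne\emptyset$ and is full-dimensional in $L_J$, I would push $y$ onto $L_J$ by orthogonal projection. Writing the projection of $y$ onto $E_J$ as $\pi(y)=\sum_{i\in J}a_ie_i$, the coefficient vector is $a=M_J^{-1}b$ with $b=(\langle y,e_i\rangle)_{i\in J}$, whose entries are strictly negative. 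The crucial point is that $M_J$ is a Stieltjes matrix---symmetric positive definite with non-positive off-diagonal entries---so $M_J^{-1}$ is entrywise non-negative with positive diagonal; hence $a$ has strictly negative entries. Then $y':=y-\pi(y)\in E_J^\perp$ is timelike, with norm square $-1-\langle\pi(y),\pi(y)\rangle\le-1$, and lies on the same sheet as $y$, so after a positive rescaling it lies in $L_J$; and for each $j\notin J$,
\[
  \langle y',e_j\rangle=\langle y,e_j\rangle-\sum_{i\in J}a_i\langle e_i,e_j\rangle<0,
\]
since $\langle y,e_j\rangle<0$ while every term $a_i\langle e_i,e_j\rangle$ is a product of a negative and a non-positive number. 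Thus $y'$ lies in the relative interior of $P^J$ inside $L_J$, so $P^J$ is a face of dimension $\dim L_J=d-|J|$, i.e.\ of codimension $|J|$.

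Next I would handle the case where $M_J$ is positive semidefinite but singular and show $P^J=\emptyset$. A nonzero kernel vector $c$ of $M_J$ yields $v=\sum_{i\in J}c_ie_i\in E_J$ with $\langle v,e_k\rangle=(M_Jc)_k=0$ for all $k\in J$, hence $v$ is orthogonal to $E_J$ and $\langle v,v\rangle=0$. A short argument with the sign pattern of $M_J$ and the point $y$ shows that $v$ may be taken nonzero: this is automatic if the $e_i$ ($i\in J$) are independent, and otherwise one splits a vanishing combination into its positive and negative parts, observes that the resulting vector of $E_J$ has norm $\le 0$ by acuteness and $\ge 0$ by positive semidefiniteness---so it is null---and uses $\langle y,\cdot\rangle$ to rule out the remaining degenerate subcase. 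So $v$ is a nonzero null vector. But any $x\in P^J\subseteq\Hy^d$ would satisfy $\langle x,v\rangle=\sum_{i\in J}c_i\langle x,e_i\rangle=0$, which is impossible because in Lorentzian signature a nonzero null vector is never orthogonal to a timelike vector. Hence $P^J=\emptyset$, and in particular not a face.

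I expect the main obstacle to be the non-emptiness of $P^J$ in the positive-definite case: this is precisely where acuteness is used essentially---the analogous assertion fails for general polytopes (already in the Euclidean plane a regular pentagon violates it for a pair of non-adjacent edges)---and the device that rescues it is the Stieltjes property $M_J^{-1}\ge 0$, which keeps the projected interior point on the correct side of every remaining facet. Everything else is routine bookkeeping with the Lorentz form and with the translation between positive definiteness, the signature of $E_J^\perp$, and the dimension of the corresponding totally geodesic subspace.
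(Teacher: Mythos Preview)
Your argument is correct. The paper does not supply a proof of this proposition at all; it is quoted verbatim as \cite[Proposition~3.1]{Vin} and used as a black box. What you have written is essentially Vinberg's original argument: work in the hyperboloid model, identify $M_J$ with the Gram matrix of the spacelike normals $\{e_i\}_{i\in J}$, and split into the two cases according to whether the restriction of the form to $E_J=\mathrm{span}\{e_i:i\in J\}$ is definite or merely semidefinite. The decisive step in the definite case---projecting an interior point of $P$ orthogonally onto $E_J^\perp$ and checking that it stays on the correct side of every remaining facet---is exactly where the acute-angle hypothesis enters, via the non-negativity of $M_J^{-1}$ for a symmetric positive definite matrix with non-positive off-diagonal entries; this is the same mechanism Vinberg uses. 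Your treatment of the singular case is also sound: the only slightly delicate point is producing a \emph{nonzero} isotropic vector in $E_J$, and your split of a linear dependence into positive and negative parts, combined with pairing against the interior point $y$ to force both parts to be nontrivial, handles it cleanly (indeed $\langle y,w\rangle=\sum_{c_i>0}c_i\langle y,e_i\rangle<0$ already gives $w\neq 0$).

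One cosmetic remark: you might state explicitly that for a convex polytope the intersection $P\cap\bigcap_{i\in J}f_i$ is automatically a face whenever it is non-empty, so that ``$P^J$ is a face'' really does reduce to ``$P^J\neq\emptyset$'' together with the dimension count. Otherwise there is nothing to add.
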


Thus, the faces of codimension $m$ in $P$ correspond to the elliptic subdiagrams of order $m$ in $\Sigma(P)$, and hence the missing faces of $P$ correspond to the Lann{\'e}r subdiagrams of $\Sigma(P)$.  In particular, a compact hyperbolic polytope has no missing faces of order greater than $5$.  Thus, using the data of the positive semidefinite submatrices of a Gram matrix, one can determine the combinatorial type of the corresponding polytope. 

\begin{rem}\label{rem: simple Coxeter}
Note that Proposition \ref{prop: Vin 3.1} implies that every compact hyperbolic Coxeter $d$-polytope is simple, i.e., every $(d-j)$-face is contained in precisely $j$ facets.
\end{rem}

\subsection{Polytope faces and Coxeter subdiagrams}

Given a face of a compact Coxeter polytope, we discuss restrictions on the combinatorial structure of the face and, if the face is a Coxeter polytope, the structure of its Coxeter diagram.  Let $P$ be a compact Coxeter $d$-polytope with Coxeter diagram $\Sigma = \Sigma(P)$, and $S_0$ an elliptic subdiagram of order $m$.  By Proposition \ref{prop: Vin 3.1}, $S_0$ corresponds to a face of codimension $m$ in $P$; denote this face by $P(S_0)$.  While $P(S_0)$ is necessarily an acute-angled polytope, it may not be Coxeter in certain cases.  Borcherds provided the following sufficient condition for $P(S_0)$ to be Coxeter.

\begin{prop}[\protect{\hspace{1sp}\cite[Example 5.6]{Bor}}]\label{prop: Coxeter face}
Suppose $P$ is a Coxeter polytope with diagram $\Sigma$, and $S_0 \subset \Sigma$ is an elliptic subdiagram containing no connected component of type $A_n$ or $D_5$. Then $P(S_0)$ is a Coxeter polytope.
\end{prop}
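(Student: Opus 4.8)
The plan is to reduce the statement to a finite check over the list of connected elliptic diagrams, driven by a Schur-complement computation of dihedral angles. Write $V=\langle e_i:i\in S_0\rangle\subseteq\R^{d+1}$ for the span of the unit normals of the facets indexed by $S_0$; since $S_0$ is elliptic, $M_{S_0}$ is positive definite, so $V$ is a positive definite subspace and orthogonal projection onto $V^{\perp}$ is well defined. The totally geodesic subspace $L=\bigcap_{i\in S_0}f_i$ is a copy of $\Hy^{d-|S_0|}$, and $P(S_0)=P\cap L$ is bounded inside $L$ by the hyperplanes $f_j\cap L$ for those $j\notin S_0$ with $S_0\cup\{j\}$ elliptic; the outward normal of $f_j\cap L$ inside $L$ is $n_j$, the component of $e_j$ orthogonal to $V$. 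A face of an acute-angled polytope is again acute-angled, so $P(S_0)$ is acute-angled, and hence it is a Coxeter polytope if and only if every pair of its facets that share a codimension-$2$ face meets at an angle $\pi/k$ with $k\in\Z_{\ge2}$. By the Schur-complement identity the $2\times2$ Gram matrix of $(n_j,n_{j'})$ is the Schur complement of $M_{S_0}$ in $M_{S_0\cup\{j,j'\}}$; it is positive definite (equivalently $f_j\cap L$ and $f_{j'}\cap L$ meet inside $L$) precisely when $M_{S_0\cup\{j,j'\}}$ is positive definite. So the pairs to be checked are exactly those $j,j'\notin S_0$ for which $S_0\cup\{j\}$, $S_0\cup\{j'\}$ and $S_0\cup\{j,j'\}$ are all elliptic.

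For such a pair, a short computation with the projection gives $|n_j|^{2}=\det M_{S_0\cup\{j\}}/\det M_{S_0}$, $\langle n_j,n_{j'}\rangle=\langle e_j,e_{j'}\rangle-b_j^{\top}M_{S_0}^{-1}b_{j'}$ with $b_j=(\langle e_j,e_i\rangle)_{i\in S_0}$, and $|n_j|^{2}|n_{j'}|^{2}\sin^{2}\theta=\det M_{S_0\cup\{j,j'\}}/\det M_{S_0}$, where $\theta$ is the dihedral angle. In particular $\theta$ depends only on the subdiagram induced on $S_0\cup\{j,j'\}$, and in fact only on $U:=\big(\text{union of the components of }S_0\text{ adjacent in }\Sigma\text{ to }j\text{ or to }j'\big)\cup\{j,j'\}$, since $e_j,e_{j'}$ are orthogonal to every component of $S_0$ not adjacent to either. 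If $j$ and $j'$ lie in different connected components of $U$, then $\langle e_j,e_{j'}\rangle=0$ and the two projections lie in orthogonal subspaces, so $\theta=\pi/2$. Otherwise $U$ is a connected elliptic diagram, and $U\setminus\{j,j'\}$ is a disjoint union of complete connected components of $S_0$, each adjacent to $j$ or $j'$; by hypothesis none of these components is of type $A_n$ or $D_5$.

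The heart of the argument is then the resulting finite case analysis over the connected elliptic diagrams $U$ (the diagrams in the classification above): for each $U$ one lists all pairs $\{j,j'\}$ of vertices such that $U\setminus\{j,j'\}$ is a disjoint union of diagrams none of which is an $A_n$ or a $D_5$, and evaluates $\theta$ for each surviving configuration using the formulas above together with $\det B_n=2^{1-n}$, $\det D_n=2^{2-n}$, $\det E_n=(9-n)2^{-n}$, $\det H_2=\sin^{2}(\pi/5)$, and the like. The surviving configurations are few: $U=I_2(m)$ with $U\cap S_0=\emptyset$ (here $\theta=\pi/m$ is unchanged); $U\in\{B_n\ (n\ge4),\,D_n\ (n=6\text{ or }n\ge8),\,F_4\}$ with $\{j,j'\}$ the two vertices at the simply-laced end, giving $\theta=\pi/4$; $U=H_4$, giving $\theta=\pi/10$ (one uses $\tfrac{5+\sqrt5}{8}=\cos^{2}(\pi/10)$); $U=E_6$, giving $\theta=\pi/3$; and $U=E_8$, giving $\theta=\pi/6$ or $\theta=\pi/4$. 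In every case $\theta$ is a submultiple of $\pi$, whereas the configurations producing a non-submultiple angle — for instance deleting the two endpoints of an $A_4$, or the two vertices of an $E_7$ that leave a $D_5$ — are precisely those excluded by the hypothesis on $S_0$. This shows that adjacent facets of $P(S_0)$ always meet at an angle $\pi/k$, i.e.\ $P(S_0)$ is a Coxeter polytope. I expect the case analysis of the last paragraph to be the main obstacle: one must be genuinely exhaustive over all connected elliptic $U$ (the two infinite families $B_n,D_n$ being dispatched by a single determinant identity), and in each sporadic case verify that the algebraic number $\cos^{2}\theta$ output by the Schur complement really is of the form $\cos^{2}(\pi/k)$.
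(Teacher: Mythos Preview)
The paper does not supply its own proof of this proposition; it is quoted directly from Borcherds \cite[Example~5.6]{Bor}, with the detailed angle computations underlying the subsequent corollary attributed to Allcock \cite[Theorem~2.2]{All}. Your argument is precisely the one those sources give: compute the Gram matrix of the face via the Schur complement of $M_{S_0}$, localise to the connected elliptic subdiagram $U$ spanned by $j,j'$ together with the components of $S_0$ adjacent to them, and then run through the finite list of possible $U$. Your enumeration of the surviving configurations is on target --- you correctly note that $E_7$ yields only excluded residues, and that forbidding $A_n$ and $D_5$ components is exactly what kills the non-Coxeter angles --- so the only work remaining is the careful bookkeeping you already flag. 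Your proposal is correct and coincides with the argument the paper cites.
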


We now examine what can be said about the Coxeter diagram of $P(S_0)$ for $S_0$ satisfying the conditions of Proposition \ref{prop: Coxeter face}.  Supposing that $P(S_0)$ is indeed Coxeter, denote its Coxeter diagram by $\Sigma_{S_0}$. The following terminology was introduced in \cite[Section 1]{FT}, which contains further details of the computation of the dihedral angles in $P(S_0)$.  A vertex of $\Sigma$ \emph{attaches} to $S_0$ if it is joined with any vertex of $S_0$ by an edge of any type, in which case $w$ is called a \emph{neighbour} of $S_0$.  Then $w$ is called a \emph{good neighbour} if $\langle S_0, w\rangle$ is elliptic, and \emph{bad} otherwise.  Note that in the latter case $\langle S_0, w \rangle$ must contain a Lann{\'e}r diagram. 

Let $\overline{S_0}$ denote the subdiagram of $\Sigma$ induced by the vertices corresponding to facets of $P(S_0)$, i.e., the good neighbours of $S_0$ along with all vertices not attached to $S_0$.  We now use the following results of Felikson and Tumarkin, based on the analysis of Allcock \cite[Theorem 2.2]{All}, to describe the possible differences between the diagrams $\Sigma_{S_0}$ and $\overline{S_0}$.  A \emph{simple edge} refers to an ordinary edge of weight $\cos\left(\frac{\pi}{3}\right)$, denoted by an ordinary unlabelled edge of multiplicity $1$ in a Coxeter diagram.  

\begin{prop}[\protect{\hspace{1sp}\cite[Corollary 1.1]{FT}}]
Under the hypotheses of Proposition \ref{prop: Coxeter face},
\begin{enumerate}[(a)]
    \item If $S_0$ is of the type $H_4$, $F_4$, or $G_2^{(m)}$ for $m \geq 6$, or any other diagram having no good neighbours, then $\overline{S_0} = \Sigma_{S_0}$.
    \item If $S_0$ is of the type $H_3$, then $\overline{S_0}$ may be obtained by replacing some dashed edges by ordinary edges. 
    \item If $S_0$ is of the type $G_2^{(5)}$, then $\overline{S_0}$ may be obtained from $\Sigma_{S_0}$ by replacing some edges labelled by $10$ by simple edges, and some dashed edges by ordinary edges.
    \item If $S_0$ is of the type $B_n$ for $n \geq 3$, then $\overline{S_0}$ may be obtained from $\Sigma_{S_0}$ by replacing some double edges by simple edges, and some dashed edges by ordinary edges.
    \item If $S_0$ is of the type $B_2 = G_2^{(4)}$, then $\overline{S_0}$ may be obtained from $\Sigma_{S_0}$ by replacing some double edges by simple edges, and some dashed edges by ordinary or empty edges.
\end{enumerate}
\end{prop}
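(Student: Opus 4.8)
The plan is to reduce the comparison of $\Sigma_{S_0}$ with $\overline{S_0}$ to a local linear-algebra computation attached to pairs of facets of $P(S_0)$, and then to run a finite case analysis over the connected elliptic types of $S_0$ in Figure~\ref{fig: ell diagrams}. First I would note that $\Sigma_{S_0}$ and $\overline{S_0}$ have the \emph{same} underlying graph: by Proposition~\ref{prop: Vin 3.1} the facets of $P(S_0)$ are indexed precisely by the good neighbours of $S_0$ together with the vertices of $\Sigma$ not attached to $S_0$, and these are exactly the vertices of both $\Sigma_{S_0}$ and $\overline{S_0}$, so only the edge weights can differ and the task is to bound how much.

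Next I would set up the projection. Write $\{e_i\}$ for the unit outward normals to the facets of $P$, so that $M(P) = (\langle e_i, e_j\rangle)$. Since $S_0$ is elliptic, the span $V$ of $\{e_s : s \in S_0\}$ is positive definite, giving an orthogonal splitting $\R^{d,1} = V \oplus V^{\perp}$ with $V^{\perp}$ Lorentzian; the hyperbolic subspace carrying $P(S_0)$ lies in $V^{\perp}$, and the outward normal to the facet of $P(S_0)$ coming from $f_v$ is the normalisation of the orthogonal projection $\pi(e_v)$ of $e_v$ onto $V^{\perp}$. Putting $\mathbf{b}_v = (\langle e_v, e_s\rangle)_{s\in S_0}$ and $G = M(S_0)$, a Schur-complement computation gives $\langle \pi(e_v), \pi(e_w)\rangle = \langle e_v, e_w\rangle - \mathbf{b}_v^{\top}G^{-1}\mathbf{b}_w$ and $\|\pi(e_v)\|^2 = 1 - \mathbf{b}_v^{\top}G^{-1}\mathbf{b}_v$, so the $vw$-entry of $M(P(S_0))$, hence the weight of the edge $vw$ in $\Sigma_{S_0}$, depends only on the weighted subdiagram $\langle S_0, v, w\rangle$. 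As $G$ is a positive definite symmetric matrix with nonpositive off-diagonal entries (a Stieltjes matrix), $G^{-1}$ is entrywise nonnegative; since $\mathbf{b}_v$ has nonpositive entries, $\mathbf{b}_v^{\top}G^{-1}\mathbf{b}_w \geq 0$ and $\|\pi(e_v)\| \leq 1$, with equality exactly when $v$ is unattached to $S_0$. Hence every edge weight of $\Sigma_{S_0}$ is at least the corresponding weight of $\overline{S_0}$, and a weight is unchanged unless $v$ or $w$ is a good neighbour of $S_0$; this already gives case~(a), because $H_4$, $F_4$, $G_2^{(m)}$ for $m\ge 6$ — and, by hypothesis, any other $S_0$ with no good neighbours — admit no one-vertex elliptic extension, so no facet of $P(S_0)$ is attached to $S_0$.

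Finally, for the remaining types $H_3$, $G_2^{(5)}$, $B_n$ ($n\ge 3$), and $B_2 = G_2^{(4)}$ I would enumerate, from Figure~\ref{fig: ell diagrams}, the short list of ways a single vertex can be a good neighbour (e.g.\ for $H_3$ the only option is a simple edge yielding $H_4$) together with the resulting vectors $\mathbf{b}_v$, determine how many of $v,w$ can simultaneously be good neighbours (only in the $B_2$ case can both be, forcing $\langle S_0,v,w\rangle = F_4$), and then evaluate the formulas above on this finite list against each admissible weight of the edge $vw$ in $\Sigma$ (an ordinary edge has weight in $\{0\}\cup[\tfrac12,1)$). I expect the main obstacle to be exactly this last computation: one must check that the enumeration of attachments is exhaustive and, more delicately, that the perturbed weights land \emph{precisely} on the admissible Coxeter values — $\cos(\tfrac{\pi}{10})$ in (c), $\cos(\tfrac{\pi}{4})$ in (d) and (e) — rather than on some nearby inadmissible value, these exact identities coming out of rank-one updates of the small inverse matrices $G^{-1}$ and reflecting the folding relations $H_4 \leftrightarrow H_3$, $F_4 \leftrightarrow B_3$, and so on. Organising this finite verification is the content of \cite[Theorem 2.2]{All}, so in the body of the paper the statement is obtained simply by citing \cite[Corollary 1.1]{FT}.
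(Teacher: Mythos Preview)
Your proposal is appropriate: the paper gives no proof of this proposition at all --- it is simply quoted as \cite[Corollary~1.1]{FT}, with no argument supplied --- and you correctly flag this in your final sentence. Your Schur-complement sketch of how a proof would proceed (projecting normals onto $V^\perp$, using nonnegativity of $G^{-1}$ to get monotonicity of weights, then a finite case check over good-neighbour attachments) is a faithful outline of the argument behind Allcock's \cite[Theorem~2.2]{All} and the Felikson--Tumarkin corollary, so there is nothing to compare beyond noting that the paper defers entirely to the citation while you have filled in the shape of the underlying computation.
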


We make frequent use of these restrictions in the second portion of this paper, when we bound the dimension of polytopes with few facets.

\subsection{Local determinants}
A technical tool that can help to identify superhyperbolic subdiagrams is the local determinant, for which the theory was developed in \cite{Vin2}.  Let $\Sigma$ be an abstract Coxeter diagram, and let $T$ be a subdiagram of $\Sigma$ such that $\det(\Sigma \cut T)\neq 0$.
The \emph{local determinant} of $\Sigma$ on the subdiagram $T$ is given by 
$$\det(\Sigma,T) = \frac{\det(\Sigma)}{\det(\Sigma \cut T)}\,.$$  
When $\Sigma$ is composed of two subdiagrams joined in a simple way, we have the following two results to simplify the calculation of the local determinant.

\begin{prop}[\protect{\hspace{1sp}\cite[Proposition 12]{Vin2}}]\label{prop: local vertex}
If a Coxeter diagram $\Sigma$ consists of two subdiagrams $\Sigma_1$ and $\Sigma_2$ having a unique vertex $v$ in common, and if no vertex of $\Sigma_1$ attaches to $\Sigma_2$, then
$$\det(\Sigma,v) = \det(\Sigma_1,v) + \det(\Sigma_2,v) - 1\,.$$
\end{prop}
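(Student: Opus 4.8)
The plan is to argue directly from the block structure of the Gram matrix $M = M(\Sigma)$. Write $V_1, V_2$ for the vertex sets of $\Sigma_1, \Sigma_2$, so $V_1 \cap V_2 = \{v\}$, and order the vertices of $\Sigma$ as $V_1 \cut \{v\}$, then $v$, then $V_2 \cut \{v\}$. The hypothesis that no vertex of $\Sigma_1$ attaches to $\Sigma_2$ means precisely that no vertex of $V_1 \cut \{v\}$ is joined by an edge to any vertex of $V_2 \cut \{v\}$, so $M$ has the form
$$M = \begin{pmatrix} A & b & 0 \\ b^{\top} & 1 & c^{\top} \\ 0 & c & C \end{pmatrix},$$
where $A = M(\Sigma_1 \cut v)$, $C = M(\Sigma_2 \cut v)$, and $b, c$ record the (negated) weights of the edges from $v$ into $\Sigma_1$ and $\Sigma_2$. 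Since $\det(\Sigma \cut v) = \det A \cdot \det C$, the hypothesis $\det(\Sigma \cut v) \neq 0$ forces $\det A \neq 0$ and $\det C \neq 0$, so all three local determinants in the statement are defined, and it suffices to prove the polynomial identity $\det M = \det(\Sigma_1)\det C + \det(\Sigma_2)\det A - \det A \det C$ and then divide through by $\det A \det C = \det(\Sigma \cut v)$.

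To establish the identity, I would split the row of $M$ indexed by $v$ as $(b^{\top},\,1,\,c^{\top}) = (b^{\top},\,1,\,0^{\top}) + (0^{\top},\,0,\,c^{\top})$ and invoke multilinearity of the determinant in that row. In the first resulting matrix the columns indexed by $V_2 \cut \{v\}$ are supported only on the rows indexed by $V_2 \cut \{v\}$, so it is block lower triangular and its determinant is $\det\!\begin{pmatrix} A & b \\ b^{\top} & 1 \end{pmatrix} \cdot \det C = \det(\Sigma_1)\det C$. In the second matrix the columns indexed by $V_1 \cut \{v\}$ are supported only on the rows indexed by $V_1 \cut \{v\}$, so its determinant is $\det A \cdot \det\!\begin{pmatrix} 0 & c^{\top} \\ c & C \end{pmatrix}$.

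It remains to evaluate $\det\!\begin{pmatrix} 0 & c^{\top} \\ c & C \end{pmatrix}$, and here I would use that the determinant of a matrix is an affine function of any single diagonal entry: if $N(t)$ denotes $M(\Sigma_2)$ with its $(v,v)$-entry changed to $t$, then cofactor expansion along the row of $v$ gives $\det N(t) = t\det C + \det N(0)$, since the coefficient of $t$ is the minor obtained by deleting the row and column of $v$. Setting $t = 1$ yields $\det(\Sigma_2) = \det C + \det N(0)$, so $\det\!\begin{pmatrix} 0 & c^{\top} \\ c & C \end{pmatrix} = \det N(0) = \det(\Sigma_2) - \det C$. Substituting back gives $\det M = \det(\Sigma_1)\det C + \det A\big(\det(\Sigma_2) - \det C\big)$, which is the required identity; dividing by $\det A \det C$ completes the proof. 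I do not anticipate a genuine obstacle here — the argument is elementary linear algebra — and the only care needed is in bookkeeping the block sizes and the zero blocks, and in reading the non-attaching hypothesis as a statement about vertices other than the common vertex $v$.
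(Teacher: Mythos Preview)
Your argument is correct. The paper does not prove this proposition; it merely quotes it from Vinberg \cite[Proposition~12]{Vin2}, so there is no in-paper proof to compare against. Your block-matrix computation --- splitting the row of $v$ and reducing to block-triangular determinants plus the affine dependence of $\det N(t)$ on the $(v,v)$-entry --- is a clean, self-contained verification and is essentially the standard elementary argument.
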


\begin{prop}[\protect{\hspace{1sp}\cite[Proposition 13]{Vin2}}]\label{prop: local edge}
If a Coxeter diagram $\Sigma$ is spanned by two disjoint subdiagrams $\Sigma_1$ and $\Sigma_2$ joined by a unique edge $v_1v_2$ of weight $a$, then
$$\det(\Sigma,\langle v_1,v_2\rangle) = \det(\Sigma_1,v_1)\det(\Sigma_2,v_2) - a^2\,.$$
\end{prop}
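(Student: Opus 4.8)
The plan is to translate everything into a block computation with the Gram matrix $M = M(\Sigma)$. Write the vertices of $\Sigma_1$ as $\{v_1\}\cup A$ and those of $\Sigma_2$ as $\{v_2\}\cup B$, so that $A = \Sigma_1 \setminus v_1$ and $B = \Sigma_2 \setminus v_2$, with principal submatrices $M_A$ and $M_B$. Since the only edge joining $\Sigma_1$ to $\Sigma_2$ is $v_1v_2$, there are no edges between $A$ and $B$, none between $v_1$ and $B$, and none between $v_2$ and $A$; in particular the submatrix of $M$ on $A\cup B$ is block diagonal, $M_{A\cup B} = M_A \oplus M_B$, so $\det(\Sigma \setminus \langle v_1,v_2\rangle) = \det(M_A)\det(M_B)$. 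By the standing hypothesis in the definition of the local determinant this is nonzero, hence $\det(M_A)\neq 0$ and $\det(M_B)\neq 0$, which also ensures that $\det(\Sigma_1,v_1)=\det M(\Sigma_1)/\det(M_A)$ and $\det(\Sigma_2,v_2)=\det M(\Sigma_2)/\det(M_B)$ are well defined. The proposition is thus equivalent to the identity $\det(M) = \det M(\Sigma_1)\det M(\Sigma_2) - a^2\det(M_A)\det(M_B)$.

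To prove this identity, I would order the rows and columns of $M$ as $(v_1, v_2, A, B)$ and take the Schur complement of the invertible block $M_A\oplus M_B$. Because $v_1$ is joined to no vertex of $B$ and $v_2$ to no vertex of $A$, the relevant off-diagonal blocks split, and a direct computation gives the Schur complement as the $2\times 2$ matrix
$$\begin{pmatrix} 1 - c_1^{T}M_A^{-1}c_1 & -a \\ -a & 1 - c_2^{T}M_B^{-1}c_2 \end{pmatrix},$$
where $c_1$ records the edge weights from $v_1$ to $A$ and $c_2$ those from $v_2$ to $B$ (with the usual sign). Each diagonal entry is itself the Schur complement of $M_A$ (resp. $M_B$) inside $M(\Sigma_1) = \left(\begin{smallmatrix} 1 & c_1^{T} \\ c_1 & M_A \end{smallmatrix}\right)$ (resp. $M(\Sigma_2)$), so $1 - c_1^{T}M_A^{-1}c_1 = \det M(\Sigma_1)/\det(M_A) = \det(\Sigma_1,v_1)$ and likewise $1 - c_2^{T}M_B^{-1}c_2 = \det(\Sigma_2,v_2)$. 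Expanding the $2\times 2$ determinant and multiplying through by $\det(M_A)\det(M_B)$ gives $\det(M) = \det(M_A)\det(M_B)\bigl(\det(\Sigma_1,v_1)\det(\Sigma_2,v_2) - a^2\bigr)$; dividing by $\det(\Sigma \setminus \langle v_1,v_2\rangle) = \det(M_A)\det(M_B)$ then yields the claim.

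I do not expect a serious obstacle here: the whole argument is two applications of the elementary identity $\det\!\left(\begin{smallmatrix} 1 & b^{T} \\ b & N \end{smallmatrix}\right) = \det(N)\,(1 - b^{T}N^{-1}b)$ together with the block-diagonal structure of $M$ away from the edge $v_1v_2$. The only things requiring care are (i) tracking precisely which entries of $M$ vanish because $\Sigma$ is spanned by $\Sigma_1$ and $\Sigma_2$ joined only by $v_1v_2$, and (ii) checking that every local determinant occurring in the statement is legitimate, i.e. that $\det(M_A)$ and $\det(M_B)$ are nonzero — both immediate from the hypotheses. As an independent check on the signs one can instead expand $\det(M)$ directly as a polynomial in the symmetric entry $M_{v_1v_2}=M_{v_2v_1}=-a$: there is no linear term, since any permutation using exactly one of those two entries would have to route the cycle through $v_1$ out into $B$ and back, which is impossible because from $B$ one can only return to $v_2$ or stay within $B$; the constant term is $\det\bigl(M(\Sigma_1)\oplus M(\Sigma_2)\bigr) = \det M(\Sigma_1)\det M(\Sigma_2)$, and the coefficient of $a^2$ is $-\det(M_A)\det(M_B)$, coming from the unique transposition $(v_1\,v_2)$ that uses both entries. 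This recovers the same identity.
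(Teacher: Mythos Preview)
Your proof is correct. The paper does not actually supply its own proof of this proposition; it is quoted from Vinberg \cite{Vin2} and used as a black box, so there is no in-paper argument to compare against. Your Schur-complement computation is exactly the standard way to establish this identity and matches what one finds in Vinberg's original treatment: once one observes that the Gram matrix is block diagonal on $A\cup B$ and that the only off-block entry in the top $2\times 2$ corner is $-a$, the formula drops out. The alternative permutation-expansion check you sketch at the end is also valid and gives a pleasant sign-free verification.
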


In particular, under the hypotheses of Proposition \ref{prop: local edge}, if $M(\Sigma)$ is singular and $\Sigma \cut \{v_1,v_2\}$ is elliptic, then we must have $\det(\Sigma_1,v_1)\det(\Sigma_2,v_2) = a^2$.  We are particularly interested in the case when $\Sigma_1$ and $\Sigma_2$ are Lann{\' e}r triangles.  It is straightforward to check that the local determinant of a Lann{\' e}r triangle (which is necessarily negative) increases in magnitude as a function of its edge weights (see, e.g., \cite{Vin2}).

\section{\texorpdfstring{Properties of Compact Hyperbolic $d$-Polytopes with $d+4$ Facets}{Properties of Compact Hyperbolic d-Polytopes with d+4 Facets}}\label{sec: d+4 properties}

Before delving into the characterisation of compact hyperbolic Coxeter $d$-polytopes with $d+4$ facets, we first mention several restrictions on these polytopes that we refer to throughout the classification.

The first restrictions we consider are related to the number of pairs of disjoint facets.  The compact Coxeter polytopes with no pairs of disjoint facets are precisely the $d$-simplices, which have $d+1$ facets and were classified by Lann{\'e}r \cite{Lan}, along with the seven Esselmann polytopes, which are $4$-polytopes with $6$ facets that were constructed by Esselmann in \cite{Ess2}; this list was shown to be complete by Felikson and Tumarkin \cite[Theorem A]{FT1}.  In 2009, Felikson and Tumarkin studied compact Coxeter polytopes with precisely one pair of disjoint facets.

\begin{thm}[\protect{\hspace{1sp}\cite[Main Theorem]{FT2}}]\label{thm: FT one pair}
 A compact hyperbolic Coxeter $d$-polytope with exactly one pair of non-intersecting facets has at most $d + 3$ facets.
\end{thm}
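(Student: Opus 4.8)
The plan is to work with the affine Gale diagram of such a polytope $P$ and derive a contradiction if $P$ has $d + 4$ or more facets. Suppose $P$ is a compact Coxeter $d$-polytope with exactly one pair of non-intersecting facets, say $f$ and $f'$, and suppose for contradiction that $P$ has $d + k$ facets with $k \geq 4$. Since $P$ is simple (Remark \ref{rem: simple Coxeter}), it has a Gale diagram consisting of $d + k$ points on $S^{k-2}$. The pair $\{f, f'\}$ being the unique missing face of order $2$ means that the points corresponding to $f$ and $f'$ are the only pair that can be separated from the rest by a hyperplane through the origin with exactly these two points on one side; more precisely, $\{f, f'\}$ is a missing face, so $\{f, f'\}$ corresponds to a pair whose complement has convex hull missing the origin, i.e.\ the two points can be weakly separated.

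First I would pass to an affine Gale diagram $A = A_+ \cup A_-$ by projecting orthogonally onto a hyperplane separating $\{f, f'\}$ from the rest and declaring the side containing $f, f'$ positive; arguing as in the paragraph preceding Proposition \ref{prop: affine pos int}, and using the perturbation argument of Proposition \ref{prop: gen pos}, we may assume $A$ is in general position in $\R^{k-2}$ with $|A_+| = 2$, say $A_+ = \{u, u'\}$ corresponding to $f, f'$. The key combinatorial translation is: a set of facets indexed by $I$ intersects nontrivially if and only if $\conv(\{a_j : j \notin I,\ a_j \in A_+\})$ meets $\conv(\{a_j : j \notin I,\ a_j \in A_-\})$. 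Since $\{f, f'\}$ is the \emph{only} pair of disjoint facets, for every other pair of facets $f_i, f_j$ the set $I = \{i,j\}$ must give intersecting convex hulls of complements. I would then extract the strongest consequence of "$f$ and $f'$ are disjoint but every proper sub-intersection is nonempty," namely that removing either $u$ or $u'$ from $A$ leaves the remaining positive point in the interior of the convex hull of $A_-$ (this is Proposition \ref{prop: affine pos int} applied to singletons), while the empty intersection $f \cap f'$ says that the segment $\overline{uu'}$ (the convex hull of $A_+$) does \emph{not} meet $\conv(A_-$ restricted to the complement$)$ — but the complement of $\{f,f'\}$ is all of $A_-$, so $\conv(\{u,u'\}) \cap \conv(A_-) = \emptyset$. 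This is the contradiction-in-waiting: $u$ and $u'$ are each individually in the interior of $\conv(A_-)$ (from the proof of Proposition \ref{prop: affine pos int}, using that $P$ is compact so the half-space condition holds with strict counts), hence so is the whole segment between them, so $\conv(A_+) \subseteq \mathrm{int}\,\conv(A_-)$, contradicting $f \cap f' = \emptyset$.

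The subtlety — and the step I expect to be the main obstacle — is that the conclusion just sketched would already contradict the existence of \emph{any} pair of disjoint facets, which is false (such polytopes exist). The resolution is that when $k = 4$ the positivity of $u$ and $u'$ being in the interior of $\conv(A_-)$ uses $|A_+| = 2$ crucially, and for $k \geq 4$ one has \emph{two} distinct valid choices of separating hyperplane only when there is a second disjoint pair; with a \emph{unique} disjoint pair the hyperplane separating $\{f,f'\}$ is essentially forced, and the affine Gale diagram obtained has $|A_+| = 2$ with $A_+$ in the interior of $\conv(A_-)$ by Proposition \ref{prop: affine pos int} — which, combined with general position and $k - 2 \geq 2$, forces $\conv(A_+)$ to meet $\conv(A_-)$, i.e.\ forces $f \cap f' \neq \emptyset$, the desired contradiction. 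So the real work is: (i) carefully justify that uniqueness of the disjoint pair lets us take $|A_+| = 2$ (one cannot have $|A_+| \geq 3$, else the facet whose point is, say, extreme in $A_+$ would be disjoint from two others — here I would count missing faces of order $2$ against positions of points); and (ii) verify that for $k \geq 4$, i.e.\ $\dim = k - 2 \geq 2$, two interior points of a convex hull in general position cannot be "shielded" from the hull, whereas for $k = 3$ (dimension $1$, points on a line) this argument genuinely fails, consistent with Theorem \ref{thm: FT one pair} allowing $d + 3$ facets. I would carry out (i) by the pigeonhole/antichain structure of the missing face list, and (ii) by the elementary planar fact used in Proposition \ref{prop: affine pos int} together with Proposition \ref{prop: affine pos int} itself. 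The compactness of $P$ enters to upgrade "$\geq 2$" to a strict interior containment in the half-space condition, which is what makes the final geometric contradiction go through.
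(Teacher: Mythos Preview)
The paper does not prove this theorem; it is quoted from \cite{FT2} and used as a black box. So there is no in-paper argument to compare against, but your proposal has a genuine error worth naming.

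Your Gale diagram translation is backwards at the crucial step. The condition for $f \cap f' = \emptyset$ is that the convex hull of the positive points \emph{in the complement of} $\{f,f'\}$ misses the convex hull of the negative points in that complement. With $A_+ = \{u,u'\}$ the positive part of the complement is empty, so the condition is $\conv(\emptyset)\cap\conv(A_-)=\emptyset$, which is vacuous. It does \emph{not} say $\conv(\{u,u'\})\cap\conv(A_-)=\emptyset$; that intersection is always nonempty (it encodes $P \neq \emptyset$, the case $I=\emptyset$). So the ``contradiction-in-waiting'' never materializes: $A_+ \subset \mathrm{int}\,\conv(A_-)$ is exactly Proposition~\ref{prop: affine pos int}, which holds for \emph{every} simple $d$-polytope with $d+4$ facets admitting such a diagram, including those with disjoint facets.

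More decisively, your approach cannot succeed even with the translation fixed, because the statement is false for simple polytopes in general. Appendix~\ref{app: Gale diagrams} lists combinatorial types $G_{31}$--$G_{34}$ of simple $4$-polytopes with $8$ facets having exactly one pair of disjoint facets; Theorem~\ref{thm: FT one pair} is precisely what rules these out as \emph{Coxeter} polytopes. Any valid proof must therefore invoke the Coxeter structure (Lann\'er subdiagrams, signature constraints on the Gram matrix, etc.), which your affine Gale diagram argument never touches. The actual proof in \cite{FT2} is a substantial case analysis on Coxeter diagrams, not a combinatorial-type argument.
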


Thus, in the setting of $d$-polytopes with $d+4$ facets, we can restrict to looking at polytopes with at least two pairs of disjoint facets.  This can be strengthened in dimension $4$.  Felikson and Tumarkin \cite{FT3} later studied $d$ polytopes with $n$ facets having at most $n-d-2$ pairs of disjoint facets.  They gave a finite algorithm for listing these polytopes, and carried out this algorithm in dimension $4$.  The algorithm produced no previously-unknown polytopes of dimension $4$, yielding the following theorem.

\begin{thm}[\protect{\hspace{1sp}\cite[Theorem 7.1]{FT3}}]\label{thm: FT ess}
 Any compact hyperbolic Coxeter $4$-polytope with $n$ facets having at most $n - 6$ pairs of disjoint facets satisfies $n \leq 7$.
\end{thm}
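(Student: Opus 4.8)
The plan is to force both the combinatorial type of $P$ and then its Coxeter diagram into a finite, explicitly enumerable list, and to inspect that list. I would first reduce to the ``essential'' case: by the classification of compact Coxeter polytopes with no pair of disjoint facets (the $d$-simplices together with the seven Esselmann $4$-polytopes, \cite[Theorem A]{FT1}) and by Theorem~\ref{thm: FT one pair}, a compact hyperbolic Coxeter $4$-polytope with at most one pair of non-intersecting facets already has $n \leq 7$. So I may assume $P$ has $p$ pairs of disjoint facets with $2 \leq p \leq n - 6$, hence $n \geq 8$, and derive a contradiction.

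Next comes the combinatorial step. Working in the Gale diagram of $P$ --- a configuration of $n$ points on $S^{n-6} \subset \R^{n-5}$, since $d = 4$ --- a pair $\{f_i, f_j\}$ of disjoint facets is exactly a pair of points whose removal leaves the other $n-2$ points in an open halfspace through the origin, whereas the Gale condition forces \emph{every} open halfspace through the origin to contain at least two points (Subsection~\ref{subsec: gale diagrams}). Passing to an affine Gale diagram as in Section~\ref{sec: affine Gale d+4}, a disjoint pair corresponds to a pair of points that can be ``split off'' to the positive side, so smallness of $p$ bounds how extreme the configuration can be. I would argue that all but boundedly many points of the diagram are interior, and that this interior/boundary split together with the Gale condition caps $n$; concretely, I would run through the finitely many possibilities $p = 2, 3, \dots$, describe the associated affine Gale diagrams as point-set order types carrying two or three distinguished splitting pairs, use Lemma~\ref{lem: order type conv int} to read off the combinatorial type, and check in each case that either $n \leq 7$, or the type fails to be simple, or it survives to the next stage as one of a short list of simple types with $n \geq 8$.

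Then comes the realisability check. For each surviving type with $n \geq 8$, the missing faces of $P$ are prescribed, so the Lann{\'e}r subdiagrams of $\Sigma(P)$ are prescribed (via Proposition~\ref{prop: Vin 3.1} and the correspondence between missing faces and Lann{\'e}r subdiagrams), while all remaining subdiagrams imposed by the face lattice must be elliptic and $M(\Sigma(P))$ must be admissible hyperbolic of rank $5$ with positive inertia index $4$ (Remark~\ref{rem: Vinberg conditions}). I would enumerate weighted graphs meeting these constraints: the Lann{\'e}r subdiagrams come from the finite list of Figure~\ref{fig: Lanner diagrams}, the elliptic-subdiagram conditions together with the local determinant identities (Propositions~\ref{prop: local vertex} and \ref{prop: local edge}) bound the remaining edge weights, and each facet of $P$ is a compact Coxeter $3$-polytope whose combinatorics and dihedral angles are tightly constrained by Andreev's theorem and the existing few-facet classifications; intersecting all these conditions leaves finitely many candidate diagrams, which one checks directly --- this is the ``finite algorithm'' of \cite{FT3}, carried out by computer --- finding none with $n \geq 8$.

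I expect the main obstacle to be the combinatorial step, i.e., converting ``few disjoint pairs'' into a genuine upper bound on $n$. For a single fixed small $p$ this should reduce, via the affine Gale picture, to showing the configuration is a bounded perturbation of one with at most one splitting pair (which forces $n \leq d + 3$ by Theorem~\ref{thm: FT one pair}); but making ``bounded perturbation'' uniform in $p$ --- so that $n$ cannot creep upward as $p$ grows --- is the delicate point, and is exactly where the algorithm of \cite{FT3} does its real work. A secondary obstacle is the bookkeeping in the realisability step: each combinatorial type spawns a substantial weighted-graph search with many Lann{\'e}r and elliptic subdiagram conditions to track simultaneously, which is genuinely computational.
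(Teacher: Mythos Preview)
This statement is not proved in the present paper; it is quoted from \cite[Theorem~7.1]{FT3} and used as a black box. There is thus no ``paper's own proof'' to compare against, but your sketch can be compared to the actual approach of Felikson--Tumarkin.

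Your combinatorial step has a genuine gap, and it cannot be repaired along the lines you suggest. For every $n \geq 5$, the dual of the cyclic polytope $C(n,4)$ is a simple $4$-polytope with $n$ facets in which \emph{every} pair of facets intersects (cyclic $4$-polytopes are $2$-neighborly), so $p = 0$. Hence the hypothesis ``$p \leq n-6$'' places no combinatorial restriction on $n$ whatsoever: no Gale-diagram argument, no interior/boundary split, and no ``bounded perturbation'' can extract an upper bound on $n$ from the number of disjoint pairs alone. When you write that this step ``is exactly where the algorithm of \cite{FT3} does its real work'', you have correctly identified that your sketch contains no proof of the main assertion --- but the work in \cite{FT3} is not of the combinatorial kind you describe, and does not go through Gale diagrams.

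The approach in \cite{FT3} operates directly on the Coxeter diagram $\Sigma(P)$. Disjoint facet pairs are exactly the dashed edges, so the hypothesis bounds their number by $n-6$. One then studies faces $P(S_0)$ for carefully chosen elliptic subdiagrams $S_0$ (in the spirit of Proposition~\ref{prop: Coxeter face}): these are compact Coxeter polytopes of lower dimension whose Coxeter diagrams sit inside $\Sigma(P)$ in a controlled way, and one feeds in the existing low-dimensional and few-facet classifications. The finiteness of the procedure --- and with it the bound on $n$ --- comes from the admissibility and signature constraints on $\Sigma(P)$ (Remark~\ref{rem: Vinberg conditions}) together with this inductive descent through faces, not from any enumeration of combinatorial types. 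Your realisability step is closer in spirit to what actually happens, but you have placed it downstream of a combinatorial reduction that does not exist.
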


For the special case $d=4$ and $n = 8$, we immediately obtain the following result.

\begin{cor}\label{cor: 4 8 disjoint}
Any compact hyperbolic Coxeter $4$-polytope with $8$ facets must have at least three pairs of disjoint facets.
\end{cor}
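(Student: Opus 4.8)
The plan is to derive Corollary~\ref{cor: 4 8 disjoint} as an immediate specialisation of Theorem~\ref{thm: FT ess}. First I would recall the setup: we are considering a compact hyperbolic Coxeter $4$-polytope $P$ with $8$ facets, so in the notation of Theorem~\ref{thm: FT ess} we have $d = 4$ and $n = 8$. The quantity controlling that theorem is $n - d - 2 = 8 - 4 - 2 = 2$, and the theorem says that any compact hyperbolic Coxeter $4$-polytope with $n$ facets having \emph{at most} $n - 6 = 2$ pairs of disjoint facets must satisfy $n \le 7$.

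Next I would argue by contradiction. Suppose $P$ has at most two pairs of disjoint facets. Then $P$ satisfies the hypothesis of Theorem~\ref{thm: FT ess} with $n = 8$ and $n - 6 = 2$, so we would be forced to conclude $n = 8 \le 7$, a contradiction. Hence $P$ must have at least three pairs of disjoint facets, which is exactly the assertion of the corollary. I should also note in passing that the hypothesis ``at least two pairs of disjoint facets'' is already guaranteed in this setting by Theorem~\ref{thm: FT one pair} (together with the fact that polytopes with no pairs of disjoint facets are simplices or Esselmann polytopes, neither of which is a $4$-polytope with $8$ facets), so the content of the corollary is purely the upgrade from ``two'' to ``three''.

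There is essentially no obstacle here: the proof is a one-line bookkeeping check that the numerical hypothesis $n - d - 2$ of the cited theorem evaluates to $2$ when $(d,n) = (4,8)$, and then a contradiction argument. The only thing to be careful about is matching the indexing conventions of \cite{FT3} as quoted in Theorem~\ref{thm: FT ess} — that ``at most $n-6$ pairs of disjoint facets'' with $n = 8$ indeed means ``at most two pairs'' — and making sure the inequality $n \le 7$ genuinely excludes $n = 8$, which it does. So the write-up is simply: apply Theorem~\ref{thm: FT ess} with $n = 8$; if $P$ had at most $2 = n - 6$ pairs of disjoint facets it would satisfy $8 = n \le 7$, which is absurd; therefore $P$ has at least $3$ pairs of disjoint facets. $\qed$
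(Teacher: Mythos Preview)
Your proof is correct and is exactly the approach taken in the paper: the corollary is stated immediately after Theorem~\ref{thm: FT ess} as the specialisation $d=4$, $n=8$, with no further argument given.
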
  

As we shall see in Section \ref{sec: d+4 class}, there do exist such polytopes with exactly three pairs of disjoint facets, in particular, those of combinatorial type $G_{10}$, $G_{12}$, or $G_{14}$ (see Appendix \ref{app: Gale diagrams} for their description).

Following the methods of Tumarkin in \cite{Tum}, we can use a geometric property of polytopes to reduce our search.  If a polytope has a facet $f$ that meets precisely $d$ other facets, we call this facet a \emph{prism facet}, and the corresponding vertex of the Coxeter diagram a \emph{prism vertex}.  Note that this condition is equivalent to $f$ being a simplex, since as a $(d-1)$-polytope it would have precisely $d$ facets.  We additionally refer to all edges incident to a prism vertex as \emph{prism edges}.  At each prism facet $f$, we can truncate the polytope by a hyperplane $H$ to obtain a new polytope $P'$ with the following properties:
\begin{itemize}
    \item The new facet $f' = P \cap H$ of $P'$ does not intersect $f$,
    \item $f'$ is either disjoint from or orthogonal to each facet of $P'$, and
    \item $P'$ is combinatorially equivalent to $P$.
\end{itemize}

In other words, we can obtain $P$ from a combinatorially equivalent polytope $P'$, where all facets of $P'$ corresponding to prism facets of $P$ meet any incident facets at dihedral angle $\frac{\pi}{2}$, by gluing Coxeter prisms onto $P'$.  The hyperplane $H$ can be obtained by choosing successive facets of $f$ and transforming the bounding hyperplane along each chosen facet of $f$ so that they meet at dihedral angle $\frac{\pi}{2}$.  This process terminates with $P'$ being combinatorially equivalent to $P$ since each transformation preserves the face structure, as $P$ is acute-angled.  In our classification, we first assume prism facets already satisfy the dihedral angle condition, and then later obtain any combinatorially equivalent polytopes by gluing compact Coxeter prisms.  The compact Coxeter prisms have been classified by Kaplinskaja \cite{Kap}.

Lastly, we mention a rank condition on the Gram matrix.  As discussed in Section \ref{sec: Coxeter and Gram}, the Gram matrix of a compact Coxeter $d$-polytope has rank $d + 1$.  This is because the normal vectors to the facets are in $\Hy^d$, which can be embedded in $\R^{d+1}$.  Thus, we have the following condition on the principal minors.

\begin{prop}\label{prop: rank condition}
If $P$ is a compact Coxeter $d$-polytope with $d+4$ facets, then every subdiagram $\Sigma_0$ obtained by deleting two vertices of $\Sigma(P)$ has a singular Gram matrix.
\end{prop}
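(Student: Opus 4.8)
The plan is to deduce this immediately from the rank of the full Gram matrix. By Remark~\ref{rem: Vinberg conditions}, $M(\Sigma(P))$ has positive inertia index $d$ and, being hyperbolic, negative inertia index $1$; hence its rank is $n_+ + n_- = d+1$. (This is exactly the observation recorded in the text following Remark~\ref{rem: Vinberg conditions}, reflecting the fact that the unit normals to the facets of a $d$-polytope lie in the $(d+1)$-dimensional ambient space $\R^{d,1}$.)

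Next, note that $\Sigma(P)$ has $d+4$ vertices, so a subdiagram $\Sigma_0$ obtained by deleting two vertices has $d+2$ vertices, and its Gram matrix $M(\Sigma_0)$ is the $(d+2)\times(d+2)$ principal submatrix of $M(\Sigma(P))$ gotten by striking the two corresponding rows and columns. Since the rank of a submatrix never exceeds the rank of the ambient matrix, $\rank M(\Sigma_0) \le \rank M(\Sigma(P)) = d+1 < d+2$, and therefore $M(\Sigma_0)$ is singular, i.e. $\det M(\Sigma_0) = 0$. This is precisely the assertion; equivalently, it is the specialization to order $d+2$ of the general fact that every principal minor of $M(\Sigma(P))$ of order at least $d+2$ vanishes.

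I do not anticipate any genuine obstacle here. The only point that requires care is the exact value of the rank, namely $d+1$ rather than $d$, which is why two (and not merely one) deleted vertices are needed to guarantee singularity of the resulting principal submatrix.
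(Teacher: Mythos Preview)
Your proof is correct and follows exactly the same reasoning as the paper: the Gram matrix $M(\Sigma(P))$ has rank $d+1$ (as noted in the discussion surrounding Remark~\ref{rem: Vinberg conditions}), so any $(d+2)\times(d+2)$ principal submatrix has rank at most $d+1$ and is therefore singular. The paper does not give a formal proof of this proposition, but the argument is contained in the sentences immediately preceding its statement.
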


\subsection{The set of multi-multiple edges}\label{subsec: multi-multiple}
In this section, we mention several restrictions on which ordinary edges can be multi-multiple, i.e., have weight $\cos\left(\frac{\pi}{m}\right)$ for $m \geq 6$, in a compact Coxeter diagram.  Restricting this set is critical to making the classification in Sections \ref{sec: d+4 class} and \ref{sec: d+5 class} computationally feasible.  

\begin{lem}\label{lem: multi-multiple}
Let $v_1v_2$ be an ordinary edge in an admissible abstract Coxeter diagram $\Sigma$.  Suppose that
\begin{enumerate}[(a)]
    \item $\Sigma$ has a Lann{\'e}r diagram of order greater than $3$ containing $v_1$ and $v_2$; or
    \item $\Sigma$ has no Lann{\'e}r diagram containing $v_1$ and $v_2$, $\Sigma$ has a Lann{\'e}r diagram $L$ of order greater than $2$ containing $v_2$, and there is no dashed edge from $v_1$ to any vertex of $L$\,.
\end{enumerate}
Then $v_1v_2$ has low weight.
\end{lem}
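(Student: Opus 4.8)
The plan is to argue by contradiction in both cases: suppose $v_1 v_2$ has weight $\cos(\pi/m)$ with $m \geq 6$, and exhibit either a parabolic subdiagram or a superhyperbolic subdiagram, contradicting admissibility of $\Sigma$. The main tool is the local determinant machinery of Propositions~\ref{prop: local vertex} and~\ref{prop: local edge}, together with the fact that the local determinant of a Lann\'er triangle is negative and grows in magnitude as its edge weights increase.

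For case (a), let $L$ be a Lann\'er diagram of order $\geq 4$ containing both $v_1$ and $v_2$. Since $L$ has order $4$ or $5$, I would consult the classification of Lann\'er diagrams (Figure~\ref{fig: Lanner diagrams}): in every such diagram, the edge $v_1 v_2$, if present, is a simple edge, and if $v_1$ and $v_2$ are non-adjacent the conclusion is immediate. The substantive sub-case is when $v_1 v_2$ is an edge of $L$. Here I would examine the proper subdiagram $L \cut \{w\}$ for an appropriate third vertex $w$ of $L$; since $L$ is Lann\'er, each such proper subdiagram is elliptic, and elliptic diagrams have bounded edge multiplicities. The key point is that increasing the weight on $v_1 v_2$ to $\cos(\pi/m)$ with $m\geq 6$ forces a triangle or path subdiagram of $L$ containing $v_1 v_2$ to become non-elliptic (parabolic or hyperbolic), so that $L$ itself—or a subdiagram of it—is no longer admissible, contradicting $\Sigma$ admissible. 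Concretely, one checks against the short list of Lann\'er diagrams of order $4$ and $5$ that none contains a multi-multiple edge, so any Lann\'er diagram of order $\geq 4$ in an admissible $\Sigma$ through the edge $v_1v_2$ already forbids $v_1v_2$ from being multi-multiple.

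For case (b), this is where the local determinant argument does the real work. Let $L$ be a Lann\'er diagram of order $\geq 3$ containing $v_2$ but not $v_1$, with no dashed edge from $v_1$ to any vertex of $L$. Consider the subdiagram $\Sigma' = \langle L, v_1 \rangle$. Because there is no Lann\'er diagram containing both $v_1$ and $v_2$, the diagram $\langle v_1, v_2 \rangle$ together with appropriate vertices stays elliptic in the relevant spots; more precisely, $\Sigma' \cut \{v_1, v_2\}$ is a proper subdiagram of $L$, hence elliptic. Now apply Proposition~\ref{prop: local edge} to $\Sigma'$ viewed as $\Sigma_1 = \{v_1\}$ (or a slightly larger piece) joined to $\Sigma_2$ (built from $L \cut \{v_2\}$ plus $v_2$) along the edge $v_1 v_2$ of weight $a = \cos(\pi/m)$. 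One gets $\det(\Sigma', \langle v_1, v_2\rangle) = \det(\Sigma_1, v_1)\det(\Sigma_2, v_2) - a^2$, where $\det(\Sigma_2, v_2)$ is (up to sign) the local determinant of the Lann\'er diagram $L$ on its vertex $v_2$, which is negative, and $\det(\Sigma_1, v_1) = 1$. Since $a^2 = \cos^2(\pi/m) \geq \cos^2(\pi/6) = 3/4$ when $m \geq 6$, the quantity $\det(\Sigma_2, v_2) - a^2$ is pushed below the threshold at which $\Sigma'$ (or a minimal bad subdiagram inside it) becomes parabolic or superhyperbolic. I would then need to check that the sign of this local determinant, combined with the elliptic status of $\Sigma' \cut \{v_1, v_2\}$, forces $\Sigma'$ to have negative inertia index $\geq 2$ or to contain a parabolic subdiagram—either way contradicting admissibility of $\Sigma$.

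The main obstacle I anticipate is the bookkeeping in case (b): identifying exactly which Lann\'er triangle inside $L$ to use as $\Sigma_2$ and computing (or bounding) its local determinant $\det(\Sigma_2, v_2)$ precisely enough to see that adding the multi-multiple edge $v_1 v_2$ crosses the critical value. This requires knowing the minimum magnitude of the local determinant of a Lann\'er diagram of order $\geq 3$ on a chosen vertex, and pairing it against $\cos^2(\pi/6) = 3/4$—a numerical margin that must be verified to actually be violated rather than merely tight. A secondary subtlety is handling the hypothesis ``no dashed edge from $v_1$ to $L$'': this ensures the only connection between $v_1$ and $L$ is through $v_2$ (plus possibly ordinary edges, which one must also control), so that the clean ``joined by a unique edge'' hypothesis of Proposition~\ref{prop: local edge} applies after passing to a suitable subdiagram. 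Once these local-determinant estimates are pinned down, the contradiction with Remark~\ref{rem: Vinberg conditions} (no parabolic subdiagrams, negative inertia index $1$) is immediate.
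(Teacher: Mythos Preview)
Your treatment of case~(a) is correct and matches the paper: one simply inspects the finite list of Lann\'er diagrams of order $4$ and $5$ in Figure~\ref{fig: Lanner diagrams} and observes that none of them contains an edge of multiplicity $\geq 4$.

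For case~(b), however, your local-determinant strategy is both overcomplicated and genuinely incomplete. The two obstacles you flag are real and you do not resolve them. First, the hypothesis only excludes \emph{dashed} edges from $v_1$ to $L$, so $v_1$ may well be joined to other vertices of $L$ by ordinary edges, and then the ``unique edge'' hypothesis of Proposition~\ref{prop: local edge} fails outright. Second, and more seriously, even when the computation goes through you only obtain $\det(\Sigma') < 0$ (since $\det(L,v_2) < 0$ and $\Sigma'\setminus\{v_1,v_2\}$ is elliptic). But a negative determinant merely says the negative inertia index of $\Sigma'$ is odd; it could perfectly well equal $1$, so superhyperbolicity does not follow. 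There is no ``threshold'' phenomenon here of the kind you describe.

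The paper's argument for (b) avoids local determinants entirely and is two lines long. Since $L$ is Lann\'er of order $\geq 3$, it is connected and all its edges are ordinary, so $v_2$ has a neighbour $v_3 \in L$ joined by an ordinary edge. Now look at the three-vertex subdiagram on $\{v_1,v_2,v_3\}$: the edge $v_1v_2$ is multi-multiple, $v_2v_3$ is ordinary and nonempty, and $v_1v_3$ is not dashed by hypothesis. This subdiagram is connected and contains a multi-multiple edge, so it is not elliptic (the only connected elliptic diagram with such an edge is $G_2^{(m)}$ on two vertices); but every proper (two-vertex) subdiagram is elliptic since none of the three edges is dashed. Hence $\{v_1,v_2,v_3\}$ is either parabolic, contradicting admissibility, or Lann\'er, contradicting the assumption that no Lann\'er diagram contains both $v_1$ and $v_2$. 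That is the whole proof.
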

\begin{proof}
The first part is an immediate consequence of the characterisation of Lann{\'e}r diagrams given in Figure \ref{fig: Lanner diagrams}.  

For the second part, note that $v_2$ must be incident to an ordinary edge induced by $L$; call this edge $v_2v_3$.  If $v_1v_2$ was multi-multiple, then $\{v_1,v_2,v_3\}$ would induce a parabolic diagram or Lann{\' e}r triangle, since $v_1v_3$ is not dashed by assumption.  The former cannot happen because $\Sigma$ is admissible, and the latter cannot happen because we assume $\Sigma$ has no Lann{\'e}r diagram containing both $v_1$ and $v_2$.  
\end{proof}

\begin{lem}\label{lem: overlapping triangles}
Suppose that $01234$ is a subdiagram of an admissible abstract Coxeter diagram.  If the induced set of missing faces is $\{012,234\}$, then $02$, $12$, $23$, and $24$ are not multi-multiple.
\end{lem}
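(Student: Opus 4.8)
The plan is to apply Lemma \ref{lem: multi-multiple} repeatedly, using the hypothesis that the missing face list induced on $01234$ is exactly $\{012,234\}$. Note first that the two Lann\'er diagrams present are the triangles $012$ and $234$, each of order $3$, so part (a) of Lemma \ref{lem: multi-multiple} never directly applies; everything will come from part (b). The key combinatorial facts I will extract from the missing face list are: (1) every Lann\'er subdiagram of $01234$ is one of $012$ or $234$ (since Lann\'er subdiagrams correspond to missing faces, and any missing face of the induced subdiagram must appear in the induced missing face list); (2) consequently $\{0,1,3,4\}$, $\{0,1,4\}$, $\{1,3\}$, $\{1,4\}$, $\{0,3\}$, $\{0,4\}$, etc.\ are all elliptic, and in particular there is no edge between the pair $\{0,1\}$ and the pair $\{3,4\}$ of either ordinary or dashed type — because, e.g., an ordinary edge $13$ together with the path through $2$ would have to sit inside a Lann\'er diagram on $\{1,2,3\}$ or force $0123$ to contain a new missing face, contradicting (1). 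I would isolate this ``no cross edges'' statement as the first step, carefully checking each of the four potential cross pairs $03,04,13,14$ cannot carry an edge of any type.

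Next, for each of the four edges $02,12,23,24$ I want to invoke Lemma \ref{lem: multi-multiple}(b) with $v_2$ the endpoint lying in a large Lann\'er diagram and $v_1$ the other endpoint. Take the edge $24$: set $v_2 = 2$, which lies in the Lann\'er triangle $L = 012$ of order $3 > 2$, and $v_1 = 4$. For (b) I must check that $\Sigma$ has no Lann\'er diagram containing both $4$ and $2$ — but any such would be a missing face containing $\{2,4\}$, and neither $012$ nor $234$ does (the latter contains $2$ and $4$ but so does $234$; wait, $234$ does contain both $2$ and $4$). This is the subtlety: $234$ is a Lann\'er diagram containing both $2$ and $4$, so the naive choice fails for $24$. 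I would instead handle $24$ by taking $v_2 = 2$ with the Lann\'er diagram $L = 012$ and $v_1 = 4$, but now arguing via part (a)'s proof idea directly, or better: use $v_2 = 4$ in $L = 234$ is also no good for the same reason. The correct route for $24$ is: there is no Lann\'er diagram of order $>3$ containing $\{2,4\}$ (by fact (1) the only one containing both is $234$, order $3$), so $24$ lies in the triangle $234$ and part (a) gives nothing — instead I argue that if $24$ were multi-multiple, then inside $234$ the triple $\{2,3,4\}$ would be a Lann\'er triangle with one multi-multiple edge, which is fine on its own, so I must bring in vertex $1$ or $0$: since $14$ is not an edge (Step 1) and $12$ is an edge induced by $012$, the triple $\{1,2,4\}$ with $24$ multi-multiple and $12$ ordinary and $14$ absent would be a Lann\'er triangle or parabolic, contradicting fact (1) / admissibility. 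So the argument for $24$ is really a direct application of the \emph{proof technique} of Lemma \ref{lem: multi-multiple}(b) with the roles $v_1 = 4$, $v_2 = 2$, $L = 012$, once I know $14$ carries no dashed edge — which is Step 1.

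So the structure is: Step 1, establish that none of $03,04,13,14$ is an edge of any type (ordinary or dashed); Step 2, for $02$ apply Lemma \ref{lem: multi-multiple}(b) with $v_1 = 0$, $v_2 = 2$, $L = 234$ (need: no Lann\'er diagram contains both $0$ and $2$ other than $012$ — but $012$ does contain both, order $3$, so again I use the proof idea: $02$ multi-multiple forces $\{0,2,3\}$ or $\{0,2,4\}$ to be Lann\'er or parabolic, impossible since $03,04$ absent); symmetrically Step 3 for $24$ as above; Step 4 for $12$ with $v_1 = 1$, $v_2 = 2$, $L = 234$, using $13,14$ absent; Step 5 for $23$ with $v_1 = 3$, $v_2 = 2$, $L = 012$, using $03,13$ absent. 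Each step is the same short contradiction: a multi-multiple edge at $2$ together with an ordinary edge from $2$ into the ``other'' Lann\'er triangle and a missing cross-edge would produce a parabolic subdiagram (forbidden by admissibility) or a new Lann\'er triangle (forbidden by fact (1)). The main obstacle — and the only place needing genuine care — is Step 1: ruling out the cross edges, since a dashed cross edge is \emph{a priori} allowed by the missing-face data alone and must be excluded using that $0123$ and $1234$ would then acquire a Lann\'er subdiagram of order $4$ or a parabolic subdiagram not appearing in the prescribed list $\{012,234\}$. Once Step 1 is secured, Steps 2--5 are essentially four copies of the argument in the proof of Lemma \ref{lem: multi-multiple}(b).
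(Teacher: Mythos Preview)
Your Steps 2–5 contain the right idea and are essentially the paper's proof: if, say, $02$ were multi-multiple, then since vertex $2$ lies in the Lann\'er diagram $234$ it is joined to $3$ or $4$ by a non-empty ordinary edge (WLOG to $3$), and the triple $\{0,2,3\}$ is then parabolic or Lann\'er --- forbidden by admissibility or by the induced missing-face list. The paper does exactly this in three lines, treating $02$ and $12$ directly after fixing $23$ non-empty, and then invoking the symmetry $\{0,1\}\leftrightarrow\{3,4\}$ for $23$ and $24$.

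Your Step 1, however, is both overclaimed and unnecessary. You assert that none of $03,04,13,14$ can carry an edge of \emph{any} type, and your justification (``the path through $2$'') presumes that both $12$ and $23$ are non-empty --- which need not hold, since a Lann\'er triangle may have one empty edge. More to the point, you do not need the ordinary case at all: as you yourself observe in the $24$ discussion, the argument in Steps 2–5 only requires that the cross-edges are not \emph{dashed}, and this is immediate from the hypothesis, since a dashed edge would be a missing face of size $2$ not appearing in $\{012,234\}$. If you drop the ordinary-edge half of Step 1, your proof collapses to the paper's short direct argument, with no need to route things through Lemma \ref{lem: multi-multiple} at all.
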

\begin{proof}
Note that one of $23$ or $24$ must be non-empty in order for $234$ to be a missing face of size $3$; without loss of generality suppose $23$ is non-empty.  Thus, if either $02$ or $12$ is multi-multiple, it would induce a missing face $023$ or $123$, which is forbidden.  Hence neither $02$ nor $12$ is multi-multiple; the analogous result for $23$ and $24$ can be obtained by swapping the roles of $0$,$1$ and $3$,$4$, respectively.
\end{proof}

\begin{lem}[\protect{\hspace{1sp}\cite[Lemma 4.14]{Tum}}]\label{lem: Tum forbidden subdiagram}
There is no compact Coxeter $4$-polytope containing a subdiagram with induced missing face list isomorphic to $\{0123, 014, 235\}$.
\end{lem}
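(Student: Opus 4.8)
The plan is to argue by contradiction at the level of Coxeter diagrams. Suppose a compact Coxeter $4$-polytope $P$ has diagram $\Sigma=\Sigma(P)$ containing a subdiagram $\Sigma_0$ on six vertices, labelled $0,\dots,5$, whose Lann{\'e}r subdiagrams are exactly those induced on $\{0,1,2,3\}$, $\{0,1,4\}$ and $\{2,3,5\}$; write $L_1,L_2,L_3$ for these. By Proposition~\ref{prop: Vin 3.1}, the Lann{\'e}r subdiagrams of $\Sigma_0$ are precisely the missing faces of $P$ among these six facets, so every subset of $\{0,\dots,5\}$ that contains none of $L_1,L_2,L_3$ induces an elliptic subdiagram. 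In particular every edge among the six vertices is ordinary, every three-vertex subdiagram other than $L_2$ and $L_3$ is elliptic, and the subdiagrams on $\{1,2,3,4\}$, $\{0,2,3,4\}$, $\{0,1,2,5\}$ and $\{0,1,3,5\}$ are elliptic of order $4$. Since $\Sigma$ is admissible (Remark~\ref{rem: Vinberg conditions}) so is $\Sigma_0$, and as it contains a Lann{\'e}r subdiagram its negative inertia index equals $1$; moreover, as the Gram matrix of a compact Coxeter $4$-polytope has rank $5$, the $6\times 6$ principal submatrix $M(\Sigma_0)$ is singular.

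I would first pin down the signature and the kernel of $M(\Sigma_0)$. Cauchy interlacing with the positive definite order-$4$ subdiagram on $\{1,2,3,4\}$ shows that $M(\Sigma_0)$ has positive inertia index at least $4$; with negative inertia index $1$ and $\det M(\Sigma_0)=0$, its signature is $(4,1,1)$. Let $x$ span the kernel. Deleting any one vertex leaves an order-$5$ subdiagram containing one of the four elliptic order-$4$ subdiagrams above together with $L_2$ or $L_3$, hence of signature $(4,1,0)$ and nonsingular, so every coordinate of $x$ is nonzero. Split $\{0,\dots,5\}=S^{+}\sqcup S^{-}$ by the signs of $x$. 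From $(M(\Sigma_0)x)_i=0$ for $i\in S^{+}$ and the non-positivity of off-diagonal entries of $M(\Sigma_0)$, the restriction of $M(\Sigma_0)$ to $S^{+}$ sends the positive vector $x|_{S^{+}}$ to a vector with non-positive entries, hence is not positive definite; the same holds for $S^{-}$. (Both sets are nonempty: were $x$ everywhere positive then $M(\Sigma_0)=I-N$ with $N\ge 0$ irreducible and $Nx=x$, forcing $M(\Sigma_0)$ positive semidefinite, contrary to negative inertia index $1$.) So each of $\Sigma_0|_{S^{+}}$, $\Sigma_0|_{S^{-}}$ contains a Lann{\'e}r subdiagram; since $S^{+}\cap S^{-}=\varnothing$ and $\{0,1,4\}$, $\{2,3,5\}$ are the only two of $L_1,L_2,L_3$ with disjoint vertex sets, comparing cardinalities forces $\{S^{+},S^{-}\}=\{\{0,1,4\},\{2,3,5\}\}$. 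Thus $\Sigma_0$ breaks into the two Lann{\'e}r triangles $L_2$ and $L_3$ on complementary triples of vertices, linked by a non-positive off-diagonal block whose nonzero entries come from the edges of $L_1$ joining $\{0,1\}$ to $\{2,3\}$ together with any edges out of $4$ or $5$.

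The last step is to show this rigid configuration cannot occur. Since $L_1$ is a rank-$4$ Lann{\'e}r diagram --- one of the compact hyperbolic tetrahedra listed in Figure~\ref{fig: Lanner diagrams}, all of whose edges have multiplicity at most $3$ --- there are only finitely many possibilities for $L_1$ and for the splitting of its vertex set into the pairs $\{0,1\}$ and $\{2,3\}$. For each, ellipticity of the subdiagrams on $\{0,2,4\},\{1,2,4\},\{0,3,4\},\{1,3,4\}$, and their analogues with $5$, constrains the edges from $4$ to $\{0,1\}$ and from $5$ to $\{2,3\}$ --- forcing them to be of low weight whenever the relevant vertex of $L_1$ has a neighbour in the opposite pair --- leaving a short list of cases, together with a few in which one new edge may be multi-multiple. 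In every case one checks that the combined constraints are unsatisfiable, by one of the following: a subdiagram on $\{1,2,3,4\}$, $\{0,2,3,4\}$, $\{0,1,2,5\}$ or $\{0,1,3,5\}$ turns out to be Lann{\'e}r, contradicting the assumed missing face list; or $\Sigma_0$ acquires a parabolic subdiagram or a second negative eigenvalue, contradicting admissibility; or a computation of $\det M(\Sigma_0)$ --- via the local determinant identities of Propositions~\ref{prop: local vertex} and~\ref{prop: local edge} applied to the decomposition of $\Sigma_0$ along the edge(s) of $L_1$ between $\{0,1\}$ and $\{2,3\}$, using the monotonicity of the local determinant of a Lann{\'e}r triangle in its edge weights --- gives a nonzero value, contradicting singularity.

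The main obstacle is precisely this final case check. The signature computation and the kernel-sign analysis of the second step are what make it finite and tractable, by reducing everything to the rigid picture of two Lann{\'e}r triangles on complementary three-element vertex sets linked through an order-$4$ Lann{\'e}r diagram; but verifying the incompatibility uniformly, especially in the cases allowing a multi-multiple edge, requires careful local-determinant bookkeeping and, in a few cases, a short direct computation.
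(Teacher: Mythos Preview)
The paper does not supply its own proof of this lemma; it is quoted verbatim from Tumarkin \cite[Lemma~4.14]{Tum} and used as a black box. So there is no in-paper argument to compare against, and your proposal should be judged on its own.

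Your reduction in the second paragraph is correct and is the real content of a proof: the signature computation, the observation that every one-vertex deletion of $\Sigma_0$ is nonsingular (hence every coordinate of the kernel vector is nonzero), and the sign-splitting argument forcing $\{S^{+},S^{-}\}=\{\{0,1,4\},\{2,3,5\}\}$ all go through exactly as you say. This pins $\Sigma_0$ down to two Lann{\'e}r triangles on complementary triples, linked through the order-$4$ Lann{\'e}r diagram $L_1$.

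The final paragraph, however, is only a sketch, and a couple of points need care. First, a slip: the elliptic triangles $\{0,2,4\},\{1,2,4\},\{0,3,4\},\{1,3,4\}$ and their analogues with $5$ constrain the \emph{cross}-edges from $4$ to $\{2,3\}$ and from $5$ to $\{0,1\}$, not the internal edges of $L_2$ and $L_3$ as you wrote. Second, the edge $45$ is not governed by those triangles and must be handled separately. Third, Propositions~\ref{prop: local vertex} and~\ref{prop: local edge} apply only when the two pieces share a single vertex or are joined by a single edge; in most of your cases several edges run between $\{0,1,4\}$ and $\{2,3,5\}$ (those of $L_1$ together with any of $24,34,05,15,45$), so the local-determinant identities are not directly available and you will need direct determinant computations instead. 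Finally, even after these corrections the case analysis over the nine order-$4$ Lann{\'e}r diagrams, their labellings, the infinitely many Lann{\'e}r triangles $L_2,L_3$, and the admissible cross-edges is substantial; it is precisely this check that Tumarkin carries out, and as written your proposal stops short of it.
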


\subsection{Admissible partial weightings}
Now that we have discussed some general properties of compact Coxeter $d$-polytopes with $d+4$ facets and the multi-multiple edges of their Coxeter diagrams, we focus on the possible structure of the ordinary low-weight edges of their Coxeter diagrams.  
\begin{defn}
We define the \emph{partial low-weighting} of an abstract Coxeter diagram $\Sigma$ as the image $\Sigma^{\leq 6}$ of a forgetful map $\phi$ to a new weighted diagram on the same vertex set where all edge weights of the form $\cos\left(\frac{\pi}{m}\right)$ for $m \geq 6$ or $\cosh(\rho)$ for any $\rho \in \R$ are forgotten, though the information of whether these edges are ordinary or dashed remains.  For a multi-multiple edge whose weight was forgotten, we denote its new weight by $*$.
\end{defn}

Figure \ref{fig: low-weighting example} depicts the partial low-weighting of a Coxeter diagram, the same one shown in Figure \ref{fig: affine G6}.

\begin{figure}[ht]
\begin{center}
\includegraphics[width=.8\linewidth]{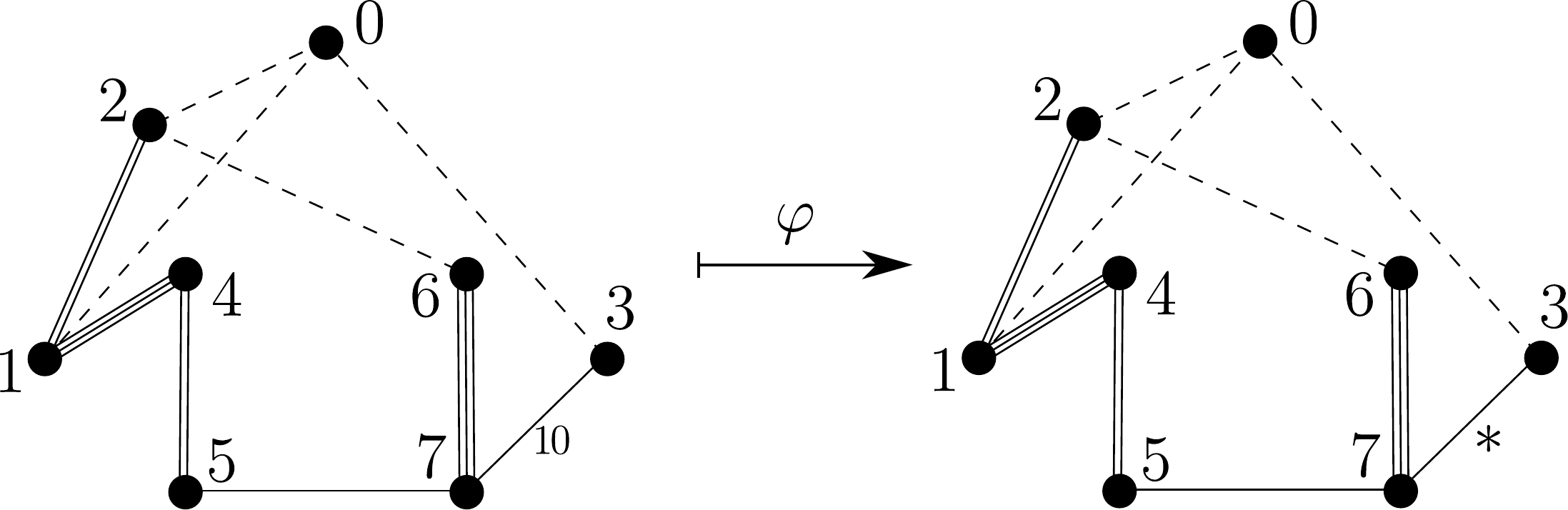}
\caption{This illustrates the action of the forgetful map $\phi$ on an abstract Coxeter diagram $\Sigma$ (on the left) to obtain its partial low-weighting (on the right).  Note that the unique multi-multiple edge changes weight from $10$ to $*$.  Moreover, the dashed edges in the left diagram have weight induced by Proposition \ref{prop: rank condition}, while the dashed edges on the right have no associated weight.}
\label{fig: low-weighting example}
\end{center}  
\end{figure}

Note that there are finitely many partial low-weightings of a given order, as each edge is either simple, double, triple, dashed, or multi-multiple with weight $*$.  The forgetful map $\phi$ also retains some information about the  elliptic subdiagrams: we say that a subdiagram $S_0$ of a partial low-weighting is \emph{elliptic} if it remains elliptic when replacing the edges weighted by $*$ with any weights of the form $\cos\left(\frac{\pi}{m_{ij}}\right)$ for $m_{ij} \geq 6$.  Since the only connected elliptic diagrams containing a multi-multiple edge are $G_2^{(m)}$ for $m \geq 6$, this is well-defined.  In particular, a subdiagram $S_0$ of a partial low-weighting is elliptic if and only if any connected component of $S_0$ is either comprised of ordinary edges and is elliptic, or consists of a single edge with weight $*$.  A \emph{missing face} of a partial low-weighting is a set of vertices $S_0$ (corresponding to a set of facets) such that the subdiagram $S_0$ is not elliptic, but every proper subdiagram is elliptic.  

We say that a partial low-weighting $\Sigma_1^{\leq 6}$  \emph{has the same combinatorial type} as (the partial low-weighting of) a Coxeter diagram $\Sigma_2$ if their sets of missing faces are the same, up to a relabelling of the vertices.  We call a partial low-weighting on the vertex set $\{0,1,\dots,d+k-1\}$ \emph{admissible} if it can be obtained as the image of an admissible abstract Coxeter diagram.  In particular, note that an \emph{admissible} partial low-weighting must not contain any parabolic subdiagrams.

Though we initially define partial low-weightings as the images of Coxeter diagrams under the map $\phi$, we can conversely consider constructing an abstract Coxeter diagram from a low-partial weighting.  We call an abstract Coxeter diagram $\Sigma_1$ an \emph{admissible extension} of a partial low-weighting $\Sigma_2^{\leq 6}$ if $\Sigma_1$ is admissible and $\Sigma_2^{\leq 6} = \Sigma_1^{\leq 6}$.

A critical step in our method is ensuring that we have a system of polynomial restrictions on our abstract Coxeter diagrams such that any valid partial low-weighting is the image of at most one admissible Coxeter diagram satisfying the polynomial restrictions.  For each combinatorial type $G_i$, we denote the corresponding system of equations by $V(G_i)$.  In our case, each system is determined by checking that certain principal submatrices of $M(\Sigma)$ of size at least $d+2$ are singular, which is satisfied for any abstract Coxeter diagram corresponding to a $d$-polytope by Proposition \ref{prop: rank condition}.  For an $n\times n$ matrix $M$ and $A,B \subseteq \{0,1,\dots,n-1\}$, let $M|_{A,B}$ denote the submatrix formed by restricting to the rows with indices in $A$ and the columns with indices in $B$ (where rows and columns are zero-indexed).  Thus, $M|_{A,B}$ is a $|A| \times |B|$ submatrix.  The elements of $V(G_i)$ are represented by strings $i_1\cdots i_m$, representing the equation 
$$\det(M|_{S,S}) = 0 \text{ where } S = \{0,\dots,n-1\} \cut \{i_1,i_2,\dots,i_m\}\,.$$
Note that while this conflicts with our notation for subdiagrams/facets, these strings are only ever written as members of a set $V(G)$ for some combinatorial type $G$, so the usage is made clear by the context.

\begin{exmp}
Fix a $4 \times 4$ matrix $M = (a_{i,j})_{0\leq i,j\leq 3}$.  Then using our notation, $V(G) = \{01,13\}$ is the system of equations
\[
\begin{cases}
\det\left(\begin{bmatrix} a_{2,2} & a_{2,3} \\ a_{3,2} & a_{3,3}\end{bmatrix}\right) = 0\,,\\\\
\det\left(\begin{bmatrix} a_{0,0} & a_{0,2} \\ a_{2,0} & a_{2,2}\end{bmatrix}\right) = 0\,,
\end{cases}
\]
or, equivalently,
\[
\begin{cases}
a_{2,2}a_{3,3} - a_{2,3}a_{3,2} = 0\,,\\
a_{0,0}a_{2,2} - a_{0,2}a_{2,0} = 0\,.
\end{cases}
\]
\end{exmp}

We now discuss our main method for classifying compact Coxeter polytopes, which involves finding a sufficient set of restrictions such that all possible compact Coxeter polytopes of a given combinatorial type can be listed by a finite algorithm.

\begin{prop}\label{prop: iterate over partial}
Fix a combinatorial type $G$, a set of restrictions $R(G)$ on the dihedral angles, and a system of equations on the edge weights $V(G)$.  Suppose $V(G)$ is chosen such that for every admissible partial low-weighting of type $G$ satisfying $R$, there are finitely many solutions to $V(G)$.  Then all Coxeter diagrams corresponding to compact polytopes of type $G$ satisfying $V(G)$ and $R(G)$ can be listed by iterating over all admissible partial low-weightings of $G$ and testing whether the solutions to $V(G)$ yield a diagram satisfying the properties of Remark \ref{rem: Vinberg conditions}.
\end{prop}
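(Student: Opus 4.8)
The statement is a completeness claim about a two-stage search, so the plan is to fix an arbitrary Coxeter diagram $\Sigma = \Sigma(P)$ of a compact hyperbolic Coxeter $d$-polytope $P$ of combinatorial type $G$ satisfying $V(G)$ and $R(G)$, and to show that $\Sigma$ is produced by the procedure. First I would record that both loops are finite: there are finitely many partial low-weightings on a fixed vertex set, since each edge is simple, double, triple, dashed, or a multi-multiple edge with weight $*$; and for each admissible partial low-weighting of type $G$ satisfying $R$, the system $V(G)$ has finitely many solutions by hypothesis. Thus it is enough to locate $\Sigma$ within the resulting list.

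The core of the argument is then to check that $\Sigma$ is reached from its own partial low-weighting $\Sigma^{\leq 6} = \phi(\Sigma)$. By Remark \ref{rem: Vinberg conditions}, $\Sigma$ is admissible, so $\Sigma^{\leq 6}$ is an admissible partial low-weighting by definition, and it satisfies the angle restrictions $R$ because $\phi$ leaves every low-weight dihedral angle unchanged. Its combinatorial type is $G$: by Proposition \ref{prop: Vin 3.1} the missing faces of $P$ are exactly the Lann{\'e}r subdiagrams of $\Sigma$, i.e. the minimal non-elliptic subdiagrams; since a subdiagram containing a dashed edge is never elliptic and the only connected elliptic diagram containing a multi-multiple edge is $G_2^{(m)}$ for $m \geq 6$, a subdiagram is elliptic in $\Sigma^{\leq 6}$ (in the partial-low-weighting sense) precisely when the corresponding subdiagram of $\Sigma$ is elliptic, so $\Sigma$ and $\Sigma^{\leq 6}$ have the same missing face list, namely that of $G$. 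Hence $\Sigma^{\leq 6}$ occurs in the outer loop. Next, the Gram matrix $M(\Sigma)$ is one of the solutions of $V(G)$ enumerated for $\Sigma^{\leq 6}$: the entries of $V(G)$ not already fixed by $\Sigma^{\leq 6}$ are exactly those indexed by the multi-multiple and dashed edges of $\Sigma$, and $V(G)$ asks that certain principal minors of size at least $d+2$ vanish, which holds for $M(\Sigma)$ either by assumption or, in the applications of this proposition, automatically by Proposition \ref{prop: rank condition}. Finally, since $P$ is a compact hyperbolic Coxeter polytope, $\Sigma$ is a connected admissible hyperbolic diagram with positive inertia index $d$ and no parabolic subdiagrams or bold edges, so it passes the concluding test and is output.

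I expect the only delicate point to be the identification of the combinatorial type of $\Sigma^{\leq 6}$ with that of $\Sigma$ — equivalently, that forgetting multi-multiple and dashed edge weights neither creates nor destroys a missing face — which is precisely why the notion of an elliptic subdiagram of a partial low-weighting was set up as in the paragraph preceding this proposition; everything else is bookkeeping about the two finite loops. It is also worth noting that a solution of $V(G)$ need not itself be the Gram matrix of a valid abstract Coxeter diagram (the forgotten entries could fail to equal $\cos\left(\frac{\pi}{m}\right)$ for an integer $m \geq 6$ or to exceed $1$ in absolute value), which is why the final filtering through Remark \ref{rem: Vinberg conditions} is needed; this only means the procedure may output a few spurious diagrams, which does not affect the completeness asserted here.
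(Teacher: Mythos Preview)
Your argument is correct. Note, however, that the paper states this proposition without proof: it is presented as an essentially definitional description of the search procedure, with the surrounding paragraphs (on the finiteness of partial low-weightings, the definition of ellipticity for partial low-weightings, and the definition of admissibility) doing the work implicitly. Your write-up makes explicit exactly the points the paper relies on tacitly --- in particular, that $\phi$ preserves the missing face list (so $\Sigma^{\leq 6}$ has type $G$), that $\Sigma^{\leq 6}$ is admissible because $\Sigma$ is, and that $M(\Sigma)$ appears among the finitely many solutions of $V(G)$ attached to $\Sigma^{\leq 6}$ --- so there is nothing to compare beyond saying that you have supplied the details the paper omits.
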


In our classification, we first fix a combinatorial type $G$ and then deduce certain properties of the dihedral angles that must hold for any compact Coxeter polytope of type $G$.  This set of restrictions will comprise $R(G)$.  Moreover, our set of equations $V(G)$ will be a subset of the restrictions guaranteeing that the Gram matrix of a $d$-polytope has rank $d+1$. Hence, $V(G)$ is satisfied by the Gram matrix of any compact Coxeter $d$-polytope.  Imposing the conditions of $R(G)$ and $V(G)$ does not eliminate any compact Coxeter polytopes of combinatorial type $G$.  Thus, we obtain the following corollary, stating that Proposition \ref{prop: iterate over partial} yields all compact Coxeter polytopes of type $G$ when $R(G)$ and $V(G)$ are as described.

\begin{cor}\label{cor: pass to computation}
Suppose $G$, $R(G)$, and $V(G)$ are chosen satisfying the hypotheses of Proposition \ref{prop: iterate over partial}, and additionally that all equations in $V(G)$ come from setting principal minors of size $\geq d+2$ to zero, and that the properties in $R(G)$ are satisfied for all compact Coxeter polytopes of type $G$.  Then the process in Proposition \ref{prop: iterate over partial} yields all Coxeter diagrams corresponding to polytopes of type $G$. 
\end{cor}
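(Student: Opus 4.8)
The plan is to observe that Corollary~\ref{cor: pass to computation} differs from Proposition~\ref{prop: iterate over partial} only in that the clause ``satisfying $V(G)$ and $R(G)$'' has been deleted from the conclusion. Consequently, it suffices to show that \emph{every} compact Coxeter $d$-polytope $P$ with $d+4$ facets and combinatorial type $G$ has a Coxeter diagram $\Sigma(P)$ satisfying both $V(G)$ and $R(G)$. Granting this, the family of Coxeter diagrams of compact polytopes of type $G$ coincides with the subfamily of those that additionally satisfy $V(G)$ and $R(G)$, and Proposition~\ref{prop: iterate over partial} asserts precisely that the stated algorithm enumerates the subfamily; hence the algorithm enumerates the entire family, which is the claim.

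First I would dispose of $R(G)$: by the hypothesis of the corollary, every property comprising $R(G)$ holds for all compact Coxeter polytopes of type $G$, so $\Sigma(P)$ satisfies $R(G)$ with nothing to check.

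Next I would verify $V(G)$. By hypothesis, each equation of $V(G)$ asserts the vanishing of a principal minor $\det\bigl(M|_{S,S}\bigr)$ of $M = M(\Sigma(P))$ with $|S| \geq d+2$. By Remark~\ref{rem: Vinberg conditions}, $M$ is the Gram matrix of a compact hyperbolic Coxeter $d$-polytope and hence has rank $d+1$; since the rank of any principal submatrix of a matrix is at most the rank of the matrix, every principal submatrix of $M$ of order $\geq d+2$ is singular. Therefore each equation of $V(G)$ is satisfied by $\Sigma(P)$. (The special case $|S| = d+2$ is exactly Proposition~\ref{prop: rank condition}.)

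Combining these two points proves that every Coxeter diagram of a compact polytope of type $G$ lies in the family produced by the algorithm of Proposition~\ref{prop: iterate over partial}, which completes the argument. I do not expect a genuine obstacle in this deduction; the only subtlety is bookkeeping, namely ensuring that the minors appearing in $V(G)$ really are of order $\geq d+2$ (so that the rank bound applies) and that the restrictions in $R(G)$ were derived as necessary conditions on polytopes of type $G$ rather than mere convenient simplifications. Both are explicitly among the hypotheses of the corollary, so the substantive work lies elsewhere, in the later sections where a suitable $R(G)$ is produced for each combinatorial type and the finiteness hypothesis of Proposition~\ref{prop: iterate over partial} is verified.
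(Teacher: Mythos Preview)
Your proposal is correct and follows essentially the same approach as the paper: both arguments reduce to showing that every compact Coxeter polytope of type $G$ automatically satisfies $V(G)$ (via the rank condition on the Gram matrix) and $R(G)$ (by hypothesis), so that the restricted enumeration of Proposition~\ref{prop: iterate over partial} in fact captures all such polytopes. The paper's proof is slightly terser but identical in substance.
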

\begin{proof}
Since the rank of the Gram matrix $M(P)$ for any compact Coxeter $d$-polytope $P$ is $d+1$, then every principal minor of size $d+2$ or larger must vanish.  Thus, all Coxeter diagrams corresponding to compact polytopes of type $G$ satisfy $V(G)$.  By assumption, all compact Coxeter polytopes of type $G$ also satisfy $R(G)$.  The process in Proposition \ref{prop: iterate over partial} yields the set of all compact Coxeter polytopes of type $G$ satisfying $V(G)$ and $R(G)$, which is merely equivalent to the set of all compact Coxeter polytopes of type $G$.
\end{proof}

We emphasize that in Proposition \ref{prop: iterate over partial}, our conditions on $V(G)$ require iterating over all admissible partial low-weightings of type $G$ satisfying $R(G)$.  In practice, this involves checking that the system $V(G)$ has finitely many solutions for each such low-weighting, which can be done with a computer algebra system such as {\tt Mathematica}.  We often choose $V(G)$ to be minimal or nearly minimal to satisfy the hypotheses of Proposition \ref{prop: iterate over partial}, since in practice this speeds up the necessary computations.  However, we are not aware of a less computationally challenging method for checking the sufficiency of the choice of $V(G)$, as the system may (and often does) have infinitely many solutions when considering non-admissible partial low-weightings, and enumerating the set of admissible partial low-weightings is itself computationally complex. 

\subsection{Computational methods}\label{subsec: computation}

In this section, we describe the computational steps used to classify compact Coxeter polytopes of a given combinatorial type $G$, following the methods described in Proposition \ref{prop: iterate over partial} and Corollary \ref{cor: pass to computation}.  We begin with a set of $d + k$ vertices with unknown edge weights, and describe the process by which we assign edge weights such that every admissible partial low-weighting of type $G$ is constructed.

\subsubsection*{1. Select a set of multi-multiple edges}
For each combinatorial type, we begin by restricting the set of edges which can be multi-multiple according to the results in Subsection \ref{subsec: multi-multiple}.  In practice, each combinatorial type considered in this paper can be limited to having at most five multi-multiple edges.  We then iterate over all subsets of the possibly multi-multiple edges, fix such a subset $H$, and examine the Coxeter diagrams for which the set of multi-multiple edges is precisely $H$.

\subsubsection*{2. Assign edge weights within Lann{\'e}r diagrams of size $4$ and $5$}
Recall that there are finitely many Lann{\'e}r diagrams of size $4$ or $5$ (see Figure \ref{fig: Lanner diagrams}).  In particular, if we restrict an admissible partial low-weighting to only those vertices contained in Lann{\'e}r diagrams of size $4$ or $5$, the resulting subdiagram is obtained by gluing Lann{\'e}r diagrams from this list.  We can thus iterate over the possible Lann{\'e}r diagrams and over permutations of the vertices within each Lann{\'e}r diagram to assign weightings to edges within such a subdiagram.

\subsubsection*{3. Assign weights to the remaining ordinary low-weight edges}
There are finitely many assignments of the remaining low-weight edges, i.e., those not contained within a Lann{\'e}r diagram of size $4$ or $5$, since each must have weight $0$, $1$, $2$, or $3$.  We iterate over all these possibilities, with the additional restriction that all subdiagrams not containing a missing face must be elliptic (see Figure \ref{fig: ell diagrams}).  We furthermore require that subdiagram induced by any two Lann{\'e}r diagrams is connected, lest this subdiagram be superhyperbolic.  

\subsubsection*{4. Solve for the remaining edge weights}
At this stage, the only edge weights which have not been assigned are the weights of the multi-multiple edges and dashed edges.  The weights of the multi-multiple edges must be real numbers in the range 
\\$\left[\cos\left(\frac{\pi}{6}\right), 1\right) = \left[\frac{\sqrt{3}}{2}, 1\right)$, and the weights of the dashed edges must be real numbers in the range $(1,\infty)$.  Restricting to these ranges, we find all solutions to the system of equations $V(G)$, where the unknowns are the weights of the multi-multiple and dashed edges.   We do so using the computer algebra system {\tt Mathematica}.  By assumption, $V(G)$ is chosen so that this solution set is finite.  

\subsubsection*{5. Check the signatures of the resulting matrices.}
For each of the solutions to the system of equations, we obtain a potential Gram matrix of a compact Coxeter polytope.  By Remark \ref{rem: Vinberg conditions}, it is sufficient to check whether the signature of the resulting matrix is $(d, 1, k-1)$.  If so, this is the Gram matrix of a compact Coxeter $d$-polytope with $d+k$ facets of combinatorial type $G$.  

\subsubsection*{6. Glue prisms onto any prism facets}
Recall that we initially assumed the weights of all prism edges were $0$.  If any polytopes are constructed by the process above, we can obtain the complete list of compact Coxeter polytopes of combinatorial type $G$ by gluing compact Coxeter prisms to this prism facet.  There are finitely many compact Coxeter prisms of dimension $4$ or $5$, see \cite{Kap} for a complete list.

The code implementing the process described above is publicly available at \\\url{https://github.com/agburcroff/Cox_d-Polytopes_with_dplus4_Facets.git}.

\section{\texorpdfstring{Classification of Compact Coxeter $4$-Polytopes with $8$ Facets}{Classification of Compact Coxeter 4-Polytopes with 8 Facets}}\label{sec: d+4 class}
In this section, we explain how to determine all compact Coxeter $4$-polytopes with $8$ facets.  We handle each of the $34$ possible combinatorial types determined in Section \ref{sec: comb types d+4} and listed in Appendix \ref{app: Gale diagrams} individually.  

For each such combinatorial type, we prove a sufficient number of restrictions on the Coxeter diagram of any polytope realising this type so that the possible Coxeter diagrams can be listed by a finite algorithm.  To be more specific, we claim that given any possible assignment of weights on the ordinary edges of $\Sigma$, there are a finite number of solutions for the weights of the remaining edges given the system of equations for $\rank(M(\Sigma))$ i.e, that every principal minor of order $6$ in $M(\Sigma)$ has determinant $0$.

Given a combinatorial type, the task of enumerating all compact polytopes with this type is a rather involved task.  Due to the presence of multi-multiple edges, initially there is an infinite number of potential Gram matrices that must be searched.  In this section, we detail how to use information about the elliptic and Lann{\'e}r subdiagrams to reduce the number of multi-multiple edges, then, once we have reduced to a system of algebraic equations with finitely many solutions, computationally determine those solutions which yield compact polytopes.

We first eliminate several combinatorial types by applying the previous work of Felikson and Tumarkin \cite{FT1,FT2,FT3, Tum}.

\begin{cor}
There are no compact hyperbolic Coxeter $4$-polytopes of combinatorial types $G_{31}$, $G_{32}$, $G_{33}$, or $G_{34}$.
\end{cor}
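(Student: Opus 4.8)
The plan is to invoke the combinatorial classification from Section~\ref{sec: comb types d+4} together with the structural results of Felikson and Tumarkin quoted in Section~\ref{sec: d+4 properties}. First I would recall that, by Theorem~\ref{thm: FT ess} (and its Corollary~\ref{cor: 4 8 disjoint}), any compact hyperbolic Coxeter $4$-polytope with $8$ facets must have at least three pairs of disjoint facets. Each combinatorial type $G_i$ in our list corresponds, via Theorem~\ref{thm: affine generation}, to an affine Gale diagram with exactly two positive points lying in the interior of the convex hull of the six negative points; the pairs of disjoint facets can be read off directly from this diagram as the pairs of facets whose complementary point sets have intersecting convex hulls. So the first step is purely bookkeeping: for each of $G_{31}, G_{32}, G_{33}, G_{34}$, count the pairs of disjoint facets from the description in Appendix~\ref{app: Gale diagrams}.

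The claim is then that each of these four types has at most two pairs of disjoint facets, and hence by Corollary~\ref{cor: 4 8 disjoint} cannot be realised as a compact Coxeter $4$-polytope. (Here one should double-check whether all four are excluded purely by the ``$\geq 3$ pairs'' criterion, or whether some are instead excluded by Theorem~\ref{thm: FT one pair} — i.e.\ they have exactly one pair of disjoint facets — or even have none and would have to be a simplex or Esselmann polytope by \cite[Theorem A]{FT1}; the argument structure is the same in each case, just citing the relevant one among Theorems~\ref{thm: FT one pair} and \ref{thm: FT ess} and Corollary~\ref{cor: 4 8 disjoint}.) Since these combinatorial types necessarily have too few pairs of disjoint facets to satisfy the constraints that a compact hyperbolic structure would impose, no such polytope exists.

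The main obstacle is not conceptual but organisational: one must correctly extract the pair-of-disjoint-facets data for each type from the Gale diagram / missing-face description in the appendix, and make sure the right exclusion theorem is cited for each. Concretely, I would present a short table listing, for each of $G_{31}$ through $G_{34}$, its number of pairs of disjoint facets, and then in each case conclude via the appropriate result (Corollary~\ref{cor: 4 8 disjoint} when the count is $2$, Theorem~\ref{thm: FT one pair} when it is $1$, and the Felikson--Tumarkin classification of polytopes with no disjoint pair when it is $0$) that the type is non-realisable. This yields the corollary immediately.
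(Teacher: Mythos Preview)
Your approach is essentially the paper's: the pairs of disjoint facets are exactly the size-$2$ missing faces, and from the missing face lists in Appendix~\ref{app: Gale diagrams} each of $G_{31}$, $G_{32}$, $G_{33}$, $G_{34}$ has precisely one such pair (namely $01$), so Theorem~\ref{thm: FT one pair} disposes of all four at once. There is no need to route through the affine Gale diagram or to invoke Corollary~\ref{cor: 4 8 disjoint}; just read off the size-$2$ entries from the tables and cite Theorem~\ref{thm: FT one pair}.
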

\begin{proof}
These combinatorial types all have exactly one pair of disjoint facets, and thus cannot be realised by Theorem \ref{thm: FT one pair}.
\end{proof}

Using Corollary \ref{cor: 4 8 disjoint} (following from results in \cite{FT3}) we can immediately exclude six more combinatorial types, each having precisely two pairs of disjoint facets.  

\begin{cor}
There are no compact hyperbolic Coxeter $4$-polytopes of combinatorial types $G_{25}$, $G_{26}$, $G_{27}$, $G_{28}$, $G_{29}$, or $G_{30}$.
\end{cor}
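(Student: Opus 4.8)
The plan is to mirror the argument used for types $G_{31}$ through $G_{34}$, but now invoking Corollary \ref{cor: 4 8 disjoint} in place of Theorem \ref{thm: FT one pair}. First I would recall that Corollary \ref{cor: 4 8 disjoint}, which is a consequence of Theorem \ref{thm: FT ess} of Felikson and Tumarkin \cite{FT3}, states that every compact hyperbolic Coxeter $4$-polytope with $8$ facets has at least three pairs of disjoint facets. So the entire content of the proof is to verify that each of the combinatorial types $G_{25}, G_{26}, G_{27}, G_{28}, G_{29}, G_{30}$ has exactly two pairs of disjoint facets, and hence cannot be realised.

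To carry this out, I would read off the missing face lists of these six combinatorial types from Appendix \ref{app: Gale diagrams} (equivalently, inspect their affine Gale diagrams). A pair of disjoint facets corresponds to a missing face of size $2$; by Proposition \ref{prop: Vin 3.1} and the surrounding discussion, the missing faces of size $2$ are exactly the pairs $\{f_i, f_j\}$ whose intersection is empty. In Gale-diagram terms (Theorem \ref{thm: affine generation}), a pair $\{f_i, f_j\}$ is disjoint precisely when the convex hull of the remaining six points contains neither of the two positive points in its interior in the required way — more simply, one just counts the size-$2$ members of the antichain of missing faces. So the verification reduces to a finite combinatorial check: for each of the six types, list its missing faces and confirm there are exactly two of size $2$. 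Since three pairs are required by Corollary \ref{cor: 4 8 disjoint}, each such type is excluded.

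The main (and only) obstacle is bookkeeping: making sure the missing face lists in Appendix \ref{app: Gale diagrams} are correctly transcribed and that "pairs of disjoint facets" is counted consistently with how Felikson–Tumarkin count them (unordered pairs, i.e., size-$2$ antichain elements). There is no geometric or algebraic difficulty — the realisability question is settled entirely by the combinatorial type, which is exactly what Corollary \ref{cor: 4 8 disjoint} constrains. I would therefore present the proof in one or two sentences: by inspection of Appendix \ref{app: Gale diagrams}, each of $G_{25}$ through $G_{30}$ has precisely two pairs of disjoint facets, so by Corollary \ref{cor: 4 8 disjoint} none of them is realised by a compact hyperbolic Coxeter $4$-polytope.

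\begin{proof}
By inspection of the combinatorial types listed in Appendix \ref{app: Gale diagrams}, each of $G_{25}$, $G_{26}$, $G_{27}$, $G_{28}$, $G_{29}$, and $G_{30}$ has exactly two missing faces of size $2$, i.e., exactly two pairs of disjoint facets. By Corollary \ref{cor: 4 8 disjoint}, any compact hyperbolic Coxeter $4$-polytope with $8$ facets must have at least three pairs of disjoint facets. Hence none of these combinatorial types is realised by a compact hyperbolic Coxeter $4$-polytope.
\end{proof}
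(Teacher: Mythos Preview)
Your proof is correct and follows exactly the same reasoning as the paper: the paper simply notes that each of these six types has precisely two pairs of disjoint facets and invokes Corollary~\ref{cor: 4 8 disjoint} to exclude them. (As a minor bookkeeping note, the row for $G_{25}$ appears to be missing from the appendix table, but this is an omission in the paper rather than a flaw in your argument.)
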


We can furthermore eliminate three combinatorial types by considering a certain forbidden subdiagram.

\begin{cor}
There are no compact hyperbolic Coxeter $4$-polytopes of combinatorial types $G_{22}$, $G_{23}$, or $G_{24}.$
\end{cor}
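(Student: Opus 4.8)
The plan is to proceed exactly as in the preceding two corollaries, namely by exhibiting a forbidden subdiagram inside the Coxeter diagram forced by each of the combinatorial types $G_{22}$, $G_{23}$, $G_{24}$. The relevant tool is Lemma \ref{lem: Tum forbidden subdiagram}: no compact Coxeter $4$-polytope can contain a subdiagram whose induced missing face list is isomorphic to $\{0123,014,235\}$. So the first step is to look up, in Appendix \ref{app: Gale diagrams}, the missing face lists of $G_{22}$, $G_{23}$, $G_{24}$, and for each of them identify a $6$-element subset of facets $\{a,b,c,d,e,f\}$ whose induced missing face list is, up to relabelling, exactly $\{abcd, abe, cdf\}$. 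Since the missing faces of a compact hyperbolic $d$-polytope correspond to the Lann\'er subdiagrams of $\Sigma(P)$ (via Proposition \ref{prop: Vin 3.1}), finding such a subset of facets immediately produces the forbidden subdiagram in the Coxeter diagram, and Lemma \ref{lem: Tum forbidden subdiagram} gives the contradiction.

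Concretely, I would carry out the following: (1) record the missing face list of each of $G_{22}$, $G_{23}$, $G_{24}$ from the appendix; (2) for each, search among the $\binom{8}{6}=28$ choices of $6$ facets for one whose \emph{induced} missing face list — i.e., the missing faces of $P$ entirely contained in that $6$-set, after discarding any that are not minimal within the restriction — matches the pattern $\{0123,014,235\}$ under some bijection of labels; (3) conclude via Lemma \ref{lem: Tum forbidden subdiagram} that no compact hyperbolic Coxeter $4$-polytope of that type exists. One subtlety to check carefully at step (2): when restricting to a subset of facets, a missing face of the whole polytope restricts to a missing face of the sub-configuration only if it is contained in the chosen subset, and additionally one must verify that no \emph{proper} subset of the chosen $6$ facets already has empty intersection in a way that would create extra (smaller) missing faces — in other words, that the induced missing face list is genuinely $\{0123,014,235\}$ and not a refinement of it. This is a finite check once the combinatorial types are written down.

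I would present this as a short proof: ``By inspection of the missing face lists in Appendix \ref{app: Gale diagrams}, each of $G_{22}$, $G_{23}$, $G_{24}$ contains a set of six facets whose induced missing face list is isomorphic to $\{0123,014,235\}$. Since the missing faces of a compact Coxeter polytope are exactly the Lann\'er subdiagrams of its Coxeter diagram, this would force $\Sigma(P)$ to contain the subdiagram forbidden by Lemma \ref{lem: Tum forbidden subdiagram}, a contradiction.'' The main obstacle is purely bookkeeping: correctly reading off the missing face lists of $G_{22}$, $G_{23}$, $G_{24}$ from the appendix and verifying the isomorphism of missing face lists — there is no conceptual difficulty, and in particular no new geometry or spectral argument is needed, since all the heavy lifting is done by Tumarkin's Lemma \ref{lem: Tum forbidden subdiagram}. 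If it turns out that one of the three types does \emph{not} literally contain the pattern $\{0123,014,235\}$ but instead a relabelled or slightly different forbidden configuration, the fallback would be to reprove a variant of Lemma \ref{lem: Tum forbidden subdiagram} for that configuration using local determinants (Propositions \ref{prop: local vertex} and \ref{prop: local edge}) to detect a superhyperbolic subdiagram, but I expect the direct appeal to Tumarkin's result to suffice.
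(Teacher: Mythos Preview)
Your proposal is correct and matches the paper's proof exactly: for each of $G_{22}$, $G_{23}$, $G_{24}$ one exhibits six facets whose induced missing face list is isomorphic to $\{0123,014,235\}$ and then invokes Lemma~\ref{lem: Tum forbidden subdiagram}. (Your caution about extra induced missing faces is harmless but unnecessary: since the missing faces of a simple polytope form an antichain, the missing faces induced on any facet subset $S$ are precisely the missing faces of $P$ contained in $S$.)
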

\begin{proof}
Each of these has a subdiagram with induced missing face list isomorphic to $\{0123, 014, 235\}$, hence cannot be realised as a polytope by Lemma \ref{lem: Tum forbidden subdiagram}.
\end{proof}

It remains to determine whether there are polytopes of combinatorial type $G_i$ for $i = 1,2,\dots,21$, and if so, to classify these polytopes.  We handle each of these combinatorial types individually in the following subsections.  In each subsection, we prove a sufficient set of restrictions to ensure that any polytopes realising a given combinatorial type can be listed by a finite algorithm.  One such initial restriction is that we can set the weight of all prism edges to $0$ (see Section \ref{sec: d+4 properties} for further explanation), and then glue prisms to any polytopes obtained.  In each case, the algorithm has been successfully implemented via the program described in Subsection \ref{subsec: computation}, though this program can in some cases take several days to check one combinatorial type on a standard machine.

\addtocontents{toc}{\protect\setcounter{tocdepth}{1}}
\subsection*{\texorpdfstring{Combinatorial type $G_1$}{Combinatorial type G1}}
Observe that this is the combinatorial type of a simplex which has been truncated at three distinct vertices.  In particular, vertices $v_0$, $v_1$, and $v_2$, which correspond to the facets obtained by the truncation, are all prism vertices. There are no possible multi-multiple edges by Lemma \ref{lem: multi-multiple}, since all non-prism edges are contained in a Lann{\'e}r diagram of order $4$. This combinatorial type satisfies the conditions of Corollary \ref{cor: pass to computation} and can be handled via computer computation without further restriction using the following system of equations:
$$V(G_1) = \{01,02,04,12,13,23\}\,.$$
This yields $130$ compact Coxeter polytopes of type $G_1$.

\subsection*{\texorpdfstring{Combinatorial type $G_2$}{Combinatorial type G2}}
This is the combinatorial type of a simplex that has been truncated at two vertices and one edge, disjoint from the two vertices.  The vertices $v_0$ and $v_1$ are both prism vertices.  There are no possible multi-multiple edges, as all non-prism edges satisfy the hypotheses of Lemma \ref{lem: multi-multiple}.  Similarly to the previous case, this combinatorial type satisfies the conditions of Corollary \ref{cor: pass to computation} with:
$$V(G_2) = \{01, 03, 04, 12, 13, 23\}\,.$$
This yields $115$ compact Coxeter polytopes of type $G_2$.

\subsection*{\texorpdfstring{Combinatorial type $G_3$}{Combinatorial type G3}}
This combinatorial type, like the previous two, is also relatively straightforward to classify.  Note that the prism vertices are $v_0$ and $v_3$. There are no possible multi-multiple edges by Lemma \ref{lem: multi-multiple}. The system of equations 
$$V(G_3) = \{02,03,07,12,13,23\}$$
provides sufficient restrictions to satisfy Corollary \ref{cor: pass to computation}.  This yields $49$ compact Coxeter polytopes of type $G_3$.

\subsection*{\texorpdfstring{Combinatorial type $G_4$}{Combinatorial type G4}}
Since this combinatorial type consists of four pairs of disjoint facets, it is the combinatorial type of a $4$-cube.  The compact $4$-cubes were recently classified by Jacquemet and Tschantz \cite{JT}, which, similarly to the present methods, utilised significant computer searches.  There are $12$ such polytopes.

\subsection*{\texorpdfstring{Combinatorial type $G_5$}{Combinatorial type G5}}
This is the first example of a combinatorial type for which we need to apply extra restrictions before passing to computer computation.  

Note that there is at most one multi-multiple edge within the subdiagram $567$, and one multi-multiple edge within the subdiagram $01234$.  There are no further possible multi-multiple edges.  By symmetry, we can assume the multi-multiple edges are limited to $57$ and $23$.

Suppose that both arise, say $57$ and $23$ are multi-multiple.  Note then that $46$, $06$, and $16$ are non-empty to ensure no two Lann{\'e}r subdiagrams are disjoint, and in fact they must be single edges.  Moreover, $56$ and $57$ are either both have multiplicity $1$, or $0$ and $1$, respectively. The only other additional non-empty edges are $12$ and $03$, each with multiplicity at most $3$.  Up to symmetry, this leaves $2(1+3+9)=24$ diagrams to check.  We can do so using Corollary \ref{cor: pass to computation} and the system of equations
$$V(G_5) = \{03, 04, 12, 13, 14, 24, 34\}\,.$$

Now suppose just $57$ is multi-multiple.  The previous check actually encompasses the case where precisely three vertices of $01234$ are adjacent to $v_6$.  Suppose there are four such vertices, say $0123$ are.  Then all edges between the subdiagram $0123$ and $v_6$ are simple edges, and $34$ and $45$ either both have multiplicity $1$, or $0$ and $1$, respectively. Then the only additional edges are $02$ and $27$, each with multiplicity at most $3$.  As in the previous case, there are $24$ partial low-weightings to test, we can complete this using Corollary \ref{cor: pass to computation} and the system of equations $V(G_5)$ defined above. Lastly, suppose there are five such vertices, then we can proceed similarly with just $2$ diagrams to check.

The remaining cases, namely when only $23$ is multi-multiple or when there are no multi-multiple edges, can be handled with the same choice of $V(G_5)$ as above.  This yields $3$ compact Coxeter polytopes of type $G_5$.  

\subsection*{\texorpdfstring{Combinatorial type $G_6$}{Combinatorial type G6}}

Note that $15$ and $35$ are not multi-multiple by applying Lemma \ref{lem: overlapping triangles} to the subdiagram $13457$.  

If $36$ is multi-multiple, then so is $37$.  Suppose it is, and $37$ is not.  Then $46$, $56$, and $35$ are empty to prevent inducing a forbidden missing face of size $3$.  Then $57$ has multiplicity at least $2$ - looking at the Lann{\'e}r subdiagrams, there are only two possibilities for the rank-$4$ Lann{\'e}r diagram (where in fact $57$ has multiplicity $3$).  It can be checked that the left triangle $145$ cannot be admissibly completed in either case. 

Suppose $36$ and $37$ are multi-multiple.  Then $4567$ is a Lann{\'e}r path.  We can then use local determinants on $134567$, along with Proposition \ref{prop: local edge}, to show that there are no possibilities.

Otherwise, just $37$, $12$, $24$, and $14$ can be multi-multiple.  Under this assumption, $G_6$ satisfies the hypotheses of Corollary \ref{cor: pass to computation} with 
$$V(G_6) = \{01,02,06,12,14,16,23,26,27\}\,.$$
This yields $2$ compact Coxeter polytopes of type $G_6$.

\subsection*{\texorpdfstring{Combinatorial type $G_7$}{Combinatorial type G7}}
Note that vertices $v_0$ and $v_1$ are both prism vertices.  The only possible multi-multiple edges are $34$ and one of $25$ or $26$ by Lemma \ref{lem: multi-multiple}.  Under these restrictions, we can apply Corollary \ref{cor: pass to computation} with 
$$V(G_7) = \{02, 03, 07, 12, 13, 23\}\,.$$
This yields $2$ compact Coxeter polytopes of type $G_7$.

\subsection*{\texorpdfstring{Combinatorial type $G_8$}{Combinatorial type G8}}
This has prism vertices $v_0$ and $v_3$, with the only possible multi-multiple edges being $16$ and $27$ by Lemma \ref{lem: multi-multiple}.   The system of equations 
$$V(G_8) = \{03, 05, 06, 07, 13, 23, 67\}$$
provides sufficient restrictions to satisfy Corollary \ref{cor: pass to computation}.  This yields $1$ compact Coxeter polytope of type $G_8$.

\subsection*{\texorpdfstring{Combinatorial type $G_9$}{Combinatorial type G9}}
Note that the vertex $v_0$ is the only prism vertex.    By Lemma \ref{lem: multi-multiple}, the only possible multi-multiple edges are $36$, $37$, and $15$.  Note that $36$ and $37$ cannot simultaneously be multi-multiple, otherwise any assignment of weights to the rank-$4$ Lann{\'e}r diagram $4567$ induces a forbidden Lann{\'e}r triangle with $3$. By symmetry, we can assume $37$ is not multi-multiple.

Suppose $36$ and $15$ are multi-multiple.  Note that $46$, $56$, and $57$ must then be empty, hence the rank-$4$ Lann{\'e}r diagram $4567$ is a path.  For any assignment of weights within this Lann{\'e}r diagram, there is no additional non-empty edge connecting Lann{\'e}r diagram $25$ to the Lann{\'e}r diagram $367$.  Thus, this cannot occur, as we cannot have two disjoint Lann{\'e}r diagrams.

Suppose only edge $15$ is multi-multiple.  The hypotheses of Corollary \ref{cor: pass to computation} are satisfied with
$$V(G_9) = \{01, 04, 05, 12, 13, 15, 45\}\,.$$

Suppose only edge $36$ is multi-multiple.  We can again look at the structure of the rank-$4$ Lann{\'e}r diagram (noting $46$ and $56$ are empty) - if it is a path, then we can repeat the same analysis as two paragraphs higher with connecting $25$ to $367$.  The only remaining option is that the rank-$4$ diagram consists of the star with one edge of multiplicity $3$ and two edges of multiplicity $1$, with two possible positions (up to symmetry).  Note that the only additional edges are $15$ and $24$.  We complete the analysis of this case with $V(G_9)$ as above.

Similarly, if all ordinary edges have low weight, then the same system of equations $V(G_9)$ suffices.  This yields $15$ compact Coxeter polytopes of type $G_9$.

\subsection*{\texorpdfstring{Combinatorial type $G_{10}$}{Combinatorial type G10}}
By Lemma \ref{lem: overlapping triangles}, the only possible multi-multiple edges are in the subdiagrams $256$ and $034$.  Note that not both $25$ and $26$ can be multi-multiple lest $156$ cannot be admissibly completed, so assume $26$ has low weight.  A similar analysis can be used to show that at least one of $03$ or $04$ has low weight, hence we can assume $04$ has low weight.

Suppose both $34$ and $56$ are multi-multiple.  Note that we cannot have all edges $13$, $14$, $15$, $16$ non-empty, less we form a $4$-cycle in the subdiagram $134567$.  This subdiagram cannot contain a $4$-cycle since it contains no Lann{\'e}r diagrams of size at least $4$, and all elliptic diagrams are acyclic.  Thus, we can assume by symmetry that the edge $13$ is empty.  Applying Proposition \ref{prop: local edge} to the subdiagram $013456$, we obtain that at least one of $56$ or $34$ has multiplicity at most $13$.  Under this assumption, we can complete the analysis via Corollary \ref{cor: pass to computation} with
$$V(G_{10}) = \{02, 03, 05, 06, 12, 13, 23\}\,.$$

Otherwise, assuming the multi-multiple edges are limited to $03$, $25$, and at most one of $34$ or $56$, Corollary \ref{cor: pass to computation} similarly applies with $V(G_{10})$ defined as above.  This yields $1$ compact Coxeter polytope of type $G_{10}$.

\subsection*{\texorpdfstring{Combinatorial type $G_{11}$}{Combinatorial type G11}}
Observe that the only possible multi-multiple edges are those within the subdiagrams $0167$ or $2345$.  

Suppose that $45$ is multi-multiple.  Observe that no other edges of $2345$ are multi-multiple, lest $67$ cannot connect to one of the rank-$3$ Lann{\'e}r diagrams.  Moreover, by symmetry we can assume the multi-multiple edges among $0167$ are limited to $06$ and $17$ or $06$ and $16$.  Suppose first that these two edges are $06$ and $17$.  We can apply Corollary \ref{cor: pass to computation} with 
$$V(G_{11}) = \{01, 02, 03, 04, 05, 06, 16, 17, 27, 67\}\,.$$
The same analysis can be applied supposing that these two edges are $06$ and $16$.  Thus, we can now assume $45$ is not multi-multiple.

Suppose $06$ is multi-multiple.  In order to ensure that $67$ is connected to $245$ and $345$, we have three cases:
\begin{enumerate}
    \item Suppose $26$ and $37$ are non-empty.  The only other possible multi-multiple edge is $17$.
    \item Suppose $27$ and $37$ are non-empty.  The only other possible multi-multiple edge is $27$.
    \item Suppose $57$ is non-empty.   The other possible multi-multiple edges are $24$, $34$, and $16$.  We cannot have $24$ and $34$ simultaneously multi-multiple, lest we create a forbidden Lann{\'e}r triangle, so assume by symmetry that $24$ is not multi-multiple. 
\end{enumerate}
In all three cases, we can apply Corollary \ref{cor: pass to computation} with $V(G_{11})$ as above.  We can now assume the multi-multiple edges are among $1345$ excluding $34$, i.e. either $13$, $45$ or $35$, $45$ or just one of these, and again the same analysis applies.  This yields $8$ compact Coxeter polytopes of type $G_{11}$.

\subsection*{\texorpdfstring{Combinatorial type $G_{12}$}{Combinatorial type G12}}
Observe that $57$, $37$, $47$, $67$, and $27$ are not multi-multiple by Lemma \ref{lem: overlapping triangles}.

Suppose $03$ is multi-multiple, and $34$ is not.  Then $47$ is empty, and $34$ and $37$ have multiplicities $2,3$ or $3,3$, respectively.  Thus, $67$ has multiplicity $1$.  Considering local determinants on $013457$, the only two possibilities are that $03$ has multiplicity $4$, $34$ has multiplicity $2$ or $3$, $37$ has multiplicity $3$, and $56$ has multiplicity $5$.  Note that now $03$ and $13$ must be empty.  If edge $27$ is empty, we can consider local determinants on $234567$ using Proposition \ref{prop: local edge} to see that we must have multiplicity $2$ on $34$, and repeating local determinant analysis on $123467$ we find that the multiplicity of $26$ is $5$, $6$, or $7$.  Then considering local determinants on the subdiagram $234567$ yieldsthat $17$ is empty, and no other can arise.  This leaves us with three diagrams to check - none of these yield polytopes.  Now we can assume that $27$ is non-empty - in fact, it must have multiplicity $1$.  Considering local determinants on the subdiagram $123467$ using Proposition \ref{prop: local edge} yields that the respective weights of $45$, $34$, $67$ are $2,3,2$; $2,3,3$; $3,3,2$; or $3,2,2$.  Moreover, the only other possible non-empty edges are $25$ or $12$, each with multiplicity $1$.  This leaves us with $16$ diagrams to check - none of these yield polytopes.

Suppose $03$ and $34$ are multi-multiple.  Then $07$ is empty.  So we can consider local determinants on $034567$, noting that if $37$ has weight larger than $1$ then $67$ has multiplicity $1$.  From this and the previous paragraph, we can determine that there are no admissible diagrams with $03$ multi-multiple.  

We are now left with considering multi-multiple edges $34$, $56$, $26$, $25$.  We can then apply Corollary \ref{cor: pass to computation} with 
$$V(G_{12}) = \{05, 12, 15, 16, 25, 26, 35, 56\}\,.$$
This yields $4$ compact Coxeter polytopes of type $G_{12}$.

\subsection*{\texorpdfstring{Combinatorial type $G_{13}$}{Combinatorial type G13}}
Observe by Lemmas \ref{lem: multi-multiple} and \ref{lem: overlapping triangles} that the only possible multi-multiple edges are $12$, $56$, $16$, $25$, $34$, $03$, and $04$.  We cannot simultaneously have $03$ and $04$ multi-multiple lest $37$ and $47$ are both empty, so assume $04$ is not.   We cannot simultaneously have $16$ and $25$ multi-multiple lest $57$ and $47$ are both empty, so assume $25$ is not.  Thus, we are limited to $12$, $56$, $16$, $34$, and $03$ being multi-multiple.

Suppose $56$ and $03$ are multi-multiple.  Then $37$ and $03$ are empty, so we can consider local determinants on $034567$ using Proposition \ref{prop: local edge}.  Assume $57$ is non-empty.  Note that either $67$ is non-empty, or $2$ is attached to $347$ to ensure $26$ is attached to $347$.  If $67$ is non-empty, the local determinant on $034567$ with respect to $07$ cannot be zero, which is a contradiction.  If $2$ is attached to $7$ or $4$, then $47$ cannot have multiplicity $3$ so is $2$.  Considering the same local determinant shows that this is not possible.  So $23$ is non-empty, and by local determinants we must have that $03$ has multiplicity $4$ or $5$, $34$ has multiplicity $2$ or $3$, $47$ has multiplicity $3$, and both $57$ and $23$ have multiplicity $1$, with no other edges being nonempty.  Now take the determinant of the subdiagram $034567$; this is non-zero, a contradiction.

Suppose $34$ and $03$ are multi-multiple (we can assume $56$ is not from previous paragraph).  Then $37$ is empty, so $47$ is not.  We can now use local determinants on $034567$.  From this, we determine that $567$ must have one empty edge.  Thus, one of $67$, $57$ has multiplicity at least $2$, so $47$ has multiplicity $1$.  Repeating a similar local determinant argument, we find that there are no admissible diagrams of this type.

The remaining cases can be handled by Corollary \ref{cor: pass to computation} with 
$$V(G_{13}) = \{01, 02, 05, 06, 12, 13, 16, 25, 26\}\,.$$
This yields $4$ compact Coxeter polytopes of type $G_{13}$.

\subsection*{\texorpdfstring{Combinatorial type $G_{14}$}{Combinatorial type G14}}
By similar considerations to type $G_{12}$, if any of $27$, $37$, $26$, $36$, $02$, or $03$ is multi-multiple then so is $23$.  

Suppose $27$ and $23$ are multi-multiple.  Note that the other possible edges (up to symmetry) can be limited to within $145$, $56$, $04$, $03$, and $36$.  
\begin{itemize}
    \item Suppose $56$ and $04$ are non-empty.  Then the only other possible multi-multiple edge is $14$.  
    \item Suppose $56$ is non-empty, $04$ is empty.  Then the only other possible multi-multiple edges are $45$ and $14$.  
    \item Suppose $04$ is non-empty, $56$ is empty.  Then the only other possible multi-multiple edges are $15$ and $14$. 
    \item Suppose $56$ and $04$ are empty.  Then the other possible multi-multiple edges are $45$, $14$, and $15$.  The only additional edges are $03$ and $36$, one of which has multiplicity $1$ or $2$ and the other having multiplicity $1$.  
\end{itemize}

In all these cases, we can apply Corollary \ref{cor: pass to computation} with 
$$V(G_{14}) = \{01, 14, 15, 17, 23, 24, 47, 57\}\,.$$

We can now restrict the multi-multiple edges to $23$, $45$, $14$, and $15$, and the same analysis applies along with the same choice of $V(G_{14})$.  This yields $2$ compact Coxeter polytopes of type $G_{14}$.

\subsection*{\texorpdfstring{Combinatorial type $G_{15}$}{Combinatorial type G15}}
Look at the subdiagram generated by $234567$.  It can quickly be checked that the only possible multi-multiple edge is one of $26$ or $27$, suppose by symmetry it is $26$.  Then $36$ must be empty, so $37$ and $67$ must have multiplicities $2$ and $3$ or both $3$.  From this information, we can then complete the remainder of the rank-$4$ Lann{\'e}r diagram, keeping in mind that $46$ must have multiplicity $0$ as well.  After doing so, it can be easily checked that the triangle $345$ cannot be completed to an admissible diagram.  Hence there are no compact Coxeter polytopes of this type.

\subsection*{\texorpdfstring{Combinatorial type $G_{16}$}{Combinatorial type G16}}
Considering overlapping triangles, the only possible multi-multiple edges are $15$, $12$, $03$, $04$, and $34$.  We cannot have $03$ and $04$ simultaneously multi-multiple, so assume $03$ has low weight.  By similar considerations to $G_{12}$, if $04$ is multi-multiple then so is $34$.  

We can apply Corollary \ref{cor: pass to computation} with 
$$V(G_{16}) = \{01, 02, 07, 12, 17, 24, 25\}$$
to show that there are no compact Coxeter polytopes of this type.

\subsection*{\texorpdfstring{Combinatorial type $G_{17}$}{Combinatorial type G17}}
By Lemma \ref{lem: overlapping triangles}, the only possible multi-multiple edges are $36$, $37$, $14$, and $15$.  Note that $36$ and $37$ cannot be simultaneously multi-multiple; assume by symmetry that $36$ has low weight.   Moreover, $14$ and $15$ cannot simultaneously be multi-multiple.

With these assumptions, we can apply Corollary \ref{cor: pass to computation} with 
$$V(G_{17}) = \{03, 05, 07, 13, 34, 35, 57\}$$
to show that there are no compact Coxeter polytopes of this type.

\subsection*{\texorpdfstring{Combinatorial type $G_{18}$}{Combinatorial type G18}}
Considering overlapping triangles, the only possible multi-multiple edges are $12$, $37$, $36$, $25$, and $14$.  

Suppose all five of these edges are multi-multiple.  Then the rank-$4$ Lann{\'e}r diagram must be a path, and there are no additional non-empty edges.  We can check that there are no solutions to the determinants of the subdiagrams $123456$ and $012345$ being zero, assuming that the undetermined edges have weight in the range $\left[-\cos\left(\frac{\pi}{6}\right), 0\right)$.  Thus, we can assume at most four of these edges are multi-multiple.  We can now complete the analysis with Corollary \ref{cor: pass to computation}, setting 
$$V(G_{18}) = \{01, 02, 03, 04, 05, 12, 23, 13\}\,,$$
to show that there are no compact Coxeter polytopes of this type.

\subsection*{\texorpdfstring{Combinatorial type $G_{19}$}{Combinatorial type G19}}
By Lemma \ref{lem: overlapping triangles}, the only possible multi-multiple edges are $15$, $14$, $35$, $37$, $24$, and $26$.  Note $35$ and $15$ cannot simultaneously be multi-multiple, and similarly for $24$ and $14$.  We can now complete the analysis with Corollary \ref{cor: pass to computation}, setting 
$$V(G_{19}) = \{02, 03, 05, 06, 07, 12, 13, 23\}\,,$$
to show that there are no compact Coxeter polytopes of this type.

\subsection*{\texorpdfstring{Combinatorial type $G_{20}$}{Combinatorial type G20}}
By Lemma \ref{lem: overlapping triangles}, the only possible multi-multiple edges are contained in the subdiagram $167$.  We can then apply Corollary \ref{cor: pass to computation} with  
$$V(G_{20}) = \{05, 06, 12, 15, 16, 56\}\,.$$
This shows that there are no compact Coxeter polytopes of this type.

\subsection*{\texorpdfstring{Combinatorial type $G_{21}$}{Combinatorial type G21}}
By Corollary \ref{cor: pass to computation}, the only possible multi-multiple edges are $05$, $07$, $15$, $13$, $34$, $46$, $26$, and $27$.  Note that $05$ and $07$ cannot simultaneously be multi-multiple, lest $57$ cannot connect to $346$.  By symmetry, assume $07$ is not multi-multiple.  We can then apply Corollary \ref{cor: pass to computation} with  
$$V(G_{21}) = \{01, 02, 03, 04, 05, 06, 07, 12, 15, 25\}\,,$$
to show that there are no compact Coxeter polytopes of this type.

\section{\texorpdfstring{Classification of Compact Coxeter $5$-Polytopes with $9$ Facets}{Classification of Compact Coxeter 5-Polytopes with 9 Facets}}\label{sec: d+5 class}
We now proceed to the classification of compact Coxeter $5$-polytopes with $9$ facets.  Though the process detailed in Section \ref{sec: comb types d+4} yields more possible combinatorial types in dimension $5$ than in dimension $4$, only six of these ($H_i$ for $1 \leq i \leq 6$) are realised by compact Coxeter polytopes.  Moreover, these combinatorial types in general have a more restrictive face structure than in the lower-dimensional cases, thus in most cases find a set of equations for which the hypotheses of Corollary \ref{cor: pass to computation} are satisfied without first proving additional restrictions.  The main challenges are limiting the set of multi-multiple edges and determining the set of equations $V(H)$ for each combinatorial type $H$.  We examine this process in detail for a few combinatorial types, but we often omit the argument for routine cases.  In these cases, Lemmas \ref{lem: multi-multiple} and \ref{lem: overlapping triangles} are sufficient to limit the multi-multiple edges, and of course the claimed properties of $V(H)$ are checked computationally.

\vspace{-0.2cm}
\subsubsection*{\texorpdfstring{Combinatorial type $H_{1}$}{Combinatorial type H1}}  Note that $v_0$ and $v_1$ are both prism vertices.  By \cite[Lemma 5.3]{Ess}, it is easily checked that there are four possible subdiagrams with the proper missing face structure on $2345678$.  We can thus obtain all diagrams by taking these four subdiagrams and gluing prisms onto facets $0$ and $1$.  This process yields $22$ compact polytopes of type $H_1$.

\vspace{-0.2cm}
\subsubsection*{\texorpdfstring{Combinatorial type $H_{2}$}{Combinatorial type H2}} 
Note that $v_0$ and $v_3$ are both prism vertices.  It is straightforward to check that there are only four possible subdiagrams for $1245678$ by gluing together subdiagrams from \cite[Lemma 5.3]{Ess}.  We thus obtain all $18$ compact polytopes of type $H_2$ by gluing prisms as appropriate to these subdiagrams.

\vspace{-0.2cm}
\subsubsection*{\texorpdfstring{Combinatorial type $H_{3}$}{Combinatorial type H3}}  
At most one of the edges $25$ or $26$ can be multi-multiple.  By symmetry, we can assume that only $25$ can be multi-multiple.  We can then apply Corollary \ref{cor: pass to computation} with $V(H_3) = \{23, 15, 12, 04, 02, 01, 13, 15\}$.  This yields $6$ compact Coxeter polytopes of type $H_3$.

\vspace{-0.2cm}
\subsubsection*{\texorpdfstring{Combinatorial type $H_{4}$}{Combinatorial type H4}}
Only possible multi-multiple edges are $37$ or $38$.  These can not be multi-multiple simultaneously.  We can then take $V(H_4) = \{02, 03, 07, 08, 23, 27, 38\}$.  This yields $3$ compact Coxeter polytopes of type $H_4$.

\vspace{-0.2cm}
\subsubsection*{\texorpdfstring{Combinatorial type $H_{5}$}{Combinatorial type H5}}
The possible multi-multiple edges are restricted to $25$, $04$, $03$, $14$, and $13$, some of which cannot be multi-multiple simultaneously. By symmetry between $3$ and $4$, we can restrict to $25$, $04$, and $14$ possibly being multi-multiple.  We then take $V(H_5)= \{01, 03, 05, 12, 14, 45\}$.  This yields $1$ compact Coxeter polytope of type $H_5$.

\vspace{-0.2cm}
\subsubsection*{\texorpdfstring{Combinatorial type $H_{6}$}{Combinatorial type H6}}
There are no possible multi-multiple edges.  We can take
$V(H_6) = \{01, 02, 04, 12, 13, 23\}$.  This yields $1$ compact Coxeter polytope of type $H_6$.

\vspace{-0.2cm}
\subsubsection*{\texorpdfstring{Combinatorial type $H_{7}$}{Combinatorial type H7}}
At most one of the edges $27$ or $28$ can be multi-multiple.  By symmetry, we can assume that only $28$ can be multi-multiple.  We can take 
$V(H_7) = \{02, 03, 12, 13, 23\}$.

\vspace{-0.2cm}
\subsubsection*{\texorpdfstring{Combinatorial type $H_{8}$}{Combinatorial type H8}}  
The only possible multi-multiple edge is $28$.  We can take 
$V(H_8) = \{02, 03, 08, 13, 23, 27, 28, 38\}$.

\vspace{-0.2cm}
\subsubsection*{\texorpdfstring{Combinatorial type $H_{9}$}{Combinatorial type H9}}  
The only possible multi-multiple edge is $34$.  We can take 
$V(H_9) = \{01, 02, 03, 04, 12, 13, 14, 23\}$.

\vspace{-0.2cm}
\subsubsection*{\texorpdfstring{Combinatorial type $H_{10}$}{Combinatorial type H10}}  
The only possible multi-multiple edge is $12$.  We can take 
$V(H_{10}) = \{01, 02, 03, 12, 18, 28\}$.

\vspace{-0.2cm}
\subsubsection*{\texorpdfstring{Combinatorial type $H_{11}$}{Combinatorial type H11}}  
The only possible multi-multiple edges are  $36$ or $37$, but not both.  As these cannot be multi-multiple simultaneously, we can assume by symmetry that $37$ has low weight.   We can take 
$V(H_{11}) = \{02, 03, 07, 08, 12, 23, 28, 38\}$.

\vspace{-0.2cm}
\subsubsection*{\texorpdfstring{Combinatorial type $H_{12}$}{Combinatorial type H12}}  
The only possible multi-multiple edges either $04$ and $14$, or $03$ and $13$.  By symmetry, we can assume the possible multi-multiple edges are only $03$ and $13$.  We can take 
$V(H_{12}) = \{01, 02, 03, 05, 12, 13, 35\}$.

\vspace{-0.2cm}
\subsubsection*{\texorpdfstring{Combinatorial type $H_{13}$}{Combinatorial type H13}}
The only possible multi-multiple edges are one of $26$ or $27$, and one of $14$ or $15$.  By symmetry, we can assume the multi-multiple edges are limited to $14$ and $26$.  We can take 
$V(H_{13}) = \{01, 02, 04, 06, 12, 13, 23\}$.

\vspace{-0.2cm}
\subsubsection*{\texorpdfstring{Combinatorial type $H_{14}$}{Combinatorial type H14}}
The only possible multi-multiple edges are one of $14$ or $15$.  By symmetry, we can assume $15$ has low weight.  We can take 
$V(H_{14}) = \{02, 05, 12, 13, 23, 24\}$.

\vspace{-0.2cm}
\subsubsection*{\texorpdfstring{Combinatorial type $H_{15}$}{Combinatorial type H15}}
The only possible multi-multiple edges are one of $14$ or $15$. We can take 
$V(H_{15}) = \{01, 03, 04, 05, 12, 13, 15, 34, 35\}$.

\vspace{-0.2cm}
\subsubsection*{\texorpdfstring{Combinatorial type $H_{16}$}{Combinatorial type H16}}
The multi-multiple edges limited to one of $03$ or $04$, and one of $13$ or $15$.   We can take 
$V(H_{16}) = \{01, 03, 05, 06, 12, 13, 16, 36\}$.

\vspace{-0.2cm}
\subsubsection*{\texorpdfstring{Combinatorial type $H_{17}$}{Combinatorial type H17}}
The multi-multiple edges are limited to one of $03$ or $04$, and one of $27$ or $28$.  By symmetry can limit ourselves to considering only $03$ and $27$.  We can take 
$V(H_{17}) = \{02, 06, 07, 12, 26, 67\}$.

\vspace{-0.2cm}
\subsubsection*{\texorpdfstring{Combinatorial type $H_{18}$}{Combinatorial type H18}}
The multi-multiple edges are limited to $12$ and $14$.  We can take 
$V(H_{18}) = \{02, 04, 12, 13, 23\}$.

\vspace{-0.2cm}
\subsubsection*{\texorpdfstring{Combinatorial type $H_{19}$}{Combinatorial type H19}}
The multi-multiple edges are limited to $27$, $28$, $37$, and $38$.  We can take 
$V(H_{19}) = \{02, 03, 07, 08, 12, 13, 23\}$.

\vspace{-0.2cm}
\subsubsection*{\texorpdfstring{Combinatorial type $H_{20}$}{Combinatorial type H20}}
The multi-multiple edges are limited to $14$, $17$, $34$, and $37$.  We can take 
$V(H_{20}) = \{01, 03, 04, 07, 12, 13, 23\}$.

\vspace{-0.2cm}
\subsubsection*{\texorpdfstring{Combinatorial type $H_{21}$}{Combinatorial type H21}}
The only possible multi-multiple edge are $37$ or $38$, and by symmetry we can assume $38$ has low weight.  We can then take 
$V(H_{21}) = \{06, 07, 13, 23, 67\}$.

\vspace{-0.2cm}
\subsubsection*{\texorpdfstring{Combinatorial type $H_{22}$}{Combinatorial type H22}}
The only possible multi-multiple edges are $37$ and $38$.  We can take 
$V(H_{22}) = \{07, 08, 12, 13, 23\}$.

\vspace{-0.2cm}
\subsubsection*{\texorpdfstring{Combinatorial type $H_{23}$}{Combinatorial type H23}}
The only possible multi-multiple edges are $16$ and one of $37$ or $38$.  By symmetry, we can assume $38$ has low weight.  We can take 
$V(H_{23}) = \{01, 03, 06, 12, 13, 23\}$.

\vspace{-0.2cm}
\subsubsection*{\texorpdfstring{Combinatorial type $H_{24}$}{Combinatorial type H24}}
The only possible multi-multiple edges are $24$ and $27$. We can take 
$V(H_{24}) = \{04, 07, 12, 13, 23\}$.

\vspace{-0.2cm}
\subsubsection*{\texorpdfstring{Combinatorial type $H_{25}$}{Combinatorial type H25}}
The only possible multi-multiple edges are $02$, $05$, $17$. We can take 
$V(H_{25}) = \{01, 02, 12, 14, 17, 23, 47\}$.

\vspace{-0.2cm}
\subsubsection*{\texorpdfstring{Combinatorial type $H_{26}$}{Combinatorial type H26}}
The only possible multi-multiple edges are $12$, $14$, $15$, and $24$. We can take 
$V(H_{26}) = \{01, 02, 04, 12, 14, 16, 23, 25\}$.

\vspace{-0.2cm}
\subsubsection*{\texorpdfstring{Combinatorial type $H_{27}$}{Combinatorial type H27}}
The only possible multi-multiple edges are $12$, $24$, and $25$. We can take 
$V(H_{27}) = \{04, 05, 12, 14, 23, 45\}$.

\vspace{-0.2cm}
\subsubsection*{\texorpdfstring{Combinatorial type $H_{28}$}{Combinatorial type H28}}
The only possible multi-multiple edges are $14$, $15$, and $24$. We can take 
$V(H_{28}) = \{01, 02, 04, 12, 13, 23\}$.

\vspace{-0.2cm}
\subsubsection*{\texorpdfstring{Combinatorial type $H_{29}$}{Combinatorial type H29}}
The only possible multi-multiple edges are $14$, $24$, and $27$. We can take 
$V(H_{29}) = \{01, 02, 04, 12, 13, 23\}$.

\vspace{-0.2cm}
\subsubsection*{\texorpdfstring{Combinatorial type $H_{30}$}{Combinatorial type H30}}
The only possible multi-multiple edge is $14$. We can take 
$V(H_{30}) = \{05, 12, 13, 23\}$.

\vspace{-0.2cm}
\subsubsection*{\texorpdfstring{Combinatorial type $H_{31}$}{Combinatorial type H31}}
The only possible multi-multiple edge is $38$. We can take 
$V(H_{31}) = \{07, 12, 13, 23\}$.

\vspace{-0.2cm}
\subsubsection*{\texorpdfstring{Combinatorial type $H_{32}$}{Combinatorial type H32}}
There are no possible multi-multiple edges. We can take 
$V(H_{32}) = \{12, 13, 23\}$.

\vspace{-0.2cm}
\subsubsection*{\texorpdfstring{Combinatorial type $H_{33}$}{Combinatorial type H33}}
The only possible multi-multiple edges are $15$ or $24$; by symmetry we can assume $24$ has low weight.  We can take 
$V(H_{33}) = \{01, 02, 05, 12, 13, 23\}$.

\vspace{-0.2cm}
\subsubsection*{\texorpdfstring{Combinatorial type $H_{34}$}{Combinatorial type H34}}
The only possible multi-multiple edges are one of $25$ or $26$, and one of $37$ or $38$. By symmetry, we can assume $26$ and $38$ have low weight.  We can take 
$V(H_{34}) = \{02, 05, 07, 12, 13, 23\}$.

\vspace{-0.2cm}
\subsubsection*{\texorpdfstring{Combinatorial type $H_{35}$}{Combinatorial type H35}}
The only possible multi-multiple edges are $25$, $26$, and one of $03$ or $04$.  By symmetry, we can assume $04$ has low weight.  We can take 
$V(H_{35}) = \{02, 05, 06, 12, 25, 56\}$.

\vspace{-0.2cm}
\subsubsection*{\texorpdfstring{Combinatorial type $H_{36}$}{Combinatorial type H36}}
The only possible multi-multiple edges are $34$ and one of $03$ or $04$.  By symmetry, we can assume $04$ has low weight.  We can take 
$V(H_{36}) = \{07, 12, 13, 23\}$.

\vspace{-0.2cm}
\subsubsection*{\texorpdfstring{Combinatorial type $H_{37}$}{Combinatorial type H37}}
The only possible multi-multiple edges are $27$ and one of $03$ or $04$.  By symmetry, we can assume $04$ has low weight.  We can take 
$V(H_{37}) = \{08, 12, 13, 23\}$.

\vspace{-0.2cm}
\subsubsection*{\texorpdfstring{Combinatorial type $H_{38}$}{Combinatorial type H38}}
The only possible multi-multiple edges are $07$, $08$, $17$, $18$.  We can take 
$V(H_{38}) = \{01, 03, 07, 08, 12, 17, 18, 27\}$.

\vspace{-0.2cm}
\subsubsection*{\texorpdfstring{Combinatorial type $H_{39}$}{Combinatorial type H39}}
The only possible multi-multiple edges are $34$ and one of $03$ or $04$.  By symmetry, we can assume $04$ has low weight.  We can take 
$V(H_{39}) = \{08, 12, 13, 23\}$.

\vspace{-0.2cm}
\subsubsection*{\texorpdfstring{Combinatorial type $H_{40}$}{Combinatorial type H40}}
The only possible multi-multiple edges are $15$ and one of $03$ or $04$.  By symmetry, we can assume $04$ has low weight.  We can take 
$V(H_{40}) = \{06, 12, 13, 23\}$.

\vspace{-0.2cm}
\subsubsection*{\texorpdfstring{Combinatorial type $H_{41}$}{Combinatorial type H41}}
The only possible multi-multiple edges are $03$, $04$, and $15$.  We can take 
$V(H_{41}) = \{06, 12, 13, 14, 23\}$.

\vspace{-0.2cm}
\subsubsection*{\texorpdfstring{Combinatorial type $H_{42}$}{Combinatorial type H42}}
The only possible multi-multiple edges are $02$, $03$, and $23$.  We can take 
$V(H_{42}) = \{03, 07, 12, 13, 23\}$.

\vspace{-0.2cm}
\subsubsection*{\texorpdfstring{Combinatorial type $H_{43}$}{Combinatorial type H43}}
The only possible multi-multiple edges are $03$ and $34$.  We can take 
$V(H_{43}) = \{01, 04, 13, 14, 23\}$.

\vspace{-0.2cm}
\subsubsection*{\texorpdfstring{Combinatorial type $H_{44}$}{Combinatorial type H44}}
The only possible multi-multiple edges are $12$, $15$, $25$, and one of $03$ or $04$.  By symmetry, we can assume $04$ has low weight. We can take 
$V(H_{44}) = \{02, 05, 07, 12, 17, 23, 57\}$.

\vspace{-0.2cm}
\subsubsection*{\texorpdfstring{Combinatorial type $H_{45}$}{Combinatorial type H45}}
The only possible multi-multiple edges are $03$, $04$, $13$, $15$, and $34$. We can take 
$V(H_{45}) = \{01, 03, 05, 12, 14, 23, 24\}$.

\vspace{-0.2cm}
\subsubsection*{\texorpdfstring{Combinatorial type $H_{46}$}{Combinatorial type H46}}
The only possible multi-multiple edges are $04$, $15$, and one of $03$ or $13$. By symmetry, we can assume $13$ has low weight.  We can take 
$V(H_{46}) = \{05, 06, 12, 13, 14, 24\}$.

\vspace{-0.2cm}
\subsubsection*{\texorpdfstring{Combinatorial type $H_{47}$}{Combinatorial type H47}}
The only possible multi-multiple edges are $03$, $04$, $13$, $15$, and $34$. We can take 
$V(H_{47}) = \{01, 03, 04, 12, 14, 23, 24\}$.

\vspace{-0.2cm}
\subsubsection*{\texorpdfstring{Combinatorial type $H_{48}$}{Combinatorial type H48}}
The only possible multi-multiple edges are $03$, $04$, and $34$. We can take 
$V(H_{48}) = \{05, 12, 13, 14, 23\}$.

\vspace{-0.2cm}
\subsubsection*{\texorpdfstring{Combinatorial type $H_{49}$}{Combinatorial type H49}}
The only possible multi-multiple edges are $27$, $28$, and $78$. We can take 
$V(H_{49}) = \{07, 08, 12, 27, 28, 78\}$.

\vspace{-0.2cm}
\subsubsection*{\texorpdfstring{Combinatorial type $H_{50}$}{Combinatorial type H50}}
The only possible multi-multiple edges are $34$ and one of $03$ or $04$.  By symmetry, we can assume $04$ has low weight.  We can take 
$V(H_{50}) = \{02, 04, 12, 13, 23\}$.

\vspace{-0.2cm}
\subsubsection*{\texorpdfstring{Combinatorial type $H_{51}$}{Combinatorial type H51}}
The only possible multi-multiple edges are $26$, $27$, and $67$.  We can take 
$V(H_{51}) = \{06, 07, 12, 26, 27, 67\}$.

\vspace{-0.2cm}
\subsubsection*{\texorpdfstring{Combinatorial type $H_{52}$}{Combinatorial type H52}}
The only possible multi-multiple edges are $27$, $28$, $67$.  We can take 
$V(H_{52}) = \{02, 07, 08, 27, 78\}$.

\vspace{-0.2cm}
\subsubsection*{\texorpdfstring{Combinatorial type $H_{53}$}{Combinatorial type H53}}
The only possible multi-multiple edges are $17$, $18$, $28$, and $78$.  We can take 
$V(H_{53}) = \{07, 08, 12, 17, 18, 78\}$.

\vspace{-0.2cm}
\subsubsection*{\texorpdfstring{Combinatorial type $H_{54}$}{Combinatorial type H54}}
The only possible multi-multiple edges are $16$, $27$, and $67$.  We can take 
$V(H_{54}) = \{02, 06, 07, 17, 26\}$.

\vspace{-0.2cm}
\subsubsection*{\texorpdfstring{Combinatorial type $H_{55}$}{Combinatorial type H55}}
The only possible multi-multiple edges are $34$ and one of $03$ or $04$.  By symmetry, we can assume $04$ has low weight.  We can take 
$V(H_{55}) = \{08, 12, 13, 23\}$.

\vspace{-0.2cm}
\subsubsection*{\texorpdfstring{Combinatorial type $H_{56}$}{Combinatorial type H56}}
The only possible multi-multiple edges are $13$, $16$, and $67$. We can take 
$V(H_{56}) = \{01, 03, 06, 16, 36\}$.

\vspace{-0.2cm}
\subsubsection*{\texorpdfstring{Combinatorial type $H_{57}$}{Combinatorial type H57}}
The only possible multi-multiple edges are $07$, $26$, $27$, and one of $03$ or $04$. By symmetry, we can assume $04$ has low weight.  We can take 
$V(H_{57}) = \{02, 06, 07, 17, 23, 27, 37\}$.

\vspace{-0.2cm}
\subsubsection*{\texorpdfstring{Combinatorial type $H_{58}$}{Combinatorial type H58}}
The only possible multi-multiple edges are $25$, $28$, and $68$.  We can take 
$V(H_{58}) = \{05, 08, 12, 28, 58\}$.

\vspace{-0.2cm}
\subsubsection*{\texorpdfstring{Combinatorial type $H_{59}$}{Combinatorial type H59}}
The only possible multi-multiple edges are $03$, $04$, $15$, $16$.  We can take 
$V(H_{59}) = \{01, 05, 06, 13, 14, 24, 34\}$.

\vspace{-0.2cm}
\subsubsection*{\texorpdfstring{Combinatorial type $H_{60}$}{Combinatorial type H60}}
The only possible multi-multiple edges are $02$, $03$, $13$, $15$.  We can take 
$V(H_{60}) = \{01, 03, 08, 13, 18, 23\}$.

\vspace{-0.2cm}
\subsubsection*{\texorpdfstring{Combinatorial type $H_{61}$}{Combinatorial type H61}}
The only possible multi-multiple edges are $12$, $15$, $26$, and one of $03$ or $04$. By symmetry, we can assume $04$ has low weight.  We can take 
$V(H_{61}) = \{01, 02, 06, 12, 13, 23\}$.

\vspace{-0.2cm}
\subsubsection*{\texorpdfstring{Combinatorial type $H_{62}$}{Combinatorial type H62}}
The only possible multi-multiple edges are $02$, $04$, and $24$.  We can take 
$V(H_{62}) = \{02, 07, 14, 24, 27\}$.

\vspace{-0.2cm}
\subsubsection*{\texorpdfstring{Combinatorial type $H_{63}$}{Combinatorial type H63}}
The only possible multi-multiple edges are $03$ and $04$.  We can take 
$V(H_{63}) = \{12, 13, 14, 24\}$.

\vspace{-0.2cm}
\subsubsection*{\texorpdfstring{Combinatorial type $H_{64}$}{Combinatorial type H64}}
The only possible multi-multiple edges are $67$ and one of $17$ or $26$.  By symmetry, we can assume $26$ has low weight.  We can take 
$V(H_{64}) = \{06, 07, 12, 16, 17, 67\}$.

\vspace{-0.2cm}
\subsubsection*{\texorpdfstring{Combinatorial type $H_{65}$}{Combinatorial type H65}}
The only possible multi-multiple edge is $67$.  We can take 
$V(H_{65}) = \{01, 17, 26\}$.

\vspace{-0.2cm}
\subsubsection*{\texorpdfstring{Combinatorial type $H_{66}$}{Combinatorial type H66}}
The only possible multi-multiple edges are $02$, $04$, $14$, $23$, and one of $15$ or $16$.  By symmetry, we can assume $16$ has low weight.  We can take 
$V(H_{66}) = \{01, 04, 08, 12, 14, 15, 24\}$.

\vspace{-0.2cm}
\subsubsection*{\texorpdfstring{Combinatorial type $H_{67}$}{Combinatorial type H67}}
The only possible multi-multiple edges are $13$ and $35$.  We can take 
$V(H_{67}) = \{02, 12, 13, 23\}$.

\vspace{-0.2cm}
\subsubsection*{\texorpdfstring{Combinatorial type $H_{68}$}{Combinatorial type H68}}
The only possible multi-multiple edges are $34$, $03$, $13$, $04$, and $14$. Note that if $03$ is multi-multiple, then $04$ and $14$ have low weight, lest a forbidden Lann{\'e}r diagram be induced among $01348$. Similar arguments hold when swapping the roles of vertices $3,4$ or $0,1$, so by symmetry we can assume $04$ and $14$ have low weight.  Then we can take 
$V(H_{68}) = \{01, 03, 05, 12, 13, 23\}$.

\vspace{-0.2cm}
\subsubsection*{\texorpdfstring{Combinatorial type $H_{69}$}{Combinatorial type H69}}
The only possible multi-multiple edges are $03$, $04$, $13$, and $15$.  We can take 
$V(H_{69}) = \{03, 05, 12, 13, 14, 23\}$.

\vspace{-0.2cm}
\subsubsection*{\texorpdfstring{Combinatorial type $H_{70}$}{Combinatorial type H70}}
The only possible multi-multiple edges are $03$, $04$, $15$, and $28$.  We can take 
$V(H_{70}) = \{05, 08, 12, 13, 23, 24\}$.

\vspace{-0.2cm}
\subsubsection*{\texorpdfstring{Combinatorial type $H_{71}$}{Combinatorial type H71}}
The only possible multi-multiple edges are $03$, $04$, and one of $12$ or $25$.  By symmetry, we can assume $25$ has low weight.  We can take 
$V(H_{71}) = \{01, 08, 12, 18, 23, 58\}$.

\vspace{-0.2cm}
\subsubsection*{\texorpdfstring{Combinatorial type $H_{72}$}{Combinatorial type H72}}
The only possible multi-multiple edges are $03$, $13$, and $15$.  We can take 
$V(H_{72}) = \{03, 05, 12, 13, 23\}$.

\vspace{-0.2cm}
\subsubsection*{\texorpdfstring{Combinatorial type $H_{73}$}{Combinatorial type H73}}
The possible multi-multiple edges are $03$, $04$, $13$, $15$, $67$.  We can take 
$V(H_{73}) = \{01, 03, 06, 12, 13, 14, 35, 36\}$.

\vspace{-0.2cm}
\subsubsection*{\texorpdfstring{Combinatorial type $H_{74}$}{Combinatorial type H74}}
The only possible multi-multiple edges are $16$, $27$, and one of $03$ or $04$.  By symmetry, we can assume $04$ has low weight.  We can take 
$V(H_{74}) = \{06, 07, 12, 13, 23\}$.

\vspace{-0.2cm}
\subsubsection*{\texorpdfstring{Combinatorial type $H_{75}$}{Combinatorial type H75}}
The only possible multi-multiple edges are $03$, $04$, $16$, and $27$. We can take 
$V(H_{75}) = \{06, 07, 12, 13, 14, 23\}$.

\vspace{-0.2cm}
\subsubsection*{\texorpdfstring{Combinatorial type $H_{76}$}{Combinatorial type H76}}
The only possible multi-multiple edges are $03$, $04$, $27$, and $28$. We can take 
$V(H_{76}) = \{07, 08, 12, 13, 14, 23\}$.

\vspace{-0.2cm}
\subsubsection*{\texorpdfstring{Combinatorial type $H_{77}$}{Combinatorial type H77}}
The only possible multi-multiple edges are one of $03$ or $04$, and one of $26$ or $27$. By symmetry, we can assume $04$ and $26$ have low weight.  We can take 
$V(H_{77}) = \{06, 08, 12, 13, 23\}$.

\vspace{-0.2cm}
\subsubsection*{\texorpdfstring{Combinatorial type $H_{78}$}{Combinatorial type H78}}
The only possible multi-multiple edges are $26$, $27$, and one of $03$ or $04$. By symmetry, we can assume $04$ has low weight.  We can take 
$V(H_{78}) = \{07, 08, 12, 13, 23\}$.

\vspace{-0.2cm}
\subsubsection*{\texorpdfstring{Combinatorial type $H_{79}$}{Combinatorial type H79}}
The only possible multi-multiple edges are $16$, one of $03$ or $04$, and one of $27$ or $28$. By symmetry, we can assume $04$ and $28$ have low weight.  We can take 
$V(H_{79}) = \{02, 06, 07, 12, 13, 23\}$.

\vspace{-0.2cm}
\subsubsection*{\texorpdfstring{Combinatorial type $H_{80}$}{Combinatorial type H80}}
The only possible multi-multiple edges are $02$, $03$, and $24$.  We can take 
$V(H_{80}) = \{04, 08, 12, 13, 23\}$.

\vspace{-0.2cm}
\subsubsection*{\texorpdfstring{Combinatorial type $H_{81}$}{Combinatorial type H81}}
The only possible multi-multiple edges are $16$, $26$, and $28$.  We can take 
$V(H_{81}) = \{02, 06, 08, 26, 68\}$.

\vspace{-0.2cm}
\subsubsection*{\texorpdfstring{Combinatorial type $H_{82}$}{Combinatorial type H82}}
The only possible multi-multiple edges are $02$, $04$, $15$, and $17$.  We can take 
$V(H_{82}) = \{01, 05, 07, 12, 14, 24\}$.

\vspace{-0.2cm}
\subsubsection*{\texorpdfstring{Combinatorial type $H_{83}$}{Combinatorial type H83}}
The only possible multi-multiple edges are $03$, $04$, $15$, and $16$.  We can take 
$V(H_{83}) = \{05, 06, 12, 12, 13, 24\}$.

\vspace{-0.2cm}
\subsubsection*{\texorpdfstring{Combinatorial type $H_{84}$}{Combinatorial type H84}}
The only possible multi-multiple edges are $02$, $04$, $15$, and $16$.  We can take 
$V(H_{84}) = \{01, 04, 06, 12, 14, 24\}$.

\vspace{-0.2cm}
\subsubsection*{\texorpdfstring{Combinatorial type $H_{85}$}{Combinatorial type H85}}
The only possible multi-multiple edges are $02$, $03$, and $13$.  We can take 
$V(H_{85}) = \{01, 08, 12, 13, 23\}$.

\vspace{-0.2cm}
\subsubsection*{\texorpdfstring{Combinatorial type $H_{86}$}{Combinatorial type H86}}
The only possible multi-multiple edges are $03$, $04$, and $13$.  We can take 
$V(H_{86}) = \{08, 12, 13, 14, 23\}$.

\vspace{-0.2cm}
\subsubsection*{\texorpdfstring{Combinatorial type $H_{87}$}{Combinatorial type H87}}
The only possible multi-multiple edges are $04$, $17$, and one of $02$ or $24$. By symmetry, we can assume $24$ has low weight.  We can take 
$V(H_{87}) = \{01, 04, 12, 14, 24\}$.

\vspace{-0.2cm}
\subsubsection*{\texorpdfstring{Combinatorial type $H_{88}$}{Combinatorial type H88}}
The only possible multi-multiple edges are $03$, $04$, $28$, and $34$.  We can take 
$V(H_{88}) = \{04, 08, 12, 14, 23, 24\}$.

\vspace{-0.2cm}
\subsubsection*{\texorpdfstring{Combinatorial type $H_{89}$}{Combinatorial type H89}}
The only possible multi-multiple edges are $03$, $04$, $27$, and $28$.  We can take 
$V(H_{89}) = \{07, 08, 12, 13, 23, 24\}$.

\vspace{-0.2cm}
\subsubsection*{\texorpdfstring{Combinatorial type $H_{90}$}{Combinatorial type H90}}
The only possible multi-multiple edges are $25$, $27$, and one of $03$ or $04$.  By symmetry, we can assume $04$ has low weight.  We can take 
$V(H_{90}) = \{05, 07, 12, 13, 23\}$.

\vspace{-0.2cm}
\subsubsection*{\texorpdfstring{Combinatorial type $H_{91}$}{Combinatorial type H91}}
The only possible multi-multiple edges are $02$, $04$, $16$, and $24$.  We can take 
$V(H_{91}) = \{02, 04, 06, 14, 24, 26\}$.

\vspace{-0.2cm}
\subsubsection*{\texorpdfstring{Combinatorial type $H_{92}$}{Combinatorial type H92}}
The only possible multi-multiple edges are $03$, $04$, $28$, and $34$.  We can take 
$V(H_{92}) = \{03, 08, 12, 14, 23, 24\}$.

\vspace{-0.2cm}
\subsubsection*{\texorpdfstring{Combinatorial type $H_{93}$}{Combinatorial type H93}}
The only possible multi-multiple edges are $03$, $04$, $15$, $16$, and $25$.  We can take 
$V(H_{93}) = \{01, 05, 06, 12, 13, 14, 23\}$.

\vspace{-0.2cm}
\subsubsection*{\texorpdfstring{Combinatorial type $H_{94}$}{Combinatorial type H94}}
The only possible multi-multiple edges are $05$, $13$, and $15$.  We can take 
$V(H_{94}) = \{01, 05, 08, 15, 18, 25\}$.

\vspace{-0.2cm}
\subsubsection*{\texorpdfstring{Combinatorial type $H_{95}$}{Combinatorial type H95}}
The only possible multi-multiple edges are $04$, $08$, $14$, and $28$.  We can take 
$V(H_{95}) = \{01, 04, 08, 14, 24, 28, 48\}$.

\vspace{-0.2cm}
\subsubsection*{\texorpdfstring{Combinatorial type $H_{96}$}{Combinatorial type H96}}
The only possible multi-multiple edges are $02$, $04$, $23$, and $24$.  We can take 
$V(H_{96}) = \{02, 04, 08, 12, 14, 24\}$.

\vspace{-0.2cm}
\subsubsection*{\texorpdfstring{Combinatorial type $H_{97}$}{Combinatorial type H97}}
The only possible multi-multiple edges are $03$, $04$, $13$, $24$, and one of  $16$, $17$, $26$, or $27$.  By symmetry between vertices $6$ and $7$, we can assume $17$, $26$, and $27$ have low weight.  We can take 
$V(H_{97}) = \{01, 02, 06, 12, 14, 23, 24\}$.

\vspace{-0.2cm}
\subsubsection*{\texorpdfstring{Combinatorial type $H_{98}$}{Combinatorial type H98}}
The only possible multi-multiple edge is one of $03$ or $04$; by symmetry, we can assume $04$ has low weight.  We can take 
$V(H_{98}) = \{12, 13, 23\}$.

\vspace{-0.2cm}
\subsubsection*{\texorpdfstring{Combinatorial type $H_{99}$}{Combinatorial type H99}}
The only possible multi-multiple edges are $12$, $16$, $25$, and one of $03$ or $04$.  By symmetry, we can assume $04$ has low weight.  We can take 
$V(H_{99}) = \{01, 02, 05, 06, 12, 13, 16, 25\}$.

\vspace{-0.2cm}
\subsubsection*{\texorpdfstring{Combinatorial type $H_{100}$}{Combinatorial type H100}}
The only possible multi-multiple edges are $03$, $04$, $13$, and $34$.  We can take 
$V(H_{100}) = \{01, 04, 12, 13, 14, 24\}$.

\vspace{-0.2cm}
\subsubsection*{\texorpdfstring{Combinatorial type $H_{101}$}{Combinatorial type H101}}
The only possible multi-multiple edges are one of $03$ or $04$, one of $15$ or $16$, and one of $27$ or $28$. By symmetry, we can assume $04$, $16$, and $28$ have low weight.  We can take 
$V(H_{101}) = \{01, 02, 05, 12, 13, 23\}$.

\vspace{-0.2cm}
\subsubsection*{\texorpdfstring{Combinatorial type $H_{102}$}{Combinatorial type H102}}
Note that there cannot be multi-multiple edges incident to any of $5$, $6$, $7$, or $8$ since these are all contained in a Lann{\'e}r diagram of size $4$ but no L{\'a}nner diagram of size $3$.  Moreover, no two vertices joined by a dashed edge can be incident to a multi-multiple edge, lest the Lann{\'e}r diagram of size $2$ containing these vertices cannot be connected to the Lann{\'e}r diagram $5678$.  Thus there can be at most one multi-multiple edge in any given weighting, and by symmetry we can assume $03$ is the only multi-multiple edge.  We can then take
$V(H_{102}) = \{01, 03, 04, 12, 13, 23\}$.

\vspace{-0.2cm}
\subsubsection*{\texorpdfstring{Combinatorial type $H_{103}$}{Combinatorial type H103}}
The only possible multi-multiple edges are $14$ and one of $02$, $03$, $26$, or $36$.  By symmetry between vertices $2$ and $3$, we can assume $03$, $26$, and $36$ have low weight.  We can take 
$V(H_{103}) = \{01, 06, 14, 24\}$.

\vspace{-0.2cm}
\subsubsection*{\texorpdfstring{Combinatorial type $H_{104}$}{Combinatorial type H104}}
The only possible multi-multiple edges are $06$, $18$, $23$, and one of $02$, $03$, $28$, or $38$.  By symmetry, we can assume $03$, $28$, and $38$ have low weight.  We can take 
$V(H_{104}) = \{03, 06, 08, 12, 16, 26\}$.

\vspace{-0.2cm}
\subsubsection*{\texorpdfstring{Combinatorial type $H_{105}$}{Combinatorial type H105}}
The only possible multi-multiple edges are $13$, $34$, and $46$. We can take 
$V(H_{105}) = \{03, 04, 06, 13, 23\}$.

\vspace{-0.2cm}
\subsubsection*{\texorpdfstring{Combinatorial type $H_{106}$}{Combinatorial type H106}}
The only possible multi-multiple edges are $18$, $26$, $34$, and one of $03$ or $04$. By symmetry, we can assume $04$ has low weight.  We can take 
$V(H_{106}) = \{01, 05, 08, 12, 13, 14, 23\}$.

\vspace{-0.2cm}
\subsubsection*{\texorpdfstring{Combinatorial type $H_{107}$}{Combinatorial type H107}}
The only possible multi-multiple edges are $34$ and $56$. We can take 
$V(H_{107}) = \{04, 06, 13, 26\}$.

\vspace{-0.2cm}
\subsubsection*{\texorpdfstring{Combinatorial type $H_{108}$}{Combinatorial type H108}}
The only possible multi-multiple edges are one of $07$ or $08$, one of $25$ or $26$, and one of $13$ or $14$. By symmetry, we can assume $08$, $26$, and $14$ have low weight.  We can take 
$V(H_{108}) = \{03, 05, 07, 12, 17, 27\}$.

\vspace{-0.2cm}
\subsubsection*{\texorpdfstring{Combinatorial type $H_{109}$}{Combinatorial type H109}}
First, observe that the multi-multiple edges must be contained either in the subdiagram $678$ or in the subdiagram $012345$.  Moreover, there can be at most one multi-multiple edge among $678$, lest the Lann{\'e}r diagrams of size $2$ cannot be connected to the Lann{\'e}r triangle.  No two vertices joined by a dashed edge can both be incident to a multi-multiple edge for this same reason.  Moreover, no vertex in $012345$ can be adjacent to two multi-multiple edges lest these induce a Lann{\'e}r triangle or violate the previous assertion.  By symmetry, we can now assume the multi-multiple edges are limited to $02$ and $67$.  

Suppose that $67$ is multi-multiple.  Then neither vertex of $67$ is connected to any vertex of $012345$ lest these induce a Lann{\'e}r triangle, so for connectivity reasons $8$ must be joined to at least one vertex of each of $01$, $23$ and $45$.  However, vertex $8$ must also be connected to a vertex of $67$.  Thus, vertex $8$ has degree at least $4$ in some elliptic diagram, which is not possible.  Therefore, we need only consider that $02$ is multi-multiple.  In this case, we can take $V(H_{109}) = \{02, 04, 12, 13, 14, 24, 47, 67\}$.

\vspace{-0.2cm}
\subsubsection*{\texorpdfstring{Combinatorial type $H_{110}$}{Combinatorial type H110}}
The only possible multi-multiple edges are $45$, one of $34$ or $45$, one of $24$ or $25$, and any edges of $678$.  By symmetry of $4$ and $5$, we can assume $34$ and $24$ have low weight.  Since the Lann{\'e}r triangle $678$ must be connected to the other Lann{\'e}r triangles, the edges $25$ and $35$ cannot simultaneously be multi-multiple.  Hence, by symmetry we can assume $35$ has low weight.  For the same reason at most one edge of $678$ can be multi-multiple.  So we can conclude, again invoking symmetry of $678$, that the multi-multiple edges are limited to $25$, $45$, and $67$.  We can then take
$V(H_{110}) = \{01, 02, 05, 06, 12, 15, 16, 23, 24, 26, 35\}$.

\vspace{-0.2cm}
\subsubsection*{\texorpdfstring{Combinatorial type $H_{111}$}{Combinatorial type H111}}
The only possible multi-multiple edges are $23$, $45$, one of $02$ or $03$, and one of $14$ or $15$.  By symmetry, we can assume $03$ and $15$ have low weight.  Moreover, both $02$ and $14$ cannot simultaneously be multi-multiple lest $78$ cannot be connected to $01$, so we can assume $14$ is also low weight.  Thus, our set of multi-multiple edges must be among $23$, $45$, and $03$.  We can then set 
$V(H_{111}) = \{03, 04, 05, 07, 12, 34, 37, 47\}$.

\section{\texorpdfstring{Remarks on Compact Coxeter $6$-Polytopes with $10$ Facets}{Remarks on Compact Coxeter 6-Polytopes with 10 Facets}}\label{sec: dim 6 polys}
Throughout this section, we consider compact Coxeter $6$-polytopes with $10$ facets. Using the methods outlined in Section \ref{sec: comb types d+4}, there is a finite algorithm yielding a superset of the combinatorial types of such polytopes.  Though the process was too computationally intense for the classification to be completed in the current project, we were able to list the combinatorial types in the special case where the missing faces have size only $2$ and $5$.
In this case, there are precisely two combinatorial types:
$$I_1 \text{ with missing face list } \{01, 02, 13, 24567, 34567, 89\}$$
and 
$$I_2 \text{ with missing face list } \{01, 02, 13, 24, 34, 56789\}\,.$$

The methods used to analyse these two combinatorial types are similar to those used extensively in Section \ref{sec: d+5 class}. It is straightforward to check using an appropriate choice of $V(I_1)$ and $V(I_2)$ that there are no compact Coxeter polytopes of these combinatorial types.

First, suppose there is a polytope realising $I_1$, and consider its Coxeter diagram.  The only multi-multiple edges are $08$, $09$, $18$, and $19$.  Moreover, note that vertices $8$ and $9$ cannot simultaneously be incident to a multi-multiple edge, lest the Lann{\' e}r diagram $89$ cannot be connected to one of the Lann{\' e}r diagrams of size $5$.  Thus, by symmetry, we can assume the multi-multiple edges are limited to $08$ and $18$.  Then setting 
$$V(I_1) = \{01, 03, 08, 09, 12, 18, 19, 28, 38\}$$
is sufficient to check computationally that there are no polytopes of type $I_1$.  

Now consider a Coxeter diagram of combinatorial type $I_2$.  The only possible multi-multiple edges must be contained within the subdiagram $01234$.  Moreover, any two vertices joined by a dashed edge in this subdiagram cannot both be incident to multi-multiple edges, lest the Lann{\'e}r subdiagram corresponding to these vertices cannot be connected to the Lann{\'e}r diagram of size $5$.  Hence, there is at most one multi-multiple edge contained within $01234$, and by symmetry we can assume this edge is $03$.  One can then show that there are no compact Coxeter polytopes of this combinatorial type by applying Corollary \ref{cor: pass to computation} with
$$V(I_2) = \{01, 02, 03, 04, 12, 13, 14, 23, 34\}\,.$$

The analysis of these combinatorial types as described above yields the following result.

\begin{thm}\label{thm: d+4 6}
There are no compact Coxeter $6$-polytopes with $10$ facets having missing faces of orders only $2$ and $5$.
\end{thm}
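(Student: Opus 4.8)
The plan is to apply the same two-stage strategy used throughout Sections \ref{sec: d+4 class} and \ref{sec: d+5 class}: first restrict to a finite list of admissible combinatorial types, then for each type cut down to a finite computation via Corollary \ref{cor: pass to computation}. For the first stage, I would run the order-type method of Section \ref{sec: comb types d+4} (Theorem \ref{thm: affine generation}) for $d = 6$, $k = 4$, but filter the bipartitions of $10$-point order types so that the associated simple $6$-polytope has missing faces --- equivalently, Lann\'er subdiagrams of the Coxeter diagram --- of orders only $2$ and $5$. This filtering makes the search far smaller than the full enumeration over all $14{,}309{,}547$ order types on $10$ points, and it should leave exactly the two combinatorial types $I_1$ and $I_2$ recorded above.

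For type $I_1$, with missing face list $\{01, 02, 13, 24567, 34567, 89\}$, I would first apply Lemma \ref{lem: multi-multiple} together with the Lann\'er list (Figure \ref{fig: Lanner diagrams}) to conclude that the only ordinary edges that can be multi-multiple are $08$, $09$, $18$, and $19$; every other ordinary edge either lies in a Lann\'er diagram of order $5$ or falls under part (b) of the lemma. Next comes a connectivity observation: since the Gram matrix of a compact Coxeter polytope is admissible, the order-$2$ Lann\'er diagram $89$ must stay connected (through the rest of the diagram) to the order-$5$ Lann\'er diagrams $24567$ and $34567$, and one checks this fails if both $8$ and $9$ are incident to a multi-multiple edge. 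Hence all multi-multiple edges are incident to a single vertex of $\{8,9\}$, which by the automorphism swapping $8$ and $9$ we may take to be $8$; that is, the multi-multiple edges lie in $\{08, 18\}$. With this restriction the hypotheses of Corollary \ref{cor: pass to computation} hold with
$$V(I_1) = \{01, 03, 08, 09, 12, 18, 19, 28, 38\}\,,$$
and running the algorithm of Subsection \ref{subsec: computation} --- iterating over admissible partial low-weightings, solving $V(I_1)$, and testing for signature $(6,1,3)$ --- produces no Gram matrix of a compact Coxeter $6$-polytope.

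For type $I_2$, with missing face list $\{01, 02, 13, 24, 34, 56789\}$, the unique Lann\'er diagram of order greater than $2$ is $56789$, so by Lemma \ref{lem: multi-multiple} every multi-multiple edge lies inside the subdiagram induced by $\{0,1,2,3,4\}$; moreover the edges $01, 02, 13, 24, 34$ there are dashed (they are missing faces of size $2$), leaving only the five ``diagonal'' pairs $03, 04, 12, 14, 23$ as candidates. The same connectivity principle shows that no two vertices joined by a dashed edge in $\{0,1,2,3,4\}$ can both be incident to a multi-multiple edge, which forces at most one multi-multiple edge inside $01234$; using the $D_5$ symmetry of the $5$-cycle formed by $01, 02, 13, 24, 34$ we may assume this edge is $03$. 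Corollary \ref{cor: pass to computation} then applies with
$$V(I_2) = \{01, 02, 03, 04, 12, 13, 14, 23, 34\}\,,$$
and the computation again yields no admissible Gram matrix, completing the proof. The main obstacle is twofold: on the combinatorial side, one must correctly carry out the restricted order-type enumeration and be sure the missing-face filtering produces only $I_1$ and $I_2$; on the algebraic side, the essential point is verifying, for every admissible partial low-weighting compatible with the stated restrictions, that the system $V(I_i)$ has only finitely many solutions with the multi-multiple weights in $[\sqrt{3}/2, 1)$ and the dashed weights in $(1,\infty)$, so that Proposition \ref{prop: iterate over partial} is applicable --- this finiteness is checked with a computer algebra system, after which the signature test is routine.
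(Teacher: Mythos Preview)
Your proposal is correct and follows essentially the same approach as the paper: enumerate (via the filtered order-type method) the two combinatorial types $I_1$ and $I_2$, restrict the multi-multiple edges exactly as the paper does (to $\{08,18\}$ for $I_1$ and to a single edge, taken to be $03$, for $I_2$), and then apply Corollary \ref{cor: pass to computation} with the same systems $V(I_1)$ and $V(I_2)$. Your added remarks on the $D_5$ symmetry of the dashed $5$-cycle in $I_2$ and the explicit invocation of Lemma \ref{lem: multi-multiple}(b) make the argument slightly more detailed, but the route is identical.
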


There is one known compact Coxeter $6$-polytope with $10$ facets (see Figure \ref{fig: bug poly}), constructed by Bugaenko \cite{Bug}.  

\begin{figure}[ht]
\begin{center}
\includegraphics[width=.3\linewidth]{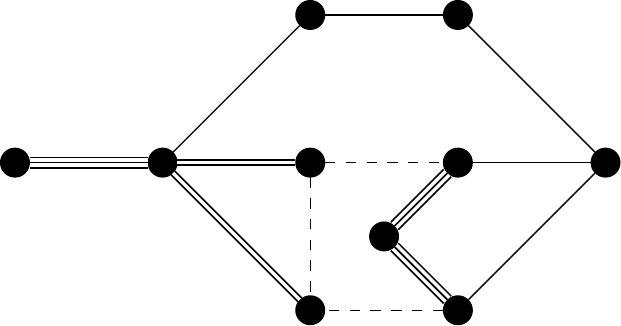}
\caption{The Coxeter diagram of the compact Coxeter $6$-polytope with $10$ facets constructed by Bugaenko.}
\label{fig: bug poly}
\end{center}  
\end{figure}

\addtocontents{toc}{\protect\setcounter{tocdepth}{2}}

\section{\texorpdfstring{Compact $3$-Free Coxeter Polytopes}{Compact 3-Free Coxeter Polytopes}}\label{sec: 3-free}

In this section, we use a modification of Vinberg's proof that simple Coxeter polytopes have dimension at most $29$ to find a tighter bound on the dimension of compact Coxeter $3$-free polytopes, i.e., polytopes where every missing face is of size $2$.  In particular, we show that the dimension of compact $3$-free polytopes is at most $13$. While this result may be of interest in its own right, we make use of this result to bound the dimension of polytopes with few facets in Section \ref{sec: dim bound}.  To our knowledge, the best previous bound on the dimension for this class of polytopes was Vinberg's bound $d \leq 29$ for all compact Coxeter polytopes.

Following the terminology of Esselmann \cite{Ess}, a polytope is called \emph{$k$-free} if it has no missing face of size at least $k$.  

\begin{rem}\label{rem: face k-free}
Every face of a $k$-free polytope is itself a $k$-free polytope (see \cite[Folgerung 1.7]{Ess}).  This holds because given a face $f$ of a polytope $P$, any missing face of $f$ is the intersection of a (possibly larger) missing face of $P$ with $f$. 
\end{rem} 

We first state a lemma from Vinberg's proof of the upper bound on the dimension of a compact polytope which relies on weightings of the planar angles.  A \emph{planar angle} of a polytope is a pair $(A,F)$ where $A$ is a vertex and $F$ is a two-dimensional face containing it; $(A,F)$ is said to be a planar angle \emph{at} $A$ and \emph{on} $F$.

\begin{prop}[\protect{\hspace{1sp}\cite[Prop. 6.2]{Vin}}]\label{prop: planar angle bound}
Let $P$ be a $d$-dimensional compact Coxeter polytope and $c > 0$ a positive number.  We assume that the planar angles of $P$ can be endowed with weights in such a way that:
\begin{enumerate}[(a)]
    \item The sum $\sigma(A)$ of the weights of the planar angles at the vertex $A$ does not exceed $cd$.
    \item The sum $\sigma(F)$ of the weights of the planar angles of any $2$-dimensional face is not less than $5 - k$, where $k$ is the number of vertices of this face.
\end{enumerate}
Then $d < 8c + 6$.
\end{prop}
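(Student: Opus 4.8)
The plan is to double count the total weight carried by all planar angles of $P$. Writing $\Sigma := \sum_{(A,F)} w(A,F)$ for the sum over all planar angles of $P$, and letting $p$ and $\ell$ be the numbers of vertices and of $2$-faces of $P$, I would first group the terms by vertex: hypothesis (a) gives $\Sigma = \sum_A \sigma(A) \le cd\,p$. Grouping instead by $2$-face and using hypothesis (b) gives $\Sigma = \sum_F \sigma(F) \ge \sum_F (5 - k_F)$, where $k_F$ denotes the number of vertices of $F$. Next I would exploit that $P$, being a compact hyperbolic Coxeter polytope, is simple (Remark \ref{rem: simple Coxeter}): the vertex figure at any vertex is a $(d-1)$-simplex, so every vertex lies in exactly $\binom{d}{2}$ of the $2$-faces, and counting vertex--$2$-face incidences in two ways yields $\sum_F k_F = \binom{d}{2}\,p$. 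Then $\sum_F (5 - k_F) = 5\ell - \binom{d}{2} p$, and combining the two estimates gives
\[
5\ell - \binom{d}{2}p \ \le\ \Sigma \ \le\ cd\,p, \qquad\text{hence}\qquad 5\ell \ \le\ p\Bigl(cd + \binom{d}{2}\Bigr).
\]

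The remaining step is to bound $\ell$ from below, and here I would invoke Nikulin's inequality \cite{Nik} on the average number of vertices of a $2$-face of a simple $d$-polytope, in the form $\sum_F k_F < \tfrac{4(d-1)}{d-2}\,\ell$. Since $\sum_F k_F = \binom{d}{2} p$, this rearranges to $\ell > \tfrac{d-2}{4(d-1)}\binom{d}{2}\,p = \tfrac{d(d-2)}{8}\,p$. Substituting into the inequality above, cancelling $p$ and then the common factor $d$, I would be left with $\tfrac{5(d-2)}{8} < c + \tfrac{d-1}{2}$, which rearranges immediately to $d < 8c + 6$. (For $d \le 6$ the conclusion already follows from $c > 0$; the argument above is the content for $d \ge 7$, where every quantity involved is positive and the division by $d-2$ is harmless.)

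The combinatorial bookkeeping above is routine, and the identity $\sum_F k_F = \binom{d}{2} p$ relies only on simplicity, hence on $P$ being a compact hyperbolic Coxeter polytope. The one genuinely external ingredient --- and the one I expect to be the crux --- is Nikulin's averaging estimate, used in the sharp form with constant $\tfrac{4(d-1)}{d-2}$: it is exactly this constant, balanced against the ``$5$'' of hypothesis (b) and the ``$cd$'' of hypothesis (a), that makes the final arithmetic collapse to the stated bound $8c+6$ rather than to something weaker; any looser version of Nikulin's inequality would not suffice to reach this constant.
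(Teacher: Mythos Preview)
Your argument is correct and is precisely Vinberg's original proof: the double count of total planar-angle weight, combined with $\sum_F k_F = \binom{d}{2}\,p$ from simplicity and Nikulin's averaging bound, is exactly how \cite[Prop.~6.2]{Vin} is established. Note that the present paper does not itself prove this proposition but merely quotes it; your reconstruction matches the source. One small remark: Nikulin's sharp constant is $\tfrac{4(d-1)}{d-2}$ for even $d$ and $\tfrac{4d}{d-1}$ for odd $d$, but since $\tfrac{4d}{d-1} < \tfrac{4(d-1)}{d-2}$, your use of the even-$d$ form uniformly is valid.
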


We also require one of Vinberg's results on the structure of quadrilateral two-dimensional faces.  We first introduce the language of star diagrams; these were referred to as ``star schemes'' by Vinberg.  The \emph{diagram of a planar angle} $(A,F)$ is the Coxeter subdiagram $S_A$ with two chosen ``black'' vertices corresponding to the facets containing $A$ but not $F$.  Note that this is an elliptic diagram of order $d$.  The \emph{star diagram of a face} $F$ is the Coxeter subdiagram $S_F^*$ whose vertices correspond to the facets having a non-trivial intersection with $F$.  We call the vertices of $S_F^*$ that are also vertices of $S_F$, i.e., that correspond to facets containing $F$, ``white'' and the remaining vertices ``black''.  Note that when $F$ is a two-dimensional face, the black vertices correspond to facets whose intersection with $F$ is precisely an edge.

\begin{prop}[\protect{\hspace{1sp}\cite[Prop. 6.4]{Vin}}]\label{prop: quad star scheme}
Let $S_F^*$ be the star diagram of a quadrilateral two-dimensional face $F$ of a polytope $P$. We divide the black vertices of $S_F^*$ into pairs in such a way that the vertices corresponding to opposite sides of $F$ correspond to a single pair. Then
\begin{enumerate}
    \item the removal from $S_F^*$ of any two black vertices in different pairs leaves the diagram of one of the planar angles of $F$;
    \item every hyperbolic subdiagram of $S_F^*$ contains both the black vertices from some pair;
    \item the removal from $S_F^*$ of the two black vertices in the same pair leaves a hyperbolic diagram.
\end{enumerate}
\end{prop}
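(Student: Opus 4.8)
The plan is to reduce all three assertions to elementary geometry inside the hyperbolic $2$-plane spanned by $F$. Work in the Minkowski model $\R^{d,1}$; let $n_i$ be a unit normal of the facet $f_i$, so $M(P)=(\langle n_i,n_j\rangle)_{i,j}$. Write $J$ for the set of white vertices of $S_F^*$ (the facets containing $F$) and $g_1,g_2,g_3,g_4$ for the black vertices, labelled so that $e_a:=F\cap f_{g_a}$ are the four edges of $F$ in cyclic order; thus the two pairs of opposite sides are $\{g_1,g_3\}$ and $\{g_2,g_4\}$. Since $F=P^J$ is a face, Proposition \ref{prop: Vin 3.1} gives that $M_J$ is positive definite, so $V:=\operatorname{span}\{n_k:k\in J\}$ is a spacelike $(d-2)$-plane, $V^\perp$ has signature $(2,1)$, and the common supporting hyperplane of the white facets cuts out a sub-hyperbolic-plane $\Hy^d\cap V^\perp\cong\Hy^2$ in which $F$ is a compact convex quadrilateral. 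Its $a$-th side lies on the line $\ell_a:=f_{g_a}\cap(\Hy^d\cap V^\perp)$, with unit normal $\widehat n_a:=\bar n_a/\lVert\bar n_a\rVert\in V^\perp$, where $\bar n_a$ is the orthogonal projection of $n_{g_a}$ onto $V^\perp$ (nonzero because $f_{g_a}$ does not contain $F$, so $n_{g_a}\notin V$).

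I would dispatch (1) first, as it is essentially combinatorial. Deleting one black vertex from each pair leaves the vertex set $J\cup\{g_a,g_b\}$ where $e_a,e_b$ are \emph{adjacent} sides of $F$, hence meet in a vertex $A$ of $F$; since $P$ is simple and $A$ is a vertex, the $d$ facets through $A$ are exactly $\{f_k:k\in J\}\cup\{f_{g_a},f_{g_b}\}$. So the induced subdiagram is precisely the diagram $S_A$ of the planar angle $(A,F)$, with black vertices $g_a,g_b$ (the facets through $A$ not containing $F$), and it is elliptic of order $d$ by Proposition \ref{prop: Vin 3.1}. Statement (2) is then immediate: any subdiagram of $S_F^*$ that omits a vertex of every pair has all of its black vertices inside some $\{g_a,g_b\}$ with $a\in\{1,3\}$, $b\in\{2,4\}$, hence is an induced subdiagram of an elliptic diagram of the shape produced in (1), hence is itself elliptic and in particular not hyperbolic.

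The substance is in (3), and its geometric half is the claim that opposite sides of $F$ are \emph{ultraparallel}, i.e. $\langle\widehat n_1,\widehat n_3\rangle<-1$ and $\langle\widehat n_2,\widehat n_4\rangle<-1$. For this I would first observe that every angle of the quadrilateral $F$ is at most $\tfrac{\pi}{2}$: at a vertex $A$ of $F$ the link $\operatorname{link}_A P$ is an acute-angled spherical $(d-1)$-simplex, so the Gram matrix of its facet normals is positive definite with nonpositive off-diagonal entries, whence its inverse has nonnegative entries; this forces every edge of that simplex to have spherical length $\le\tfrac{\pi}{2}$, and the angle of $F$ at $A$ is exactly the length of one such edge (the edge joining the two vertices lying on all white facets). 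Granting this, if the lines carrying two opposite sides of the compact convex quadrilateral $F$ intersected — in $\Hy^2$ or at infinity — one could form a hyperbolic triangle (or ideal triangle) two of whose angles are the supplements of two angles of $F$ and whose remaining angle is the nonnegative angle of intersection, giving angle sum $\ge 2\pi-(\tfrac{\pi}{2}+\tfrac{\pi}{2})=\pi$, contradicting Gauss--Bonnet. Hence the two lines are ultraparallel and the $2\times2$ Gram matrix of $\widehat n_1,\widehat n_3$ (likewise of $\widehat n_2,\widehat n_4$) has determinant $<0$, so signature $(1,1)$.

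To finish (3) I would translate this back to $S_F^*$. Deleting the same-pair vertices $\{g_1,g_3\}$ leaves the Gram matrix of $\{n_k:k\in J\}\cup\{n_{g_2},n_{g_4}\}$; these $d$ vectors are linearly independent (project onto $V^\perp$ and use that $\bar n_2,\bar n_4$, being normals of distinct lines of $\Hy^2$, are independent), so the signature of this $d\times d$ matrix equals the signature of the Lorentzian form restricted to $W:=\operatorname{span}\bigl(\{n_k:k\in J\}\cup\{n_{g_2},n_{g_4}\}\bigr)$. Since $V\subseteq W$ and $\dim W=d$, the intersection $W\cap V^\perp$ is $2$-dimensional and equals $\operatorname{span}(\bar n_2,\bar n_4)$; as $V$ is spacelike, $W=V\oplus(W\cap V^\perp)$ orthogonally, so the signature of $W$ is $(d-2,0,0)$ plus that of $\operatorname{span}(\bar n_2,\bar n_4)$, namely $(d-1,1,0)$ by the previous step. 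Thus $S_F^*\setminus\{g_1,g_3\}$ has negative inertia index $1$, i.e. is hyperbolic, and the identical argument handles $\{g_2,g_4\}$. The main obstacle is precisely this geometric half of (3): verifying that opposite sides of $F$ are ultraparallel is the only place acute-angledness of $P$ is genuinely used (via vertex links and Gauss--Bonnet), whereas parts (1) and (2) and the final signature bookkeeping are formal consequences of simplicity and Proposition \ref{prop: Vin 3.1}.
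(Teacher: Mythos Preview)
The paper does not prove this proposition; it is quoted verbatim from Vinberg \cite[Prop.~6.4]{Vin} and used as a black box in the proof of Theorem~\ref{thm: 3-free bound}. There is therefore no ``paper's own proof'' to compare against.

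Your argument is correct and complete. Part (1) is exactly the combinatorial identification of planar-angle diagrams via simplicity, and (2) follows immediately since subdiagrams of elliptic diagrams are elliptic. For (3), your two-step approach---first showing opposite sides of $F$ are ultraparallel in the intrinsic $\Hy^2$, then computing the signature by orthogonally splitting $W=V\oplus(W\cap V^\perp)$---is the natural route and is essentially how Vinberg proceeds. The one place worth flagging as a reader is the claim that all angles of $F$ are at most $\pi/2$: your justification via the inverse of an M-matrix (positive definite with nonpositive off-diagonal has nonnegative inverse, hence edge lengths of the spherical vertex link are at most $\pi/2$) is correct, but deserves to be stated as a standalone lemma rather than buried in a parenthetical, since it is the only step where acute-angledness is used and the M-matrix fact is not entirely elementary. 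Everything else---linear independence of $\{n_k:k\in J\}\cup\{n_{g_2},n_{g_4}\}$, spacelikeness of $\bar n_a$, and the signature bookkeeping---is handled cleanly.
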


\begin{thm}\label{thm: 3-free bound}
There are no compact $3$-free Coxeter hyperbolic polytopes of dimension $14$ or higher.
\end{thm}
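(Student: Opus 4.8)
The plan is to run Vinberg's planar-angle averaging argument, Proposition~\ref{prop: planar angle bound}, but with a weighting tailored to $3$-free polytopes so that the constant $c$ can be taken equal to $1$; since the proposition then gives $d < 8\cdot 1 + 6 = 14$, the theorem follows. By Remark~\ref{rem: face k-free} every face of a $3$-free polytope is again $3$-free, so every $2$-face of a compact $3$-free Coxeter polytope $P$ is a compact $3$-free Coxeter polygon, and hence (being compact) has at least three vertices. \textbf{Step 1 (combinatorics of $2$-faces).} I would first show that $P$ has \emph{no triangular $2$-face}, and that in any $2$-face two non-adjacent edges lie on disjoint facets. Both are instances of one observation: if $F$ is a $2$-face, $g_1,\dots,g_{d-2}$ are the facets of $P$ containing $F$, and $f,f'$ carry two non-adjacent edges of $F$ (or, when $F$ is a triangle, $f_1,f_2,f_3$ carry its three edges), then the intersection of $\{f,f',g_1,\dots,g_{d-2}\}$, respectively $\{f_1,f_2,f_3,g_1,\dots,g_{d-2}\}$, is empty, so this family contains a missing face; but every pair in the family meets — the $g_i$ share $F$, each edge-facet meets each $g_i$ in an edge of $F$, and in the triangle case the $f_i$ meet pairwise at the vertices of $F$ — so $3$-freeness forces the missing face to be $\{f,f'\}$, i.e. $f\cap f'=\emptyset$, while in the triangle case no size-$2$ missing face is available at all, a contradiction. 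Consequently every $2$-face of $P$ has at least four vertices, and opposite sides of a quadrilateral $2$-face lie on a disjoint pair of facets (so that pair is joined by a legitimate dashed edge in $\Sigma(P)$).

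\textbf{Step 2 (the weighting).} I would assign weight $0$ to every planar angle lying on a $2$-face with at least five vertices, and distribute total weight $1$ among the four planar angles of each quadrilateral $2$-face (the simplest choice being $\tfrac14$ to each; a less symmetric distribution may be needed in Step 3). Condition (b) of Proposition~\ref{prop: planar angle bound} is then immediate from Step 1: for a quadrilateral $\sigma(F)=1=5-4$; for a $k$-gon with $k\ge 5$ one has $\sigma(F)=0\ge 5-k$; and triangles do not occur. This is exactly the $3$-free simplification of Vinberg's weighting, whose only delicate part in general is the treatment of triangular $2$-faces, which are now absent.

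\textbf{Step 3 (bounding $\sigma(A)$) — the main obstacle.} It remains to verify condition (a), namely $\sigma(A)\le d$ for every vertex $A$; with the uniform choice this amounts to bounding by $4d$ the number of quadrilateral $2$-faces through $A$. Here I would follow Vinberg's estimate of $\sigma(A)$ from the proof of the bound $d\le 29$, which rests on the star-diagram structure of Proposition~\ref{prop: quad star scheme}: for a quadrilateral $2$-face $F\ni A$, the black vertices of $S_F^{*}$ are paired according to opposite sides, deleting the two vertices of a pair leaves a hyperbolic diagram while deleting vertices from distinct pairs leaves the diagram of a planar angle of $F$; by Step 1 each pair spans a dashed edge, and the two black vertices recording the sides of $F$ at $A$ lie among the $d$ facets through $A$. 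The key point is that the contributions to $\sigma(A)$ from triangular $2$-faces — the source of the factor $3$ in Vinberg's $\sigma(A)\le 3d$ — have vanished, so the surviving quadrilateral contributions should combine, using the rank constraint of Remark~\ref{rem: Vinberg conditions} (no parabolic subdiagrams, positive inertia index $d$) to prevent too many dashed-edge pairs from accumulating near $A$, to yield $\sigma(A)\le d$. Making this count tight for \emph{every} vertex rather than merely on average is the hard part, and is where one may have to fine-tune the distribution of weight within each quadrilateral; once it is in place, Proposition~\ref{prop: planar angle bound} gives $d<14$, completing the proof.
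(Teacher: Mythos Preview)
Your framework is exactly right: use Proposition~\ref{prop: planar angle bound} with $c=1$, observe that triangular $2$-faces cannot occur (your Step~1 argument for this is correct and essentially the paper's), and check conditions (a) and (b). The gap is in Step~3. With your uniform $\tfrac14$ weighting, condition (a) becomes ``at most $4d$ quadrilateral $2$-faces through each vertex $A$'', but a simple vertex has $\binom{d}{2}$ two-faces through it, and you give no mechanism preventing most of them from being quadrilaterals; indeed in a $d$-cube every $2$-face is a square. Your sketch (``rank constraint\ldots prevent too many dashed-edge pairs from accumulating'') does not supply such a mechanism, and it is not clear any redistribution of the total weight $1$ within each quadrilateral can.

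The paper avoids this counting problem entirely by choosing a different weighting. It sets the weight of a planar angle $(A,F)$ to be $1$ if the two black vertices of its scheme (the two facets through $A$ not containing $F$) are \emph{adjacent} in $\Sigma$, and $0$ otherwise. Then $\sigma(A)$ is simply the number of edges in the elliptic diagram $S_A$; since every elliptic diagram is a forest on $d$ vertices, $\sigma(A)\le d-1<d$, and condition (a) is immediate. For condition (b) on a quadrilateral $F$, your Step~1 shows the two pairs of opposite-side facets are dashed edges $L$ and $M$; since two disjoint Lann\'er subdiagrams would make $\Sigma$ superhyperbolic, $L\cup M$ is connected, so some vertex of $L$ is joined by an (ordinary) edge to some vertex of $M$; by Proposition~\ref{prop: quad star scheme}(1) that pair is the pair of black vertices of one of the four planar angles of $F$, which therefore has weight $1$, giving $\sigma(F)\ge 1$. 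So the missing idea is not a sharper count of quadrilaterals but the adjacency-based weighting that converts condition (a) into the trivial bound ``a forest on $d$ vertices has at most $d-1$ edges''.
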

\begin{proof}
Let $P$ be a compact $3$-free Coxeter $d$-polytope and $S$ its scheme.  We attach weights to the planar angle $P$ similarly to the original construction by Vinberg \cite[Theorem 6.1]{Vin}: the weight of a planar angle $P$ is $1$ if the black vertices in its scheme are adjacent and $0$ otherwise.  We now check that the hypotheses of Proposition \ref{prop: planar angle bound} are satisfied with $c = 1$.

Since every elliptic Coxeter diagram is a forest (when viewed as an unweighted graph), there are at most $d-1$ edges, i.e., pairs of vertices of distance at most $c = 1$.  Thus, Condition (a) of Proposition \ref{prop: planar angle bound} holds.

It remains to check Condition (b) for two-dimensional triangular and quadrilateral faces.  Faces with five or more vertices vacuously satisfy this condition, as the planar angle weights are all non-negative.  

First, we show that $P$ cannot have a two-dimensional triangular face. By Remark \ref{rem: face k-free}, every face of $P$ is $3$-free as well.  Note that a two-dimensional triangular face itself has three one-dimensional facets, which all together have trivial intersection but of which any two intersect non-trivially.  Thus, this triangular face, viewed as a two-dimensional polytope, has a missing face of order $3$.  Hence, it is not $3$-free, contradicting Remark \ref{rem: face k-free}.

Suppose now that $F$ is a two-dimensional quadrilateral face of $P$.  It follows from Proposition \ref{prop: quad star scheme} that every pair of black vertices of $S_F^*$ is contained in some hyperbolic subdiagram not containing the black vertices of the other pair.  Since we have no missing faces of size other than $2$, this implies that each pair of black vertices corresponds to Lann{\'e}r subdiagram of size $2$; call these subdiagrams $L$ and $M$.  Since the subdiagram induced by $L \cup M$ is connected (lest it be parabolic), there must be two black vertices from separate pairs that are adjacent.  These two black vertices lie in the scheme of the corresponding angle of $F$ at a distance of $1$.   Thus, we have shown $\sigma(F) \geq 1$, as desired.  Therefore, by Proposition \ref{prop: planar angle bound}, we have $d < 8(1)+6 = 14$.
\end{proof}

\section{Bounding the Dimension of Polytopes with Few Facets}\label{sec: dim bound}

We now utilise the classification of compact Coxeter $d$-polytopes with few facets to place upper bounds on the dimensions of polytopes with $d+k$ facets for $5 \leq k \leq 10$.  Our methods also utilise the bound $d \leq 14$ for compact $3$-free Coxeter polytopes derived in Section \ref{sec: 3-free}.  

\begin{defn}
Let $D(k)$ denote the maximum positive integer for which a compact Coxeter $D(k)$-polytope with $D(k) + k$ facets exists.
\end{defn}

The only previously published bound on the dimension of such polytopes is the following general result of Vinberg:

\begin{thm}[\protect{\hspace{1sp}\cite[Theorem 4]{Vin2}}]\label{thm: Vinberg 30}
There are no compact Coxeter polytopes in dimension $30$ or higher.
\end{thm}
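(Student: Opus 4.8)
This theorem is Vinberg's \cite{Vin2}, and the plan is to reconstruct his argument with the tools assembled above. Suppose for contradiction that $P$ is a compact Coxeter $d$-polytope with $d \geq 30$, and write $S = \Sigma(P)$. The strategy is to apply Proposition \ref{prop: planar angle bound} with the constant $c = 3$: producing a weighting of the planar angles of $P$ for which $\sigma(A) \leq 3d$ at every vertex $A$ and $\sigma(F) \geq 5 - k$ at every $2$-face $F$ with $k$ vertices forces $d < 8 \cdot 3 + 6 = 30$, the desired contradiction. So the whole task is to build such a weighting.

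The natural starting point is the weighting used in Theorem \ref{thm: 3-free bound}, which gives a planar angle weight $1$ when the two black vertices of its diagram are adjacent in $S_A$ and $0$ otherwise; since each $S_A$ is elliptic, hence a forest, this already forces $\sigma(A) \leq |E(S_A)| \leq d - 1$, and it suffices for every $2$-face of a $3$-free polytope. Once missing faces of size $3$, $4$, or $5$ are allowed, however, triangular $2$-faces appear and the star diagrams of quadrilateral faces become more complicated: Proposition \ref{prop: quad star scheme} only forces each opposite pair of black vertices of $S_F^*$ to sit inside a hyperbolic subdiagram, which may now be a Lann\'er diagram of size up to $5$ in which no two black vertices of planar angles of $F$ are adjacent. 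I would therefore refine the weighting so that a planar angle receives a positive weight depending on the minimal Lann\'er subdiagram of the star diagram that contains its two black vertices --- decreasing in the length of the shortest Lann\'er path joining them --- and, using the explicit list of Lann\'er diagrams (Figure \ref{fig: Lanner diagrams}) together with Proposition \ref{prop: quad star scheme} and its analogue for pentagons, calibrate the constants so that every triangular $2$-face acquires total weight at least $2$ and every quadrilateral at least $1$, while $2$-faces with five or more vertices remain unconstrained.

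The remaining work is then to check condition (a), that this refined weighting still satisfies $\sigma(A) \leq 3d$. This is the step I expect to be the main obstacle: because the refined weighting is supported not only on adjacent pairs of $S_A$ but on pairs joined by short Lann\'er paths, $\sigma(A)$ now also collects contributions from planar angles at $2$-faces with four, five, or more vertices, and one must show these cannot push the sum past the budget $3d$. This is precisely where Nikulin's inequality \cite{Nik} on the average number of vertices of the $2$-dimensional faces of a simple $d$-polytope enters: in high dimension the $2$-faces are on average close to quadrilaterals, so $2$-faces with many vertices are scarce enough that the accumulated weight at each vertex remains within $3d$. Combining the local verification of condition (b) at triangular, quadrilateral, and pentagonal faces with this global count then establishes the hypotheses of Proposition \ref{prop: planar angle bound} and completes the proof.
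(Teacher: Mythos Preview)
The paper does not give its own proof of this statement; it is quoted from Vinberg \cite{Vin2} and used as a black box. So there is no in-paper argument to compare against, only Vinberg's original.

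Your overall shape---apply Proposition~\ref{prop: planar angle bound} with $c=3$ by constructing a refined weighting---is correct, but two points are genuinely off.

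First, you have misplaced Nikulin's theorem. Nikulin's inequality is consumed \emph{inside} the proof of Proposition~\ref{prop: planar angle bound}: it is the device that converts the per-vertex bound $\sigma(A)\le cd$ and per-face bound $\sigma(F)\ge 5-k$ into the inequality $d<8c+6$, by averaging over $2$-faces. It plays no role in \emph{verifying} condition~(a). Condition~(a) is a purely local statement about the elliptic diagram $S_A$ at a single vertex, and Vinberg checks it by a direct combinatorial count: $S_A$ is a forest whose connected components come from the finite list in Figure~\ref{fig: ell diagrams}, so one bounds, for each component type, the weighted number of pairs of vertices at distance $1$, $2$, and $3$, and sums. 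Your proposed mechanism (``$2$-faces with many vertices are scarce'') is the engine of Proposition~\ref{prop: planar angle bound} itself, not a way to establish its hypothesis.

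Second, your description of the weighting conflates two objects. The weight of a planar angle $(A,F)$ must be a function of the planar angle diagram---the elliptic diagram $S_A$ with its two marked black vertices---not of the star diagram $S_F^*$. There are no Lann\'er subdiagrams in $S_A$ (it is elliptic), so ``shortest Lann\'er path joining them'' is not meaningful here. Vinberg's actual weights depend on the distance between the two black vertices in the forest $S_A$ (roughly: weight $1$ at distance $1$, smaller rational weights at distances $2$ and $3$, zero beyond), calibrated so that the per-vertex sum stays below $3d$ by the forest count above. The star diagram $S_F^*$ and its Lann\'er subdiagrams enter only when verifying condition~(b) for triangular and quadrilateral $F$, via Proposition~\ref{prop: quad star scheme} and its triangular analogue.
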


That is, we have $D(k) \leq 29$ for all $k$.  For $k \leq 4$, the exact value of $D(k)$ is known.

\begin{thm}\label{thm: low k bounds}\hspace{-0.5em}\footnote{Anna Felikson and Pavel Tumarkin have shown in unpublished notes that $D(5) \leq 8$ using a fairly involved argument [Tumarkin, personal communication (2021)].}
We have 
\begin{itemize}
    \item $D(1) = 4$, due to Lann{\'e}r \emph{\cite{Lan}};
    \item $D(2) = 5$, due to Kaplinskaja \emph{\cite{Kap}} and Esselmann \emph{\cite{Ess}};
    \item $D(3) = 8$, due to Esselmann \emph{\cite{Ess}};
    \item $D(4) = 7$, due to Felikson and Tumarkin \emph{\cite{FT}}.
\end{itemize}
\end{thm}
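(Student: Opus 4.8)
The plan is to establish each of the four equalities by pairing an existence statement in dimension $D(k)$ with a non-existence statement in all strictly larger dimensions, both supplied by the cited classifications. By the definition of $D(k)$, it suffices in each case to exhibit a compact Coxeter $n$-polytope with $n+k$ facets and to rule out any compact Coxeter $m$-polytope with $m+k$ facets for $m > n$.

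For $D(1)$, I would invoke Lann{\'e}r's classification \cite{Lan} of compact hyperbolic Coxeter simplices: the compact Coxeter $d$-polytopes with $d+1$ facets are precisely those whose Coxeter diagram is a Lann{\'e}r diagram on $d+1$ vertices (Figure \ref{fig: Lanner diagrams}), and these occur only for $d \in \{2,3,4\}$; since a compact Coxeter $4$-simplex exists, $D(1) = 4$. For $D(2)$, the classification of compact Coxeter $d$-polytopes with $d+2$ facets by Kaplinskaja \cite{Kap} and Esselmann \cite{Ess2} shows that such polytopes arise exactly in dimensions $3$, $4$, and $5$, whence $D(2) = 5$.

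For $D(3)$, Esselmann \cite{Ess} proved that a compact Coxeter $d$-polytope with $d+3$ facets must satisfy $d \leq 8$ and that the unique realisation in dimension $8$ is Bugaenko's polytope \cite{Bug}; combining the bound with existence in dimension $8$ gives $D(3) = 8$. For $D(4)$, Felikson and Tumarkin \cite{FT} proved the bound $d \leq 7$ for compact Coxeter $d$-polytopes with $d+4$ facets, together with uniqueness of the $7$-dimensional example (again Bugaenko's polytope \cite{Bug}), so $D(4) = 7$.

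The only point requiring care is that each extremal example genuinely has $D(k)+k$ facets rather than fewer, which is immediate from the explicit Coxeter diagrams in the cited references. Since the substantive upper bounds are exactly the content of those prior classifications, there is no real obstacle here: the statement is a collation, and the argument amounts to assembling and correctly attributing these four results.
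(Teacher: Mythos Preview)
Your proposal is correct and matches the paper's approach: the theorem is stated in the paper purely as a collation of prior results with attributions and no separate proof is given, so your elaboration of how each equality follows from the cited classifications is exactly the intended justification.
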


 Thus, we begin our investigation with $k = 5$, and proceed until the bounds obtained by our methods are no stronger than Vinberg's Theorem \ref{thm: Vinberg 30}.  In particular, we derive the following bounds in this section:
 
 \begin{thm}
 We have 
\begin{itemize}
    \item $D(5) \leq 9$,
    \item $D(6) \leq 12$,
    \item $D(7) \leq 15$, 
    \item $D(8) \leq 18$, 
    \item $D(9) \leq 22$, and
    \item $D(10) \leq 26$.
\end{itemize}
 \end{thm}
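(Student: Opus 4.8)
The plan is to argue by induction on $k$, feeding back the known values $D(1)=4$, $D(2)=5$, $D(3)=8$, $D(4)=7$ (Theorem~\ref{thm: low k bounds}) and the bound $d\le 13$ for $3$-free polytopes (Theorem~\ref{thm: 3-free bound}), and using a face-reduction step that passes from a polytope with $d+k$ facets to a lower-dimensional compact Coxeter polytope with fewer facets relative to its dimension.

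For the reduction: let $P$ be a compact Coxeter $d$-polytope with $d+k$ facets and Coxeter diagram $\Sigma$. If $P$ is $3$-free, Theorem~\ref{thm: 3-free bound} gives $d\le 13$; otherwise, by Proposition~\ref{prop: Vin 3.1}, $\Sigma$ contains a Lann\'er subdiagram $L$ of order $m\in\{3,4,5\}$. A connected diagram on at least three vertices all of whose edges are simple is a tree (hence elliptic) or contains a parabolic cycle $\widetilde A_n$, so it is never Lann\'er; thus $L$ has an edge $e$ of multiplicity at least $2$. Deleting a leaf of $L$ not incident to $e$ (or, if $L$ is a cycle, any vertex that is not an endpoint of $e$) yields a connected elliptic subdiagram $S_0\subset L$ of order $m-1$ that contains $e$; by inspection of Figure~\ref{fig: Lanner diagrams}, $S_0$ is of type $G_2^{(m')}$ ($m'\ge4$), $B_n$, $F_4$, $H_3$, or $H_4$, in particular elliptic with no component of type $A_n$ or $D_5$. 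By Proposition~\ref{prop: Coxeter face}, $P(S_0)$ is a compact Coxeter polytope of dimension $d-(m-1)$; its facets are the good neighbours of $S_0$ together with the vertices of $\Sigma$ not attached to $S_0$, and the unique vertex $w$ of $L\setminus S_0$ is a bad neighbour of $S_0$ (since $\langle S_0,w\rangle=L$ is not elliptic). Hence $P(S_0)$ has $(d-(m-1))+k'$ facets with $1\le k'\le k-1$, the lower bound because a $(d-(m-1))$-polytope has at least $d-(m-1)+1$ facets, so $d\le (m-1)+\max\{D(j):1\le j\le k-1\}\le 4+\max\{D(j):1\le j\le k-1\}$. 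Taking the maximum with $13$ (the $3$-free case) is the crude bound underlying Theorem~\ref{thm: partial dim bound}.

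To reach the stated values one must sharpen this in two ways. First, for each $k$ one shows that in top dimension the reduction cannot be as wasteful as above: using that $P$ has a full complement of $d+k$ facets --- hence, by Proposition~\ref{prop: Vin 3.1}, many Lann\'er subdiagrams --- together with the classifications of compact Coxeter polytopes with fewer facets (the Lann\'er simplices, the $d+2$ and $d+3$ lists, and the new $d+4$ and $d+5$ lists of Sections~\ref{sec: d+4 class}--\ref{sec: d+5 class}), one constrains the combinatorial type of both $L$ and the face $P(S_0)$, forcing either a larger number $b$ of bad neighbours of $S_0$ (hence a smaller $k'$) or a Lann\'er subdiagram of smaller order $m$. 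This converts the crude recursion into $D(k)\le 3+D(k-1)$ for $6\le k\le 8$ and $D(k)\le 4+D(k-1)$ for $k\in\{9,10\}$, which together with $D(4)=7$ and the bound $D(5)\le 9$ yields $D(5)\le 9$, $D(6)\le 12$, $D(7)\le 15$, $D(8)\le 18$, $D(9)\le 22$, $D(10)\le 26$. Second, since the $3$-free bound $d\le 13$ is weaker than the target for $k=5$ and $k=6$, one argues separately that a compact $3$-free Coxeter polytope with only $d+5$ (resp. $d+6$) facets has dimension at most $9$ (resp. $12$): the missing faces of a $3$-free polytope are exactly its dashed edges, so its combinatorial type is determined by the graph of non-intersecting facet pairs on $d+k$ vertices, and one enumerates and eliminates the high-dimensional possibilities for this graph under the signature constraint $(d,1,k-1)$ on the Gram matrix.

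The main obstacle is the first refinement: ruling out, dimension by dimension, a compact Coxeter $d$-polytope with $d+k$ facets whose large missing faces and Coxeter faces are all, in effect, ``as unhelpful as possible''. This step is genuinely case-dependent and leans on the explicit lists of polytopes with $k\le 4$ (and, for the later values of $k$, on $D(5)\le 9$). A secondary difficulty is the $3$-free refinement for $k\in\{5,6\}$, which is delicate precisely because $3$-free polytopes are not described by a short finite list.
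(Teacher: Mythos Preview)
Your outline captures the broad shape of the argument (face reduction via Lann\'er subdiagrams plus a separate treatment of the $3$-free case), but there are two genuine gaps.

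First, and most critically, your handling of the $3$-free case is off. You propose to show that a $3$-free compact Coxeter $d$-polytope with $d+5$ (resp.\ $d+6$) facets has $d\le 9$ (resp.\ $d\le 12$) by enumerating graphs of non-intersecting facet pairs under the signature constraint. This is both unnecessary and unworkable as stated. The paper instead invokes Esselmann's result (Lemma~\ref{lem: Ess 2d}): a $3$-free compact Coxeter $d$-polytope has at least $2d$ facets, with equality only for the $d$-cube. Hence $d+k\ge 2d$, i.e.\ $d\le k$, and since compact Coxeter $d$-cubes exist only for $d\le 5$, one gets $d\le k-1$ for $k>5$. So for $k=5$ the $3$-free bound is $d\le 5$, and for $k=6$ it is $d\le 5$ as well --- far below the targets $9$ and $12$, making your proposed enumeration moot. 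Without this lemma, your enumeration idea has no clear termination: there is no finite list of ``high-dimensional'' dashed-edge graphs to eliminate, and the Gram-matrix signature constraint alone does not bound the number of such graphs.

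Second, your ``first refinement'' is too schematic to be a proof. You assert that one can upgrade the crude recursion to $D(k)\le 3+D(k-1)$ for $6\le k\le 8$ by ``constraining the combinatorial type of both $L$ and the face $P(S_0)$'', but you do not say how. The paper's actual mechanism is quite specific: for $D(5)\le 9$, one shows that any compact Coxeter $d$-polytope with $d+5$ facets and $d\ge 10$ would force $P(S_0)$ to be the unique $8$-polytope with $11$ facets (or the unique $7$-polytope with $11$ facets), then exploits that the Coxeter diagrams of \emph{those particular polytopes} contain $H_4$ subdiagrams with a known number of bad neighbours attached by simple edges (preserved in $\overline{S_0}$ by Proposition~\ref{prop: Coxeter face}), and iterates to reach a contradiction. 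The case $d=10$ with missing faces only of sizes $2$ and $5$ further requires Theorem~\ref{thm: d+4 6}. Similar bespoke arguments handle $k=6,7,8$. Your proposal does not indicate that the refinement hinges on the explicit structure of these few exceptional polytopes, and the vague appeal to ``many Lann\'er subdiagrams'' does not substitute for it.
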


The argument proceeds by first proving a linear bound on $d$ having slope $4$ (with respect to $k$), then applying the classification of polytopes with fewer facets to slightly improve these bounds in particular cases.  The first bound is obtained by iterating part of the argument used by Felikson and Tumarkin to bound $D(4)$ \cite{FT}. 

\begin{lem}\label{lem: face bound}
If $P$ contains a missing face of size $\ell > 2$, then
$$D(k) \leq \max_{1 \leq i \leq k - 1} D(i) + \ell - 1\,.$$
\end{lem}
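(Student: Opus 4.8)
The plan is to generalise the face-passing argument that Felikson and Tumarkin used to bound $D(4)$: from a missing face of size $\ell$ in $P$ I will extract a genuine compact Coxeter \emph{face} of dimension $d-\ell+1$ whose number of facets exceeds its dimension by at most $k-1$, and then invoke the definition of $D(i)$ for $i\le k-1$.

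Concretely, let $P$ be a compact Coxeter $d$-polytope with $d+k$ facets and let $M$ be a missing face of size $\ell\in\{3,4,5\}$, so by Proposition \ref{prop: Vin 3.1} the induced subdiagram $L$ of $\Sigma(P)$ on $M$ is a Lann{\'e}r diagram of order $\ell$. I would choose a vertex $v\in M$ (the choice is discussed below) and set $S_0=M\setminus\{v\}$; since every proper subdiagram of a Lann{\'e}r diagram is elliptic, $S_0$ is elliptic of order $\ell-1\le 4$, hence has no component of type $D_5$. Granting that $v$ can be chosen so that $S_0$ also has no component of type $A_n$, Proposition \ref{prop: Coxeter face} shows that $P(S_0)$ is a compact Coxeter polytope, and by Proposition \ref{prop: Vin 3.1} its dimension is $d':=d-\ell+1$. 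To bound its facets, note that the facets of $P(S_0)$ are among the $(d+k)-(\ell-1)$ facets of $P$ not lying in $S_0$, and $f_v$ is not among them since $f_v\cap P(S_0)=\bigcap_{i\in M}f_i=\emptyset$; hence $P(S_0)$ has at most $(d+k)-\ell=d'+(k-1)$ facets. Writing the number of facets of $P(S_0)$ as $d'+i$, we have $1\le i\le k-1$ because any $d'$-polytope has at least $d'+1$ facets, so $d'\le D(i)\le\max_{1\le j\le k-1}D(j)$; that is, $d\le\max_{1\le j\le k-1}D(j)+\ell-1$. Applied to a polytope realising $D(k)$, this is the lemma.

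The one point needing genuine care --- and the main obstacle --- is the existence of a vertex $v\in M$ with $M\setminus\{v\}$ free of components of type $A_n$. For $\ell=3$ this is immediate: a Lann{\'e}r triangle or path must contain a non-simple edge, since otherwise its Gram matrix would be positive semidefinite of corank $1$, i.e.\ parabolic of type $\widetilde A_2$; deleting the vertex not incident to such an edge leaves a single edge $G_2^{(m)}$ with $m\ge 4$, which is not of type $A_n$. For $\ell\in\{4,5\}$ I would argue from the list in Figure \ref{fig: Lanner diagrams} using two structural observations: a Lann{\'e}r diagram of order $\ge 4$ is triangle-free (an induced triangle would be a non-elliptic proper subdiagram), and a Lann{\'e}r diagram of order $5$ is moreover free of induced $4$-cycles; consequently $L$ is a path, a tree with a single branch vertex, or a cycle. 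In each of these shapes an all-simple diagram would be elliptic of type $A$ or $D$, or affine of type $\widetilde A$ or $\widetilde D$ --- never Lann{\'e}r --- so $L$ carries a non-simple edge; one then deletes a suitable leaf or cycle vertex so that this non-simple edge survives, leaving an elliptic diagram of type $B_n$, $F_4$, $H_n$, or $D_4$, which has no component of type $A_n$ (nor $D_5$). This case analysis is finite and elementary, but it is the part of the argument that must be executed carefully so that Borcherds' criterion in Proposition \ref{prop: Coxeter face} genuinely applies in every instance.
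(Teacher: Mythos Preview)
Your proposal is correct and follows essentially the same approach as the paper: pick an elliptic subdiagram $S_0$ of order $\ell-1$ inside the Lann{\'e}r diagram so that Borcherds' criterion (Proposition~\ref{prop: Coxeter face}) applies, observe that the excluded vertex is a bad neighbour, and pass to the Coxeter face $P(S_0)$ of dimension $d-\ell+1$ with at most $(d-\ell+1)+(k-1)$ facets. The only difference is presentational---the paper simply inspects Figure~\ref{fig: Lanner diagrams} and names the type of $S_0$ directly ($G_2^{(m)}$ with $m\ge 4$ for $\ell=3$; $H_3$ or $B_3$ for $\ell=4$; $H_4$ or $F_4$ for $\ell=5$), whereas you sketch a structural argument explaining why such a choice is always available.
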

\begin{proof}
Let $T$ be a Lann{\'e}r subdiagram of order $\ell$.  Let $S_0$ be a subdiagram of the type 
\[
\begin{cases}
 G_2^{(m)} \text{ for some } m \geq 4 & \text{ if } \ell = 3\;;\\
 H_3 \text{ or } B_3 & \text{ if } \ell = 4\;;\\
 H_4 \text{ or } F_4 & \text{ if } \ell = 5\;.
\end{cases}
\]
Since $S_0$ has at least one bad neighbour (the unique vertex of $T\cut S_0$), by Proposition \ref{prop: Coxeter face} we have that $P(S_0)$ is a Coxeter $(d-\ell + 1)$-polytope with at most $d + k - (\ell + 1)  = (d - \ell + 1) + k - 1$ facets.  Hence,
$$d - \ell + 1 \leq \max_{1 \leq i \leq k - 1} D(i)\,.$$
This implies the desired inequality.
\end{proof}

In order to obtain a general bound from this lemma, we make use of the following result of Esselmann.
\begin{lem}[\protect{\cite[Lemma 6.7]{Ess}}]\label{lem: Ess 2d}
A $3$-free compact Coxeter $d$-polytope has at least $2d$ facets, and in the case of equality, it must be a $d$-cube.
\end{lem}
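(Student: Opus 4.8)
The plan is to reduce the statement to a purely combinatorial fact and prove it by induction on $d$: \emph{every $3$-free simple $d$-polytope $P$ has at least $2d$ facets, with equality only if $P$ is combinatorially a $d$-cube.} Since every compact Coxeter polytope is simple (Remark~\ref{rem: simple Coxeter}), this gives the lemma. The base case $d=1$ is a segment, which has $2=2\cdot 1$ facets and is the $1$-cube, and the inductive hypothesis will be applied to facets of $P$, which by Remark~\ref{rem: face k-free} are again $3$-free, and which are simple $(d-1)$-polytopes since facets of simple polytopes are simple.

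The heart of the argument is the following claim, which I would establish first: \emph{if $P$ is a $3$-free simple $d$-polytope with $d\geq 2$ and $f$ is any facet of $P$, then $f$ is disjoint from some other facet of $P$.} To prove it, pick a vertex $v$ of $P$ not lying on $f$ (one exists since $P$ is the convex hull of its vertices and $\dim\operatorname{aff}(f)=d-1<d$), and write $v=g_1\cap\cdots\cap g_d$ as an intersection of $d$ facets, none equal to $f$. Then $f\cap g_1\cap\cdots\cap g_d\subseteq f\cap\{v\}=\emptyset$, so the set $\{f,g_1,\dots,g_d\}$ contains a missing face. As $P$ is $3$-free this missing face has size $2$, and since the $g_i$ pairwise intersect (all contain $v$) it must be of the form $\{f,g_i\}$; that is, $f\cap g_i=\emptyset$.

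For the inductive step, let $f$ be a facet of the $3$-free simple $d$-polytope $P$ with $d\geq 2$. The facets of $f$ are exactly the sets $f\cap g$ for facets $g\neq f$ of $P$ meeting $f$, and distinct such $g$ yield distinct facets of $f$ by simplicity; so by the inductive hypothesis $P$ has at least $2d-2$ facets other than $f$ that meet $f$. Adding $f$ itself and the facet disjoint from $f$ furnished by the claim gives at least $(2d-2)+1+1=2d$ facets. For the equality case $n=2d$, the claim forces each facet $f$ to be disjoint from \emph{exactly} one other facet (two would leave $f$ with at most $2d-3<2d-2$ facets), so ``disjointness'' is a perfect matching on the $2d$ facets, giving $d$ pairwise disjoint pairs. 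Since $P$ is $3$-free, its missing faces are precisely the pairs of disjoint facets, hence exactly these $d$ matched pairs; this is the same missing face list as the $d$-cube (whose missing faces are its $d$ pairs of opposite facets), and since the combinatorial type of a polytope is determined by its list of missing faces, $P$ is combinatorially a $d$-cube.

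The step I expect to be the main obstacle is the claim of the second paragraph, i.e.\ locating a vertex off $f$, cutting out the right $d$ facets through it, and squeezing a size-$2$ missing face out of $3$-freeness; once this and the inductive hypothesis are in hand, both the count and the equality analysis are routine bookkeeping, the only delicate point being the passage from ``disjointness is a perfect matching'' to ``combinatorially a cube'', for which the clean route is that combinatorial type is recovered from the missing face list. One could equivalently phrase the whole argument dually, as the statement that a flag simplicial $(d-1)$-sphere has at least $2d$ vertices with equality only for the boundary of the cross-polytope, proved by inducting on vertex links; the two formulations are interchangeable.
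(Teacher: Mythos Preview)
The paper does not supply a proof of this lemma at all; it is quoted from Esselmann \cite[Lemma 6.7]{Ess} and used as a black box, so there is no in-paper argument to compare against.

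Your proof is correct. The key claim---that in a $3$-free simple $d$-polytope every facet $f$ is disjoint from some other facet---is cleanly obtained by taking a vertex $v\notin f$, writing $v=g_1\cap\cdots\cap g_d$, and extracting a size-$2$ missing face from $\{f,g_1,\dots,g_d\}$ via $3$-freeness; this is exactly the right move. The inductive step is sound: facets of simple polytopes are simple, Remark~\ref{rem: face k-free} guarantees they remain $3$-free, and the correspondence $g\mapsto f\cap g$ between facets of $P$ meeting $f$ and facets of $f$ is bijective by simplicity. Your equality analysis is also fine: once disjointness is a perfect matching on the $2d$ facets, the missing face list consists of exactly those $d$ pairs, and the paper explicitly states that combinatorial type is determined by the missing face list, so $P$ is combinatorially a $d$-cube. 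Your parenthetical remark that this is dual to the statement that a flag simplicial $(d-1)$-sphere has at least $2d$ vertices (with equality only for the cross-polytope boundary), proved by inducting on vertex links, is accurate and is indeed the standard formulation of this result in combinatorics.
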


\begin{thm}\label{thm: partial dim bound}
For $k \geq 5$, we have
$$D(k) \leq  \max\left\{\max_{i < k} D(i) + 4, \min\{k-1,13\}\right\}\,.$$
\end{thm}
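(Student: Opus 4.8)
The plan is to dichotomise on the size of the largest missing face of an extremal polytope and to feed the outcome into the three prior results Lemma \ref{lem: face bound}, Theorem \ref{thm: 3-free bound}, and Lemma \ref{lem: Ess 2d}. Fix a compact Coxeter $d$-polytope $P$ with $d+k$ facets realising $d = D(k)$; this exists by the definition of $D(k)$, which is finite by Theorem \ref{thm: Vinberg 30}. Every polytope has at least one missing face, and $P$, being compact hyperbolic, has no Lann\'er subdiagram of order greater than $5$; hence either $P$ has a missing face of size $\ell$ with $3 \le \ell \le 5$, or $P$ is $3$-free. The two cases will produce, respectively, the first and the second term on the right-hand side.

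In the first case I would quote Lemma \ref{lem: face bound} verbatim: it yields $D(k) \le \max_{1 \le i \le k-1} D(i) + \ell - 1 \le \max_{i < k} D(i) + 4$ since $\ell \le 5$, which is the first term and finishes this case. All of the geometric content here — passing to a face $P(S_0)$ with $S_0$ of one of the types $G_2^{(m)}$, $H_3$, $B_3$, $H_4$, $F_4$ possessing a bad neighbour — is already packaged inside that lemma, so no further work is needed.

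In the $3$-free case I would combine Theorem \ref{thm: 3-free bound}, which gives $d \le 13$, with Lemma \ref{lem: Ess 2d}, which gives $d + k \ge 2d$, i.e., $d \le k$, and moreover forces $P$ to be a $d$-cube when $d + k = 2d$. If $d \le k-1$, then $d \le \min\{k-1,13\}$, the second term, and we are done. The residual case is $d = k$: since $k \ge 5$, this would make $P$ a compact hyperbolic Coxeter $d$-cube of dimension $d \ge 5$, and I would exclude it via the classification of compact Coxeter cubes \cite{JT} (no such cube exists), concluding that in fact $d \le k-1$. This cube case is the one genuinely delicate point, and it is precisely what upgrades $\min\{k,13\}$ to $\min\{k-1,13\}$; note that for the range $5 \le k \le 12$ it is in any event harmless, since $\max_{i<k} D(i) + 4 \ge D(3)+4 = 12 \ge k$ by Theorem \ref{thm: low k bounds}, while for $k \ge 14$ the bound $d \le 13 < k$ already puts us in the case $d \le k-1$, so only $k = 13$ actually invokes the nonexistence of a $13$-cube. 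Everything else is a routine assembly of the cited results, so the main obstacle is isolating and discharging this cube equality case.
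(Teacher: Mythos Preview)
Your approach is the paper's own: split on whether $P$ is $3$-free, invoke Lemma~\ref{lem: face bound} in the first case and Lemma~\ref{lem: Ess 2d} together with Theorem~\ref{thm: 3-free bound} and the cube classification \cite{JT} in the second. The only slip is the parenthetical claim that no compact Coxeter $d$-cube exists for $d \ge 5$ --- in fact compact $5$-cubes do exist, and the paper accordingly treats $k=5$ separately --- but your own follow-up observation that the first term $\max_{i<k} D(i)+4 \ge 12 \ge k \ge d$ already dominates for $5 \le k \le 12$ repairs this, so the argument is sound.
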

\begin{proof}
Fix a Coxeter $d$-polytope $P$ with $d+k$ facets.  If $P$ is not $3$-free, then it contains a missing face of order $3$, $4$, or $5$.  The first bound then follows from Lemma \ref{lem: face bound}.

Now suppose $P$ is $3$-free, so by Lemma \ref{lem: Ess 2d}, $P$ either has at least $2d+1$ facets or is a $d$-cube. Since compact Coxeter $d$-cubes only exist in dimension at most $5$ \cite{JT}, any $3$-free compact Coxeter $d$ polytope with $d+k$ facets satisfies $d \leq k-1$ for $k > 5$.  For $k = 5$, we can have $3$-free compact polytopes in dimension $5$ or less, but this is exceeded by the first bound $\max_{i<5}\{D(i) + 4\} = 12$.  We furthermore have from Theorem \ref{thm: 3-free bound} that such polytopes arise in dimension at most $13$.  Hence for $k > 5$, a $3$-free compact Coxeter $d$-polytope with $d+k$ facets satisfies $d \leq \min\{k-1,13\}$, from which the second bound follows.  
\end{proof}

We now proceed to improving this bound for polytopes with few facets, which involves a more detailed argument depending on the classifications of compact Coxeter polytopes with fewer facets.

\subsection{\texorpdfstring{Polytopes with $d+5$ facets}{Polytopes with d+5 facets}}
Theorem \ref{thm: partial dim bound} implies that there are no compact Coxeter $d$-polytopes with $d+5$ facets for $d \geq 13$.  In this section, we additionally show that no such polytopes arise in dimensions $12$, $11$, and $10$.

For the first reduction, we require some knowledge about compact Coxeter $6$-polytopes with $10$ facets.  In fact, it has been verified that any such polytope has a missing face of size $3$ or $4$ using the exhaustive methods detailed in the first portion of this work (see Theorem \ref{thm: d+4 6}).

\begin{prop}
There is no Coxeter $d$-polytope with $d+5$ facets for $d \geq 10$.
\end{prop}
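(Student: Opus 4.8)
The plan is to assume, for contradiction, that $P$ is a compact Coxeter $d$-polytope with $d+5$ facets and $d\ge 10$. By Theorem~\ref{thm: partial dim bound} applied with $k=5$ (note $\max_{i<5}D(i)=D(3)=8$ by Theorem~\ref{thm: low k bounds}) we already know $d\le 12$, so it remains to rule out $d\in\{10,11,12\}$. Two constraints come for free. First, by Lemma~\ref{lem: Ess 2d} a $3$-free compact Coxeter $d$-polytope has at least $2d$ facets, and $2d>d+5$ once $d\ge 6$, so $P$ is not $3$-free; hence $\Sigma(P)$ contains a Lann\'er subdiagram $T$ of some order $\ell\in\{3,4,5\}$. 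Second, Lemma~\ref{lem: face bound} with $k=5$ shows that a missing face of order $\ell$ forces $d\le\ell+7$, so every missing face of $P$ of order $\ge 3$ has order exactly $5$ when $d=12$, order $\ge 4$ when $d=11$, and order $\ge 3$ when $d=10$.

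Next I would pass to a low-codimension face exactly as in the proof of Lemma~\ref{lem: face bound}. Choosing inside $T$ a subdiagram $S_0$ of type $G_2^{(m)}$ ($m\ge 4$), $H_3$ or $B_3$, or $H_4$ or $F_4$ according as $\ell=3,4,5$, Proposition~\ref{prop: Coxeter face} gives that $P(S_0)$ is a compact Coxeter polytope of dimension $d'=d-\ell+1$; deleting the $\ell-1$ vertices of $S_0$ together with the bad neighbour $T\setminus S_0$ (and possibly further bad neighbours) leaves at most $d'+4$ facets. Since $d'\ge 6$ whenever $d\ge 10$, and since $D(1)=4$, $D(2)=5$, $D(3)=8$, $D(4)=7$, the polytope $P(S_0)$ has exactly $d'+3$ facets, except that when $d'=7$ it may instead have $d'+4$. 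In particular, when $d'=8$ it is the unique compact Coxeter $8$-polytope with $11$ facets, namely Bugaenko's polytope, and when $d'=6$ it is a compact Coxeter $6$-polytope with $9$ or $10$ facets; in every case $P(S_0)$ belongs to the classifications of Tumarkin and of Felikson--Tumarkin.

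The core of the argument is to push a small missing face of $P(S_0)$ back up to $P$. For $S_0$ of type $H_4$, $F_4$, or $G_2^{(m)}$ with $m\ge 6$, the Coxeter diagram $\Sigma_{S_0}$ of $P(S_0)$ is exactly the induced subdiagram $\overline{S_0}\subset\Sigma(P)$ by the structure results of Felikson and Tumarkin quoted earlier, so any Lann\'er subdiagram of $\Sigma_{S_0}$ is one of $\Sigma(P)$; for the remaining admissible choices of $S_0$ one uses the other parts of that result, observing that the permitted edge replacements only raise multiplicities and that a Lann\'er diagram of order $\ge 3$ has no dashed edge, so that an order-$3$ or order-$4$ Lann\'er subdiagram of $\Sigma_{S_0}$ still contains a Lann\'er subdiagram inside $\Sigma(P)$. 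Since $P(S_0)$ is itself not $3$-free, one descends once more in the same fashion to a face that is a compact Coxeter $6$-polytope with exactly $10$ facets (the accumulated order constraints are what make it possible to steer the facet count to $6+4$ rather than $6+3$), and invokes the fact that every compact Coxeter $6$-polytope with $10$ facets has a missing face of order $3$ or $4$ (Theorem~\ref{thm: d+4 6} and the discussion preceding this proposition). Propagating that small missing face back up through $P(S_0)$ to $P$ contradicts the constraints of the first paragraph when $d=11$ or $d=12$, and when $d=10$ contradicts the simultaneous presence of an order-$5$ missing face. For $d=12$ there is a clean shortcut: $\ell=5$ is forced, $P(S_0)$ is Bugaenko's $8$-polytope, which one reads off from Esselmann's and Tumarkin's description to have a missing face of order $4$, already contradicting that every missing face of $P$ of order $\ge 3$ has order $5$.

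The step I expect to be the main obstacle is exactly this transfer-and-descent bookkeeping: one must verify, in each of the finitely many configurations, that the chain of face-reductions can be routed to land on a $6$-polytope with precisely $10$ facets rather than $9$, and must handle the sporadic polytopes appearing en route — Bugaenko's $8$-polytope and Esselmann's $4$-polytopes with $6$ facets — by reading their missing-face structure off the explicit classifications. A secondary technicality is checking, for $S_0$ of type $H_3$, $B_3$, $B_2$, or $G_2^{(5)}$, that the edge replacements permitted by the Felikson--Tumarkin structure result never convert a genuine order-$3$ or order-$4$ missing face of $P(S_0)$ into an elliptic subdiagram of $\Sigma(P)$.
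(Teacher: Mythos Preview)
Your overall architecture---descend to a face $P(S_0)$ via a subdiagram inside a Lann\'er diagram, land in one of the known classifications, and then read off structure---matches the paper's. But several of the specific steps you rely on are false, and the paper's proof actually works by a different mechanism (tracking bad-neighbour counts of $H_4$ subdiagrams), not by ``propagating a small missing face back up.''

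First, your ``clean shortcut'' for $d=12$ fails: Bugaenko's compact $8$-polytope with $11$ facets has missing faces only of orders $2$ and $5$ (this is exactly why the paper's Case~3 lists it as a possibility for a polytope with only such missing faces). So you cannot extract an order-$4$ missing face from it. Second, you have the direction of the Felikson--Tumarkin edge replacements backwards: $\overline{S_0}$ is obtained from $\Sigma_{S_0}$ by \emph{lowering} multiplicities (double $\to$ simple, dashed $\to$ ordinary or empty, etc.), so an order-$3$ or order-$4$ Lann\'er subdiagram of $\Sigma_{S_0}$ may well become elliptic inside $\Sigma(P)$; you therefore cannot transport it upward in the way you claim for $S_0$ of type $G_2^{(4)}$, $G_2^{(5)}$, $B_3$, or $H_3$. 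Third, your $d=10$ endgame is unclear: for $d=10$ the first paragraph imposes no constraint on the orders of missing faces, so ``contradicts the simultaneous presence of an order-$5$ missing face'' does not yield a contradiction. Finally, the ``steering to a $6$-polytope with exactly $10$ facets'' step is asserted but not justified; iterating the descent from Bugaenko's $8$-polytope can simply land you back at Bugaenko's $8$-polytope.

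What the paper actually does, in each case, is identify in $\overline{S_0}$ two specific $H_4$ subdiagrams $T_1,T_2$ together with their bad neighbours attached by \emph{simple} edges (structure that \emph{does} survive the passage from $\Sigma_{S_0}$ to $\overline{S_0}$). Passing to the face $P(T_1)$ forces $T_1$ to have exactly two bad neighbours and lands in the short list of $6$-polytopes with $9$ facets or the $8$-polytope with $11$ facets; one reads off that in every such polytope each $H_4$ subdiagram has two simple-edge bad neighbours. This pushes the bad-neighbour count of $T_2$ in $\Sigma(P)$ to at least $3$, forcing $P(T_2)$ to be a $(d-4)$-polytope with at most $(d-4)+2$ facets, which does not exist for $d\ge 10$. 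The crux is the bad-neighbour bookkeeping, not the transport of Lann\'er subdiagrams.
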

\begin{proof}
Suppose there is a compact Coxeter polytope $P$ of dimension $d \geq 10$ with at most $d+5$ facets; denote its Coxeter diagram by $\Sigma$.  

First suppose that $P$ has a missing face of order $3$,  and let $S_0$ be a subdiagram of type $G_2^{(m)}$ for $m \geq 4$ contained in this missing face.  The face $P(S_0)$ corresponding to the subdiagram $S_0$ is a Coxeter polytope of dimension $d_1 \geq 8$ with at most $d_1+4$ facets.  This must be the unique $8$-polytope with $11$ facets.  Note that Coxeter diagram of this polytope, $\Sigma_{S_0}$, contains two subdiagrams of type $H_4$.  These subdiagrams are preserved in the subdiagram $\overline{S_0}$, call them $T_1$ and $T_2$.  Moreover, there is a vertex $v \in \Sigma$ connected to $T_1$ and $T_2$ by simple edges, as well as distinct vertices $u_i$ connected to $T_i$ by a simple edge for $i \in \{1,2\}$, as this is true in $\Sigma_{S_0}$ and is preserved under the operations described in Proposition \ref{prop: Coxeter face}.  In particular, $T_1$ has at least two bad neighbours in $\Sigma$.  The face corresponding to the subdiagram $T_1$ in $\Sigma$ is then a compact Coxeter polytope of dimension $d_2 \geq 6$ with at most $d_2 + 3$ facets. In particular, it must have dimension $6$ with $9$ facets or dimension $8$ with $11$.  There are four such polytopes, and in each such polytope every subdiagram of type $H_4$ has two bad neighbours joined by simple edges.  Thus, $T_2$ must have at least $3$ bad neighbours joined by simple edges in $\Sigma$, since there are at least two in each of $\overline{S_0}$ and $\overline{T_1}$, with at most one bad neighbour being common to both.  Thus, the face corresponding to $T_2$ in $\Sigma$ must be a compact Coxeter polytope of dimension $d_3 \geq 6$ with at most $d_3 + 2$ facets, but no such polytope exists by Theorem \ref{thm: low k bounds}.  Therefore, we can assume $P$ has no missing faces of order $3$.

Now suppose that $P$ has a missing face of order $4$.  Let $S_0$ be a subdiagram of type $B_3$ or $H_3$ contained in this missing face.  Then the face corresponding to $S_0$ is a compact Coxeter polytope of dimension $d_1 \geq 7$ with at most $d_1 + 4$ facets.  Hence, $P(S_0)$ is either the unique compact Coxeter $7$-polytope with $11$ facets, or the unique compact Coxeter $8$-polytope with $11$ facets.   If $P(S_0)$ is the $7$-polytope, then $\Sigma_{S_0}$ (and hence $\overline{S_0}$, by Proposition \ref{prop: Coxeter face}) contains a subdiagram $S_1$ of type $H_4$ with three bad neighbours, each connected by a simple edge.  Thus, the face corresponding to $S_1$ in $\Sigma$ is a compact Coxeter polytope of dimension $d_2 \geq 6$ with at most $d_2 + 2$ facets, which does not exist.  Thus, $P(S_0)$ is not the unique compact Coxeter $7$-polytope with $11$ facets.  Now supposing  $P(S_0)$ is the unique compact Coxeter $8$-polytope with $11$ facets, the same reasoning as that used in the previous paragraph shows that $\Sigma$ must contain a subdiagram of type $H_4$ with at least three bad neighbours.  This again yields a face which is a compact Coxeter polytope of dimension $d_3 \geq 6$ with at most $d_3 + 2$ facets, which does not exist.  Therefore, we can assume $P$ has no missing faces of order $4$.

Thus, we can now restrict to considering polytopes with missing faces of order only $2$ and $5$.  By Lemma \ref{lem: Ess 2d}, $P$ must contain a missing face of size $5$.  Let $S_0$ be a subdiagram of type $H_4$ or $F_4$ in this missing face.  Then $P(S_0)$ is a compact Coxeter polytope of dimension $d_1 \geq 6$ with at most $d_1 + 4$ facets.  Moreover, since $\Sigma_{S_0} = \overline{S_0}$ by Proposition \ref{prop: Coxeter face}, then $P(S_0)$ also contains missing faces of order only $2$ and $5$.  In particular, $P(S_0)$ is either a $6$-polytope with $10$ facets or the unique compact Coxeter $8$-polytope with $11$ facets, as the unique compact Coxeter $7$-polytope with $11$ facets contains missing faces of order $4$.  If  $P(S_0)$ is the unique compact Coxeter $8$-polytope with $11$ facets, then we reach a contradiction by the same reasoning as above.  Otherwise, $P(S_0)$ must be a compact Coxeter $6$-polytope with $10$ facets containing missing faces of order only $2$ and $5$, but no such polytopes exist by Theorem \ref{thm: d+4 6}. We can now conclude that no such polytope $P$ arises.
\end{proof}

\begin{cor}
We have $D(5) \leq 9$.
\end{cor}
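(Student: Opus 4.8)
The plan is to obtain the bound directly from the Proposition established immediately above. By definition $D(5)$ is the largest dimension $d$ admitting a compact Coxeter $d$-polytope with $d+5$ facets, and this is a well-defined integer since Theorem~\ref{thm: Vinberg 30} gives $D(5) \le 29$ a priori. The Proposition asserts that no compact Coxeter $d$-polytope with $d+5$ facets exists when $d \ge 10$; combined with the definition of $D(5)$, this forces $D(5) \le 9$, which is exactly the claim. So the only step is to quote the Proposition and specialise it to the definition of $D(5)$.

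For context, I would also record that Theorem~\ref{thm: partial dim bound} already yields the weaker estimate $D(5) \le \max\{\max_{i<5} D(i)+4,\ \min\{4,13\}\} = \max\{D(3)+4,\ 4\} = 12$, using only the values of $D(1),\dots,D(4)$ from Theorem~\ref{thm: low k bounds}; the sharpening from $12$ to $9$ is precisely the work carried out in the Proposition, which proceeds by locating a subdiagram of type $G_2^{(m)}$, $B_3$, $H_3$, $H_4$, or $F_4$ inside a missing face, passing to the corresponding Coxeter face via Proposition~\ref{prop: Coxeter face}, and iterating with the classifications of polytopes having $d+2$, $d+3$, and $d+4$ facets (the latter including Theorem~\ref{thm: d+4 6} on $6$-polytopes with $10$ facets and missing faces of orders only $2$ and $5$). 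Accordingly there is no residual obstacle in the corollary itself: it is a one-line consequence of the Proposition, with no further case analysis or computation required.
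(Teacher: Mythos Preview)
Your proposal is correct and matches the paper's approach: the corollary is stated without proof in the paper, as it is an immediate consequence of the preceding Proposition together with the definition of $D(5)$. Your contextual remark about the weaker bound $D(5)\le 12$ from Theorem~\ref{thm: partial dim bound} is accurate and consistent with the paper's narrative.
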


\begin{lem}\label{lem: 9 d+5}
If there exists a $9$-polytope with $14$ facets, then it contains no missing faces of size $3$ nor any multi-multiple edge.
\end{lem}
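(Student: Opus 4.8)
The plan is to suppose that a compact Coxeter $9$-polytope $P$ with $14$ facets exists, with Coxeter diagram $\Sigma$, and to reach a contradiction in each of the two cases by passing to faces, invoking the classification of compact Coxeter polytopes with fewer facets, and using the structure of the unique compact Coxeter $7$-polytope with $11$ facets, which I denote $B_7$.

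\emph{No missing face of order $3$.} Suppose $T\subset\Sigma$ is a Lann{\'e}r subdiagram of order $3$. Every such diagram has an edge of multiplicity at least $2$ (a $3$-vertex path $\circ\overset{p}{-}\circ\overset{q}{-}\circ$ with $1/p+1/q<1/2$ has $\max\{p,q\}\geq 4$, and a $3$-vertex triangle all of whose labels are $3$ is Euclidean), so one may take $S_0\subset T$ of type $G_2^{(m)}$ for some $m\geq 4$. Since this type has no component of type $A_n$ or $D_5$, Proposition~\ref{prop: Coxeter face} makes $P(S_0)$ a compact Coxeter polytope of dimension $7$ (codimension $2$), and since the third vertex of $T$ is a bad neighbour of $S_0$ it has at most $14-3=11$ facets; by Theorem~\ref{thm: low k bounds} and Tumarkin's classification \cite{Tum}, this forces $P(S_0)=B_7$, $|\overline{S_0}|=11$, and $S_0$ to have exactly one bad neighbour, the third vertex $v_3$ of $T$. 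As shown in the proof of the preceding proposition, the diagram of $B_7$ contains an $H_4$ subdiagram $S_1$ whose three bad neighbours are joined to it by simple edges; moreover its remaining four vertices form a Lann{\'e}r subdiagram $L$ of order $4$, being the facets of the (bounded, since $P$ is compact) codimension-$4$ face of $B_7$ cut out by $S_1$, a Coxeter $3$-simplex by Proposition~\ref{prop: Coxeter face}. These data survive in $\Sigma$: for $m\geq 6$ because $\overline{S_0}=\Sigma_{S_0}$ by \cite[Corollary~1.1]{FT}, and for $m\in\{4,5\}$ because the modifications permitted there alter only double and dashed edges, which occur neither inside an $H_4$ nor on its simple-edge neighbours, and one checks they do not render $L$ elliptic. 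Now $H_4$ has no good neighbours, so if any of $v_1,v_2,v_3$ attached to $S_1$ it would be a fourth bad neighbour and $P(S_1)$ would be a compact Coxeter $5$-polytope with at most $14-4-4=6$ facets, impossible as $D(1)=4$. Hence none of them attaches to $S_1$, so $\Sigma_{S_1}=\overline{S_1}$ contains $T$, and $P(S_1)$ is a compact Coxeter $5$-polytope with exactly $14-4-3=7$ facets, i.e.\ a simplicial prism by \cite{Kap,Ess2}. But a simplicial prism $\Delta^4\times[0,1]$ has a unique missing face, the pair of bases, of order $2$, so its diagram has no Lann{\'e}r subdiagram of order $>2$, contradicting $T\subset\Sigma_{S_1}$.

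\emph{No multi-multiple edge.} Suppose $v_1v_2$ is multi-multiple, so $S_0:=\{v_1,v_2\}$ has type $G_2^{(m)}$ with $m\geq 6$. No connected elliptic diagram of order $3$ contains such a $G_2^{(m)}$, so $S_0$ has no good neighbours; thus $\overline{S_0}=\Sigma_{S_0}$, and since $\Sigma$ is connected $S_0$ has a (necessarily bad) neighbour. As before, $P(S_0)=B_7$ and $S_0$ has exactly one bad neighbour $w$; then $\{v_1,v_2,w\}$ is not elliptic and not parabolic (admissibility), so it contains a Lann{\'e}r subdiagram, which cannot be the triangle $\{v_1,v_2,w\}$ (that would be a missing face of order $3$, already excluded), hence is a Lann{\'e}r pair $\{v_1,w\}$ or $\{v_2,w\}$. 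As in the first case, $B_7$ has an $H_4$ subdiagram $S_1$ with three simple-edge bad neighbours and with its remaining four vertices forming an order-$4$ Lann{\'e}r subdiagram $L$; since $\overline{S_0}=\Sigma_{S_0}$ there are no modifications and $S_1,L$ are honest subdiagrams of $\Sigma$. If any of $v_1,v_2,w$ attached to $S_1$, again $P(S_1)$ would be a compact Coxeter $5$-polytope with $\leq 6$ facets, impossible; hence $\Sigma_{S_1}=\overline{S_1}$ contains $L$, and $P(S_1)$ is a compact Coxeter $5$-polytope with $7$ facets, a simplicial prism, whose diagram has no Lann{\'e}r subdiagram of order $>2$, contradicting $L\subset\Sigma_{S_1}$.

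\emph{Main obstacle.} The delicate step is the passage from the intrinsic diagram $\Sigma_{S_0}$ of the face $P(S_0)=B_7$ to the induced subdiagram $\overline{S_0}$ of $\Sigma$, i.e.\ verifying that the $H_4$ subdiagram of $B_7$ together with its three simple-edge bad neighbours, and the order-$4$ Lann{\'e}r subdiagram $L$, remain subdiagrams of $\Sigma$. This is free when the cut-out edge is multi-multiple (which is why the second assertion is clean), but when $S_0$ has type $G_2^{(4)}$ or $G_2^{(5)}$ it requires checking, against the explicit modifications of \cite[Corollary~1.1]{FT}, that these structures are preserved. A secondary point to confirm is the elementary combinatorial fact that a simplicial prism has a unique missing face, of order $2$.
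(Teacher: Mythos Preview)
Your approach coincides with the paper's in its architecture: cut down by a $G_2^{(m)}$ subdiagram to land on the Bugaenko $7$-polytope $B_7$, lift its $H_4$ subdiagram $S_1$ with three simple-edge bad neighbours back into $\Sigma$, argue that $S_1$ can have no further bad neighbours in $\Sigma$ (else $P(S_1)$ would be a $5$-polytope with at most $6$ facets), and conclude that $P(S_1)$ is a compact Coxeter $5$-polytope with $7$ facets whose diagram still contains the offending structure. The paper finishes by noting that such a $5$-polytope is a simplicial prism (by \cite{Kap,Ess2}) and appealing to the classification: none of them carries a missing face of order $3$ or a multi-multiple edge.

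There is, however, a genuine error in your final step. You assert that a $5$-dimensional simplicial prism $\Delta^4\times[0,1]$ has a \emph{unique} missing face (the pair of bases) and hence no Lann\'er subdiagram of order $>2$. This is false: the five lateral facets $F_i\times[0,1]$ have empty common intersection (since the $F_i$ are the five facets of $\Delta^4$, whose common intersection is empty), while any four of them meet in an edge of the prism. Thus the prism also has a missing face of order $5$, and its Coxeter diagram carries a Lann\'er subdiagram of order $5$. The repair is immediate: what you actually need is that the prism has no missing face of order $3$ or $4$, which is true and suffices for both cases (order $3$ for $T$, order $4$ for $L$). You flagged this ``elementary combinatorial fact'' as something to confirm in your obstacle paragraph, and your suspicion was well-placed.

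One stylistic remark: in the multi-multiple case the paper dispenses with $L$ entirely and reaches the contradiction by observing that the multi-multiple edge $v_1v_2$ itself lies in $\Sigma_{S_1}$, whereas no compact $5$-prism has a multi-multiple edge (all Lann\'er diagrams of order $5$ have edge labels $\leq 5$, and the prism construction adds only a dashed edge and right angles). Your route through $L$ is equally valid but slightly less direct, and introducing $L$ already in the first case is unnecessary since there you only use $T$.
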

\begin{proof}
Suppose that $P$ is a compact $9$-polytope with $14$ facets, and let $\Sigma$ be its Coxeter diagram. Assume $P$ contains a missing face of size $3$ or a multi-multiple edge.  In the first case, let $\Sigma_0$ be a subdiagram of type $G_2^{(m)}$ for $m \geq 4$ contained in the missing face of size $3$.  In the latter case, let $\Sigma_0$ be a subdiagram of type $G_2^{(m)}$ for $m\geq 6$.  Then the face corresponding to $\Sigma_0$ is a $7$-polytope with at most $11$ facets.  It must be the unique $7$-polytope with $11$ facets constructed by Bugaenko.  Moreover, by Proposition \ref{prop: Coxeter face} we can obtain the diagram $\overline{\Sigma_0}$ from $\Sigma_{S_0}$ by possibly replacing some double edges by simple edges, or some dashed edges by ordinary or empty edges.  Thus, there is a diagram of type $H_4$ in $\overline{S_0}$ with at least $3$ bad neighbours, call it $\Sigma_1$.  Moreover, $\Sigma_1$ must have precisely three bad neighbours in $\Sigma$, or else the face corresponding to $\Sigma_1$ is a $5$-polytope with at most $6$ facets, which does not exist.  Then the the face corresponding to $\Sigma_1$ in $\Sigma$ is a $5$-polytope with precisely $7$ facets, furthermore containing the fixed missing face of size $3$ or the multi-multiple edge.  However, no such $5$-polytope exists.
\end{proof}

\begin{thm}\label{thm: 9 d+5}
In a $9$-polytope with $14$ facets, every missing face of size $5$ contains a subdiagram of type $H_4$ or $F_4$ with at least two bad neighbours.
\end{thm}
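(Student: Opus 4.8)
The plan is to argue by contradiction. Suppose $L$ is a missing face of $P$ of size $5$, i.e.\ a Lann\'er subdiagram of $\Sigma:=\Sigma(P)$ of order $5$, for which the assertion fails: no subdiagram of $L$ of type $H_4$ or $F_4$ has two or more bad neighbours. First I would record three facts. By Lemma~\ref{lem: 9 d+5}, $P$ has no missing face of size $3$ and no multi-multiple edge, so $\Sigma$ — and hence each of its induced subdiagrams — contains no Lann\'er triangle and no $G_2^{(m)}$ with $m\geq 6$. By inspection of the rank-$5$ Lann\'er diagrams in Figure~\ref{fig: Lanner diagrams}, $L$ contains a subdiagram $S_0$ of type $H_4$ or $F_4$; write $w$ for the single vertex of $L\setminus S_0$. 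Finally, no connected elliptic diagram of order $5$ (that is, none of $A_5$, $B_5$, $D_5$) contains a subdiagram of type $H_4$ or $F_4$, so $S_0$ has no good neighbours; thus every neighbour of $S_0$ is bad, and since $L$ is connected, $w$ is such a neighbour. By the contradiction hypothesis, $w$ is then the \emph{only} neighbour of $S_0$ in $\Sigma$, and likewise for every subdiagram of $L$ of type $H_4$ or $F_4$.

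The first case to dispose of is when $L$ admits two distinct subdiagrams $S_0,S_0'$ of type $H_4$ or $F_4$ (for instance $[5,3,3,5]$, together with the remaining branched rank-$5$ Lann\'er diagram, as one reads off from Figure~\ref{fig: Lanner diagrams}). Then $S_0\cup S_0'=L$, and by the previous paragraph no vertex of $\Sigma\setminus L$ is adjacent to $S_0$ or to $S_0'$; since every vertex of $L$ lies in $S_0$ or in $S_0'$, no vertex of $\Sigma\setminus L$ is adjacent to any vertex of $L$. Hence $L$ is a union of connected components of $\Sigma$, and being connected it is a single component; but $\Sigma$ is connected with $|\Sigma|=14\neq 5=|L|$ (Remark~\ref{rem: Vinberg conditions}), a contradiction.

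It remains to handle the rank-$5$ Lann\'er diagrams $L$ with a \emph{unique} subdiagram $S_0$ of type $H_4$ or $F_4$ — among them $[5,3,3,3]$, $[5,3,3,4]$, and the $5$-cycle with one double edge. Since $S_0$ is connected of type $H_4$ or $F_4$, it has no component of type $A_n$ or $D_5$, so by Proposition~\ref{prop: Coxeter face} the face $P(S_0)$ (of codimension $4$ by Proposition~\ref{prop: Vin 3.1}) is a compact Coxeter $5$-polytope; and as $S_0$ has no good neighbours, part~(a) of \cite[Corollary~1.1]{FT} gives that the Coxeter diagram of $P(S_0)$ is $\overline{S_0}$, which is precisely the induced subdiagram of $\Sigma$ on the $14-4-1=9$ vertices outside $L$. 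So $P(S_0)$ is a compact Coxeter $5$-polytope with $9$ facets — one of the $51$ classified in Section~\ref{sec: d+5 class} — and its Coxeter diagram $\overline{S_0}$, being an induced subdiagram of $\Sigma$, has neither a Lann\'er triangle nor a multi-multiple edge. Moreover $\Sigma$ is recovered from $\overline{S_0}$ by attaching the ``pendant gadget'' $L=S_0\cup\{w\}$ through $w$ alone (the only neighbour of $S_0$), and $w$ must itself be adjacent to $\overline{S_0}$, since otherwise $L$ is again a component of $\Sigma$. Running through the finitely many possibilities — the classified diagrams $\overline{S_0}$, the finitely many shapes of $L$, and the placement of the edges from $w$ into $\overline{S_0}$ — while imposing that $\Sigma$ be an admissible hyperbolic diagram of signature $(9,1,4)$ with no parabolic subdiagram, one finds that none of these configurations occurs, which is the contradiction.

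I expect this last step to be the main obstacle: it is the one that genuinely relies on the classification of compact Coxeter $5$-polytopes with $9$ facets rather than on soft arguments. A cleaner route I would try first is to show that $\langle\overline{S_0},w\rangle$ is forced to be a compact Coxeter $6$-polytope with $10$ facets all of whose missing faces have size $2$ or $5$, which is impossible by Theorem~\ref{thm: d+4 6}: here one uses that $w$ cannot create a Lann\'er triangle (else $\Sigma$ has a missing face of size $3$, against Lemma~\ref{lem: 9 d+5}) and tracks the inertia as it passes from $(5,1,3)$ for $\overline{S_0}$ to $(6,1,3)$ after adjoining $w$. The delicate point in this route — and the reason the explicit list from Section~\ref{sec: d+5 class} seems unavoidable — is ruling out that $w$ introduces a new missing face of size $4$.
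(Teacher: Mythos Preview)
Your setup is fine — passing to $P(S_0)$ for $S_0$ of type $H_4$ or $F_4$ inside the rank-$5$ Lann\'er diagram $L$, and landing in a compact Coxeter $5$-polytope $P'$ with $9$ facets whose Coxeter diagram $\overline{S_0}$ is an induced subdiagram of $\Sigma(P)$ — is exactly what the paper does. The divergence comes right after that, and it is the source of the gap you identify yourself.

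The paper does \emph{not} try to glue $L$ back onto $\overline{S_0}$ and derive a contradiction from signature, admissibility, or Theorem~\ref{thm: d+4 6}. Instead it makes a single observation about the classified list in Section~\ref{sec: d+5 class}: every compact Coxeter $5$-polytope with $9$ facets whose Coxeter diagram contains no Lann\'er triangle already contains, inside its own diagram, a subdiagram of type $H_4$ with at least two bad neighbours. (Lemma~\ref{lem: 9 d+5} guarantees $P'$ has no missing face of size $3$, so this hypothesis is met.) Since $\overline{S_0}=\Sigma_{S_0}$ is an induced subdiagram of $\Sigma(P)$, that $H_4$ sits in $\Sigma(P)$ with the same two bad neighbours. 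Done — no reconstruction of $\Sigma(P)$, no finite check over attachments of $w$, no appeal to the $6$-polytope result, and no worry about accidental missing faces of size $4$. Your Case~1 argument (two $H_4/F_4$'s in $L$ force $L$ to be a component) is correct and pleasant, but with the paper's observation in hand it is unnecessary: one $S_0$ already suffices.

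One point worth flagging: the $H_4$ produced by the paper's argument lies in $\overline{S_0}$, hence \emph{outside} $L$. Read literally, the theorem asserts the subdiagram lies \emph{inside} the given missing face; the proof as written actually establishes the (a priori weaker) statement that $\Sigma(P)$ contains some $H_4$ or $F_4$ with two bad neighbours. That weaker statement is exactly what is used downstream (e.g.\ in the $d+6$ bound), so the discrepancy is cosmetic for the applications — but it explains why your attempt, which takes the wording at face value, runs into heavier work than the paper does.
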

\begin{proof}
Suppose $P$ is such a polytope containing a missing face of size $5$. Let $\Sigma_0$ be a subdiagram of type $F_4$ or $H_4$.  Then the face corresponding to $\Sigma_0$ is a $5$-polytope $P'$ with at most $9$ facets.  Furthermore, if $\Sigma_0$ does not itself have two bad neighbours, then $P'$ has precisely $9$ facets.  By Lemma \ref{lem: 9 d+5}, $P'$ has no missing faces of size $3$.  By the classification in Section \ref{sec: d+5 class}, all $5$-polytopes with $9$ facets not containing missing faces of order $3$ contain a subdiagram of type $H_4$ with at least $2$ bad neighbours.  
\end{proof}

\subsection{\texorpdfstring{Polytopes with $d+6$ facets}{Polytopes with d+6 facets}}
Now that we have established $D(5) \leq 9$, we obtain from Theorem \ref{thm: partial dim bound} that $D(6) \leq 13$.  Using results from the previous section, we can reduce this bound by one dimension.

\begin{lem}
There are no $13$-polytopes with $19$ facets.
\end{lem}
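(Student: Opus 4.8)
The plan is to assume, for contradiction, that a compact Coxeter $13$-polytope $P$ with $19$ facets exists, write $\Sigma = \Sigma(P)$, and reach a contradiction by passing to faces that are themselves compact Coxeter polytopes. Since $19 = 13 + 6$ and Theorem~\ref{thm: partial dim bound} together with $D(5) \le 9$ already gives $D(6) \le 13$, the polytope $P$ lies exactly on the borderline, so everything below is needed. First I would clear away the easy cases. If $P$ were $3$-free, then Lemma~\ref{lem: Ess 2d} would force it to have at least $2 \cdot 13 = 26 > 19$ facets, which is absurd; hence $P$ has a missing face of some size $\ell \in \{3,4,5\}$. If $\ell \in \{3,4\}$, then Lemma~\ref{lem: face bound} (with $k = 6$) gives $13 \le D(6) \le \max_{1 \le i \le 5} D(i) + \ell - 1 \le 9 + \ell - 1 \le 12$ by Theorem~\ref{thm: low k bounds}, a contradiction. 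So every missing face of $P$ has size $2$ or $5$, and at least one has size $5$.

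Next comes the main reduction. Fix a Lannér subdiagram $T$ of order $5$ in $\Sigma$ and, as in the proof of Lemma~\ref{lem: face bound}, pick a subdiagram $S_0 \subset T$ of type $H_4$ or $F_4$; the remaining vertex of $T$ is a bad neighbour of $S_0$. By Proposition~\ref{prop: Coxeter face}, $P(S_0)$ is a compact Coxeter polytope of dimension $13 - 4 = 9$, with at most $19 - 4 - 1 = 14$ facets. Since a compact Coxeter $9$-polytope with $9 + j$ facets for $1 \le j \le 4$ is impossible ($\max_{1 \le j \le 4} D(j) = 8 < 9$ by Theorem~\ref{thm: low k bounds}), $P(S_0)$ has exactly $14$ facets. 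Moreover $S_0$, being of type $H_4$ or $F_4$, has no good neighbours, so the Coxeter diagram of $P(S_0)$ is $\overline{S_0}$, an induced subdiagram of $\Sigma$; hence every Lannér subdiagram of $P(S_0)$ is one of $\Sigma$, so $P(S_0)$ has missing faces only of sizes $2$ and $5$. It cannot be $3$-free (that would require $\ge 18 > 14$ facets by Lemma~\ref{lem: Ess 2d}), so it has a missing face of size $5$.

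Finally I would invoke Theorem~\ref{thm: 9 d+5} for the $9$-polytope $P(S_0)$: its missing face of size $5$ contains a subdiagram $S_1$ of type $H_4$ or $F_4$ having at least two bad neighbours in $\overline{S_0}$. Because $\overline{S_0}$ is an induced subdiagram of $\Sigma$, adjacency and the ellipticity of $\langle S_1, w \rangle$ are unchanged on passing to $\Sigma$, so $S_1$ has at least two bad neighbours in $\Sigma$ as well. Then $P(S_1)$ is, by Proposition~\ref{prop: Coxeter face}, a compact Coxeter polytope of dimension $13 - 4 = 9$ with at most $19 - 4 - 2 = 13$ facets, i.e.\ a compact Coxeter $9$-polytope with $9 + j$ facets for some $1 \le j \le 4$ — which is impossible. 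This contradiction proves the lemma.

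The crux, and the reason Lemma~\ref{lem: 9 d+5} and Theorem~\ref{thm: 9 d+5} were established beforehand, is that a single face reduction from $\ell = 5$ only takes dimension $13$ down to $13 - \ell + 1 = 9$, landing exactly at the bound $D(5) \le 9$ rather than strictly below it; so a \emph{second} reduction, powered by the structural information about hypothetical compact Coxeter $9$-polytopes with $14$ facets, is what does the real work. The one technical point to watch, used twice above, is that an $H_4$- or $F_4$-subdiagram has no good neighbours, so each face diagram $\overline{S_0}$ is a genuine induced subdiagram of $\Sigma$, and hence the relevant combinatorics — missing-face sizes and bad-neighbour counts — transfer between the face and $P$ without change.
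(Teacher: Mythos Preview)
Your proof is correct and follows essentially the same route as the paper's: restrict to missing faces of sizes $2$ and $5$, pass to the face $P(S_0)$ for an $H_4$- or $F_4$-subdiagram to obtain a compact Coxeter $9$-polytope with exactly $14$ facets, and then use Theorem~\ref{thm: 9 d+5} to find a second $H_4$/$F_4$-subdiagram with at least two bad neighbours, whose corresponding face in $P$ would be a nonexistent compact Coxeter $9$-polytope with at most $13$ facets. Your write-up is in fact more careful than the paper's in justifying that $\overline{S_0} = \Sigma_{S_0}$ (since $H_4$ and $F_4$ admit no good neighbours) and that bad-neighbour counts transfer back from the induced subdiagram to $\Sigma$.
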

\begin{proof}
Suppose such a polytope exists, call it $P$.  By Lemma \ref{lem: face bound}, $P$ must have missing faces only of size $2$ or $5$.  Moreover, by Lemma \ref{lem: Ess 2d}, $P$ must contain at least one missing face of size $5$.  Let $\Sigma_0$ be a subdiagram of $T$ of type $H_4$ or $F_4$.  Considering the face corresponding to $\Sigma_0$, we obtain a Coxeter polytope $P'$ of dimension $9$ with at most $14$ facets, and furthermore it contains only missing faces of size $2$ or $5$.  Note that in fact $P'$ must contain precisely $14$ facets, since $D(4) < 9$.  Again applying Lemma \ref{lem: Ess 2d}, $P'$ must contain at least one missing face of size $5$.  Thus, by Theorem \ref{thm: 9 d+5}, $P'$ must contain a subdiagram of type $F_4$ or $H_4$ with at least two bad neighbours.  Looking at the face corresponding to this subdiagram in $\Sigma$ yields a face of $P$ that is a Coxeter $9$-polytope with at most $13$ facets.  No such Coxeter polytopes exists, so we reach a contradiction.
\end{proof}

\begin{cor}\label{cor: d+6 bound}
We have $D(6) \leq 12$.
\end{cor}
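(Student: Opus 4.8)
The proof of Corollary~\ref{cor: d+6 bound} is immediate once the preceding lemma is in place, so the plan is simply to combine the two ingredients we have available. By Theorem~\ref{thm: partial dim bound}, and using the bound $D(5) \leq 9$ just established in the previous subsection, we obtain
\[
D(6) \leq \max\left\{\max_{i<6} D(i) + 4,\ \min\{5,13\}\right\} = \max\{9+4,\ 5\} = 13.
\]
The preceding lemma then rules out the top dimension: there is no compact Coxeter $13$-polytope with $19$ facets, i.e.\ no polytope achieving $D(6) = 13$. Hence $D(6) \leq 12$, which is the claim.

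The only thing to be careful about is that the bound from Theorem~\ref{thm: partial dim bound} really does feed on $D(i)$ for $i < 6$, and in particular on $D(5) \leq 9$ rather than the trivial $D(5) \leq 29$; this is exactly why the present subsection was placed after the establishment of $D(5) \leq 9$ in Section~\ref{sec: dim bound}. The remaining values $D(1) = 4$, $D(2) = 5$, $D(3) = 8$, $D(4) = 7$ from Theorem~\ref{thm: low k bounds} are all at most $9$, so the maximum is governed by $D(5) + 4 = 13$.

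The genuine content lies entirely in the preceding lemma, whose proof in turn rests on the chain of face-descent arguments: starting from a hypothetical $13$-polytope with $19$ facets, Lemma~\ref{lem: face bound} forces missing faces of size only $2$ or $5$, Lemma~\ref{lem: Ess 2d} forces a missing face of size $5$, descending along an $H_4$ or $F_4$ subdiagram produces a $9$-polytope with $14$ facets (again with missing faces only of sizes $2$ and $5$, since $\Sigma_{S_0} = \overline{S_0}$ for $S_0$ of type $H_4$ or $F_4$ by Proposition~\ref{prop: Coxeter face}), and then Theorem~\ref{thm: 9 d+5} yields an $H_4$ or $F_4$ subdiagram with two bad neighbours, whose associated face would be a Coxeter $9$-polytope with at most $13$ facets --- contradicting $D(4) < 9$. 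Thus the corollary itself requires no new obstacle; the work was already done in Theorem~\ref{thm: 9 d+5}, Lemma~\ref{lem: 9 d+5}, and the classification of $5$-polytopes with $9$ facets in Section~\ref{sec: d+5 class}.
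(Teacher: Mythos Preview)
Your proof is correct and follows exactly the same approach as the paper: apply Theorem~\ref{thm: partial dim bound} together with $D(5)\leq 9$ to obtain $D(6)\leq 13$, and then invoke the preceding lemma ruling out a $13$-polytope with $19$ facets. The additional paragraph recapitulating the lemma's face-descent argument is accurate but is exposition rather than part of the corollary's proof, which the paper leaves implicit.
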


We now formulate the following lemma, where we consider faces of increasing codimension corresponding to subdiagrams of type $H_4$ or $F_4$ in polytopes with missing faces of sizes either $2$ or $5$.

\begin{lem}\label{lem: comp iter}
Let $P$ be a compact Coxeter $d$-polytope with $d+k$ facets whose missing faces have size either $2$ or $5$, and $k < d$.  Suppose every subdiagram in $P$ of the type $H_4$ or $F_4$ has precisely one bad neighbour (note that such a diagram must always have at least one bad neighbour).  Then $P$ has a subdiagram $S_\ell$ corresponding to a face $P(S_\ell)$ which is a $(d - 4(\ell+1))$-polytope with $d + k - 5(\ell+1)$ facets, where $\ell$ is the maximum non-negative integer such that 
$$d > k + 3\ell\,,$$
and such that $\Sigma_{S_\ell} = \overline{S_\ell}$.
\end{lem}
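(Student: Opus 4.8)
The plan is to peel off, one at a time, codimension-$4$ faces cut out by subdiagrams of type $H_4$ or $F_4$, inducting on $j$ from $0$ to $\ell$. Write $P(S_{-1}):=P$. I claim that for every $0\le j\le\ell$ there is an induced subdiagram $S_j\subset\Sigma_{S_{j-1}}$ of type $H_4$ or $F_4$ such that the face $P(S_j):=P(S_{j-1})(S_j)$ is a compact Coxeter $(d-4(j+1))$-polytope with exactly $d+k-5(j+1)$ facets, whose missing faces again have size only $2$ or $5$, in which every subdiagram of type $H_4$ or $F_4$ again has precisely one bad neighbour, and for which $\Sigma_{S_j}=\overline{S_j}$. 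The final assertion is the case $j=\ell$: since $H_4$ and $F_4$ have no good neighbours, $\overline{S_j}$ is exactly the set of vertices not attached to $S_j$, so the $S_i$ are pairwise non-adjacent in $\Sigma$ (for $i>j$, the vertices of $S_i$ lie in $\overline{S_j}$); hence $\bigcup_{i\le\ell}S_i$ is elliptic of order $4(\ell+1)$, and by Proposition \ref{prop: Vin 3.1} it cuts out the codimension-$4(\ell+1)$ face of $P$ that coincides with the recursively defined $P(S_\ell)$.

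For the base case $j=0$ everything asserted about $P(S_{-1})=P$ is part of the hypotheses. For the inductive step, given the data for $P(S_{j-1})$ with $j\le\ell$, I first produce a missing face of size $5$ inside it. A short estimate gives $\dim P(S_{j-1})=d-4j\ge 2$: as $P(S_{j-1})$ is a genuine compact Coxeter polytope with $d+k-5j$ facets and a polytope has at least one more facet than its dimension (the degenerate cases $\dim\le 1$ being incompatible with $d>k+3j$), we get $k-j\ge 1$, so $d-4j>(k+3j)-4j=k-j\ge 1$, using $d>k+3j$ (which holds for $j\le\ell$ by maximality of $\ell$). Thus Lemma \ref{lem: Ess 2d} applies; and $d>k+3j$ is equivalent to $d+k-5j<2(d-4j)$, so $P(S_{j-1})$ has fewer than twice its dimension many facets and is therefore not $3$-free. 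Since its missing faces have size $2$ or $5$, it has one of size $5$, i.e.\ a Lannér subdiagram $T_j\subset\Sigma_{S_{j-1}}$ of order $5$; exactly as in the proof of Lemma \ref{lem: face bound}, $T_j$ contains a subdiagram $S_j$ of type $H_4$ or $F_4$, and writing $v_j$ for the remaining vertex of $T_j$, the diagram $\langle S_j,v_j\rangle=T_j$ is Lannér, so $v_j$ is a bad neighbour of $S_j$.

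Now I extract the face. Because $H_4$ and $F_4$ have no good neighbours, every neighbour of $S_j$ in $\Sigma_{S_{j-1}}$ is bad, and by hypothesis on $P(S_{j-1})$ there is exactly one; so $v_j$ is the unique vertex attached to $S_j$, and $\overline{S_j}$ is the induced subdiagram on the $(d+k-5j)-5$ vertices outside $S_j\cup\{v_j\}$. By part (a) of \cite[Corollary 1.1]{FT}, $\Sigma_{S_j}=\overline{S_j}$; by Proposition \ref{prop: Coxeter face}, $P(S_j)$ is a compact Coxeter polytope, necessarily of dimension $d-4(j+1)$ with $d+k-5(j+1)$ facets. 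Heredity of the two structural conditions is then checked as follows: $\overline{S_j}=\Sigma_{S_j}$ is, transitively, a weight-preserving induced subdiagram of $\Sigma$, so a Lannér subdiagram minimal within it is minimal within $\Sigma$, hence a missing face of $P$ of size $2$ or $5$; and a subdiagram of $\Sigma_{S_j}$ of type $H_4$ or $F_4$ is such a subdiagram of $\Sigma_{S_{j-1}}$, so has at most one bad neighbour there and a fortiori in $\Sigma_{S_j}$, while it has at least one unless $\Sigma_{S_j}$ has four or fewer vertices — in which case $\Sigma_{S_j}$, being the connected diagram of a compact Coxeter polytope, is a Lannér diagram of rank $\le 4$ and thus contains no $H_4$- or $F_4$-subdiagram at all, so the condition holds vacuously.

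The main obstacle is the boundary bookkeeping that keeps the recursion alive for exactly the $\ell+1$ required steps: the precise point is the arithmetic equivalence between ``$P(S_{j-1})$ has fewer than $2\dim P(S_{j-1})$ facets'' (the input needed to invoke Lemma \ref{lem: Ess 2d}) and ``$d>k+3j$'', which by the maximality of $\ell$ holds for all $j\le\ell$; the one delicate verification is ruling out that some intermediate face drops to dimension $\le 1$ before the process finishes, handled by the estimate $d-4j>k-j\ge 1$. I expect the only auxiliary fact worth isolating is the repeatedly used claim that $H_4$ and $F_4$ have no good neighbours in any admissible diagram — equivalently, that no connected rank-$5$ elliptic diagram contains $H_4$ or $F_4$ — which is exactly part (a) of \cite[Corollary 1.1]{FT}.
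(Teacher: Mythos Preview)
Your proof is correct and follows essentially the same approach as the paper: inductively locate a rank-$5$ Lann\'er subdiagram (via Lemma~\ref{lem: Ess 2d} and the arithmetic $d>k+3j\iff d+k-5j<2(d-4j)$), extract an $H_4$ or $F_4$ subdiagram with exactly one bad neighbour, and pass to the corresponding codimension-$4$ face. The only differences are cosmetic---your indexing starts the recursion at $P(S_{-1})=P$ and keeps each $S_j$ as a single $H_4/F_4$ block (reassembling them into an elliptic union at the end), whereas the paper lets $S_m$ grow as a union---and you supply more bookkeeping (the dimension check $d-4j\ge 2$, heredity of the missing-face and bad-neighbour conditions) than the paper writes out.
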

\begin{proof}
Fix $m$ such that $d \geq k + 3m$.  We proceed by induction on $m$, showing at each step that $P$ has a subdiagram $S_m$ corresponding to a face $P(S_m)$ which is a $(d-4(m+1))$-polytope with $d+k-5(m+1)$ facets satisfying $\Sigma_{S_m} = \overline{S_m}$.  Note that in particular, $P(S_m)$ contains missing faces of size only $2$ and $5$, as its Coxeter diagram is a subdiagram of the diagram for $P$.  The base case $m = 0$ is trivial, as we can view $P$ as a face of itself and take $S_0$ to be the empty diagram.  

Now suppose that we have fixed a subdiagram $S_{m-1}$ with the necessary properties.  Since we have $d > k + 3m $, then we have
$$2(d-4m) > d + k - 5m\,.$$
Thus, by Lemma \ref{lem: Ess 2d}, a $(d-4m)$-polytope with $d + k - 5m$ facets cannot be $3$-free.   Hence the polytope $P(S_{m-1})$ must have a missing face of size $5$.  The corresponding Lann{\'e}r diagram must contain a subdiagram $T_m$ of type $H_4$ or $F_4$.  Let $S_m = S_{m-1} \cup T_m$.  By our hypotheses, $T_m$ has precisely one bad neighbour, hence $P(S_m)$ is a compact Coxeter $(d-4(m+1))$-polytope with $d+k-5(m+1)$ facets, as desired. 
\end{proof}

\begin{lem}\label{lem: 12 18 bad}
Suppose there exist a compact Coxeter $12$-polytope $P$ with $18$ facets whose missing faces have size either $2$ and $5$.  Then $P$ contains a subdiagram of the type $H_4$, $F_4$ with at least $2$ bad neighbours.
\end{lem}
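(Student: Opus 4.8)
\emph{Proof proposal.} The plan is to argue by contradiction, assuming that every subdiagram of $\Sigma := \Sigma(P)$ of type $H_4$ or $F_4$ has at most one bad neighbour. Since $H_4$ and $F_4$ admit no good neighbours and $\Sigma$ is connected, each such subdiagram then has \emph{exactly} one bad neighbour, so $P$ satisfies the hypotheses of Lemma~\ref{lem: comp iter} with $d = 12$ and $k = 6 < d$. The largest integer $\ell$ with $12 > 6 + 3\ell$ is $\ell = 1$, so Lemma~\ref{lem: comp iter} produces a subdiagram $S_1$ --- built, via the construction in its proof, by adjoining to an $H_4$- or $F_4$-subdiagram $A$ lying in a size-$5$ missing face $L = A \cup \{b\}$ of $P$ a second such subdiagram inside the corresponding face --- with the property that $P(S_1)$ is a compact Coxeter $4$-polytope with $8$ facets whose missing faces have size $2$ or $5$, and $\Sigma_{S_1} = \overline{S_1}$ is the induced subdiagram of $\Sigma$ on the eight vertices not attached to $S_1$. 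Since $P(S_1)$ has exactly $2 \cdot 4 = 8$ facets, Lemma~\ref{lem: Ess 2d} splits the argument into two cases.

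If $P(S_1)$ contains a missing face of size $5$: I would take the corresponding order-$5$ Lann{\'e}r subdiagram of $\overline{S_1}$ together with a subdiagram $U$ of type $H_4$ or $F_4$ inside it. As $U$ is an elliptic subdiagram of order $4$ in the diagram of a $4$-polytope, $P(S_1)(U)$ is a vertex, which by simplicity (Remark~\ref{rem: simple Coxeter}) lies in precisely the four facets of $U$. Each of the remaining four facets $f$ of $P(S_1)$ must be adjacent to $U$ --- otherwise $\langle U, f\rangle$ would be elliptic of order $5$ and cut out a codimension-$5$ face of the $4$-polytope $P(S_1)$ --- and since $\langle U,f\rangle$ cannot be elliptic, $f$ is a bad neighbour of $U$. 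Because being a bad neighbour of $U$ depends only on the induced subdiagram $\langle U, f\rangle$, the subdiagram $U \subseteq \overline{S_1} \subseteq \Sigma$ has at least four bad neighbours in $\Sigma$, contradicting the standing assumption.

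If $P(S_1)$ is a $4$-cube: I would rule this out as follows. The Lann{\'e}r diagram $L = A \cup \{b\}$ has order $5$, is disjoint from $\overline{S_1}$, and --- since $A \subseteq S_1$ --- none of its vertices in $A$ is attached to $\overline{S_1}$; note also $b \notin \overline{S_1}$, as $b$ attaches to $A$. The $4$-cube $\overline{S_1}$ has four pairwise vertex-disjoint dashed edges $e_1, \dots, e_4$ (its four pairs of opposite facets) spanning all eight vertices, each a Lann{\'e}r subdiagram of order $2$. As $\Sigma$ is admissible, hence contains no superhyperbolic subdiagram, the subdiagram on $L \cup e_i$ must be connected for every $i$; since $A$ is not attached to $\overline{S_1}$, this forces $b$ to be adjacent to some endpoint $u_i$ of $e_i$. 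The $u_i$ are pairwise non-adjacent in $\Sigma$: any two of them are facets of the cube from different opposite-pairs, hence intersecting, so $u_i u_j$ is ordinary or empty, and if it were ordinary then $\{b, u_i, u_j\}$ would be a $3$-cycle, necessarily parabolic or a Lann{\'e}r diagram of order $3$, which $\Sigma$ excludes. Now I would examine the star on $\{b, u_1, u_2, u_3, u_4\}$: a short determinant check on the edge multiplicities (all of $bu_1,\dots,bu_4$ simple forces the parabolic diagram $\widetilde{D}_4$, whereas any edge $bu_i$ of multiplicity $\ge 2$ already produces a parabolic subdiagram or a Lann{\'e}r diagram of order $3$ or $4$) shows that this star cannot be admissible, a contradiction. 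In either case we reach a contradiction, proving the lemma.

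The main obstacle is the second case. The reduction to a $4$-polytope with $8$ facets and the first case are essentially formal consequences of Lemmas~\ref{lem: comp iter} and~\ref{lem: Ess 2d}; but a $4$-cube is a genuine a priori possibility for $P(S_1)$, and excluding it requires the connectivity argument for Lann{\'e}r diagrams together with the case analysis on the star at $b$. An alternative to that case analysis would be to invoke the explicit classification of the compact Coxeter $4$-cubes \cite{JT}.
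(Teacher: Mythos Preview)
Your overall scaffold matches the paper's: assume every $H_4$/$F_4$ has exactly one bad neighbour, apply Lemma~\ref{lem: comp iter} with $d=12,\,k=6,\,\ell=1$, and land on a compact Coxeter $4$-polytope $P(S_1)$ with $8$ facets whose missing faces have size $2$ or $5$. Your Case~1 argument (an order-$5$ missing face in $P(S_1)$ forces four bad neighbours of an $H_4$/$F_4$) is clean and correct --- though vacuous here, since in fact no compact Coxeter $4$-polytope with $8$ facets has an order-$5$ missing face; one is always in the $4$-cube case.

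The gap is in Case~2. You argue that $b$ must be joined to an endpoint $u_i$ of each dashed edge $e_i$ of the cube, then that the $u_i$ are pairwise non-adjacent, then that the star $\{b,u_1,\dots,u_4\}$ is parabolic or contains a forbidden Lann\'er subdiagram. Both the second and third steps silently assume that each $bu_i$ is an \emph{ordinary} edge. Nothing rules out $bu_i$ being dashed: then $\{b,u_i\}$ is itself an allowed order-$2$ Lann\'er subdiagram of $\Sigma$, the triangle $\{b,u_i,u_j\}$ is neither parabolic nor Lann\'er of order $3$ (its proper subdiagram $\{b,u_i\}$ is not elliptic), so you cannot conclude $u_iu_j$ is empty; and the star with some dashed legs has signature $(4,1,0)$, containing only order-$2$ Lann\'er subdiagrams. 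So the ``short determinant check'' does not close the argument.

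The paper sidesteps this by using the route you flag as an alternative: invoke the Jacquemet--Tschantz list of compact Coxeter $4$-cubes. Rather than analysing how $b$ attaches to $\overline{S_1}$, the paper works \emph{inside} $\overline{S_1}$: each of the twelve cubes contains a subdiagram of type $D_4$ with four bad neighbours, or of type $B_3$ with at least three. Since $\overline{S_1}=\Sigma_{S_1}$ is an induced subdiagram of $\Sigma$, these bad neighbours persist in $\Sigma$, and Proposition~\ref{prop: Coxeter face} then produces a compact Coxeter $8$-polytope with at most $10$ facets (from $D_4$) or a $9$-polytope with at most $12$ facets (from $B_3$), contradicting $D(2)=5$ and $D(3)=8$. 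Your attempted classification-free treatment of the cube case is an appealing idea, but as written it does not go through; to finish the proof you should fall back on the classification and locate such a $D_4$ or $B_3$ inside the cube.
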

\begin{proof}
By Lemma \ref{lem: comp iter}, $P$ must have a subdiagram $\Sigma_{P'}$ corresponding to a face $P'$ that is a compact Coxeter $4$-polytope with $8$ facets.  However, it can be checked that each of these contains a subdiagram of type $D_4$ with $4$ bad neighbours or of type $B_3$ with at least $3$ bad neighbours, respectively.  Looking at the faces corresponding to these subdiagrams in $\Sigma$ would yield a compact Coxeter $8$-polytope with at most $10$ facets or a compact Coxeter $9$-polytope with at most $12$ facets, both of which do not exist. 
\end{proof}

\subsection{\texorpdfstring{Polytopes with $d+7$ facets}{Polytopes with d+7 facets}}

\begin{lem}
There is no compact Coxeter $16$-polytope with $23$ facets.
\end{lem}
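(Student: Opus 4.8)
The plan is to argue by contradiction, peeling off faces of codimension~$4$ that correspond to $H_4$- or $F_4$-subdiagrams, exactly as in the proofs of Lemmas~\ref{lem: comp iter} and~\ref{lem: 12 18 bad}. Suppose $P$ is a compact Coxeter $16$-polytope with $23$ facets and write $\Sigma = \Sigma(P)$. First I would determine the missing-face structure of $P$: since missing faces correspond to Lann{\'e}r subdiagrams, each has size $2$, $3$, $4$, or $5$, and if $P$ had one of size $3$ or $4$ then Lemma~\ref{lem: face bound} together with Corollary~\ref{cor: d+6 bound} would force $16 = d \le \max_{i<7}D(i) + (\ell-1) \le 12 + 3 = 15$, a contradiction. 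Hence every missing face of $P$ has size $2$ or $5$. Since $2\cdot 16 = 32 > 23$ and compact Coxeter $d$-cubes exist only for $d\le 5$, Lemma~\ref{lem: Ess 2d} shows $P$ is not $3$-free, so $P$ has at least one missing face of size~$5$.

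Next I would fix a Lann{\'e}r subdiagram $T\subset\Sigma$ of size $5$ and a subdiagram $S_0\subset T$ of type $H_4$ or $F_4$ (such a subdiagram exists, as used in the proof of Lemma~\ref{lem: face bound}). Being a proper subdiagram of a Lann{\'e}r diagram, $S_0$ is elliptic, and it has no component of type $A_n$ or $D_5$, so Proposition~\ref{prop: Coxeter face} makes $P(S_0)$ a compact Coxeter $12$-polytope, while part~(a) of \cite[Corollary~1.1]{FT} gives $\Sigma_{S_0}=\overline{S_0}$. If $b\ge 1$ denotes the number of bad neighbours of $S_0$ (at least the vertex of $T\cut S_0$), then $P(S_0)$ has $23-4-b$ facets. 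If $b\ge 2$, then $P(S_0)$ is a compact Coxeter $12$-polytope with at most $17 = 12+5$ facets, contradicting the known bounds $D(j)\le 9$ for $j\le 5$ (Theorem~\ref{thm: low k bounds} together with $D(5)\le 9$). So $b=1$ and $P(S_0)$ has exactly $18 = 12+6$ facets; moreover $\overline{S_0}$ is an induced subdiagram of $\Sigma$, so every Lann{\'e}r subdiagram of $\Sigma_{S_0}=\overline{S_0}$ is a Lann{\'e}r subdiagram of $\Sigma$ and hence has size $2$ or $5$. Thus $P(S_0)$ is a compact Coxeter $12$-polytope with $18$ facets all of whose missing faces have size $2$ or $5$.

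Finally I would apply Lemma~\ref{lem: 12 18 bad} to $P(S_0)$, obtaining in $\overline{S_0}=\Sigma_{S_0}$ a subdiagram $S_1$ of type $H_4$ or $F_4$ with at least two bad neighbours in $\overline{S_0}$. Since $\overline{S_0}$ is induced in $\Sigma$, the diagram $S_1$ is a subdiagram of $\Sigma$ of the same type and each of its bad neighbours in $\overline{S_0}$ remains a bad neighbour in $\Sigma$; hence $S_1$ has at least two bad neighbours in $\Sigma$. Then $P(S_1)$, now a face of $P$ itself, is a compact Coxeter $12$-polytope with at most $23-4-2 = 17 = 12+5$ facets, again contradicting $D(j)\le 9$ for $j\le 5$. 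This contradiction proves the statement.

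All the steps are routine once the right faces are chosen, and I do not expect a real obstacle. The only point requiring care is the bookkeeping: at each stage one must check that the restriction on missing-face sizes is inherited, that the facet and bad-neighbour counts are exactly as claimed, and that the bad-neighbour relation is preserved when $\overline{S_0}$ is viewed back inside $\Sigma$. The genuinely hard computational input --- that every compact Coxeter $4$-polytope with $8$ facets contains a $D_4$-subdiagram with $4$ bad neighbours or a $B_3$-subdiagram with at least $3$ --- is already packaged inside Lemma~\ref{lem: 12 18 bad} (via Lemma~\ref{lem: comp iter}), so I would not need to revisit it.
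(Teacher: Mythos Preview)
Your proof is correct and follows essentially the same strategy as the paper. The only difference is organizational: the paper applies Lemma~\ref{lem: comp iter} to descend directly to a $4$-polytope with $8$ facets and then invokes the $D_4$/$B_3$ observation, while you descend one step to a $12$-polytope with $18$ facets and quote Lemma~\ref{lem: 12 18 bad} for the remainder.
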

\begin{proof}
By Lemma \ref{lem: comp iter}, this must have a face which is a $4$-polytope with $8$ facets.  But by the argument in Lemma \ref{lem: 12 18 bad}, these each contain either a subdiagram of type $D_4$ with $4$ bad neighbours or $B_3$ with at least $3$ bad neighbours, respectively.  Looking at the face corresponding to such a subdiagram in $\Sigma$ would yield a compact Coxeter $12$-polytope with at most $15$ facets or a compact Coxeter $13$-polytope with at most $17$ facets, both of which do not exist. 
\end{proof}

\begin{cor}
We have $D(7) \leq 15$.
\end{cor}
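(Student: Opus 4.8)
The plan is to obtain the bound by combining the general linear estimate of Theorem~\ref{thm: partial dim bound} with the lemma just proved (no compact Coxeter $16$-polytope with $23$ facets). First I would instantiate Theorem~\ref{thm: partial dim bound} at $k = 7$. Plugging in the values already established, namely $D(1)=4$, $D(2)=5$, $D(3)=8$, $D(4)=7$ from Theorem~\ref{thm: low k bounds}, together with the bound $D(5)\le 9$ obtained above and $D(6)\le 12$ from Corollary~\ref{cor: d+6 bound}, one gets $\max_{i<7}D(i)=12$, so the theorem yields
$$D(7)\le \max\bigl\{\,12+4,\ \min\{6,13\}\,\bigr\}=16.$$

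It then remains only to rule out $D(7)=16$, and this is exactly the content of the preceding lemma: a compact Coxeter $16$-polytope would have $16+7=23$ facets, which is impossible. Hence $D(7)\le 15$, as claimed.

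There is no genuine obstacle at this step — it is pure bookkeeping — since the substantive work has been done earlier: the exclusion of dimension $16$ rests on Lemma~\ref{lem: comp iter}, which forces a face that is a compact Coxeter $4$-polytope with $8$ facets, and then on the classification of those $4$-polytopes invoked through Lemma~\ref{lem: 12 18 bad}. The only thing worth checking carefully is that the improved bound $D(6)\le 12$ (rather than the weaker $D(6)\le 13$) is what brings the linear estimate down to exactly $16$, so that a single application of the lemma suffices; and that the $\min\{k-1,13\}$ term plays no role here since $k-1=6$.
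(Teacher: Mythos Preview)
Your proposal is correct and matches the paper's intended argument: the corollary is not given an explicit proof in the paper, but it is meant to follow immediately from the preceding lemma (no compact Coxeter $16$-polytope with $23$ facets) together with Theorem~\ref{thm: partial dim bound} applied at $k=7$, exactly as you describe. Your check that $\max_{i<7}D(i)\le 12$ and hence $D(7)\le 16$ before invoking the lemma is precisely the bookkeeping the paper leaves implicit.
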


\begin{lem}\label{lem: 15 22 bad}
Suppose there exist a compact Coxeter $15$-polytope $P$ with $22$ facets whose missing faces have size either $2$ and $5$.  Then $P$ contains a subdiagram of the type $H_4$ or $F_4$ with at least $2$ bad neighbours.
\end{lem}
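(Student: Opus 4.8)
The plan is to imitate the descent used in Lemma~\ref{lem: 12 18 bad} and in the argument behind $D(7)\le 15$. Suppose, for contradiction, that every subdiagram of $\Sigma=\Sigma(P)$ of type $H_4$ or $F_4$ has precisely one bad neighbour. Since $P$ has $22<2\cdot 15$ facets and all its missing faces have size $2$ or $5$, Lemma~\ref{lem: Ess 2d} shows $P$ is not $3$-free, hence has a missing face of size $5$; together with the contradiction hypothesis and the obvious fact $k=7<15=d$, this puts us in a position to apply Lemma~\ref{lem: comp iter} with $d=15$, $k=7$. The largest $\ell$ with $15>7+3\ell$ is $\ell=2$, so iterating the ``take an $H_4$ or $F_4$ face'' construction three times produces a subdiagram $S$ with $\Sigma_S=\overline{S}$ whose corresponding face $P(S)$ is a compact Coxeter $3$-polytope with $7$ facets, still having all missing faces of size $2$ or $5$. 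A $3$-polytope cannot carry a Lann{\'e}r subdiagram of order $5$ (such a diagram has positive inertia index $4>3$), so $P(S)$ is $3$-free, and by Lemma~\ref{lem: Ess 2d} it is not a $3$-cube.

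Next I would invoke the classification of compact hyperbolic Coxeter $3$-polytopes with $7$ facets and inspect the $3$-free members. In each such polytope one locates a subdiagram $T$ of a type covered by Proposition~\ref{prop: Coxeter face} --- for instance $B_3$, $H_3$, $D_4$, $H_4$, $F_4$, or $G_2^{(m)}$ with $m\ge 6$ --- carrying enough bad neighbours that the corresponding face is ``too large''. Because $\Sigma_S=\overline{S}$ is an \emph{induced} subdiagram of $\Sigma$, and the modifications allowed by Proposition~\ref{prop: Coxeter face} can only create and never destroy bad neighbours, every bad neighbour of $T$ in $P(S)$ is also a bad neighbour of $T$ in $\Sigma$. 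Thus the face of $P$ determined by $T$ is a compact Coxeter $(15-|T|)$-polytope with at most $22-|T|-b$ facets, where $b$ is the number of bad neighbours of $T$; equivalently a compact Coxeter $d'$-polytope with $d'+k'$ facets for some small $k'\le 5$, whose dimension $d'\ge 11$ then exceeds $D(4)=7$ (Theorem~\ref{thm: low k bounds}) and $D(5)\le 9$. This contradiction would prove that not every $H_4$/$F_4$ subdiagram of $\Sigma$ has exactly one bad neighbour, which is the claim.

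The main obstacle is precisely this base case. Unlike Lemma~\ref{lem: 12 18 bad}, where the descent terminates at the $4$-polytopes with $8$ facets classified in Section~\ref{sec: d+4 class}, here it terminates at $3$-polytopes with $7$ facets, which are not among the classes treated in this paper; worse, some of these (the prism-type ones) have Coxeter diagrams built only from single and dashed edges and contain no $H_4$, $F_4$, $B_3$, $H_3$, or $D_4$ subdiagram at all, so the naive ``lift a rich subdiagram'' step has nothing to lift. I would handle this by stopping the descent one step earlier, at the intermediate compact Coxeter $7$-polytope $P'$ with $12$ facets (missing faces of size $2$ or $5$) produced along the way, and proving separately that $P'$ must contain an $H_4$ or $F_4$ subdiagram with at least two bad neighbours --- arguing from the order-$5$ Lann{\'e}r subdiagram of $P'$ guaranteed by Lemma~\ref{lem: Ess 2d} together with the constraint that $\Sigma$ has positive inertia index $15$ --- which then lifts to a face of $P$ of forbidden dimension. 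Along the way one should also note that Bugaenko's unique compact Coxeter $7$- and $8$-polytopes with $11$ facets could occur as intermediate faces, and these are disposed of by direct inspection of their diagrams. Making one of these two routes airtight is the crux of the proof.
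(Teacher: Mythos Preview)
Your descent via Lemma~\ref{lem: comp iter} to a $3$-free compact Coxeter $3$-polytope $P(S)$ with $7$ facets is exactly the route the paper takes, and your observation that order-$5$ Lann{\'e}r subdiagrams cannot appear in dimension $3$ is correct. The genuine gap is in your handling of this base case: your list of candidate subdiagrams ($B_3$, $H_3$, $D_4$, $H_4$, $F_4$, $G_2^{(m)}$ for $m\ge 6$) need not appear in such a polytope, as you yourself note, and your fallback of stopping the descent at the $7$-polytope with $12$ facets is not fleshed out enough to be a proof.

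The paper closes the base case differently and more concretely. First, by \cite[Satz 6.9]{Ess} there is a \emph{unique} $3$-free combinatorial type for a compact $3$-polytope with $7$ facets, so one is dealing with a single specific face lattice rather than an unspecified family. Second, a short computational check (of the kind described in Section~\ref{sec: d+4 properties}) shows that this combinatorial type cannot be realised with dihedral angles only $\pi/2$ and $\pi/3$; hence $\Sigma_{P(S)}$ contains a subdiagram $T$ of type $G_2^{(m)}$ for some $m\ge 4$. The crucial combinatorial fact about this unique type is that any two vertices joined by a non-dashed edge are together incident to at least three dashed edges, so $T$ has at least three bad neighbours already in $\overline{S}$, and hence in $\Sigma$. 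Passing to the face $P(T)$ in $P$ then gives a compact Coxeter $13$-polytope with at most $13+4$ facets, contradicting $D(4)=7$. The missing ingredients in your argument are thus the Esselmann uniqueness result, the use of $G_2^{(m)}$ with $m\ge 4$ (rather than $m\ge 6$) as the liftable subdiagram, and the bad-neighbour count specific to that one combinatorial type.
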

\begin{proof}
By Lemma \ref{lem: comp iter}, $P$ must have a subdiagram $\Sigma_{P'}$ corresponding to a face $P'$ that is a compact Coxeter $3$-polytope with $7$ facets.  By \cite[Satz 6.9]{Ess}, there is precisely one possible combinatorial type.  It is fairly straightforward to check that no such polytope can have only angles $\pi/2$ or $\pi/3$ (this can be accomplished with the code described in Section \ref{sec: d+4 properties}).  If this contains a subdiagram of the type $G_2^{(m)}$ for $m \geq 4$, then it must have at least $3$ bad neighbours (as any two vertices not joined by a dashed edge in the diagram are together adjacent to at least $3$ dashed edges).  So there is a face of $P'$ which is a compact Coxeter $13$-polytope with at most $17$ facets, which does not exist.
\end{proof}

\subsection{\texorpdfstring{Polytopes with $d+8$ facets}{Polytopes with d+8 facets}}

\begin{lem}
There is no compact Coxeter $19$-polytope with $27$ facets.
\end{lem}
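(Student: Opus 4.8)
The plan is to follow the proof of the $d+7$ case (and of Lemma~\ref{lem: 15 22 bad}): cut $P$ down to a $3$-dimensional face of a forced combinatorial type, then contradict the nonexistence of polytopes with very few facets. Suppose such a polytope exists, call it $P$, with Coxeter diagram $\Sigma$. Since $D(i)\le 15$ for every $i\le 7$ (Theorem~\ref{thm: low k bounds}, Corollary~\ref{cor: d+6 bound}, and the bounds $D(5)\le 9$, $D(7)\le 15$ obtained above), Lemma~\ref{lem: face bound} with $k=8$ forbids a missing face of size $3$ (it would force $19=d\le 15+2$) or of size $4$ (it would force $d\le 15+3$); since a compact hyperbolic polytope has no missing face of size larger than $5$, every missing face of $P$ has size $2$ or $5$. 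Moreover, no subdiagram of $\Sigma$ of type $H_4$ or $F_4$ can have two or more bad neighbours: if $S_0$ were such a subdiagram, then $P(S_0)$ would be a compact Coxeter polytope (Proposition~\ref{prop: Coxeter face}) of dimension $19-4=15$ with at most $27-4-2=15+6$ facets, which is impossible as $D(i)\le 12<15$ for $1\le i\le 6$. Since each such subdiagram has at least one bad neighbour (as noted in Lemma~\ref{lem: comp iter}), the hypotheses of Lemma~\ref{lem: comp iter} are satisfied.

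Applying Lemma~\ref{lem: comp iter} with $(d,k)=(19,8)$ --- for which the largest $\ell$ with $19>8+3\ell$ is $\ell=3$ --- gives a subdiagram $S_3$ of $\Sigma$ with $\Sigma_{S_3}=\overline{S_3}$ whose associated face $P(S_3)$ is a compact Coxeter $3$-polytope with $7$ facets. I would then rerun the analysis from the proof of Lemma~\ref{lem: 15 22 bad}: by \cite[Satz 6.9]{Ess} there is a unique combinatorial type of $3$-polytope with $7$ facets, it admits no compact Coxeter realisation in which every dihedral angle is $\frac{\pi}{2}$ or $\frac{\pi}{3}$ (verified computationally as in Lemma~\ref{lem: 15 22 bad}), and any $G_2^{(m)}$-subdiagram $S'$ with $m\ge 4$ that it contains has at least three bad neighbours in $\Sigma_{S_3}$ (from the count of dashed edges forced by that combinatorial type). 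As $\Sigma_{S_3}=\overline{S_3}$ is an induced subdiagram of $\Sigma$, $S'$ has at least three bad neighbours in $\Sigma$ too, so $P(S')$ is a compact Coxeter polytope of dimension $19-2=17$ with at most $27-2-3=17+5$ facets --- impossible, since $D(5)\le 9<17$. Hence no such $P$ exists.

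The propagation of facet counts through the recursion of Lemma~\ref{lem: comp iter} and the bounds $D(5)\le 9$, $D(6)\le 12$, $D(7)\le 15$ is routine bookkeeping; the only subtle ingredient, and the main obstacle, is the piece imported from Lemma~\ref{lem: 15 22 bad}: pinning down the unique combinatorial type of a $3$-polytope with $7$ facets, excluding a realisation with only $\frac{\pi}{2}$- and $\frac{\pi}{3}$-angles, and showing that a smaller angle forces a $G_2^{(m)}$-subdiagram with three bad neighbours. An alternative that sidesteps the $3$-polytope analysis is to perform only the first step of Lemma~\ref{lem: comp iter}, obtaining a compact Coxeter $15$-polytope with $22$ facets whose missing faces have size $2$ or $5$, apply Lemma~\ref{lem: 15 22 bad} to extract an $H_4$- or $F_4$-subdiagram with at least two bad neighbours, and pull it back along $\Sigma_{S_0}=\overline{S_0}$ to contradict the fact, established above, that $\Sigma$ admits no such subdiagram.
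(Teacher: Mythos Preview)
Your proposal is correct, and the alternative you sketch in the final paragraph is precisely the paper's proof: one step of descent produces a compact Coxeter $15$-polytope with $22$ facets whose missing faces have size $2$ or $5$, Lemma~\ref{lem: 15 22 bad} supplies an $H_4$- or $F_4$-subdiagram with at least two bad neighbours, and pulling this back to $\Sigma$ yields a $15$-polytope with at most $21$ facets, contradicting $D(i)<15$ for $i\le 6$.

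Your main approach---iterating Lemma~\ref{lem: comp iter} all the way to $\ell=3$ to land on a $3$-polytope with $7$ facets and then rerunning the Esselmann/angle analysis---is also valid, but it is doing more work than necessary: you are essentially reproving Lemma~\ref{lem: 15 22 bad} inside the argument rather than invoking it. The paper's route is shorter because that lemma has already packaged the $3$-polytope analysis; once you have the $15$-polytope face, a single citation finishes the job. Your longer route has the minor advantage of being self-contained and of producing a $G_2^{(m)}$-contradiction in dimension $17$ rather than an $H_4/F_4$-contradiction in dimension $15$, but neither buys anything extra here.
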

\begin{proof}
 By Lemma \ref{lem: face bound}, $P$ must have missing faces only of size $2$ or $5$.  By Lemma \ref{lem: comp iter}, this must have a face which is a $15$-polytope with $22$ facets.  By Lemma \ref{lem: 15 22 bad}, this must have a subdiagram of type $H_4$ or $F_4$ with at least two bad neighbours.  But then this yields a face which is a compact Coxeter $15$-polytope with at most $21$ facets, contradicting that $D(i) < 15$ for $i \leq 6$.
\end{proof}

\begin{cor}
We have $D(8) \leq 18$.
\end{cor}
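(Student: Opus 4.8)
The plan is to extend the face-descent argument already carried out for $d+5$ through $d+7$ by one further step. Suppose toward a contradiction that $P$ is a compact Coxeter $19$-polytope with $27$ facets, so $k=8$; write $\Sigma=\Sigma(P)$. First I would fix the missing-face structure. The bounds already established give $D(i)\le 15$ for every $i<8$, so Lemma~\ref{lem: face bound} forces $d\le 15+2=17$ if $P$ has a missing face of size $3$, and $d\le 15+3=18$ if it has one of size $4$; since $d=19$, neither happens, so every missing face of $P$ has size $2$ or $5$. Moreover $2d=38>27$, so by Lemma~\ref{lem: Ess 2d} the polytope $P$ is not $3$-free and hence has at least one missing face of size $5$; the corresponding order-$5$ Lann\'er subdiagram contains a subdiagram $S_0$ of type $H_4$ or $F_4$, exactly as in the proof of Lemma~\ref{lem: face bound}.

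Next I would examine the face $P(S_0)$. Since $H_4$ and $F_4$ have no component of type $A_n$ or $D_5$ and have no good neighbours, Proposition~\ref{prop: Coxeter face} together with the Felikson--Tumarkin description of $\Sigma_{S_0}$ versus $\overline{S_0}$ \cite{FT} shows that $P(S_0)$ is a compact Coxeter polytope with $\Sigma_{S_0}=\overline{S_0}$, the induced subdiagram of $\Sigma$ on the vertices not attached to $S_0$. By Proposition~\ref{prop: Vin 3.1} this is a $15$-polytope, with exactly $27-4-b$ facets, where $b\ge 1$ is the number of (necessarily bad) neighbours of $S_0$. If $b\ge 2$, then $P(S_0)$ is a $15$-polytope with at most $21=15+6$ facets, which is impossible since $D(j)<15$ for all $j\le 6$; hence $b=1$ and $P':=P(S_0)$ is a compact Coxeter $15$-polytope with precisely $22$ facets (the first face produced in the descent of Lemma~\ref{lem: comp iter}, with the above case analysis showing its hypothesis is harmless). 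As $\Sigma_{P'}$ is an induced subdiagram of $\Sigma$, every Lann\'er subdiagram of $\Sigma_{P'}$ is a Lann\'er subdiagram of $\Sigma$, so the missing faces of $P'$ again have size only $2$ or $5$.

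I would then apply Lemma~\ref{lem: 15 22 bad} to $P'$: it contains a subdiagram $T_1$ of type $H_4$ or $F_4$ with at least two bad neighbours in $\Sigma_{P'}$. Because $\Sigma_{P'}$ is an induced subdiagram of $\Sigma$, for each such neighbour $w$ the subdiagram $\langle T_1,w\rangle$ is the same non-elliptic diagram computed in $\Sigma$, so $T_1$ has at least two bad neighbours in $\Sigma$ as well. By Propositions~\ref{prop: Vin 3.1} and~\ref{prop: Coxeter face}, the face $P(T_1)$ is then a compact Coxeter $15$-polytope with at most $27-4-2=21$ facets, again contradicting $D(j)<15$ for $j\le 6$. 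This proves the lemma, and the corollary $D(8)\le 18$ follows immediately, since Theorem~\ref{thm: partial dim bound} already gives $D(8)\le\max\{15+4,\,7\}=19$.

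The argument uses nothing new beyond the machinery of the previous subsections; the only point needing care is the bookkeeping around the phrase ``bad neighbour''. Since the faces in play enter $\Sigma$ only through the identifications $\Sigma_{S_0}=\overline{S_0}$ and $\Sigma_{T_1}=\overline{T_1}$ (valid because $H_4$ and $F_4$ have no good neighbours), one must confirm these are genuine induced subdiagrams of $\Sigma$, so that the restriction on missing-face sizes, the conclusion of Lemma~\ref{lem: 15 22 bad}, and the facet count for $P(T_1)$ all transfer back to the ambient diagram rather than holding merely inside $P'$. I expect no obstruction beyond this.
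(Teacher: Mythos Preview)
Your proposal is correct and follows essentially the same approach as the paper: reduce to missing faces of size $2$ or $5$, pass to a $15$-polytope with $22$ facets via an $H_4$/$F_4$ subdiagram, invoke Lemma~\ref{lem: 15 22 bad} to find a second $H_4$/$F_4$ subdiagram with at least two bad neighbours, and derive a contradiction from $D(j)<15$ for $j\le 6$. The paper compresses your first case analysis (the possibility $b\ge 2$) into a citation of Lemma~\ref{lem: comp iter}, whereas you unroll that step explicitly and are more careful about transferring the bad-neighbour count from $\Sigma_{P'}$ back to $\Sigma$; otherwise the arguments coincide.
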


While in the previous cases we were able to reduce the dimension by one via showing that there are certain subdiagrams with at least two bad neighbours, we were not able to obtain such a reduction in this case.  Lemma \ref{lem: comp iter} guarantees that such a polytope has a face which is a compact $2$-polytope with $6$ facets.  However, by the classification in \cite{And} there are many of these for which every subdiagram not of type $A_n$ or $D_5$ has at most one bad neighbour.  

\subsection{\texorpdfstring{Polytopes with $d+9$ or $d+10$ facets}{Polytopes with d+9 or d+10 facets}}

As mentioned previously, in these cases we could not reduce the dimension by one.  

\begin{cor}
We have $D(9) \leq 22$.
\end{cor}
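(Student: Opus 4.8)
The plan is that $D(9)\le 22$ is immediate from Theorem~\ref{thm: partial dim bound} once the bound for $k=8$ is in hand. Specializing that theorem to $k=9$ gives
$$D(9)\;\le\;\max\!\left\{\ \max_{i<9} D(i)+4,\ \min\{8,13\}\ \right\}.$$
By Theorem~\ref{thm: low k bounds} we have $D(1)=4$, $D(2)=5$, $D(3)=8$, $D(4)=7$, and the corollaries of the preceding subsections give $D(5)\le 9$, $D(6)\le 12$, $D(7)\le 15$, and $D(8)\le 18$; hence $\max_{i<9}D(i)=D(8)\le 18$, so the first term of the maximum is at most $22$. Since $\min\{8,13\}=8\le 22$, we conclude $D(9)\le 22$. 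Thus nothing beyond a one-line computation is required to establish the stated bound.

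The point worth flagging is that, unlike the cases $k=6,7,8$, I do not expect the extra ``$-1$'' improvement to be attainable by the descent of Lemma~\ref{lem: comp iter}. To rule out a compact Coxeter $22$-polytope with $31$ facets one would first invoke Lemma~\ref{lem: face bound} to reduce to missing faces of size only $2$ and $5$, and then Lemma~\ref{lem: comp iter} (here with $\ell=4$) produces a face that is a compact Coxeter $2$-polytope with $6$ facets. For the smaller $k$ one finishes by observing that the small face forced into existence must contain a subdiagram of type $H_4$, $F_4$ (or $D_4$, $B_3$, $G_2^{(m)}$) with at least two bad neighbours, which propagates back up to a face that would be a nonexistent Coxeter polytope; but by the classification of compact hyperbolic $2$-polytopes \cite{And}, many of those with $6$ facets have the property that every subdiagram not of type $A_n$ or $D_5$ has at most one bad neighbour, so the inductive step collapses. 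Consequently the real difficulty is not proving $D(9)\le 22$ — which is trivial — but sharpening it: this would seem to require a genuinely new idea for controlling the $2$-dimensional faces, and the same obstruction applies verbatim to $D(10)\le 26$.
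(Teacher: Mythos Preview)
Your proof is correct and essentially identical to the paper's: both simply invoke Theorem~\ref{thm: partial dim bound} together with the previously established bounds $D(i)\le 18$ for $i<9$. Your additional commentary on why the descent via Lemma~\ref{lem: comp iter} bottoms out at a $2$-polytope with $6$ facets (and hence fails to yield a further reduction) matches the paper's own remark to this effect.
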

\begin{proof}
This follows directly from Theorem \ref{thm: partial dim bound} and the results proved above that $D(k) \leq 18$ for $k < 9$.
\end{proof}

\begin{cor}
We have $D(10) \leq 26$.
\end{cor}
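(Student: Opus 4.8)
The plan is to derive this bound as an immediate consequence of Theorem \ref{thm: partial dim bound}, in exactly the same way as the preceding corollary $D(9) \leq 22$. Specialising that theorem to $k = 10$ gives
$$D(10) \leq \max\left\{\max_{i < 10} D(i) + 4,\ \min\{10 - 1, 13\}\right\}\,.$$
So the first step is simply to record the values and bounds of $D(i)$ for $i < 10$ that have already been established: by Theorem \ref{thm: low k bounds} we have $D(1) = 4$, $D(2) = 5$, $D(3) = 8$, $D(4) = 7$; by the corollaries proved earlier in this section, $D(5) \leq 9$, $D(6) \leq 12$, $D(7) \leq 15$, $D(8) \leq 18$; and $D(9) \leq 22$. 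Hence $\max_{i < 10} D(i) \leq 22$, realised by (the bound on) $D(9)$.

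The second and final step is the arithmetic: $\max_{i<10} D(i) + 4 \leq 22 + 4 = 26$, while $\min\{9, 13\} = 9 \leq 26$, so the maximum of the two quantities is at most $26$, giving $D(10) \leq 26$ as claimed. There is no real obstacle here — the corollary is purely bookkeeping on top of Theorem \ref{thm: partial dim bound} and the $k < 10$ bounds established above; the only thing to be careful about is that the induction implicit in Theorem \ref{thm: partial dim bound} has already been fully discharged for all $i < 10$, which it has by the chain of results culminating in the $D(9) \leq 22$ corollary.
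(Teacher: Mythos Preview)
Your proposal is correct and follows exactly the paper's approach: the paper's proof simply states that the bound follows directly from Theorem \ref{thm: partial dim bound} together with the previously established fact that $D(k) \leq 22$ for all $k < 10$. Your write-up just makes the arithmetic explicit.
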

\begin{proof}
This follows directly from Theorem \ref{thm: partial dim bound} and the results proved above that $D(k) \leq 22$ for $k < 10$.
\end{proof}

For $k \geq 11$, the bounds we obtain from repeated application of Theorem \ref{thm: partial dim bound} are weaker than Vinberg's bound $D(k) \leq 29$ \cite{Vin2}.

\section{Further Directions}

By the results presented in this paper, along with the earlier work of Felikson and Tumarkin \cite{FT}, the only dimension where compact Coxeter $d$-polytopes with $d+4$ facets have not been classified is $d = 6$.  A list of potential combinatorial types can be generated in the same manner as in Section \ref{sec: comb types d+4}, but the large number of point set order types with $10$ points makes this computationally challenging.  With such a list in hand, the analysis of each combinatorial type seems likely to be fairly straightforward, as was the case in dimension $5$.  The author plans to continue this work and is hopeful that the computational difficulties can be handled.

We also provide many new examples of compact Coxeter polytopes in dimension $4$ and $5$.  This includes the first known example in dimension higher than three with a dihedral angle of less than $\frac{\pi}{10}$, as well as the first known example in dimension higher than three with an angle of $\frac{\pi}{7}$.  It may be interesting to further study the properties of many of these polytopes, especially those that are essential.

In Section \ref{sec: dim bound}, we provide improved upper bounds on the dimension of compact Coxeter $d$-polytopes with $d+k$ facets for $5 \leq k \leq 10$.  Thus far, there are no compact Coxeter polytopes known in dimension larger than $8$.  These bounds can be viewed as further evidence that there are perhaps no higher dimensional examples, or that such polytopes may be fairly complicated.  It seems quite likely that the bounds provided can be improved by a constant factor with more detailed analysis.  This may also be the case for the bound on compact Coxeter $3$-free $d$-polytopes obtained in Section \ref{sec: 3-free}, as examples are known only up through dimension $5$ (see, e.g., the cubes classified in \cite{JT}).  Already this has been achieved by Alexandrov \cite{Ale} in showing $d \leq 12$, but perhaps the bound can be improved further.  

\section{Acknowledgements}
The majority of this work was completed as part of the author's thesis \cite{Bur} for the Mathematical Sciences Master of Science by Research program at Durham University.  The author would like to sincerely thank her advisor, Pavel Tumarkin, for his expert support throughout the course of this research and for introducing her to this fascinating topic.  His thorough reading and helpful suggestions have been crucial to the development of this work.  She is also grateful to Anna Felikson, Ruth Kellerhals, and Norbert Peyerimhoff for their helpful comments and suggestions.  The author extends her thanks to Jiming Ma and Fangting Zheng, whose similar work helped her to correct a minors errors in the list of polytopes.  Additional thanks are due to the Marshall Commission for making the author's study in the UK possible.

\newpage

\appendix
\section{List of Combinatorial Types}\label{app: Gale diagrams}
The first table contains the missing face list of the $34$ possible combinatorial types of compact hyperbolic $4$-polytopes with $8$ facets having at least one pair of disjoint facets, as described in Subsection \ref{subsec: point set}.  The second table contains the missing face list of the $111$ possible combinatorial types of compact hyperbolic $5$-polytopes with $9$ facets having at least two pairs of disjoint facets.  Each list of missing faces is presented in its lexicographically least form, with respect to relabellings of the facets. A missing face $f = f_{i_1} \cap \dots \cap f_{i_s}$ is denoted by the string $i_1\dots i_s$ where $i_1 < \dots < i_s$.

\renewcommand{\arraystretch}{1.25}
\begin{table}[H]\label{tab: d+4 comb types}
\begin{longtable}{|l|l|l|}
\hline
\textbf{Type} & \textbf{Missing Face List}                              & \textbf{\begin{tabular}[c]{@{}l@{}}Number of\\ Polytopes\end{tabular}} \\ \hline
$G_1$                                                                                       & 01, 02, 03, 12, 14, 25, 3467, 3567, 4567                       & 130                                                                                         \\ \hline
$G_2$                                                                                       & 01, 02, 03, 12, 14, 2567, 34, 3567, 4567                       & 115                                                                                         \\ \hline
$G_3$                                                                                       & 01, 02, 03, 1245, 1456, 27, 36, 37, 4567                       & 49                                                                                         \\ \hline
$G_4$                                                                                       & 01, 23, 45, 67                                                 & 12                                                                                          \\ \hline
$G_5$                                                                                       & 01, 02, 13, 24, 34, 567                                        & 3                                                                                           \\ \hline
$G_6$                                                                                       & 01, 02, 03, 124, 145, 26, 357, 367, 4567                       & 2                                                                                           \\ \hline
$G_7$                                                                                       & 01, 02, 03, 12, 14, 256, 347, 3567, 4567                       & 2                                                                                           \\ \hline
$G_8$                                                                                       & 01, 02, 03, 1245, 146, 257, 36, 37, 4567                       & 1                                                                                           \\ \hline
$G_9$                                                                                       & 01, 02, 03, 14, 25, 367, 4567                                  & 15                                                                                          \\ \hline
$G_{10}$                                                                                    & 01, 02, 034, 134, 156, 256, 27, 347, 567                       & 1                                                                                           \\ \hline
$G_{11}$                                                                                    & 01, 02, 13, 245, 345, 67                                       & 8                                                                                           \\ \hline
$G_{12}$                                                                                    & 01, 02, 034, 134, 15, 256, 267, 347, 567                       & 4                                                                                           \\ \hline
$G_{13}$                                                                                    & 01, 02, 034, 15, 26, 347, 567                                  & 4                                                                                           \\ \hline
$G_{14}$                                                                                    & 01, 023, 145, 236, 237, 46, 57                                 & 2                                                                                           \\ \hline
$G_{15}$                                                                                    & 01, 02, 03, 12, 145, 267, 345, 367, 4567                       & 0                                                                                           \\ \hline
$G_{16}$                                                                                    & 01, 02, 034, 125, 156, 27, 346, 347, 567                       & 0                                                                                           \\ \hline
$G_{17}$                                                                                    & 01, 02, 03, 124, 145, 267, 35, 367, 4567                       & 0                                                                                           \\ \hline
$G_{18}$                                                                                    & 01, 02, 03, 124, 125, 146, 257, 346, 357, 367, 4567            & 0                                                                                           \\ \hline
$G_{19}$                                                                                    & 01, 02, 03, 124, 135, 145, 246, 267, 357, 367, 4567            & 0                                                                                           \\ \hline
$G_{20}$                                                                                    & 01, 02, 034, 135, 156, 247, 26, 347, 567                       & 0                                                                                           \\ \hline
$G_{21}$                                                                                    & 01, 02, 134, 135, 246, 267, 346, 57                            & 0                                                                                           \\ \hline
$G_{22}$                                                                                    & 01, 02, 03, 124, 1456, 27, 356, 37, 4567                       & 0                                                                                           \\ \hline
$G_{23}$                                                                                    & 01, 02, 034, 134, 15, 25, 267, 3467, 567                       & 0                                                                                           \\ \hline

\end{longtable}
\end{table}

\begin{table}[H]
\begin{longtable}{|l|l|l|}
\hline
\textbf{Type} & \textbf{Missing Face List}                              & \textbf{\begin{tabular}[c]{@{}l@{}}Number of\\ Polytopes\end{tabular}}                                                                                                                     \\ \hline
$G_{24}$                                                                                    & 01, 02, 034, 135, 16, 247, 26, 3457, 567                       & 0    \\ \hline
$G_{26}$                                                                                    & 01, 023, 024, 156, 157, 234, 236, 356, 47, 567                 & 0                                                                                         \\ \hline
$G_{27}$                                                                                    & 01, 02, 034, 134, 135, 156, 247, 256, 267, 347, 567            & 0                                                                                           \\ \hline
$G_{28}$                                                                                    & 01, 02, 034, 135, 136, 157, 246, 247, 257, 346, 567            & 0                                                                                           \\ \hline
$G_{29}$                                                                                    & 01, 02, 034, 125, 136, 156, 247, 257, 346, 347, 567            & 0                                                                                           \\ \hline
$G_{30}$                                                                                    & 01, 023, 024, 135, 156, 237, 247, 357, 46, 567                 & 0                                                                                           \\ \hline
$G_{31}$                                                                                    & 01, 023, 024, 035, 126, 157, 167, 234, 246, 345, 357, 467, 567 & 0                                                                                           \\ \hline
$G_{32}$                                                                                    & 01, 023, 024, 035, 124, 146, 156, 237, 247, 356, 357, 467, 567 & 0                                                                                           \\ \hline
$G_{33}$                                                                                    & 01, 023, 024, 035, 146, 157, 167, 234, 235, 246, 357, 467, 567 & 0                                                                                           \\ \hline
$G_{34}$                                                                                    & 01, 023, 024, 035, 126, 137, 167, 245, 246, 345, 357, 467, 567 & 0                                                                                           \\ \hline
\end{longtable}
\end{table}

\begin{table}[H]
\begin{longtable}{|l|l|l|}
\hline
\textbf{Type} & \textbf{Missing Face List}                              & \textbf{\begin{tabular}[c]{@{}l@{}}Number of\\ Polytopes\end{tabular}} \\ \hline
$H_1$         & 01, 02, 03, 12, 14, 25678, 34, 35678, 45678             & 22                                                                     \\ \hline
$H_2$         & 01, 02, 03, 12456, 14567, 28, 37, 38, 45678             & 18                                                                     \\ \hline
$H_3$         & 01, 02, 03, 12, 14, 256, 3478, 35678, 45678             & 6                                                                      \\ \hline
$H_4$         & 01, 02, 03, 1245, 1456, 27, 368, 378, 45678             & 3                                                                      \\ \hline
$H_5$         & 01, 02, 034, 134, 15, 256, 2678, 3478, 5678             & 1                                                                      \\ \hline
$H_6$         & 01, 02, 03, 12, 14, 25, 34678, 35678, 45678             & 1                                                                      \\ \hline
$H_7$         & 01, 02, 03, 12456, 1457, 267, 268, 38, 45678            & 0                                                                      \\ \hline
$H_8$         & 01, 02, 03, 12456, 1457, 268, 37, 38, 45678             & 0                                                                      \\ \hline
$H_9$         & 01, 02, 03, 12, 14, 2567, 348, 35678, 45678             & 0                                                                      \\ \hline
$H_{10}$      & 01, 02, 03, 124, 14567, 28, 3567, 38, 45678             & 0                                                                      \\ \hline
$H_{11}$      & 01, 02, 03, 1245, 14567, 28, 367, 38, 45678             & 0                                                                      \\ \hline
$H_{12}$      & 01, 02, 034, 134, 15, 25, 2678, 34678, 5678             & 0                                                                      \\ \hline
$H_{13}$      & 01, 02, 03, 12, 145, 267, 3458, 3678, 45678             & 0                                                                      \\ \hline
$H_{14}$      & 01, 02, 03, 12, 145, 2678, 345, 3678, 45678             & 0                                                                      \\ \hline
$H_{15}$      & 01, 02, 03, 124, 145, 2678, 35, 3678, 45678             & 0                                                                      \\ \hline
$H_{16}$      & 01, 02, 034, 135, 16, 2478, 26, 34578, 5678             & 0                                                                      \\ \hline
$H_{17}$      & 01, 02, 034, 1345, 16, 26, 278, 34578, 5678             & 0                                                                      \\ \hline
$H_{18}$      & 01, 02, 03, 124, 125, 146, 2578, 346, 3578, 3678, 45678 & 0                                                                      \\ \hline
$H_{19}$      & 01, 02, 03, 1245, 1346, 1456, 257, 278, 368, 378, 45678 & 0                                                                      \\ \hline
$H_{20}$      & 01, 02, 03, 124, 1256, 147, 2568, 347, 3568, 378, 45678 & 0                                                                      \\ \hline
$H_{21}$      & 01, 02, 03, 1245, 1456, 278, 36, 378, 45678             & 0                                                                      \\ \hline
$H_{22}$      & 01, 02, 03, 1245, 1246, 1457, 268, 357, 368, 378, 45678 & 0                                                                      \\ \hline
$H_{23}$      & 01, 02, 03, 1245, 146, 2578, 36, 378, 45678             & 0                                                                      \\ \hline
$H_{24}$      & 01, 02, 03, 124, 1356, 1456, 247, 278, 3568, 378, 45678 & 0                                                                      \\ \hline
$H_{25}$      & 01, 02, 03, 124, 1456, 27, 3568, 378, 45678             & 0                                                                      \\ \hline
$H_{26}$      & 01, 02, 03, 124, 145, 26, 3578, 3678, 45678             & 0                                                                      \\ \hline
$H_{27}$      & 01, 02, 03, 124, 125, 146, 257, 3468, 3578, 3678, 45678 & 0                                                                      \\ \hline
$H_{28}$      & 01, 02, 03, 124, 135, 145, 246, 2678, 3578, 3678, 45678 & 0                                                                      \\ \hline
$H_{29}$      & 01, 02, 03, 124, 1356, 145, 247, 278, 3568, 3678, 45678 & 0                                                                      \\ \hline
$H_{30}$      & 01, 02, 03, 124, 145, 267, 358, 3678, 45678             & 0                                                                      \\ \hline
$H_{31}$      & 01, 02, 03, 124, 1456, 278, 356, 378, 45678             & 0                                                                      \\ \hline
$H_{32}$      & 01, 02, 03, 124, 1456, 278, 356, 378, 45678             & 0                                                                      \\ \hline
$H_{33}$      & 01, 02, 03, 14, 25, 3678, 45678                         & 0   

\\ \hline
\end{longtable}
\end{table}

\begin{table}[H]
\begin{longtable}{|l|l|l|}
\hline
\textbf{Type} & \textbf{Missing Face List}                              & \textbf{\begin{tabular}[c]{@{}l@{}}Number of\\ Polytopes\end{tabular}}

\\ \hline
$H_{34}$      & 01, 02, 03, 14, 256, 378, 45678                         & 0                                   
 \\ \hline
$H_{35}$      & 01, 02, 034, 15, 256, 267, 3478, 5678                   & 0                                                                      \\ \hline
$H_{36}$      & 01, 02, 034, 1256, 157, 268, 347, 348, 5678             & 0                                                                      \\ \hline
$H_{37}$      & 01, 02, 034, 1345, 156, 267, 278, 348, 5678             & 0                                                                      \\ \hline
$H_{38}$      & 01, 02, 13, 2456, 3456, 78                              & 0                                                                      \\ \hline
$H_{39}$      & 01, 02, 034, 134, 156, 2567, 278, 348, 5678             & 0                                                                      \\ \hline
$H_{40}$      & 01, 02, 034, 125, 156, 278, 346, 3478, 5678             & 0                                                                      \\ \hline
$H_{41}$      & 01, 02, 034, 135, 156, 2478, 267, 348, 5678             & 0                                                                      \\ \hline
$H_{42}$      & 01, 023, 1456, 27, 38, 4567, 4568                       & 0                                                                      \\ \hline
$H_{43}$      & 01, 02, 034, 13, 1567, 248, 2567, 348, 5678             & 0                                                                      \\ \hline
$H_{44}$      & 01, 02, 034, 125, 156, 27, 3468, 3478, 5678             & 0                                                                      \\ \hline
$H_{45}$      & 01, 02, 034, 135, 136, 157, 2468, 2478, 2578, 346, 5678 & 0                                                                      \\ \hline
$H_{46}$      & 01, 02, 034, 135, 16, 2478, 2678, 345, 5678             & 0                                                                      \\ \hline
$H_{47}$      & 01, 02, 034, 134, 135, 156, 2478, 2567, 2678, 348, 5678 & 0                                                                      \\ \hline
$H_{48}$      & 01, 02, 034, 135, 1567, 248, 267, 348, 5678             & 0                                                                      \\ \hline
$H_{49}$      & 01, 02, 0345, 1346, 17, 258, 278, 3456, 678             & 0                                                                      \\ \hline
$H_{50}$      & 01, 02, 034, 134, 1567, 2567, 28, 348, 5678             & 0                                                                      \\ \hline
$H_{51}$      & 01, 02, 0345, 1345, 16, 267, 278, 3458, 678             & 0                                                                      \\ \hline
$H_{52}$      & 01, 02, 0345, 1345, 1346, 167, 258, 267, 278, 3458, 678 & 0                                                                      \\ \hline
$H_{53}$      & 01, 02, 0345, 1346, 137, 178, 2456, 258, 278, 3456, 678 & 0                                                                      \\ \hline
$H_{54}$      & 01, 02, 0345, 1345, 136, 167, 2458, 267, 278, 3458, 678 & 0                                                                      \\ \hline
$H_{55}$      & 01, 02, 034, 1256, 1567, 28, 347, 348, 5678             & 0                                                                      \\ \hline
$H_{56}$      & 01, 02, 0345, 134, 136, 167, 2458, 2578, 267, 3458, 678 & 0                                                                      \\ \hline
$H_{57}$      & 01, 02, 034, 1345, 16, 267, 278, 3458, 5678             & 0                                                                      \\ \hline
$H_{58}$      & 01, 02, 0345, 1346, 1347, 168, 257, 258, 268, 3457, 678 & 0                                                                      \\ \hline
$H_{59}$      & 01, 02, 034, 135, 156, 2478, 26, 3478, 5678             & 0                                                                      \\ \hline
$H_{60}$      & 01, 023, 024, 135, 156, 2467, 2478, 38, 4567, 5678      & 0                                                                      \\ \hline
$H_{61}$      & 01, 02, 034, 125, 126, 157, 268, 3457, 3468, 3478, 5678 & 0                                                                      \\ \hline
$H_{62}$      & 01, 023, 024, 1356, 1567, 238, 248, 3568, 47, 5678      & 0                                                                      \\ \hline
$H_{63}$      & 01, 02, 034, 1256, 137, 1567, 248, 2568, 347, 348, 5678 & 0                                                                      \\ \hline
$H_{64}$      & 01, 02, 0345, 16, 27, 3458, 678                         & 0                                                                      \\ \hline
$H_{65}$      & 01, 02, 0345, 134, 167, 258, 267, 3458, 678             & 0                                                                      \\ \hline
$H_{66}$      & 01, 023, 024, 156, 237, 3567, 48, 5678                  & 0                                         
\\ \hline
\end{longtable}
\end{table}

\begin{table}[H]
\begin{longtable}{|l|l|l|}
\hline
\textbf{Type} & \textbf{Missing Face List}                              & \textbf{\begin{tabular}[c]{@{}l@{}}Number of\\ Polytopes\end{tabular}}

 \\ \hline
$H_{67}$      & 01, 02, 134, 135, 146, 2578, 2678, 357, 468             & 0                                                                      \\ \hline
$H_{68}$      & 01, 02, 034, 134, 15, 2567, 2678, 348, 5678             & 0   

\\ \hline
$H_{69}$      & 01, 02, 034, 134, 135, 156, 2478, 256, 2678, 3478, 5678 & 0                                                                      \\ \hline
$H_{70}$      & 01, 02, 034, 135, 1367, 158, 2467, 248, 258, 3467, 5678 & 0                                                                \\ \hline
$H_{71}$      & 01, 02, 034, 125, 1567, 28, 3467, 348, 5678             & 0                                                                      \\ \hline
$H_{72}$      & 01, 02, 034, 135, 136, 157, 2468, 2478, 257, 3468, 5678 & 0                                                                      \\ \hline
$H_{73}$      & 01, 02, 034, 135, 16, 2478, 267, 3458, 5678             & 0                                                                      \\ \hline
$H_{74}$      & 01, 02, 034, 1345, 156, 167, 2348, 267, 278, 3458, 5678 & 0                                                                      \\ \hline
$H_{75}$      & 01, 02, 034, 1345, 136, 167, 2458, 267, 278, 3458, 5678 & 0                                                                      \\ \hline
$H_{76}$      & 01, 02, 034, 1345, 1356, 167, 248, 267, 278, 3458, 5678 & 0                                                                      \\ \hline
$H_{77}$      & 01, 02, 034, 1345, 1567, 267, 28, 348, 5678             & 0                                                                      \\ \hline
$H_{78}$      & 01, 02, 034, 1345, 1346, 157, 257, 268, 278, 3468, 5678 & 0                                                                      \\ \hline
$H_{79}$      & 01, 02, 034, 1345, 156, 26, 278, 3478, 5678             & 0                                                                      \\ \hline
$H_{80}$      & 01, 023, 024, 1356, 1567, 247, 248, 38, 4567, 5678      & 0                                                                      \\ \hline
$H_{81}$      & 01, 02, 0345, 126, 1347, 167, 258, 268, 3457, 3458, 678 & 0                                                                      \\ \hline
$H_{82}$      & 01, 023, 024, 156, 157, 2348, 2368, 3568, 47, 5678      & 0                                                                      \\ \hline
$H_{83}$      & 01, 02, 034, 125, 136, 156, 2478, 2578, 346, 3478, 5678 & 0                                                                      \\ \hline
$H_{84}$      & 01, 023, 024, 135, 156, 2378, 2478, 3578, 46, 5678      & 0                                                                      \\ \hline
$H_{85}$      & 01, 023, 024, 135, 1567, 2467, 248, 38, 4567, 5678      & 0                                                                      \\ \hline
$H_{86}$      & 01, 02, 034, 135, 136, 1578, 246, 2478, 2578, 346, 5678 & 0                                                                      \\ \hline
$H_{87}$      & 01, 023, 024, 1356, 157, 2368, 248, 3568, 47, 5678      & 0                                                                      \\ \hline
$H_{88}$      & 01, 02, 034, 1256, 1357, 1567, 248, 268, 347, 348, 5678 & 0                                                                      \\ \hline
$H_{89}$      & 01, 02, 034, 1345, 1356, 1567, 248, 267, 278, 348, 5678 & 0                                                                      \\ \hline
$H_{90}$      & 01, 02, 034, 125, 1346, 156, 257, 278, 3468, 3478, 5678 & 0                                                                      \\ \hline
$H_{91}$      & 01, 023, 024, 156, 1578, 234, 2378, 3578, 46, 5678      & 0                                                                      \\ \hline
$H_{92}$      & 01, 02, 034, 134, 1356, 1567, 248, 2567, 278, 348, 5678 & 0                                                                      \\ \hline
$H_{93}$      & 01, 02, 034, 125, 136, 156, 2478, 257, 3468, 3478, 5678 & 0                                                                      \\ \hline
$H_{94}$      & 01, 02, 134, 135, 2467, 2678, 3467, 58                  & 0                                                                      \\ \hline
$H_{95}$      & 01, 02, 134, 1356, 2567, 278, 3567, 48                  & 0                                                                      \\ \hline
$H_{96}$      & 01, 023, 024, 1567, 1568, 234, 237, 3567, 48, 5678      & 0                                                                      \\ \hline
$H_{97}$      & 01, 02, 034, 135, 167, 248, 267, 3458, 5678             & 0                                                                      \\ \hline
$H_{98}$      & 01, 02, 034, 134, 156, 256, 278, 3478, 5678             & 0                                                                      \\ \hline
$H_{99}$      & 01, 02, 034, 15, 26, 3478, 5678                         & 0                                     
\\ \hline
\end{longtable}
\end{table}

\begin{table}[H]
\begin{longtable}{|l|l|l|}
\hline
\textbf{Type} & \textbf{Missing Face List}                              & \textbf{\begin{tabular}[c]{@{}l@{}}Number of\\ Polytopes\end{tabular}}

  \\ \hline
$H_{100}$     & 01, 02, 034, 134, 135, 1567, 248, 2567, 2678, 348, 5678 & 0                                                                      \\ \hline
$H_{101}$     & 01, 02, 034, 12, 156, 278, 3456, 3478, 5678             & 0                                                                      \\ \hline
$H_{102}$     & 01, 02, 13, 24, 34, 5678                                & 0       

\\ \hline
$H_{103}$     & 01, 023, 145, 236, 2378, 46, 578                        & 0                                                                      \\ \hline
$H_{104}$     & 01, 023, 1456, 237, 238, 457, 68                        & 0                                                                      \\ \hline
$H_{105}$     & 01, 02, 134, 135, 246, 2678, 346, 578                   & 0                                                                      \\ \hline

$H_{106}$     & 01, 02, 034, 15, 267, 348, 5678\hspace{4.4cm}                         & 0                                                                      \\ \hline
$H_{107}$     &   01, 02, 134, 156, 278, 347, 568                                      & 0                                                                      \\ \hline
$H_{108}$     & 01, 02, 134, 256, 3456, 78                         & 0                                                                      \\ \hline
$H_{109}$     & 01, 23, 45, 678                                & 0                                                                      \\ \hline
$H_{110}$     & 01, 02, 13, 245, 345, 678                              & 0                                                                      \\ \hline
$H_{111}$     & 01, 023, 145, 236, 456, 78                              & 0                                                                      \\ \hline
\end{longtable}
\end{table}
 \newlength{\originalVOffset}
 \newlength{\originalHOffset}
 \setlength{\originalVOffset}{\voffset}   
 \setlength{\originalHOffset}{\hoffset}

 \setlength{\voffset}{0cm}
 \setlength{\hoffset}{0cm}
\includepdf[pages=-,pagecommand=\thispagestyle{empty}]{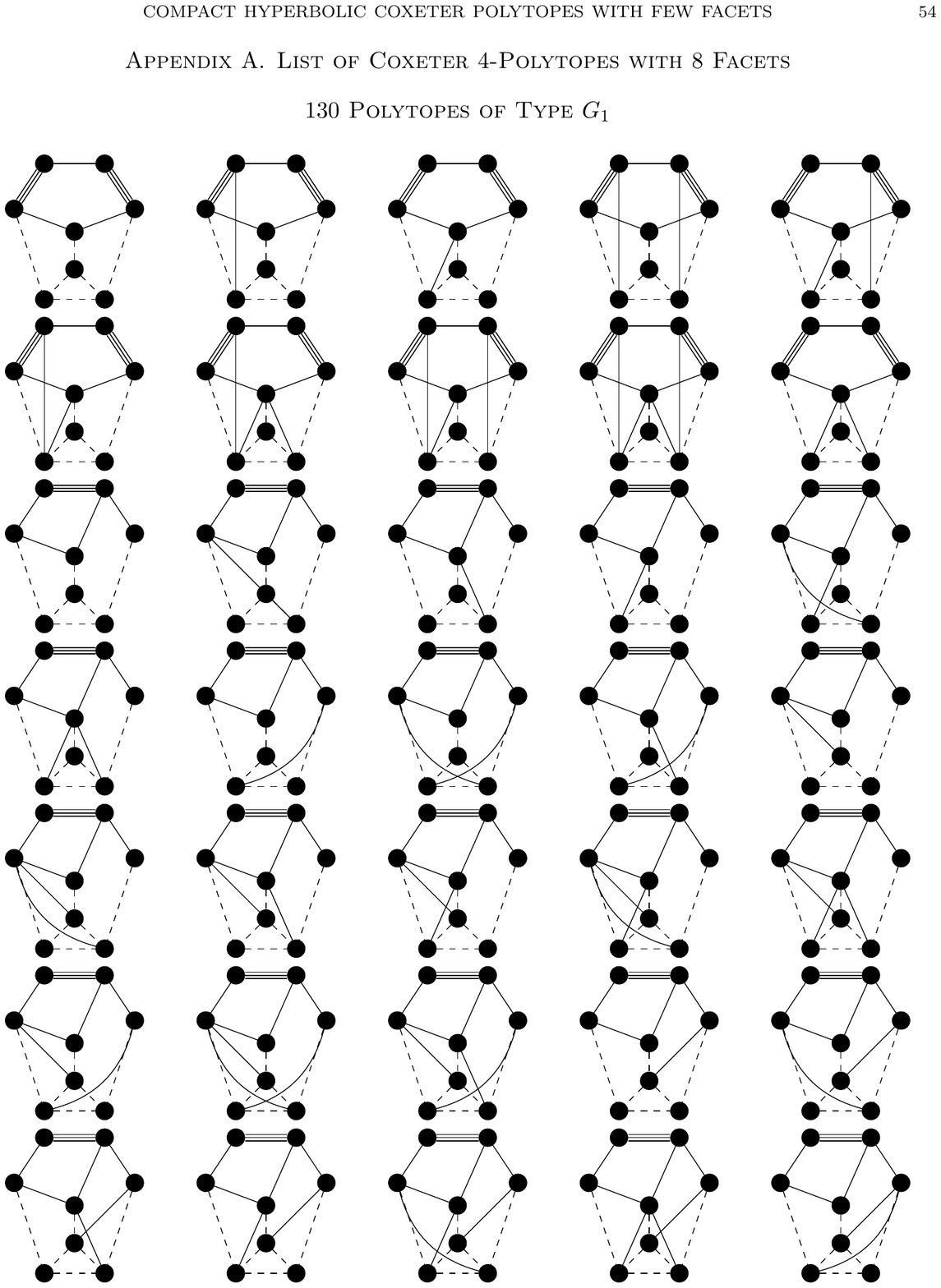}
 \setlength{\voffset}{\originalVOffset}
 \setlength{\hoffset}{\originalHOffset}

\end{document}